\numberwithin{equation}{section}
\newtheorem{theorem}{Theorem}[section]
\newtheorem{lemma}[theorem]{Lemma}
\newtheorem{proposition}[theorem]{Proposition}
\newtheorem{corollary}[theorem]{Corollary}
\newtheorem{claim}[theorem]{Claim}
\newtheorem{fact}[theorem]{Fact}
\theoremstyle{definition}}
\theoremstyle{definition}}
\theoremstyle{definition}\newtheorem{example}[theorem]{Example}}
\newtheorem{observation}[theorem]{Observation}
\theoremstyle{definition}\newtheorem{definition}[theorem]{Definition}}
\theoremstyle{definition}}
\theoremstyle{definition}\newtheorem{remark}[theorem]{Remark}}
\newtheorem{question}[theorem]{Question}
\theoremstyle{definition}
\newtheorem*{remark*}{\it Remark}
\newtheorem*{example*}{\it Example}}
\def\glx{{\rm GL}(X)}
\def\effhc{{\mathcal F}\hbox{-}\mathrm{HC}}
\def\deffhc{d\,\hbox{-}\mathcal F\hbox{-\rm HC}}
\newcommand{\CC}{\mathbb{C}}
\newcommand{\NN}{\mathbb{N}}
\newcommand{\DD}{\mathbb{D}}
\newcommand{\RR}{\mathbb{R}}
\newcommand{\ZZ}{\mathbb{Z}}
\newcommand{\TT}{\mathbb{T}}
\newcommand{\veps}{\varepsilon}
\newcommand{\ldens}{\underline{\textrm{dens}}}
\newcommand{\udens}{\overline{\textrm{dens}}}
\newcommand{\sbt}{\,\begin{picture}(-1,1)(-1,-3)\circle*{3}\end{picture}\ }
\newcommand{\hc}{\mathrm{HC}}
\newcommand{\fhc}{\mathrm{FHC}}
\newcommand{\ufhc}{\hbox{UFHC}}%{\mathcal U\hbox{-}\mathrm{FHC}}
\newcommand{\dfhc}{d\hbox{\,-\,}\mathrm{FHC}}
\newcommand{\dhc}{d\hbox{\,-\,}\mathrm{HC}}
\newcommand{\wick}[1]{\pmb:{#1}\pmb:}
\newcommand{\tvw}{T_{v,\,w,\, \varphi, \,b\,}}
\newcommand{\gn}{n\ge 1}
\newcommand{\cpt}{C$_{+}$-type}
\newcommand{\cput}{C$_{+,1}$-type}
\def\ds{\displaystyle}
\begin{document}
\title[Hereditary frequent hypercyclicity]
{Hereditarily frequently hypercyclic operators \\ and disjoint frequent hypercyclicity}
%\date{\today}

\author[F. Bayart]{Fr\'ed\'eric Bayart}
\address[F. Bayart]{Laboratoire de Math\'ematiques Blaise Pascal UMR 6620 CNRS, Universit\'e Clermont Auvergne, Campus universitaire des C\'ezeaux, 3 place Vasarely, 63178 Aubi\`ere Cedex, France.}
\email{frederic.bayart@uca.fr}
%\urladdr{http://math.univ-lille1.fr/~grivaux/}
\author[S. Grivaux]{Sophie Grivaux}
\address[S. Grivaux]{CNRS, Univ. Lille, UMR 8524 - Laboratoire Paul
Painlev\'e, F-59000 Lille, France}
\email{sophie.grivaux@univ-lille.fr}
\author[\'{E}. Matheron]{\'{E}tienne Matheron}
\address[\'{E}. Matheron]{Univ. Artois, UR 2462 - Laboratoire de Math\'{e}matiques de Lens (LML)\\ F-62300 Lens, France}
\email{etienne.matheron@univ-artois.fr}
%\urladdr{http://matheron.perso.math.cnrs.fr/}
\author[Q. Menet]{Quentin Menet}
\address[Q. Menet]{Service de Probabilit\'e et Statistique, D\'epartement de Math\'ematique\\ Universit\'{e} de Mons\\ Place du Parc 20\\ 7000 Mons, Belgium}
\email{quentin.menet@umons.ac.be}

\thanks{We gratefully acknowledge the support of the Mathematisches Forschungsinstitut Oberwolfach, where part of this work was carried out. The second author was also supported in part by the Labex CEMPI (ANR-11-LABX-0007-01). The fourth author is a Research Associate of the Fonds de la Recherche Scientifique - FNRS}

\subjclass[2020]{47A16, 37A30, 37B20}

\keywords{Frequent hypercyclicity, Furstenberg families, countable Lebesgue spectrum, disjointness}

\begin{abstract}
We introduce and study the notion of hereditary frequent hypercyclicity, which is a reinforcement of the well known concept of frequent hypercyclicity. This notion is useful for the study of the dynamical properties of direct sums of operators; in particular, a basic observation is that the direct sum of a hereditarily frequently hypercyclic operator with any frequently hypercyclic operator is frequently hypercyclic. Among other results, we show that operators satisfying the Frequent Hypercyclicity Criterion are hereditarily frequently hypercyclic, as well as a large class of operators whose unimodular eigenvectors are spanning with respect to the Lebesgue measure.
On the other hand, we exhibit two frequently hypercyclic weighted shifts $B_w,B_{w'}$ on  $c_0(\ZZ_+)$ whose direct sum $B_w\oplus B_{w'}$ is not $\mathcal{U}$-frequently hypercyclic (so that neither of them is hereditarily frequently hypercyclic), and we construct a $C$-type operator on $\ell_p(\ZZ_+)$, $1\le p<\infty$  which is frequently hypercyclic but not hereditarily frequently hypercyclic. We also solve several problems concerning disjoint frequent hypercyclicity: we show that for every $N\in\NN$, any disjoint frequently hypercyclic $N$-tuple of operators $(T_1,\dots ,T_N)$ can be extended to a disjoint frequently hypercyclic $(N+1)$-tuple $(T_1,\dots ,T_N, T_{N+1})$ as soon as the underlying space supports a hereditarily frequently hypercyclic operator; we construct a disjoint frequently hypercyclic pair which is not densely disjoint hypercyclic; and we show that the pair $(D,\tau_a)$ is disjoint frequently hypercyclic, where $D$ is the derivation operator acting on the space of entire functions and $\tau_a$ is the operator of translation by $a\in\CC\setminus\{ 0\}$.
Part of our results are in fact obtained in the general setting of Furstenberg families.
\end{abstract}
\maketitle

\section{Introduction}

This paper is devoted to two different topics, both pertaining to the study of the dynamics of linear operators. Firstly, motivated by some questions regarding the behaviour of direct sums of operators, we introduce a new dynamical property of continuous linear operators on Banach or Fr\'echet spaces, which appears to be a very natural strengthening of the classical notion of frequent hypercyclicity; we call it \emph{hereditary frequent hypercyclicity}. We believe that this is an interesting notion, and  we study it in some detail. Secondly, we address some questions concerning \emph{disjoint frequent hypercyclicity} -- also called \emph{diagonal} frequent hypercyclicity. One notable connection between these two topics is that hereditarily frequently hypercyclic operators can be used to extend  diagonally frequently hypercyclic tuples (see below). 

\smallskip
In what follows, the letter $X$ denotes an infinite-dimensional Polish topological vector space, and  $\mathfrak L(X)$ is the space of  continuous linear operators on $X$. Recall that an operator $T\in\mathfrak L(X)$ is said to be hypercyclic if it has a dense orbit, \textit{i.e.} there exists $x\in X$ such that $\{T^n x:\ n\geq 0\}$ is dense in $X$;  equivalently, for each non-empty open set $V\subset X$, the ``visit set'' $\mathcal  N_T(x,V):=\{ n\in\NN;\; T^nx\in V\}$ is infinite. A much stronger property, introduced in \cite{BAYGRITAMS}, is \emph{frequent} hypercyclicity: the operator $T$ is frequently hypercyclic if there exists $x\in X$ such that for each non-empty open set $V\subset X$, the set $\mathcal N_T(x,V)$ has positive lower density. We refer the reader to \cite{BM09} and \cite{KarlAlfred} for an in-depth presentation of various aspects of linear dynamics.

\par\smallskip
More recently, quantitative notions of hypercyclicity have begun to be studied in a very general framework (\cite{BoGre18}, \cite{BeMePePu}, \cite{EEM}). Let $\mathcal F$ be a \textbf{Furstenberg family}, \textit{i.e.} a family of non-empty subsets of $\NN$ which is hereditary upwards (if $A'\supset A\in \mathcal F$ then $A'\in\mathcal F$). Following \cite{BeMePePu}, we say that an operator $T\in\mathfrak L(X)$ is \textbf{$\mathcal F$-hypercyclic} if there exists $x\in X$, called a $\mathcal F$-hypercyclic vector for $T$,  such that for each non-empty open set $V\subset X$, the set $\mathcal N_T(x,V)$ belongs to $\mathcal F$. Thus, hypercyclicity corresponds to the family of all infinite subsets of $\NN$, and frequent hypercyclicity corresponds to the family of sets with positive lower density. The set of $\mathcal F$-hypercyclic vectors for $T$ will be denoted by $\effhc(T)$. However, in accordance with a well-established notation, we write $\hc(T)$ in the hypercyclic case and $\fhc(T)$ in the frequently hypercyclic case. Also, when $\mathcal F$ is the family of all subsets of 
 $\NN$ with positive upper density, we say that $T$ is \emph{$\mathcal U$-frequently hypercyclic} and we write $\ufhc(T)$.

\medskip
The starting point of the paper is the following question: 

\begin{question}\label{q0}
Let $T_1\in\mathfrak L(X_1)$ and $T_2\in\mathfrak L(X_2)$ be two frequently hypercyclic operators; is it true that $T_1\oplus T_2$ is frequently hypercyclic?  
\end{question}

This question seems to have been considered for the first time in \cite[Section 8]{GMM21}, and appears as a natural variant of the following well-known open problem in linear dynamics \cite{BAYGRITAMS}: if $T$ is a frequently hypercyclic operator, is it true that $T\oplus T$ is frequently hypercyclic?
% \par\smallskip
Question \ref{q0} makes sense for $\mathcal F$-hypercyclicity as well; and in the especially interesting case $T_1=T_2$, the answer is known for the family of all infinite subsets of $\NN$ and for the family of sets with positive \emph{upper} density. Indeed, a famous example from \cite{DlRR} shows that hypercyclicity of $T$ does not imply that of $T\oplus T$, whereas it is proved in \cite{EEM} that {$\mathcal U$-frequent hypercyclicity} of $T$ does imply that of $T\oplus T$. 

\smallskip
Given $T_1\in\mathfrak L(X_1)$ and $T_2\in\mathfrak L(X_2)$ two frequently hypercyclic operators, a natural way to show that $T_1\oplus T_2$ is frequently hypercyclic  would be the following. Let $(V_i)_{i\in\NN}$ be a countable basis of open sets for $X_1\times X_2$, and assume that each $V_i$ has the form $V_i=V_{i,1}\times V_{i,2}$, where $V_{i,1}$ is open in $X_1$ and $V_{i,2}$ is open in $X_2$. Pick a frequently hypercyclic vector $x_1\in X_1$ for $T_1$. Then, for any $i\in\NN$, there exists a set $A_i\subset \NN$  with positive lower density
such that $T_1^{n}x\in V_{i,1}$ for all $n\in A_i$. We would be done if we were able to find a vector $x_2\in X_2$ with the following property: for every $i\in\NN$, there exists a set $B_i$ with positive lower density \emph{and contained in  $A_i$} (this is the important point, which cannot be guaranteed if $x_2$ is simply assumed to be frequently hypercyclic for $T_2$)  such that $T_2^{n}x_2\in V_{i,2}$ for all $n\in B_i$. This leads to the following definition. %Recall that a \textbf{Furstenberg family} is 

\begin{definition}\label{def0}
 Let $\mathcal F\subset 2^{\NN}$ be a Furstenberg family. We say that an operator $T\in \mathfrak L(X)$ is  {\bf hereditarily $\mathcal F$-hypercyclic}
 if, for any countable family $(V_i)_{i\in I} $ of non-empty open subsets of $X$ and any family $(A_i)_{i\in I}\subset \mathcal F$ indexed by the same countable set $I$, 
 there exists a vector $x\in X$ such that  $\mathcal N_T(x,V_i)\cap A_i\in\mathcal F$ for every $i\in I$; in other words, for each $i\in I$, there is a set $B_i\in\mathcal F$ such that $B_i\subset A_i$ and $T^nx\in V_i$ for all $n\in B_i$.
 \end{definition}
 
 When $\mathcal F$ is the family of sets with positive lower density, we say (of course) that the operator $T$ is \emph{hereditarily frequently hypercyclic}; and likewise for $\mathcal U$-frequent hypercyclicity. By the above discussion, we get %shows the following.
 \begin{observation}\label{prop:sumhfhc0}
 Let $\mathcal F\subset 2^\NN$ be a Furstenberg family. If $T_1, T_2$ are two $\mathcal F$-hypercyclic operators and at least one of them is hereditarily $\mathcal F$-hypercyclic, then $T_1\oplus T_2$ is $\mathcal F$-hypercyclic.
\end{observation}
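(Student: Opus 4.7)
The plan is to formalize exactly the heuristic argument given in the paragraph preceding Definition~\ref{def0}, with the hereditary hypothesis providing precisely the missing input.

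First, I would assume without loss of generality that $T_1$ is the one which is hereditarily $\mathcal F$-hypercyclic. Since $X_1$ and $X_2$ are Polish, the product space $X_1\times X_2$ admits a countable basis of non-empty open sets in product form, say $(V_i)_{i\in\NN}$ with $V_i=V_{i,1}\times V_{i,2}$, where $V_{i,1}\subset X_1$ and $V_{i,2}\subset X_2$ are non-empty and open. Next, I would pick any vector $x_2\in\effhc(T_2)$ and record the visit sets $A_i:=\mathcal N_{T_2}(x_2,V_{i,2})$, each of which lies in $\mathcal F$ by definition of $\mathcal F$-hypercyclicity.

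The key step is then to apply hereditary $\mathcal F$-hypercyclicity of $T_1$ to the countable families $(V_{i,1})_{i\in\NN}$ and $(A_i)_{i\in\NN}$: this delivers a single vector $x_1\in X_1$ such that $B_i:=\mathcal N_{T_1}(x_1,V_{i,1})\cap A_i$ belongs to $\mathcal F$ for every $i$. By construction, whenever $n\in B_i$ one has simultaneously $T_1^n x_1\in V_{i,1}$ and $T_2^n x_2\in V_{i,2}$, hence $(T_1\oplus T_2)^n(x_1,x_2)\in V_i$. Using upward-closure of $\mathcal F$, I would conclude that $\mathcal N_{T_1\oplus T_2}((x_1,x_2),V_i)\supset B_i$ lies in $\mathcal F$ for each $i$, and finally extend this to arbitrary non-empty open subsets of $X_1\times X_2$ via the basis property together with one more application of upward-closure. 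This shows that $(x_1,x_2)$ is an $\mathcal F$-hypercyclic vector for $T_1\oplus T_2$.

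I do not expect any genuine obstacle: the statement is designed to be an almost immediate consequence of Definition~\ref{def0}, and the proof becomes mechanical once the product basis has been chosen. The only minor point to watch is that the basis must consist of \emph{product} open sets, so that the hereditary condition for $T_1$ can be applied coordinate-wise to the $V_{i,1}$ alone with the $A_i$ supplied by $x_2$.
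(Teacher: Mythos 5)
Your proposal is correct and follows essentially the same argument as the paper's own proof (given as Proposition \ref{prop:sumhfhc}): choose a countable basis of product open sets, use the plain $\mathcal F$-hypercyclic vector of one operator to generate the visit sets $A_i$, and then invoke hereditary $\mathcal F$-hypercyclicity of the other operator to produce a companion vector whose visit sets meet the $A_i$ in members of $\mathcal F$. The only cosmetic difference is that you let the hereditarily $\mathcal F$-hypercyclic operator act in the first coordinate while the paper puts it in the second, which is immaterial.
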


Note that when $\mathcal F$ is the family of all infinite subsets of $\NN$,  hereditary $\mathcal F$-hypercyclicity is equivalent to \emph{topological mixing}; see Section \ref{weak} for the (easy) proof. So Observation \ref{prop:sumhfhc0} implies in particular that the direct sum of a hypercyclic operator with a topologically mixing operator is hypercyclic; this is of course well known.

\smallskip We also point out that -- perhaps surprisingly -- hereditary $\mathcal F$-hypercyclicity automatically implies \emph{dense} hereditary $\mathcal F$-hypercyclicity: given $(A_i)$ and $(V_i)$ as in Definition \ref{def0} above, there is a dense set of vectors $x\in X$ satisfying the required property; see Proposition \ref{densely} below.

\medskip

Having introduced a definition, we are immediately faced with some obvious questions.

\smallskip

%\noindent
 {\sbt\ \it Are there any hereditarily frequently hypercyclic operators?} We answer this in the affirmative, by two different methods. Indeed, there are two ``standard'' ways of proving that an operator is frequently hypercyclic: either by showing that it satisfies the so-called Frequent Hypercyclicity Criterion (see \cite{BM09} or  \cite{KarlAlfred}) or by exhibiting a large supply of eigenvectors associated to unimodular eigenvalues (see e.g. \cite{BAYGRITAMS} or  \cite{BAYMATHERGOBEST}). It turns out that in both cases, one gets in fact hereditary frequent hypercyclicity (Theorem \ref{thm:operatorfhc}, Theorem \ref{thm:ergodic}) or hereditary $\mathcal U$-frequent hypercyclicity (Theorem \ref{HUFHC}).

\smallskip

%\noindent 
{\sbt\ \it Is hereditary frequent hypercyclicity a new notion?} In other words, are there any frequently hypercyclic operators which are not hereditarily frequently hypercyclic? The answer is ``Yes", and we prove this in two ways. On the one hand, we construct two frequently hypercyclic weighted shifts $B_w$ and $B_{w'}$ on $c_0(\mathbb Z_+)$ such that $B_w\oplus B_{w'}$ is not $\mathcal U$-frequently hypercyclic (Theorem \ref{thm:sumfhc}), so that neither of them can be hereditarily frequently hypercyclic by Observation \ref{prop:sumhfhc0}. This also gives a strong negative answer to the $T_1\oplus T_2$ frequent hypercyclicity problem of Question \ref{q0}. On the other hand, with the terminology of \cite{GMM21}, we construct a C-type  operator on $\ell_p(\ZZ_+)$, $1\leq p<\infty$ which is frequently hypercyclic but not hereditarly frequently hypercyclic (Theorem \ref{Ctypeex}).

\smallskip
 {\sbt\ \it  What are hereditarily frequently hypercyclic operators good for?} We will use them in the context of ``disjoint hypercyclicity''. 
 The notion of disjointness in  linear dynamics was introduced independently in \cite{Bernal-disjoint} and \cite{BePe-disjoint}. Let $N\geq 1$ and let $T_1,\dots,T_N\in\mathfrak L(X)$. Following \cite{BePe-disjoint}, we say that $T_1,\dots,T_N$ are \textbf{disjoint}, or that the tuple $(T_1,\dots ,T_N)$ is \textbf{diagonally hypercyclic}, if there exists $x\in X$ such that the set $\{(T_1^n x,\dots,T_N^n x):\ n\geq 0\}$ is dense in $X^N$; in other words, the ``diagonal'' vector $x\oplus\cdots\oplus x$ is hypercyclic for $T_1\oplus\cdots\oplus T_N$. Such a vector $x$ is said to be $d$-hypercyclic for the tuple $(T_1,\dots ,T_N)$, and the set of $d$-hypercyclic vectors for $(T_1,\dots,T_N)$ will be denoted by $\dhc(T_1,\dots,T_N)$.
 Similarly, $(T_1,\dots,T_N)$ is said to be \textbf{$d$-frequently hypercyclic} if there exists $x\in X$ such that $x\oplus\cdots\oplus x$ is a frequently hypercyclic
vector for $T_1\oplus\cdots\oplus T_N$, and we denote by $\dfhc(T_1,\dots,T_N)$ 
the set of $d$-frequently hypercyclic vectors for $(T_1,\dots ,T_N)$.

A natural problem regarding $d$-hypercyclicity is that of the \emph{extension} of $d$-hypercyclic tuples. It was shown in  \cite{MaSa24} that given any $N\geq 1,$ any Banach space $X$ and any $T_1,\dots,T_N\in\mathfrak L(X)$ such that $(T_1,\dots,T_N)$ is $d$-hypercyclic, there exists $T_{N+1}\in\mathfrak L(X)$ such that $(T_1,\dots,T_{N+1})$ is also $d$-hypercyclic. As for $d$-frequent hypercyclicity, the situation is trickier since there exist Banach spaces which do not support any frequently hypercyclic operator (\cite{Shk09}). The best one could hope for is that as soon as $X$ supports a frequently hypercyclic operator, then one can extend $d$-frequently hypercyclic tuples. We are unable to prove this, but we show that one can indeed extend $d$-frequently hypercyclic tuples as soon as $X$ supports a \emph{hereditarily} frequently hypercyclic operator (Theorem \ref{thm:extendingdfhc2}).

\smallskip
The preceding discussion has outlined the content of Sections \ref{FHCC}--\ref{extend} of the paper, and  hopefully it is clear that Section \ref{extend} makes a transition between our two topics --  hereditary frequent hypercyclicity and $d$-frequent hypercyclicity. The next two sections are exclusively devoted to $d$-frequent hypercyclicity.
In Section \ref{pasdensely}, we show (in the spirit of \cite{SaSh14}) that there exists a $d$-frequently hypercyclic pair $(T_1,T_2)$ on some Banach space $X$ which is not densely $d$-hypercyclic, \textit{i.e.} the set $\dhc(T_1,T_2)$ is not dense in $X$ (Theorem \ref{thm:dfhcnotddens}). In Section \ref{ajout}, we give a sufficient condition for $d$-frequent hypercyclicity of a tuple $(T_1,\dots ,T_N)$ in terms of eigenvectors of the operators $T_i$ (Theorem \ref{tambouille}); and this allows us for example to show that the pair $(D, \tau_a)$ is $d$-frequently hypercyclic, where $D$ is the derivation operator on the space of entire functions $H(\CC)$ and $\tau_a$ is the operator of translation by $a\in\CC\setminus\{ 0\}$.% on $H(\CC)$. 

\smallskip Finally, Section \ref{quest} contains a few additional remarks and a number of open questions originating in a rather natural way from our work.

\section{The Frequent Hypercyclicity Criterion}\label{FHCC}
The Frequent Hypercyclicity Criterion (FHCC) is a very efficient tool for showing that a given operator is frequently hypercyclic. Since \cite{BAYGRITAMS}, there have been several versions of it in the literature; we choose here the most widely used (\cite{BoGre07}): an operator $T\in\mathfrak L(X)$ satisfies the FHCC provided there exist a dense set $\mathcal D\subset X$ and a map $S:\mathcal D\to\mathcal D$ such that 
\begin{itemize}
\item[\sbt] $TS=I$ on $\mathcal D$;
\item[\sbt] for any $x\in \mathcal D,$ the series $\sum T^n x$ and $\sum S^n x$ are unconditionally convergent.
\end{itemize}

\smallskip In this section, we show that the FHCC implies in fact hereditary frequent hypercyclicity.
\begin{theorem}\label{thm:operatorfhc}
 If $T\in\mathfrak L(X)$satisfies the Frequent Hypercyclicity Criterion, 
then $T$ is hereditarily frequently hypercyclic.
\end{theorem}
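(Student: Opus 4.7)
The plan is to adapt the standard derivation of frequent hypercyclicity from the FHCC to the hereditary setting. Recall the classical pattern: one picks $y_i\in\mathcal D$ approximating a point of a given basic open set $V_i$, builds pairwise well-separated sets $B_i\subset\NN$ of positive lower density, and forms the vector $x=\sum_i\sum_{n\in B_i}S^n y_i$; then $T^mx\approx y_i\in V_i$ for every $m\in B_i$, the ``error terms'' being tails of the unconditionally convergent series $\sum T^n y_j$ and $\sum S^n y_j$. For Theorem~\ref{thm:operatorfhc}, the open sets $V_i$ ($i\in I$) and the sets $A_i\in\mathcal F$ are prescribed from the outset; the only new constraint is that the $B_i$ we produce must lie inside $A_i$.

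The technical heart of the argument will therefore be a \emph{simultaneous thinning lemma}: given a countable family $(A_i)_{i\in\NN}$ of subsets of $\NN$ of positive lower density and any preassigned sequence $(N_k)_{k\in\NN}$ of integers, one can find $B_i\subset A_i$, still of positive lower density, such that $\min B_i\ge N_i$ and $|n-m|\ge N_{\max(i,j)}$ whenever $(i,n)\ne(j,m)$ with $n\in B_i$ and $m\in B_j$. I plan to prove this by a block-extraction argument: for each $i$, partition $\NN$ into consecutive blocks of length $K_i$ (chosen much larger than $N_i$), select at most one element of $A_i$ from the middle portion of each block, and inductively discard those selections that would conflict with the previously constructed $B_j$ ($j<i$). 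Positive lower density is preserved because if $\underline d(A_i)\ge\delta_i>0$, then any partition of $[0,N]$ into blocks of length $K_i$ has at least $\delta_i N/K_i$ blocks meeting $A_i$, so the thinned set $B_i$ has lower density of order $\delta_i/K_i$.

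With the lemma in hand, the construction is routine. We may assume $I=\NN$; fix an $F$-norm $\|\cdot\|$ compatible with the topology of $X$; pick $y_i\in\mathcal D$ and $\varepsilon_i>0$ with $B(y_i,2\varepsilon_i)\subset V_i$; and choose $N_k$ growing fast enough that $\sum_{n\ge N_k}(\|T^n y_j\|+\|S^n y_j\|)$ is negligible for all $j\le k$, which is possible by unconditional convergence of $\sum T^n y_j$ and $\sum S^n y_j$. Apply the thinning lemma to obtain $B_i\subset A_i$ of positive lower density satisfying the separation condition for this $(N_k)$, and set
$$x\;=\;\sum_{i\in\NN}\ \sum_{n\in B_i}S^n y_i,$$
the sum being absolutely convergent by the choice of $N_k$. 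For any fixed $i$ and $m\in B_i$, using $T^kS^k=I$ on $\mathcal D$ one finds
$$T^m x\;=\;y_i\;+\;\sum_{(j,n)\ne(i,m)}T^m S^n y_j,$$
and every term in the remaining sum is of the form $S^{|n-m|}y_j$ or $T^{|m-n|}y_j$ with $|n-m|\ge N_{\max(i,j)}$; summing according to the chosen decay rates of the $N_k$'s forces the total error to lie in $B(0,\varepsilon_i)$. Hence $T^mx\in V_i$, and $\mathcal N_T(x,V_i)\cap A_i\supset B_i$ has positive lower density.

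The main obstacle I foresee is the thinning lemma itself: the $A_i$'s are arbitrary members of $\mathcal F$ with no structural properties beyond their prescribed lower densities, and we must simultaneously sparsify them to enforce the separation conditions while keeping each $B_i$ of positive lower density. In the usual FHCC proof the $B_i$'s are built freely from $\NN$ (for instance via the Bayart--Grivaux arithmetic construction), so this combinatorial extraction is the genuinely new ingredient; nonetheless it should go through by the block argument sketched above.
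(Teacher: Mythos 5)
Your overall architecture is the same as the paper's: the vector $x=\sum_i\sum_{n\in B_i}S^ny_i$ with the tail estimates is the standard FHCC argument, and the genuinely new ingredient is indeed the simultaneous thinning lemma (this is Lemma \ref{prop:subsets} of the paper, essentially \cite[Lemma 2.2]{MaMePu}). The gap is in your proof of that lemma: the inductive discarding goes in the wrong direction. You construct $B_1,B_2,\dots$ in order and, at stage $i$, remove from the candidate $C_i\subset A_i$ the points lying within distance $N_i$ of some earlier $B_j$, $j<i$. But the removed set contains $B_1+[-N_i,N_i]\supset B_1$, whose upper density is at least $\ldens(B_1)>0$ --- a quantity \emph{fixed} once $B_1$ is built --- whereas the candidate $C_i$, having at most one point per block of length $K_i$ with $K_i\gtrsim N_i$ (you need this just for the separation \emph{inside} $B_i$), has lower density at most $1/K_i\le C/N_i\to 0$. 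So for large $i$ the set you delete has upper density far exceeding the lower density of the set you delete it from; the only general estimate available, $\ldens(C_i\setminus F)\ge \ldens(C_i)-\udens(F)$, gives a negative bound, and in the worst case the $N_i$-neighbourhood of $B_1$ swallows $C_i$ entirely. No choice of the $K_j$ can repair this, since the obstruction only uses $\ldens(B_1)>0$ and $N_i\to\infty$.

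The repair is to reverse the order of removal, which is exactly what the paper's proof of Lemma \ref{prop:subsets} does. First fix, for every $j$, a sparse candidate $C_j\subset A_j$ (your one-point-per-block selection is fine, as is the paper's choice of every $s(j)$-th element of $A_j$), choosing the sparsity of $C_j$ \emph{after} the lower densities of $C_1,\dots,C_{j-1}$ are known, so that the fattened set $\widetilde C_j:=\bigl(C_j+[-M_j,M_j]\bigr)\cap\NN$, with $M_j:=2\max_{k\le j}N_k$, satisfies $\udens(\widetilde C_j)\le 4^{-(j-i)}\,\ldens(C_i)$ for all $i<j$. Then set $B_i:=C_i\setminus\bigcup_{j>i}\widetilde C_j$: each removal now costs only a geometrically small fraction of $\ldens(C_i)$, and the separation holds because for $j>i$ a point $n\in B_i$ avoids $C_j+[-M_j,M_j]$ with $M_j\ge N_i+N_j\ge N_{\max(i,j)}$. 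With this corrected lemma the rest of your argument goes through, modulo the standard bookkeeping (the paper's condition $p\veps_p+\sum_{q>p+1}\veps_q<\alpha_p$) that you left implicit, and modulo a small point in the block selection: $A_i$ may meet some blocks only near their boundaries, so insisting on the ``middle portion'' can lose points; this is easily handled (e.g.\ by using shifted block decompositions) but should be addressed.
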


\smallskip 
The proof of this theorem will closely mimic the classical proof that an operator satisfying the FHCC is frequently hypercyclic. Recall that the latter depends on the construction
of subsets of $\NN$ with positive lower density which are ``well separated''. 
To obtain hereditary  frequent hypercyclicity, we need to control more precisely
these subsets, and in particular we have to be sure that one can find them inside some
prescribed subsets of $\NN$ (of positive lower density, of course). This is the content
of the next lemma, which is useful in other situations as well (see \cite{MaMePu} and the very recent \cite{Rodrigo}). This lemma is actually contained in \cite[Lemma 2.2]{MaMePu}, but we give a proof for completeness (and convenience of the reader).

\begin{lemma}\label{prop:subsets}
 Let $(A_i)_{i\in I}$ be a countable family of subsets of $\NN$ with positive lower density, and let $(N_i)_{i\in I}$ be a family 
 of positive integers indexed by the same countable set $I$. There exists a family $(B_i)_{i\in I}$ of pairwise disjoint subsets of $\NN$ with positive lower density such that
 \begin{itemize}%[\rm (a)]
  \item[\rm (a)] $B_i\subset A_i$ and $\min(B_i)\geq N_i$ for all $i\in I$;
  \item[\rm (b)] for any $i, j\in I$ and any $(n,m)\in B_i\times B_j$ with $n\neq m,$ $|n-m|\geq N_i+N_j.$
 \end{itemize}
\end{lemma}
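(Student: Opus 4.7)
My plan is a direct inductive construction, after enumerating $I$ as $\NN$. The main tool would be a standard thinning observation: for any $A\subset\NN$ with $\ldens(A)=\delta>0$ and integers $M,N\ge 1$, writing $A=\{a_1<a_2<\cdots\}$ and $n_0:=\min\{n:a_n\ge N\}$, the set $B:=\{a_{n_0+jM}:j\ge 0\}$ satisfies $B\subset A$, $\min B\ge N$, consecutive gaps $\ge M$ (since the $a_n$ are distinct integers, so $a_{n+M}-a_n\ge M$), $\ldens(B)\ge\delta/M$ and $\udens(B)\le 1/M$.

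Set $\delta_k:=\ldens(A_k)>0$. I would pre-select positive reals $(\eta_k)_{k\ge 1}$ (to be specified), and construct $B_k$ recursively. At step $k$, define the forbidden zone
$$F_k:=\bigcup_{j<k}\bigl(B_j+[-(N_k+N_j-1),\,N_k+N_j-1]\bigr),$$
apply the thinning observation to $A_k\setminus F_k$ with $M:=\max\bigl(\lceil 1/\eta_k\rceil,\,2N_k\bigr)$ and $N:=N_k$, and declare the result to be $B_k$. By construction $B_k\subset A_k$, $\min B_k\ge N_k$, consecutive gaps in $B_k$ are $\ge 2N_k$ (which handles the $i=j=k$ instance of (b)), $\udens(B_k)\le\eta_k$, and $B_k$ is both disjoint from and at distance $\ge N_k+N_j$ from each previously built $B_j$, yielding the cross case of (b); moreover $B_k$ inherits positive lower density from $A_k\setminus F_k$ via the thinning lemma.

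The whole argument thus hinges on being able to choose $(\eta_k)$ so that, for every $k$,
$$\udens(F_k)\;\le\;\sum_{j<k}(2(N_k+N_j)-1)\,\eta_j\;<\;\delta_k/2,$$
which then gives $\ldens(A_k\setminus F_k)\ge\delta_k/2>0$. Since the full data $(\delta_k,N_k)_{k\ge 1}$ is known at the outset, I would try to reserve for each future constraint $k>j$ a geometric share of its budget, e.g.\ by requiring $(2(N_k+N_j)-1)\,\eta_j\le 2^{j-k-1}\,\delta_k$. The main obstacle will be to verify that a positive $\eta_j$ meeting all such requirements actually exists, i.e.\ that the resulting bound $\eta_j\le\inf_{k>j}\bigl[2^{j-k-1}\delta_k/(2(N_k+N_j)-1)\bigr]$ is strictly positive; this is the delicate quantitative step, and is precisely where the careful bookkeeping of the construction must take place.
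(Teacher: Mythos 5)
There is a genuine gap, and it is exactly at the step you flag as ``the delicate quantitative step'': the required positive $\eta_j$ does not exist. With your allocation you need $\eta_j\le 2^{j-k-1}\delta_k/(2(N_k+N_j)-1)$ for \emph{every} $k>j$; since $\delta_k\le 1$ and $2(N_k+N_j)-1\ge 1$, the right-hand side is at most $2^{j-k-1}$, which tends to $0$ as $k\to\infty$, so the infimum is always $0$ and no positive $\eta_j$ works -- for \emph{any} input data. Nor can this be repaired by a cleverer allocation: from $\sum_{j<k}(2(N_k+N_j)-1)\,\eta_j<\delta_k/2$ one extracts the necessary condition $\eta_j<\delta_k/\bigl(2(2(N_k+N_j)-1)\bigr)$ for all $k>j$, and the infimum of the right-hand side over $k$ is $0$ as soon as $\delta_k\to 0$ or $N_k\to\infty$ (e.g.\ $A_k=4^k\NN$). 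The underlying problem is the \emph{direction} of your forbidden zones: you freeze $B_1$ first, with some upper density $\eta_1>0$ fixed once and for all, and then ask that its fattened neighbourhood be negligible inside every future $A_k$, whose lower density may be arbitrarily small compared to $\eta_1$. That is an impossible demand.

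The paper's proof avoids this by reversing the roles. Each $A_i$ is thinned to $A_{i,s(i)}$ (keeping every $s(i)$-th element), and the parameter $s(i)$ is chosen \emph{at step $i$} so that the fattened set $\widetilde A_{i,s(i)}=A_{i,s(i)}+[-M_i,M_i]$ has upper density at most $4^{-(i-j)}\ldens(A_{j,s(j)})$ for all $j<i$ -- a condition involving only finitely many already-determined quantities, hence always satisfiable. Then one sets $B_i:=A_{i,s(i)}\setminus\bigcup_{j>i}\widetilde A_{j,s(j)}$: the removal is of \emph{later} sets from \emph{earlier} ones, so every smallness requirement compares a later, still-adjustable density to an earlier, already-fixed one, and the geometric series gives $\ldens(B_i)\ge\ldots>0$. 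Your thinning observation and your verification of (a), (b) and disjointness are fine as far as they go; it is only the budget bookkeeping that must be turned around in this way.
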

\begin{proof}  We may assume that $I=\NN$. For each $i\in\NN$, let 
$ M_i:= 2\, \max_{j\leq i} \, N_j.$

\smallskip Enumerate each set $A_i$ as an increasing sequence $(n_i(k))_{k\in\mathbb N}$ and for $s\geq 1,$ define
 \begin{align*}
  A_{i, s}&:=\left\{n_{i}(sk):\ k\in\NN\right\},\\
  \widetilde{A}_{i, s}&:=\bigl(A_{i,s}+[-M_i,M_i]\bigr)\cap \NN.
 \end{align*}
Then (by subadditivity of upper density)
\begin{align*}
 \udens(\widetilde{A}_{i,s})%&\leq (2M_i+1)\,\ldens(E_s(i))\\
% &
 \leq \frac{2M_i+1}{s}\cdot%\, \ldens(A_i).
\end{align*}

Since all sets $A_{i,s}$ have positive lower density, it follows that one can construct by induction a sequence of positive integers $(s(i))_{i\in\NN}$ such that for all $i\in\mathbb N,$
 $s(i)\geq M_i$ and 
 \[\udens\left(\widetilde{A}_{i, s(i)}\right)\leq \min_{j<i}\frac{1}{4^{i-j}}\, \ldens\left(A_{j, s(j)}\right).\]
 We then set 
 \[B_i:=A_{i, s(i)}\backslash  \bigcup_{j>i}\widetilde{A}_{j,s(j)}\,,\]
 so that
 \begin{align*}
  \ldens(B_i)&\geq \ldens \left(A_{i,s(i)}\right)\left(1-\sum_{j>i}\frac 1{4^{j-i}}\right)
  >0.
 \end{align*}
 Moreover, $B_i$ is clearly contained in $A_i$, $\min(B_i)\geq s(i)\geq M_i$, the sets $B_i$ are pairwise disjoint, and if $(n,m)\in B_i\times B_j$
 with $n\neq m$ and $j\geq i$, then
 \begin{itemize}
 \item[\sbt] either $j=i$, in which case $|n-m|\geq s(i)\geq M_i\geq 2N_i=N_i+N_j$;
  \item[\sbt] or $j> i$, in which case  $|n-m|\geq M_j\geq N_i+N_j$ since $n\notin B_j+[-M_j,M_j]$.
 \end{itemize}
\end{proof}

\smallskip
We can now give the proof of Theorem \ref{thm:operatorfhc}.
\begin{proof}[Proof of Theorem \ref{thm:operatorfhc}] Let $(A_p)_{p\in\NN}$ be a sequence of subsets of $\NN$ with positive lower density, and let $(V_p)_{p\in\NN}$ be a sequence of non-empty open subsets of $X$. We have to find a vector $x\in X$ such that $\mathcal N_T(x,V_p)\cap A_p$ has positive lower density for all $p\in\NN$; and for that we follow the proof of \cite[Theorem 6.18]{BM09}. Let us fix an $F$-norm $\Vert\,\cdot\,\Vert$ defining the topology of $X$.
 
 \smallskip
 Let $\mathcal D$ be the dense set given by the Frequent Hypercyclicity Criterion.
 For each $p\geq 1$, choose a vector $x_p\in\mathcal D\cap V_p$ and let $\alpha_p>0$ be
 such that $B(x_p,3\alpha_p)\subset V_p$.
 Let also $(\veps_p)_{p\geq 1}$ be a summable sequence of positive real numbers
 such that
 for all $p\geq 1,$ 
 $$p\veps_p+\sum_{q>p+1}\veps_q<\alpha_p.$$
 By unconditional convergence of the series involved in the Frequent Hypercyclicity Criterion, for each $p\geq 1,$
 one can find a positive integer $N_p$ such that, for any set $F\subset\NN\cap[N_p,\infty),$
 $$\left\|\sum_{n\in F}T^n x_i\right\|+\left\|\sum_{n\in F}S^nx_i\right\|\leq {\veps_p}\quad\hbox{ for all }i\leq p.$$
Now, let $(B_p)_{p\in\NN}$ be the sequence of subsets of $\NN$ with positive lower density associated to $(A_p)$ and $(N_p)$ by Lemma \ref{prop:subsets}. The vector $x$
 we are looking for is defined by
 $$x:=%y+
 \sum_{p=1}^{\infty}\sum_{n\in B_p}S^n x_p.$$
 First we note that $x$ is well defined.
 Indeed, each series $\sum_{n\in B_p}S^n x_p$ is convergent, and since $B_p\subset [N_p,\infty)$ for all $p$ we have
 $$\sum_{p\geq 1}\left\|\sum_{n\in B_p}S^n x_p\right\|\leq\sum_{p\geq 1}\veps_p<\infty.$$
 Let us fix $p\geq 1$ and $n\in B_p$: we show that $T^nx\in V_p$. By definition of $x$, we have
 \begin{align*}
  \|T^n x-x_p\|&\leq
  \sum_{q=1}^{\infty}\left\|\sum_{\substack{m\in B_q\\ m>n}}S^{m-n} x_q \right\|+\sum_{q=1}^{\infty}\left\|\sum_{\substack{m\in B_q\\  m<n}}T^{n-m} x_q \right\|.
 \end{align*}
 To estimate the first sum, we decompose it as 
 $$\sum_{q=1}^{p}\left\|\sum_{\substack{m\in B_q\\ m>n}}S^{m-n} x_q\right\|+\sum_{q=p+1}^{\infty}\left\|\sum_{\substack{m\in B_q\\ m>n}}S^{m-n} x_q\right\|.$$
 Since $n\in B_p,$ we know that $m-n>\max(N_p,N_q)$ whenever $m\in B_q$ and $m>n$. By the choice of the sequence $(N_p)$, it follows that
 $$\sum_{q=1}^{\infty}\left\|\sum_{\substack{m\in B_q\\ m>n}}S^{m-n} x_q\right\|\leq p\veps_p+\sum_{q=p+1}^{\infty}\veps_q<\alpha_p.$$
 Estimating the second sum in the same way, we conclude that
 $$\|T^n x-x_p\|< 3\alpha_p,$$
 so that $T^n(x)\in V_p.$
\end{proof}

As a direct consequence of Theorem \ref{thm:operatorfhc} and the fact that every frequently hypercyclic weighted backward shift on $\ell_p(\mathbb Z_+),$ satisfies the Frequent Hypercyclicity Criterion \cite{BAYRUZSA}, we obtain

\begin{corollary}\label{shiftlp}
A  weighted backward shift on $\ell_p(\mathbb Z_+),$ $ 1\leq p<\infty$ is frequently hypercyclic if and only if it is hereditarily frequently hypercyclic.
\end{corollary}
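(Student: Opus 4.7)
The corollary is a biconditional, and my plan is to dispatch each direction separately by appealing to already-established results.

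First I would address the easy implication, namely that hereditary frequent hypercyclicity implies frequent hypercyclicity. This does not even need the weighted-shift structure: given any countable basis $(V_i)_{i\in\NN}$ of open sets in $\ell_p(\ZZ_+)$, apply Definition \ref{def0} with $A_i:=\NN$ for every $i$ (which obviously has positive lower density). The resulting vector $x$ satisfies $\mathcal N_T(x,V_i)\cap\NN=\mathcal N_T(x,V_i)\in\mathcal F$ for all $i\in\NN$, which since the $V_i$ form a basis of open sets means that $x\in\fhc(T)$. So this direction is one line.

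For the non-trivial implication, suppose that $B_w$ is a frequently hypercyclic weighted backward shift on $\ell_p(\ZZ_+)$ with $1\le p<\infty$. The plan here is to invoke two results in sequence. First, by the characterization of Bayart and Ruzsa in \cite{BAYRUZSA}, every frequently hypercyclic weighted shift on $\ell_p(\ZZ_+)$ automatically satisfies the Frequent Hypercyclicity Criterion (this is the substantive input, and is the step I would regard as the only real obstacle were it not already in the literature; fortunately, we only need to cite it). Second, Theorem \ref{thm:operatorfhc}, which was just established in this section, asserts that any operator satisfying the FHCC is hereditarily frequently hypercyclic. Combining these two facts immediately yields that $B_w$ is hereditarily frequently hypercyclic, completing the proof.

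In summary, the argument is essentially a one-line chain of implications
\[
B_w\text{ is FHC}\ \Longrightarrow\ B_w\text{ satisfies the FHCC}\ \Longrightarrow\ B_w\text{ is hereditarily FHC}\ \Longrightarrow\ B_w\text{ is FHC},
\]
with the first arrow coming from \cite{BAYRUZSA}, the second from Theorem \ref{thm:operatorfhc}, and the third being trivial. There is no real obstacle to overcome beyond correctly citing these two inputs.
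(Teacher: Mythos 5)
Your proposal is correct and follows exactly the paper's argument: the corollary is derived there as a direct consequence of Theorem \ref{thm:operatorfhc} together with the fact from \cite{BAYRUZSA} that every frequently hypercyclic weighted backward shift on $\ell_p(\ZZ_+)$ satisfies the Frequent Hypercyclicity Criterion, with the reverse implication being immediate from the definition. Nothing is missing.
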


\begin{remark} It should be clear from the proof of Theorem \ref{thm:operatorfhc} that the Frequent Hypercyclicity Criterion implies hereditary $\mathcal F$-hypercyclicity for any Furstenberg family $\mathcal F$ satisfying Lemma \ref{prop:subsets}. For example, this holds true for the family of sets with positive upper density and the family of sets with positive Banach upper density, see \cite[Lemma 2.2]{MaMePu}. 
Observe that if $\mathcal F$ and $\mathcal F'$ are two Furstenberg families, the inclusion
$\mathcal F\subset\mathcal F'$ does not formally imply that hereditary $\mathcal F$-hypercyclicity is a stronger property than hereditary $\mathcal F'$-hypercyclicity.
\end{remark}

\section{Ergodic theory} 

\subsection{Results and general strategy} It is well known that if  $T\in\mathfrak L(X)$ and if one can find a $T$-$\,$invariant Borel probability measure $\mu$ on $X$ with full support with respect to which $T$ is an ergodic transformation, then $T$ is frequently hypercyclic. Let us recall the argument. Let $(V_p)_{p\in\NN}$ be a countable basis of open sets for $X$. Applying Birkhoff's pointwise ergodic theorem to the characteristic functions $\mathbf 1_{V_p}$, we obtain a sequence $(\Omega_p)$ of subsets of $X$ with $\mu(\Omega_p)=1$ such that
$$\frac1N\, \# \mathcal (N_T(x,V_p)\cap[0,N-1])=\frac 1N\sum_{n=0}^{N-1}\mathbf 1_{V_p}(T^n x)\xrightarrow{N\to\infty}\mu(V_p)>0\quad\textrm{ for every }x\in \Omega_p.$$
Hence, any $x\in \bigcap_{p\in\NN} \Omega_p$ is a frequently hypercyclic vector for $T$. 

Moreover, it is also well known (see e.g. \cite{BAYGRITAMS}, \cite[Chapter 5]{BM09}, \cite{BAYMATHERGOBEST}) that if $X$ is a complex Banach space and if an operator $T\in\mathfrak L(X)$ admits ``sufficiently many'' $\TT$-eigenvectors (\textit{i.e.} eigenvectors whose  associated eigenvalues have modulus $1$), then it is indeed possible to find an ergodic measure with full support for $T$. 
So, it may seem reasonable to expect that operators with sufficiently many $\TT$-eigenvectors are hereditarily frequently hypercyclic.

\smallskip
Now, if one wants to repeat the above argument to show that a given operator is hereditarily frequently hypercyclic, Birkhoff's ergodic theorem is not enough: what is needed is a pointwise convergence result for averages of quantities of the form $\mathbf 1_V(T^nx)$ not only along the whole sequence of integers, but in fact along any sequence $(n_k)$ with positive lower density. Specifically, we will use a theorem of Conze \cite{Con77}, which we state in the next section (Theorem \ref{thm:conze}). The assumptions of Conze's theorem are much stronger than merely asking that $T$ is an ergodic transformation; so we will need to impose a rather strong condition on the $\TT$-eigenvectors in order be able to conclude that our operator $T$ is indeed hereditarily frequently hypercyclic. %This will lead to the next theorem. 

\smallskip
Let us recall a few definitions. Assume that $X$ is a complex Banach space. If $T\in\mathfrak L(X)$, a \textbf{$\TT$-eigenvector field} for $T$ is any bounded map $E:\TT\to X$ such that $TE(z)=z E(z)$ for every $z\in\TT$. Given a positive Borel measure $\sigma$ on $\TT$, we say that a family of $\TT$-eigenvector fields $(E_i)_{i\in I}$ is \textbf{$\sigma$-spanning} if $\overline{\rm span}\,\bigl( E_i(z)\,:\, z\in \TT\setminus N\,,\, i\in I\bigr)=X$ for every Borel set $N\subset \TT$ such that $\sigma(N)=0$. Similarly, we say that \emph{the $\TT$-eigenvectors of $T$ are $\sigma$-spanning} if for every Borel set $N\subset\TT$ such that $\sigma(N)=0$, the eigenvectors of $T$ with eigenvalues in $\TT\setminus N$ span a dense subspace of $X$. It follows from \cite[Lemma 5.29]{BM09} that the $\TT$-eigenvectors of $T$ are $\sigma$-spanning if and only if there exists a $\sigma$-spanning countable family of Borel $\TT$-eigenvector fields for $T$. Our aim is to prove the following theorem.%if $E_i\in 

\begin{theorem}\label{thm:ergodic} Let $X$ be a separable complex Banach space, and let $T\in\mathfrak L(X)$. 
 Assume that one can find a $\lambda$-spanning, finite or countably infinite family $(E_i)_{i\in I}$ of $\mathbb T$-eigenvector fields for $T$, where $\lambda$ is the Lebesgue measure on $\TT$. Moreover, assume that one of the following holds true.
 \begin{itemize}
 \item[\rm (a)] $X$ has type $2$;
 \item[\rm (b)] $X$ has type $p\in [1,2)$, and each eigenvector field $E_i$ is $\alpha_i$-H\"olderian for some $\alpha_i>1/p-1/2$. 
 \end{itemize}
Then  $T$ is hereditarily frequently hypercyclic.
\end{theorem}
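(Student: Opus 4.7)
The plan is to combine the classical ergodic-theoretic construction of an invariant measure from $\TT$-eigenvector fields with the subsequence pointwise ergodic theorem of Conze (Theorem \ref{thm:conze}, to be recalled shortly). First I would use the $\lambda$-spanning family $(E_i)_{i\in I}$ to build a $T$-invariant Gaussian Borel probability measure $\mu$ on $X$ with full topological support, following the now-standard recipe of \cite{BAYGRITAMS,BAYMATHERGOBEST} (see also \cite[Chapter 5]{BM09}). The type hypotheses enter precisely at this step: under~(a) the vector-valued stochastic integrals $\int_{\TT}f(z)E_i(z)\,dW(z)$ are well defined because $X$ has type~$2$; under~(b) the H\"older regularity of exponent $\alpha_i>1/p-1/2$ is exactly what is required for the corresponding Wiener chaos to converge almost surely in a Banach space of type~$p<2$. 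The $\lambda$-spanning property then guarantees $\supp(\mu)=X$.

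Second, a key structural observation is that in this Gaussian setting one gets much more than ergodicity: the absolute continuity with respect to $\lambda$ of the spectral measures carried by the $E_i$'s lifts to the fact that $T$ has \emph{countable Lebesgue spectrum} on $L^2_0(\mu):=L^2(\mu)\ominus\CC$, so that $(X,\mu,T)$ is a $K$-automorphism (indeed a Bernoulli system). This is precisely the kind of strong-mixing hypothesis under which Conze's theorem applies, which is what distinguishes the present argument from the classical one based solely on Birkhoff.

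Third, given the data $(A_p)_{p\in\NN}$ with $\ldens(A_p)>0$ and non-empty open sets $(V_p)_{p\in\NN}$, I would apply Conze's theorem to each pair $(\mathbf 1_{V_p},A_p)$ to produce $\Omega_p\subset X$ with $\mu(\Omega_p)=1$ such that, for every $x\in\Omega_p$,
\[
\frac{1}{\#(A_p\cap[0,N))}\sum_{n\in A_p\cap[0,N)}\mathbf 1_{V_p}(T^nx)\;\xrightarrow[N\to\infty]{}\;\mu(V_p).
\]
Since $\mu(V_p)>0$ by full support, this forces $\ldens\bigl(\mathcal N_T(x,V_p)\cap A_p\bigr)>0$, and any $x\in\bigcap_p\Omega_p$, a set of full $\mu$-measure, witnesses hereditary frequent hypercyclicity.

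The main obstacle is case~(b): one must check that the precise H\"older threshold $\alpha_i>1/p-1/2$ is sufficient both to make sense of the Gaussian integrals in a space of type $p<2$ and to preserve the absolute continuity of the spectral type—so that the resulting system still has countable Lebesgue spectrum and Conze's theorem is indeed applicable. All of the individual ingredients already exist in the literature, but combining them carefully while tracking the spectral information under a weakened type assumption is where the real technical work lies.
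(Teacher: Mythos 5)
Your overall strategy coincides with the paper's: construct a $T$-invariant Gaussian measure $\mu$ with full support from the $\lambda$-spanning eigenvector fields (the type/H\"older hypotheses serve exactly to make this work, via \cite[Lemma 5.35]{BM09}), upgrade to countable Lebesgue spectrum, and conclude with Conze's theorem. However, the two steps you merely assert are precisely where the paper has to work, and one of them is a genuine obstruction as written. First, Conze's theorem applies to \emph{invertible} measure-preserving systems, and an operator $T$ satisfying the hypotheses need not be invertible; you call $(X,\mu,T)$ a ``$K$-automorphism'', but in general it is not an automorphism at all. The paper gets around this by passing to Rokhlin's natural extension, checking that the natural extension of a linear Gaussian system is again linear and Gaussian (Lemma \ref{tildegauss}), that spectral measures are preserved under the extension (Fact \ref{formal}), and then transporting the conclusion of Conze's theorem back down through the factor map (Corollary \ref{prop:conzelike}). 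None of this appears in your outline, and without it the appeal to Conze's theorem is not licensed.

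Second, the claim that absolute continuity of the spectral measures $\sigma_{x^*}$, $x^*\in X^*$, ``lifts'' to countable Lebesgue spectrum of $U_T$ on all of $L^2_0(\mu)$ is exactly Proposition \ref{CFSlike}, and it is not an off-the-shelf fact: the paper notes that it resembles \cite[Theorem 14.3.2]{CFS82} but cannot obviously be deduced from it. Its proof requires the Wiener chaos decomposition of $L^2(\mu)$, the identity expressing the spectral measure of a Wick product as a sum of convolutions (Fact \ref{convol}), the fact that a sufficiently high convolution power of a nonzero absolutely continuous measure is \emph{equivalent} to $\lambda$, and a spectral-multiplicity argument (Lemma \ref{lem:lebesguespectrum}) to pass from ``maximal spectral type absolutely continuous, plus one invariant subspace with countable Lebesgue spectrum'' to countable Lebesgue spectrum globally. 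Relatedly, you locate the ``real technical work'' in verifying the H\"older threshold of case (b); in fact that part is handled entirely by citation, and the substance of the proof lies in the two points above. Your final application of Conze's theorem and the deduction that $\mathcal N_T(x,V_p)\cap A_p$ has positive lower density are correct as stated, modulo the invertibility issue.
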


\smallskip Our strategy for proving this theorem should be clear: we will show that under the above assumptions, one can find a Borel $T$-$\,$invariant measure $\mu$ on $X$ for which one can apply Conze's Theorem \ref{thm:conze} below to get hereditary frequent hypercyclicity. This will yield a more precise result, Theorem \ref{thm:ergodicbis}.

\smallskip In what follows, by a \emph{measure-preserving dynamical system} $(X, \mathcal B, \mu, T)$, we mean a pair consisting of a probability space $(X, \mathcal B, \mu)$ and a measurable map $T:X\to X$ such that $\mu\circ T^{-1}=\mu$. Note that here we are departing from our standing notation: $X$ is an abstract space, not necessarily a topological vector space, and hence $T$ is not necessarily a linear operator. This ambiguity is in fact intentional, and should cause no confusion.

Given a measure-preserving dynamical system $(X, \mathcal B, \mu, T)$, we denote by $U_T:L^2(X, \mathcal B, \mu)\to L^2(X, \mathcal B, \mu)$ the associated Koopman operator, which is defined by
\[ U_Tf:= f\circ T.\]
This is an isometry of $L^2(X, \mathcal B, \mu)$, and a unitary operator if $T$ is bijective and bimeasurable, in which case we say that $T$ is an \emph{automorphism} of $(X, \mathcal B, \mu)$, or that the measure-preserving dynamical system $(X, \mathcal B, \mu, T)$ is \emph{invertible}.
\par\smallskip
We stress a technical point: all probability spaces $(X, \mathcal B, \mu)$ under consideration will be assumed to be \textbf{standard Borel}, \textit{i.e.} the underlying measurable space $(X,\mathcal B)$ is isomorphic to$\bigl(Z, \mathcal B(Z)\bigr)$ for some Polish space $Z$, where $\mathcal B(Z)$ is the Borel $\sigma$-algebra of $Z$. And when $X$ is already a Polish space, we assume that $\mathcal B$ is the Borel $\sigma$-algebra of $X$. 

\subsection{Conze's theorem} 
Let us introduce some terminology. A unitary operator $U:H\to H$ acting on a Hilbert space $H$ 
is said to have {\bf Lebesgue spectrum} if there exists a family of vectors $(f_i)_{i\in I}$ in $H$
such that $\{U^n f_i:\ n\in\ZZ,\ i\in I\}$ is an orthonormal basis of $H$. Observe that if $H$ is separable,
the family $(f_i)$ has to be  finite or countably infinite. When there exists a countably infinite such family $(f_i)$, we say that $U$ has {\bf countable Lebesgue spectrum}. Finally, we say that an invertible measure-preserving dynamical system $(X,\mathcal B,\mu, T)$  has (countable) Lebesgue spectrum if the restriction of the Koopman operator $U_T$ to $ L^2_0(X,\mathcal B,\mu):=\bigl\{ f\in  L^2(X,\mathcal B,\mu)\,:\, \int_{X} f\, d\mu=0\bigr\}$ has (countable) Lebesgue spectrum. We may also say that $T$ itself has (countable) Lebesgue spectrum.
\smallskip

Conze's theorem from \cite{Con77} now reads as follows.

\begin{theorem}\label{thm:conze}
Let $(X,\mathcal B,\mu, T)$ be an invertible measure-preserving dynamical system, and assume that $T$ has Lebesgue spectrum. If $(n_k)_{k\geq 0}$ is an increasing sequence of integers with positive lower density then, for any  $f\in L^1(\mu),$ 
\[\frac 1N\sum_{k=0}^{N-1} f(T^{n_k}x)\xrightarrow{N\to\infty}\int_X f\, d\mu\quad\hbox{ for } \mu\hbox{-almost every }x\in X.\]
\end{theorem}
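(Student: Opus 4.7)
The plan follows the classical three-step recipe for pointwise ergodic theorems: establish mean convergence via the spectral theorem, a maximal inequality via comparison with Birkhoff averages, and then upgrade to $\mu$-a.e. convergence via the Banach principle. Subtracting the $T$-invariant constant $\int f\,d\mu$, one may assume $f\in L^2_0(\mu)$ and aim to prove
\[S_N f(x)\;:=\;\frac 1N\sum_{k=0}^{N-1} f(T^{n_k}x)\;\xrightarrow{N\to\infty}\;0\quad \text{for $\mu$-a.e. } x;\]
the extension from $L^2$ to $L^1$ follows at the end by the usual density argument combined with the maximal inequality below.

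For the $L^2$-convergence, apply the spectral theorem to the unitary Koopman operator $U_T$ on $L^2_0(\mu)$: every $f\in L^2_0(\mu)$ carries a spectral measure $\sigma_f$ on $\TT$ with $\langle U_T^n f,f\rangle=\int_\TT z^n\,d\sigma_f(z)$, and hence
\[\|S_N f\|_2^2 \;=\; \int_\TT |K_N(z)|^2\,d\sigma_f(z),\qquad K_N(z):=\frac 1N\sum_{k=0}^{N-1} z^{n_k}.\]
The Lebesgue-spectrum hypothesis forces $\sigma_f\ll\lambda$ with density $h_f\in L^1(\lambda)$. Distinctness of the $n_k$'s gives $\int_\TT|K_N|^2\,d\lambda=1/N\to 0$; combined with $|K_N|^2\le 1$ and a truncation of $h_f$, this yields $\int|K_N|^2\,d\sigma_f\to 0$, i.e. $\|S_N f\|_2\to 0$. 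For the maximal inequality, let $\underline d>0$ denote the lower density of $\{n_k\}$; then $n_{N-1}\le 2N/\underline d$ for all $N$ large, so
\[|S_N f(x)|\;\le\;\frac 1N\sum_{n=0}^{n_{N-1}}|f|(T^n x)\;\le\;\frac{2}{\underline d}\,M_T|f|(x),\]
where $M_T g(x):=\sup_{k\ge 1}\tfrac 1k\sum_{n=0}^{k-1} g(T^n x)$; Hopf's maximal inequality then makes $S^*f:=\sup_N|S_N f|$ weak $(1,1)$ and strong $(p,p)$ for $p>1$.

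By the Banach principle, the set $\mathcal V:=\{f\in L^2_0(\mu):S_N f\to 0\ \mu\text{-a.e.}\}$ is closed in $L^2_0(\mu)$, so it suffices to exhibit a dense subspace of $L^2_0(\mu)$ contained in $\mathcal V$. My candidate is $\mathcal D:=\{f\in L^2_0(\mu):h_f\in L^\infty(\lambda)\}$, which is dense because the concrete spectral model provided by Lebesgue spectrum identifies $L^2_0(\mu)$ with an $\ell^2$-direct sum of copies of $L^2(\TT,\lambda)$, inside which bounded functions are dense. For $f\in\mathcal D$ the sharp bound $\|S_N f\|_2^2\le\|h_f\|_\infty/N$ holds; along a sparse subsequence $N_j:=j^\alpha$ with $\alpha>2$ one has $\sum_j\|S_{N_j}f\|_2^2<\infty$, so Borel--Cantelli furnishes $S_{N_j}f\to 0$ $\mu$-a.e. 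To handle intermediate indices $N_j\le N<N_{j+1}$, decompose
\[S_N f \;=\; \frac{N_j}N\,S_{N_j}f \;+\; \frac 1N\sum_{k=N_j}^{N-1} f\circ T^{n_k},\]
and dominate the tail sum by a shifted Birkhoff average of $|f|$ at the point $T^{n_{N_j}}x$, which is controlled $\mu$-a.e. via $M_T|f|$ and the $T$-invariance of $\mu$.

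The main obstacle is precisely this last ``filling-in'' step. The block of integers involved in the tail, $[n_{N_j},n_{N_{j+1}-1}]$, has length only bounded by a constant multiple of $N_{j+1}/\underline d$, so a naive comparison with the Birkhoff maximal function costs a factor $\sim N_{j+1}/N_j$ which remains bounded but does not vanish. Overcoming this requires a delicate balance: $N_j$ must grow slowly enough that $(N_{j+1}-N_j)/N_j\to 0$ (ruling out geometric subsequences), yet sparsely enough for Borel--Cantelli to close. The polynomial choice $N_j=j^\alpha$ with $\alpha$ just above $2$ satisfies both constraints, and should complete the proof of $\mathcal D\subset\mathcal V$, hence $\mathcal V=L^2_0(\mu)$ by the Banach principle.
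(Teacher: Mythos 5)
The paper itself offers no proof of this statement: Theorem \ref{thm:conze} is quoted directly from Conze's article \cite{Con77}, so your argument has to stand on its own rather than be compared with an internal proof. Its architecture is the right one and almost every step is correct: the spectral identity $\Vert S_Nf\Vert_2^2=\int_\TT|K_N|^2\,d\sigma_f$, the fact that Lebesgue spectrum forces $\sigma_f\ll\lambda$, the bound $\int_\TT|K_N|^2\,d\lambda=1/N$, the domination of $S^*f$ by $C\,M_T|f|$ using $n_{N-1}\le 2N/\underline{d}$, the Banach principle, the density of $\mathcal D=\{f:h_f\in L^\infty\}$ in the spectral model, and the a.e.\ convergence $S_{N_j}f\to 0$ along $N_j=\lfloor j^\alpha\rfloor$ via $\sum_j\Vert S_{N_j}f\Vert_2^2\le\Vert h_f\Vert_\infty\sum_j N_j^{-1}<\infty$ are all sound.

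The genuine gap is the filling-in step, and the repair you sketch does not close it. Comparing $\frac1N\sum_{k=N_j}^{N-1}|f|\circ T^{n_k}$ with a Birkhoff average of $|f|$ over the integer window $[n_{N_j},n_{N_{j+1}-1}]$ yields a prefactor $(n_{N_{j+1}-1}-n_{N_j}+1)/N_j$, which is only bounded (since $n_{N_j}\ge N_j$ can be as small as $N_j$ while $n_{N_{j+1}-1}$ can be as large as $2N_{j+1}/\underline d$), times a window average which does not tend to $0$; the whole expression is dominated by $C\,M_T|f|(x)$, and $M_T|f|(x)\ge\int|f|\,d\mu>0$ a.e., so this controls nothing. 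Imposing $(N_{j+1}-N_j)/N_j\to0$ does not help by itself, because the number of \emph{integers} in the window is governed by the gaps of $(n_k)$, not by $N_{j+1}-N_j$, and moreover the shifted maximal functions $M_T|f|(T^{n_{N_j}}x)$ are not uniformly bounded in $j$ for unbounded $f$. The standard fix is cruder and bypasses the maximal function entirely: by the triangle inequality in $L^2(\mu)$ and invariance of $\mu$,
\[
\Bigl\Vert\,\frac1{N_j}\sum_{k=N_j}^{N_{j+1}-1}|f|\circ T^{n_k}\Bigr\Vert_2\;\le\;\frac{N_{j+1}-N_j}{N_j}\,\Vert f\Vert_2\;=\;O(j^{-1})\,\Vert f\Vert_2,
\]
whose squares are summable; Chebyshev and Borel--Cantelli then give that these block averages tend to $0$ $\mu$-a.e., and combined with $S_{N_j}f\to0$ a.e.\ and $N_j/N\to1$ for $N_j\le N<N_{j+1}$ this proves $S_Nf\to0$ a.e.\ for every $f\in\mathcal D$. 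With that single substitution (which also shows the tail estimate needs no hypothesis on $f$ beyond $f\in L^2$), your proof is complete.
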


\smallskip Getting back to the case where $X$ is a separable Banach space, it is easy to check that if $T\in\mathfrak L(X)$ and if one can find a $T$-$\,$invariant measure with full support $\mu$ such that $(X,\mathcal B,\mu, T)$ satisfies the assumptions of Conze's theorem, then $T$ is hereditarily frequently hypercyclic. However, Conze's theorem is only stated for automorphisms, and in our context it would be rather restrictive to confine ourselves to the case of automorphisms.
We will get round this problem thanks to the notion of \textbf{factor}. Recall that a measure-preserving dynamical system $(X,\mathcal B,\mu,T)$ is a factor of a measure-preserving dynamical system  $(Y ,\mathcal C,\nu,S)$, or that $(Y ,\mathcal C,\nu,S)$ is an \textbf{extension} of $(X,\mathcal B,\mu,T)$,  
if (possibly after deleting two sets of measure $0$ in $X$ and $Y$) there exists a measurable map $\pi:Y\to X$ such that
 $\pi\circ S=T\circ\pi$ and $\mu=\nu\circ\pi^{-1}$.  Using suitable extensions, we will be able to apply Conze's theorem to general operators $T$ \textit{via} the following corollary.

\begin{corollary}\label{prop:conzelike}
 Let $T\in\mathfrak L(X)$ and assume that there exists a $T$-$\,$invariant probability measure $\mu$ on $X$ 
 with full support such that $(X,\mathcal B,\mu,T)$ is a factor of an invertible measure-preserving dynamical system $(Y,\mathcal C,\nu,S)$ with  Lebesgue spectrum. Then $T$ is hereditarily frequently hypercyclic. More precisely, given a countable family $(A_i)_{i\in I}$ of subsets of $\NN$ with positive lower density and a family $(V_i)_{i\in I}$ of non-empty open sets, $\mu$-almost every $x\in X$ is such that 
 $\mathcal N_T(x,V_i)\cap A_i$ has positive lower density for every $i\in I$.
\end{corollary}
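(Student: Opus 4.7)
The plan is to transfer the problem up to the extension $(Y,\mathcal C,\nu,S)$, where Conze's theorem applies directly, and then push the conclusion back down to $X$ via the factor map $\pi:Y\to X$ (which satisfies $\pi\circ S=T\circ\pi$ and $\mu=\pi_*\nu$).

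First I would fix the data $(A_i)_{i\in I}$ and $(V_i)_{i\in I}$. For each $i$, enumerate $A_i$ in increasing order as $(n_k^{(i)})_{k\ge 0}$ and set $\delta_i:=\ldens(A_i)>0$; note also that $\mu(V_i)>0$ since $\mu$ has full support. Applying Theorem~\ref{thm:conze} to the invertible system $(Y,\mathcal C,\nu,S)$, to the bounded function $f_i:=\mathbf 1_{V_i}\circ\pi$, and to the subsequence $(n_k^{(i)})$, and using that $\int_Y f_i\,d\nu=\mu(V_i)$ together with $f_i(S^{n_k^{(i)}}y)=\mathbf 1_{V_i}(T^{n_k^{(i)}}\pi(y))$, one obtains a $\nu$-conull set $\Omega_i\subset Y$ on which
\[\frac 1N\sum_{k=0}^{N-1}\mathbf 1_{V_i}\bigl(T^{n_k^{(i)}}\pi(y)\bigr)\xrightarrow{N\to\infty}\mu(V_i).\]

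The expression above depends only on $\pi(y)$, so $\Omega_i=\pi^{-1}(\Omega'_i)$, where
\[\Omega'_i:=\left\{x\in X:\ \frac 1N\sum_{k=0}^{N-1}\mathbf 1_{V_i}(T^{n_k^{(i)}}x)\xrightarrow{N\to\infty}\mu(V_i)\right\}\]
is Borel; hence $\mu(\Omega'_i)=\nu(\pi^{-1}(\Omega'_i))\ge\nu(\Omega_i)=1$, and the intersection $\Omega':=\bigcap_{i\in I}\Omega'_i$ still has full $\mu$-measure. It remains to convert the Cesàro average along $(n_k^{(i)})$ into a statement about the lower density of $\mathcal N_T(x,V_i)\cap A_i$. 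For $x\in\Omega'$ and any $i$, setting $N(M):=\#(A_i\cap[0,M])$,
\[\frac{\#\bigl(\mathcal N_T(x,V_i)\cap A_i\cap[0,M]\bigr)}{M}=\frac{1}{N(M)}\sum_{k=0}^{N(M)-1}\mathbf 1_{V_i}(T^{n_k^{(i)}}x)\cdot\frac{N(M)}{M},\]
and since the first factor tends to $\mu(V_i)$ while $\liminf_M N(M)/M=\delta_i$, the product has $\liminf\ge\mu(V_i)\,\delta_i>0$, so $\ldens(\mathcal N_T(x,V_i)\cap A_i)\ge\mu(V_i)\,\delta_i>0$, as required.

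There is no serious obstacle in this argument, since Conze's theorem does all the essential analytic work. The only slightly delicate points are the use of the identity $\mu=\pi_*\nu$ to transfer the full-measure conclusion from $(Y,\nu)$ to $(X,\mu)$ (which relies on the Borel measurability of $\Omega'_i$, which is why the precise statement of the corollary gives the $\mu$-a.e.\ version rather than a bare existence statement), and the $\liminf$ manipulation at the end (which works because one of the two factors in the displayed product converges, not merely has positive $\liminf$).
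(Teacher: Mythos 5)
Your proof is correct and follows essentially the same route as the paper's: apply Conze's theorem on the invertible extension to the function $\mathbf 1_{V_i}\circ\pi=\mathbf 1_{\pi^{-1}(V_i)}$ along the increasing enumeration of $A_i$, then transfer the almost-sure conclusion down to $X$. The only minor (and arguably cleaner) differences are that you transfer the full-measure set by observing that the relevant set in $Y$ is the $\pi$-preimage of a Borel set in $X$ and using $\mu=\nu\circ\pi^{-1}$, whereas the paper takes the forward image $\pi(\Omega)$ and invokes universal measurability of analytic sets, and that your explicit $\liminf$ computation at the end makes quantitative the paper's unproved assertion that the set $B=\{n_{m_k}:k\ge 1\}$ has positive lower density.
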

\begin{proof} It is enough to prove the result for a single pair $(A,V)$, where $A\subset \NN$ has positive lower density and $V$ is a non-empty open set. 

\smallskip
Let $(n_k)_{k\geq 1}$ be the increasing enumeration of $A$. Let also $W:= \pi^{-1}(V)$, where $\pi:(Y ,\mathcal C,\nu,S)\to (X,\mathcal B,\mu,T)$ is a factor map, so that $\nu(W)=\mu(V)>0$, and let
\[\Omega:=\left\{y\in Y:\ \frac 1N\sum_{k=1}^N \mathbf 1_{{W}}(S^{n_{k}}y)\xrightarrow{N\to\infty}\mu(V)\right\}.\]
By Conze's theorem, we know that $\nu(\Omega)=1$. Since we are working with standard Borel spaces, it follows that the set $\pi(\Omega)$ is $\mu$-measurable (being an analytic set, it is universally measurable) and that $\mu\bigl( \pi(\Omega)\bigr)=1$. So it is enough to show that every $x\in \pi(\Omega)$ is such that $\ldens\bigl( \mathcal N_T(x,V)\cap A\bigr)>0$.

Let $x\in \pi(\Omega)$, and write $x$ as $x=\pi(y)$ for some $y\in\Omega$. By definition of $\Omega$, we know that the set $\{k\geq 1:\ S^{n_{k}}y\in{W}\}$ has positive lower density; enumerate it as an increasing sequence $(m_{k})_{k\geq 1}.$ Then 
$B:=\{n_{m_{k}}:\ k\geq 1\}$ 
has positive lower density, it is contained in $A,$ and $S^n y\in  \pi^{-1}(V)$ for all $n\in B$. This means that $T^n x\in V$ for all 
$n\in B,$ which concludes the proof. 
\end{proof}

\subsection{Natural extensions} 
In order to apply Corollary \ref{prop:conzelike}, we need a simple way of extending a given measure-preserving dynamical system $(X,\mathcal B,\mu,T)$ to a measure-preserving dynamical system $(\widetilde X,\widetilde{\mathcal B}, \widetilde\mu, \widetilde T)$ where $\widetilde T$ is an automorphism of $(\widetilde X,\widetilde{\mathcal B},\widetilde\mu)$. Fortunately, there is a canonical procedure doing precisely that. 
Consider the set 
\[\widetilde X:=\bigl\{(x_k)_{k\geq 0}\in X^{\ZZ_+}:\ T(x_{k+1})=x_k\textrm{ for all }k\geq 0\bigr\},\]
and for $k\geq 0,$ let $\pi_k:\widetilde X\to X$ denote the projection onto the $k$-th coordinate of $\widetilde X.$ Endow $\widetilde X$ with the smallest $\sigma$-algebra $\widetilde{\mathcal B}$ 
which makes every projection $\pi_k$ measurable. The {\bf Rokhlin's natural extension} of $T$ is the measurable transformation $\widetilde T:\widetilde X\to\widetilde X$ defined by
$$\widetilde T(x_0, x_1, \dots):=(T(x_0),x_0,x_1,\dots).$$
One can prove that there exists a unique probability measure $\widetilde \mu$ on $(\widetilde X,\widetilde{\mathcal B})$ such that $\widetilde\mu\circ\pi_k^{-1}=\mu$ for all $k\geq 0$. (Here, the fact $(X,\mathcal B)$ is a standard Borel space is needed.) 
Then, it is a simple exercise to check that $\widetilde T:\widetilde X\to\widetilde X$ is an automorphism of  $(\widetilde X,\widetilde B, \widetilde\mu)$ such that  $\pi_k\circ\widetilde T=T\circ\pi_k$ for all $k\geq 0$. In particular,
$(X,\mathcal B,\mu,T)$ is a factor of $(\widetilde{ X},\widetilde{ \mathcal B},\widetilde \mu,\widetilde T)$ as witnessed by $\pi_0: \widetilde X\to X$. Note also that if $X$ is a Polish space, then $\widetilde X$ is a Borel subset of the Polish space $X^{\ZZ_+}$ endowed with the product topology, and $\widetilde{\mathcal B}$ is the Borel sigma-algebra of $\widetilde X$. For more details, see e.g. \cite[Section 8.4]{URM} or \cite{NP11}.

\smallskip
We will need the following lemma. 
 Recall that if $X$ is a complex Fr\'echet space, a Borel probability measure $\mu$ on $X$ is said to be \textbf{Gaussian} if every continuous linear functional on $X$ has a complex symmetric Gaussian distribution when considered as a random variable on 
$(X,\mathcal B,\mu)$.
\begin{lemma}\label{tildegauss} Assume that $X$ is a complex Fr\'echet space, that $T\in\mathfrak L(X)$, and that the measure $\mu$ is Gaussian. Then $\widetilde X$ is a Fr\'echet space when endowed with the induced product topology of $X^{{\mathbb{Z}}^{+}}$, $\widetilde T$ is an invertible continuous linear operator on $\widetilde X$, and the measure $\widetilde\mu$ is Gaussian.
\end{lemma}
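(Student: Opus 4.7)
My plan is to handle the three assertions of the lemma in the order stated, since each builds lightly on the previous one.

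For the Fréchet space claim, I would first observe that $X^{\mathbb{Z}_+}$, being a countable product of Fréchet spaces, is a Fréchet space in the product topology. The set $\widetilde X$ is the intersection, over $k\ge 0$, of the kernels of the continuous linear maps $(x_j)_{j\ge 0}\mapsto T(x_{k+1})-x_k$ from $X^{\mathbb{Z}_+}$ to $X$. Hence $\widetilde X$ is a closed linear subspace of $X^{\mathbb{Z}_+}$, and therefore a Fréchet space in the induced topology.

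For the operator $\widetilde T$, linearity is immediate from the coordinate-wise definition. Continuity on $\widetilde X$ follows from the universal property of the product topology: each coordinate of $\widetilde T(x_0,x_1,\dots)$ is either $T(x_0)$ or some $x_j$, and these are continuous in $(x_k)_{k\ge 0}$. For invertibility, I would verify that the formula $\widetilde T^{-1}(y_0,y_1,y_2,\dots)=(y_1,y_2,y_3,\dots)$ defines a two-sided inverse: if $(y_k)\in\widetilde X$ then the compatibility relation $T(y_{k+1})=y_k$ for $k\ge 0$ guarantees both that $(y_1,y_2,\dots)$ again belongs to $\widetilde X$ and that $\widetilde T(y_1,y_2,\dots)=(T(y_1),y_1,y_2,\dots)=(y_0,y_1,y_2,\dots)$. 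Continuity of this shift is again coordinate-wise.

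For the Gaussian claim, the strategy is to reduce every continuous linear functional on $\widetilde X$ to a single functional evaluated at a single coordinate projection, so that Gaussianity of $\widetilde\mu$ follows from Gaussianity of $\mu=\widetilde\mu\circ\pi_K^{-1}$. By the Hahn–Banach theorem, any $\varphi\in\widetilde X^*$ extends to a continuous linear functional on $X^{\mathbb{Z}_+}$, which, since the dual of a countable Fréchet product is the algebraic direct sum of the duals, must have the form $(x_k)_{k\ge 0}\mapsto\sum_{k=0}^{K}x_k^*(x_k)$ for some $K\in\NN$ and some $x_k^*\in X^*$. On $\widetilde X$ the compatibility relation iterates to $x_k=T^{K-k}(x_K)$ for $0\le k\le K$, so
\[
\varphi\bigl((x_k)_{k\ge 0}\bigr)=\sum_{k=0}^{K}x_k^*\bigl(T^{K-k}(x_K)\bigr)=y^*(x_K),\qquad y^*:=\sum_{k=0}^{K}(T^*)^{K-k}x_k^*\in X^*.
\]
Hence $\varphi=y^*\circ\pi_K$ on $\widetilde X$. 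Since $\widetilde\mu\circ\pi_K^{-1}=\mu$ is Gaussian on $X$, the random variable $y^*\circ\pi_K$ has a complex symmetric Gaussian distribution under $\widetilde\mu$, and so does $\varphi$. This proves that $\widetilde\mu$ is Gaussian.

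The main (minor) obstacle is the Gaussian step, where one must notice the collapse of the finite sum to a single coordinate thanks to the compatibility condition defining $\widetilde X$; the remaining points are essentially formal verifications of continuity and closedness.
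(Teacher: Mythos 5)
Your proof is correct and follows essentially the same route as the paper's: closedness of $\widetilde X$ in the product, the explicit shift inverse for $\widetilde T$, and the Hahn--Banach extension of a functional on $\widetilde X$ followed by the collapse $\pi_k=T^{K-k}\circ\pi_K$ to reduce to a single coordinate, so that $\widetilde\mu\circ\varphi^{-1}=\mu\circ(y^*)^{-1}$ is Gaussian. You merely spell out a few verifications the paper declares to be clear.
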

\begin{proof} It is clear that $\widetilde X$ is a closed linear subspace of $X^{\ZZ_+}$ (and hence a Fr\'echet space), and that $\widetilde T$ is an invertible continuous linear operator on $\widetilde X$. 

Let $\phi$ be a continuous linear functional on $\widetilde X$. By the Hahn-Banach theorem, one can extend $\phi$ to a continuous linear functional $\Phi$ on $X^{\ZZ_+}$. Since $X^{\ZZ_+}$ is endowed with the product topology, this linear functional $\Phi$ has the form
\[ \Phi=\sum_{k=0}^N x^*_k\circ \pi_k,\]
where $x_0^*,\dots , x_N^*\in X^*$. Moreover, by definition of $\widetilde X$, we have $\pi_k=T^{N-k}\circ \pi_N$ on $\widetilde X$ for $k=0,\dots ,N$; so we may write %(with a slight abuse of notation)
\[ \phi =x^*\circ \pi_N\quad \hbox{where}\quad  x^*=\sum_{k=0}^N x_k^*\circ T^{N-k}\in X^*.\]

Hence, $\widetilde \mu\circ \phi^{-1}=\bigl( \widetilde\mu\circ\pi_N^{-1}\bigr)\circ (x^*)^{-1}=\mu\circ (x^*)^{-1}$. Since $\mu$ is a Gaussian measure, it follows that $\widetilde\mu$ is Gaussian as well.
\end{proof}

\subsection{Two facts concerning unitary operators}

We will need several results on unitary operators. These results are certainly well known, but since they are rather difficult to locate in the literature, we provide some details.

\smallskip
Let $U$ be a unitary operator acting on a complex separable Hilbert space $H.$  By Herglotz's theorem, for any $f\in H$, there exists 
a unique positive and finite Borel measure $\sigma_f$ on $\TT$, called the {\bf spectral measure} of $f$ with respect to $U$, such that%, for all $n\in\ZZ,$
\[ \forall n\in\ZZ\;:\; \langle U^n f,f\rangle=\widehat{\sigma_f}(n).\]

\smallskip
Note in particular that $\sigma_f$ is equal to the Lebesgue measure $\lambda$ if and only if the sequence $(U^nf)_{n\in\ZZ}$ is orthonormal. This explains 
the terminology ``Lebesgue spectrum''.

\smallskip Denote by $C(f)$ the \emph{cyclic subspace} generated by $f,$
\textit{i.e.}  $C(f)=\overline{\textrm{span}}(U^n f:\ n\in\mathbb Z)$. With this notation, one form of the Spectral Theorem reads as follows (see e.g. \cite[Chapter IX]{Conw}, \cite[Appendix, Section 2]{Parry} or \cite{Martine}): there exists
a finite or infinite sequence of vectors
$f_i\in H,$ $0\leq i<m$, where $m\in\mathbb N\cup\{\infty\}$, such that 
$$H=\bigoplus_{0\leq i< m} C(f_i)\quad\textrm{ and }\quad\sigma_{f_0}\gg\sigma_{f_1}\gg\cdots\gg\sigma_{f_i}\gg\cdots.$$
Moreover, these measures are essentially unique in the following sense: for any other sequence $(g_i)_{0\leq i<m'}$ 
satisfying $H=\bigoplus_{0\leq i<m'}C(g_i)$ and $\sigma_{g_0}\gg\sigma_{g_1}\gg\cdots\gg\sigma_{g_i}\gg\cdots,$
we have $m'=m$ and $\sigma_{f_i}\sim\sigma_{g_i}$ for all $i.$ The {\bf maximal spectral type} of $T$ is then defined as (the equivalence class of ) the measure $\sigma_{f_0}$.

\smallskip Observe that for any $f\in H,$ the restriction of $U$ to $C(f)$ is unitary equivalent to the multiplication operator $M_{\sigma_f}:L^2(\TT,\sigma_f)\to L^2(\TT,\sigma_f)$
defined by $M_{\sigma_f}u (z):= zu(z)$, $z\in\mathbb T$. 

\smallskip
Suppose now that $U$ has countable Lebesgue spectrum and let $(f_i)_{i\in \NN}$ be a sequence such that 
$\{U^n f_i:\ n\in\mathbb Z,\ i\in\NN\}$ is an orthonormal basis of $H.$ Then 
$H=\bigoplus_{i\in \NN}C(f_i)$, and $\sigma_{f_i}=\lambda$ for all $i\in I$.
Hence, 
$U$ is unitarily equivalent to $M_{\lambda}^{(\infty)}:=\bigoplus_{i=1}^\infty M_\lambda$ acting on $\bigoplus_{i=1}^\infty L^2(\TT,\lambda)$.
Conversely, it is clear that if $U\cong M_{\lambda}^{(\infty)}$ then  $U$ has countable Lebesgue spectrum.

\smallskip
By the uniqueness part of the Spectral Theorem, it follows in particular that the maximal spectral type of a unitary operator with countable Lebesgue spectrum is the Lebesgue measure. The next  lemma is a kind of converse.

\begin{lemma}\label{lem:lebesguespectrum}
 Let $U$ be a unitary operator on a complex separable Hilbert space $H$ satisfying the following two conditions.
 \begin{itemize}
  \item[(a)] There exists a closed subspace $K\subset H$ such that $U(K)=K$ and $U_{| K}:K\to K$ has countable Lebesgue spectrum.
  \item[(b)] The maximal spectral type of $U$ is the Lebesgue measure.
 \end{itemize}
 Then $U:H\to H$ has countable Lebesgue spectrum.
\end{lemma}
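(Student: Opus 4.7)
The plan is to identify $U$ with $M_\lambda^{(\infty)}$ by exploiting the $U$-invariant decomposition $H=K\oplus K^\perp$ and invoking the uniqueness in the Spectral Theorem recalled above.

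Since $U$ is unitary and $U(K)=K$, the orthogonal complement $K^\perp$ is also $U$-invariant, yielding the orthogonal splitting $U=U_{|K}\oplus V$, where $V:=U_{|K^\perp}$. By assumption (a), $U_{|K}\cong M_\lambda^{(\infty)}$. As a preliminary step, I would show that the maximal spectral type of $V$ is absolutely continuous with respect to $\lambda$: by assumption (b), every spectral measure $\sigma_h$ with $h\in H$ is dominated by $\lambda$, and this applies in particular to the vectors in $K^\perp$; a fortiori this holds for the maximal spectral type of $V$.

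To conclude, I would use the following consequence of the uniqueness in the Spectral Theorem: whenever $W$ is a unitary operator on a complex separable Hilbert space whose maximal spectral type is $\ll\lambda$, one has $M_\lambda^{(\infty)}\oplus W\cong M_\lambda^{(\infty)}$. Indeed, the multiplicity function of a direct sum of unitary operators is the pointwise sum of the individual multiplicities (with the convention that these vanish off the support of the respective maximal spectral type), so both $M_\lambda^{(\infty)}\oplus W$ and $M_\lambda^{(\infty)}$ have maximal spectral type $\lambda$ together with multiplicity $\aleph_0$ at $\lambda$-a.e.\ point of $\TT$; hence they are unitarily equivalent. Applying this with $W=V$ yields $U\cong M_\lambda^{(\infty)}\oplus V\cong M_\lambda^{(\infty)}$, \textit{i.e.}\ $U$ has countable Lebesgue spectrum.

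The delicate point is the absorption fact $M_\lambda^{(\infty)}\oplus W\cong M_\lambda^{(\infty)}$ for $W$ with maximal spectral type $\ll\lambda$: it cannot be established by pairing off summands one at a time (which would raise the multiplicity on the support of each spectral measure of $W$), but genuinely uses the infinite direct sum together with the spectral multiplicity classification of unitary operators.
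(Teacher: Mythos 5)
Your proof is correct and follows essentially the same route as the paper's: split $H=K\oplus K^\perp$, identify $U_{|K}\cong M_\lambda^{(\infty)}$ via (a), observe that (b) forces the maximal spectral type of $U_{|K^\perp}$ to be absolutely continuous with respect to $\lambda$, and then absorb $U_{|K^\perp}$ into $M_\lambda^{(\infty)}$. The only difference is that you justify the absorption step by invoking the additivity of the multiplicity function under direct sums, whereas the paper verifies it by hand — decomposing $U_{|K^\perp}$ into operators $M_{\nu_n}^{(n)}$ with the $\nu_n$ supported on disjoint pieces of $\TT$ and using $M_{\nu_n}^{(\infty)}\oplus M_{\nu_n}^{(n)}\cong M_{\nu_n}^{(\infty)}$ — which is the same computation packaged differently.
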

\begin{proof} We will use the following notation: if $\sigma$ is a positive finite Borel measure on $\TT$ and $N\in\NN\cup\{\infty\}$, we denote by  $M_{\sigma}^{(N)}$ the operator $\bigoplus_{0\leq j<N} M_\sigma$
acting on $\bigoplus_{0\leq j<N} L^2(\TT,\sigma)$. Recall also that $\lambda$ is the Lebesgue measure on $\TT$.

%\smallskip
By (a), we know that 
\[ U_{| K}\cong M_{\lambda}^{(\infty)}.\]
 
 Consider now the operator $U_{| K^\perp}:K^\perp\to K^\perp$. By (b), its maximal spectral type is absolutely continuous with respect to the Lebesgue measure $\lambda$. 
 By the ``second formulation'' of the Spectral Theorem (see e.g. \cite{Martine} or \cite[Chapter IX]{Conw}),
 there exist pairwise disjoint Borel sets $\Delta_\infty,\Delta_1,\Delta_2,\dots$ and positive finite measures $\mu_\infty,\mu_1,\mu_2,\dots$ 
 on $\TT$ supported on $\Delta_\infty,\Delta_1,\Delta_2,\dots$ and absolutely continuous with respect to $\lambda$, such that $U_{|K^\perp}$ is unitarity equivalent to 
 $M_{\mu_\infty}^{(\infty)}\oplus M_{\mu_1}^{(1)}\oplus M_{\mu_2}^{(2)}\oplus\cdots.$ Note that some of the measures $\mu_n$ may be $0$. For $n\in\{ \infty\}\cup\NN$, we may write $\mu_n=f_n\, \lambda$ for some nonnegative function $f_n\in L^1(\TT)$, and we may assume that 
$\Delta_n=\{ f_n>0\}$. Hence, if we set $\nu_n:=\mathbf{1}_{\Delta_n}\, \lambda,$
we get $M_{\mu_n}\cong M_{\nu_n}$ since the measures $\mu_n$ and $\nu_n$ are equivalent.
Therefore, \[ U_{|K^\perp}\cong M_{\nu_\infty}^{(\infty)}\oplus M_{\nu_1}^{(1)}\oplus M_{\nu_2}^{(2)}\oplus\cdots.\]

%\smallskip 
Now, let $E:=\TT\backslash\left(\Delta_\infty\cup \bigcup_{n\geq 1}\Delta_n\right)$ and $\nu:=\mathbf{1}_{E}\, \lambda.$ Then 
$\lambda=\nu+\nu_\infty+\sum\limits_{n\in\NN}\nu_n$ so that 
\[ M_\lambda\cong M_\nu\oplus M_{\nu_\infty}\oplus M_{\nu_1}\oplus \cdots.\]

Therefore, we obtain
\begin{align*}
 U&\cong U_{|K}\oplus U_{| K^\perp}\\
 &\cong M_\nu^{(\infty)}\oplus M_{\nu_\infty}^{(\infty)}\oplus M_{\nu_1}^{(\infty)}\oplus M_{\nu_2}^{(\infty)}\oplus \cdots\\
 &\qquad\qquad \oplus M_{\nu_\infty}^{(\infty)}\oplus M_{\nu_1}^{(1)}\oplus M_{\nu_2}^{(2)}\oplus\cdots\\
 &\cong M_\nu^{(\infty)}  \oplus M_{\nu_\infty}^{(\infty)}\oplus M_{\nu_1}^{(\infty)}\oplus M_{\nu_2}^{(\infty)} \cdots\\
 &\cong M_\lambda^{(\infty)},
\end{align*}
which means that $U$ has countable Lebesgue spectrum.
\end{proof}

\smallskip The following observation is useful in order to check assumption (a) in Lemma \ref{lem:lebesguespectrum}.

\begin{lemma}\label{passitriv} Let $(f_i)_{i\in I}$ be a countably infinite family of  vectors in $H$. Assume that the cyclic subspaces $C(f_i)$ are pairwise orthogonal, and let $K:=\oplus_{i\in I} C(f_i)$. If $\sigma_{f_i}$ is equivalent to the Lebesgue measure for all $i\in I$, then $U_{| K}$ has countable Lebesgue spectrum.
\end{lemma}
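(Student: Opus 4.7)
The plan is to reduce to the case where each spectral measure is exactly the Lebesgue measure by modifying each $f_i$ within its own cyclic subspace. Recall from the spectral theorem that the restriction of $U$ to $C(f_i)$ is unitarily equivalent to the multiplication operator $M_{\sigma_{f_i}}$ on $L^2(\mathbb T,\sigma_{f_i})$, with $f_i$ corresponding to the constant function $\mathbf 1$. Under this identification, if a vector $g\in C(f_i)$ corresponds to $\phi \in L^2(\mathbb T,\sigma_{f_i})$, then a direct Fourier computation gives
\[
\widehat{\sigma_g}(n)=\langle U^n g,g\rangle = \int_{\mathbb T} z^n|\phi(z)|^2\,d\sigma_{f_i}(z),
\]
so $\sigma_g=|\phi|^2\,\sigma_{f_i}$.

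Since $\sigma_{f_i}\sim\lambda$, write $\sigma_{f_i}=h_i\,\lambda$ with $h_i>0$ $\lambda$-a.e., and set $\phi_i:=h_i^{-1/2}$. Then
\[
\int_{\mathbb T}|\phi_i|^2\,d\sigma_{f_i}=\int_{\mathbb T}h_i^{-1}\,h_i\,d\lambda=1,
\]
so $\phi_i\in L^2(\mathbb T,\sigma_{f_i})$, and by the formula above the corresponding vector $g_i\in C(f_i)$ satisfies $\sigma_{g_i}=|\phi_i|^2\sigma_{f_i}=\lambda$. In particular $(U^n g_i)_{n\in\mathbb Z}$ is orthonormal.

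Next I would check that $C(g_i)=C(f_i)$, \emph{i.e.} that $\phi_i$ is cyclic for $M_{\sigma_{f_i}}$. If $\psi\in L^2(\mathbb T,\sigma_{f_i})$ is orthogonal to every $z^n\phi_i$, then all Fourier coefficients of the complex measure $\overline\psi\,\phi_i\,\sigma_{f_i}$ vanish, so this measure is zero; but $\phi_i>0$ $\lambda$-a.e.\ and $\sigma_{f_i}\sim\lambda$, which forces $\psi=0$ in $L^2(\mathbb T,\sigma_{f_i})$. Hence $\{U^n g_i:n\in\mathbb Z\}$ is an orthonormal basis of $C(f_i)$.

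Finally, since the $C(f_i)$ are pairwise orthogonal and $K=\bigoplus_{i\in I}C(f_i)$, the countable family $\{U^n g_i:n\in\mathbb Z,\,i\in I\}$ is an orthonormal basis of $K$, which is exactly the definition of $U_{|K}$ having countable Lebesgue spectrum. The only delicate step is verifying that $\phi_i=h_i^{-1/2}$ is cyclic; everything else is an application of the spectral theorem in its cyclic-space form.
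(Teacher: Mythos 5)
Your proof is correct and follows essentially the same route as the paper: replace each $f_i$ by a vector $g_i\in C(f_i)$ with $\sigma_{g_i}=\lambda$ and $C(g_i)=C(f_i)$, so that $\{U^ng_i\,:\,n\in\ZZ,\ i\in I\}$ is an orthonormal basis of $K$. The only difference is that the paper cites Parry for the existence of such $g_i$, whereas you supply the (correct) details via the identification of $U_{|C(f_i)}$ with $M_{\sigma_{f_i}}$ and the choice $\phi_i=h_i^{-1/2}$.
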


\begin{proof} For each $i\in I$, there exists $g_i\in C(f_i)$ such that $\sigma_{g_i}$ is exactly equal to Lebesgue measure, and for any such $g_i$ we have $C(g_i)=C(f_i)$; see e.g. \cite[pp. 93--94]{Parry}. Hence, $\{U^ng_i\,:\, i\in I, n\in\ZZ\}$ is an orthonormal basis of $K$.
\end{proof}

We will also need the following fact.

\begin{lemma}\label{lem:densemaximalspectrum}
  Let $U$ be a unitary operator acting on a separable Hilbert space $H$,  and let $\sigma$ be a finite, positive Borel measure on $\TT$. If there exists a dense set $\mathcal D\subset H$ such that 
 $\sigma_f\ll \sigma$ for all $f\in \mathcal D$, then $\sigma_f\ll \sigma$ for all $f\in H$.
\end{lemma}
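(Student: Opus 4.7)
The plan is to show that for each fixed Borel set $N \subset \TT$ with $\sigma(N) = 0$, the set $\{f \in H : \sigma_f(N) = 0\}$ is a closed linear subspace of $H$. Since this subspace contains the dense set $\mathcal{D}$ by hypothesis, it must equal $H$, and then taking the union over all such $N$ gives the conclusion $\sigma_f \ll \sigma$ for every $f \in H$.

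The key tool is the spectral theorem for the unitary operator $U$, which provides a projection-valued measure $E$ on $\TT$ such that $U = \int_\TT z \, dE(z)$. The associated scalar spectral measures are then given by
\[ \sigma_f(B) = \langle E(B) f, f\rangle = \|E(B)f\|^2 \quad \text{for every Borel } B \subset \TT,\]
where we use that $E(B)$ is an orthogonal projection. In particular, $\sigma_f(B) = 0$ if and only if $E(B) f = 0$.

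Now I fix a Borel set $N \subset \TT$ with $\sigma(N) = 0$ and consider
\[ K_N := \{f \in H : \sigma_f(N) = 0\} = \ker E(N).\]
Since $E(N)$ is a bounded linear operator, $K_N$ is a closed linear subspace of $H$. By the assumption that $\sigma_f \ll \sigma$ for every $f \in \mathcal{D}$, we have $\sigma_f(N) = 0$ for every $f \in \mathcal{D}$, so $\mathcal{D} \subset K_N$. Density of $\mathcal{D}$ combined with closedness of $K_N$ then forces $K_N = H$.

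Since this argument applies to every $\sigma$-null Borel set $N \subset \TT$, every $f \in H$ satisfies $\sigma_f(N) = 0$ for all such $N$, which is exactly the statement that $\sigma_f \ll \sigma$. There is no substantial obstacle: once one observes that $\sigma_f(N)$ is precisely $\|E(N) f\|^2$, the result reduces to the trivial fact that the kernel of a bounded operator is closed. The only care needed is to fix the null set $N$ \emph{before} invoking the density argument, since the family of $\sigma$-null sets is uncountable.
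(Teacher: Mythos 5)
Your proof is correct, but it follows a different route from the paper's. The paper's argument is a two-line appeal to the fact (quoted from Queff\'elec's book) that $f\mapsto\sigma_f$ is continuous from $H$ into $M(\TT)$ equipped with the total variation norm; since the set of measures absolutely continuous with respect to $\sigma$ is norm-closed in $M(\TT)$, the conclusion follows by density. You instead invoke the spectral theorem to write $\sigma_f(B)=\langle E(B)f,f\rangle=\Vert E(B)f\Vert^2$ for a projection-valued measure $E$, so that for each fixed $\sigma$-null set $N$ the set $\{f\in H:\sigma_f(N)=0\}$ is the kernel of the bounded operator $E(N)$, hence a closed subspace containing $\mathcal D$ and therefore all of $H$; letting $N$ range over all $\sigma$-null sets then gives $\sigma_f\ll\sigma$ for every $f$. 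Your quantifier handling (fixing $N$ before applying density) is exactly right, and the identification $\sigma_f(B)=\Vert E(B)f\Vert^2$ is what makes the null-set condition linear and closed -- something not visible from the Fourier-coefficient definition alone. The only cosmetic caveat is that, depending on the sign convention for $\widehat{\sigma_f}(n)$, the measure $B\mapsto\langle E(B)f,f\rangle$ may be the reflection of $\sigma_f$ under $z\mapsto\bar z$ rather than $\sigma_f$ itself; this changes nothing in the argument. Your approach is more self-contained (it avoids citing the norm-continuity of $f\mapsto\sigma_f$), while the paper's is shorter given the reference.
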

\begin{proof} Denote by $M(\TT)$ the space of all complex Borel measures on $\TT$. By \cite[Corollary 2.2]{Martine}, the map $f\mapsto \sigma_f$ is continuous from $H$ into $M(\TT)$ endowed with the \emph{norm} topology. The result follows immediately.
\end{proof}

\subsection{Gaussian measures and countable Lebesgue spectrum} In this section, we prove a general result about Gaussian linear measure-preserving dynamical systems. This result looks very much like \cite[Theorem 14.3.2]{CFS82}, but it is not clear to us that it can be formally deduced from it; so we give a more or less complete proof.

\begin{proposition}\label{CFSlike} Let $(X,\mathcal B,\mu, T)$ be a measure-preserving dynamical system where $X$ is a separable Fr\'echet space, T is an invertible linear operator and $\mu$ is a Gaussian measure on $X$. Assume that for any $x^*\in X^*\subset L^2(\mu)$, the spectral measure $\sigma_{x^*}$ with respect to $U_T$ is absolutely continuous with respect to Lebesgue measure. Then $(X,\mathcal B,\mu, T)$ has countable Lebesgue spectrum.
\end{proposition}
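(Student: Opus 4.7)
The plan is to apply Lemma \ref{lem:lebesguespectrum} to the Koopman operator $U_T$ acting on $H := L^2_0(\mu)$. The main tool is the Wiener--It\^o chaos decomposition $L^2(\mu) = \bigoplus_{n\geq 0}H_n$, where $H_0 = \CC$, $H_1$ is the $L^2(\mu)$-closure of $X^*$, and $H_n$ is the $n$-th Wiener chaos. Since $T$ is linear, $U_T$ preserves each $H_n$, and under the canonical unitary identification $H_n \cong H_1^{\odot n}$ the restriction $U_T|_{H_n}$ is the $n$-th symmetric tensor power of $U_T|_{H_1}$.

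I first verify that every spectral measure in $L^2_0(\mu)$ is absolutely continuous with respect to $\lambda$. By hypothesis $\sigma_{x^*} \ll \lambda$ for all $x^* \in X^*$, and since $X^*$ is dense in $H_1$, Lemma \ref{lem:densemaximalspectrum} extends this to every $f \in H_1$. For $f \in H_1$ the tensor power $f^{\otimes n} \in H_n$ satisfies $\langle U_T^m f^{\otimes n}, f^{\otimes n}\rangle = \langle U_T^m f, f\rangle^n = \widehat{\sigma_f^{*n}}(m)$, so $\sigma_{f^{\otimes n}} = \sigma_f^{*n} \ll \lambda$ (convolutions of a.c.\ measures are a.c.). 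By polarization, the vectors $f^{\otimes n}$ ($f \in H_1$) span a dense subspace of $H_1^{\odot n} \cong H_n$, so a second application of Lemma \ref{lem:densemaximalspectrum} yields $\sigma_g \ll \lambda$ for every $g \in H_n$, and hence for every $g \in L^2_0(\mu)$. In particular the maximal spectral type of $U_T$ on $L^2_0(\mu)$ is $\ll \lambda$.

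Next I build a $U_T$-invariant closed subspace $K \subset L^2_0(\mu)$ on which $U_T$ has countable Lebesgue spectrum; combined with the previous paragraph, this yields both conditions of Lemma \ref{lem:lebesguespectrum}. Assuming $\mu \ne \delta_0$ (otherwise the statement is vacuous), $H_1 \ne \{0\}$; pick $f_0 \in H_1$ realising the maximal spectral type of $U_T|_{H_1}$ and write $\sigma_{f_0} = \rho\cdot\lambda$ with $\rho \in L^1_+(\TT)$ non-zero. A standard harmonic-analytic argument shows that for $n$ sufficiently large the $n$-fold convolution $\sigma_{f_0}^{*n}$ is equivalent to $\lambda$: the support of $\rho^{*n}$ eventually equals $\TT$ (iterated Steinhaus theorem), and the densities $\rho^{*n}$ become a.e.\ positive since $|\widehat{\sigma_{f_0}}(k)| < 1$ for every $k \ne 0$. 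Fix such an $n_0$. Then for each $n \geq n_0$ the vector $f_0^{\otimes n} \in H_n$ has spectral measure $\sigma_{f_0}^{*n} \sim \lambda$, and since these vectors lie in distinct chaos subspaces their cyclic subspaces $C(f_0^{\otimes n})$ are pairwise orthogonal. By Lemma \ref{passitriv}, $U_T$ on $K := \bigoplus_{n \geq n_0} C(f_0^{\otimes n})$ has countable Lebesgue spectrum. An application of Lemma \ref{lem:lebesguespectrum} then finishes the proof.

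The main technical obstacle is the harmonic-analytic claim that $\sigma^{*n} \sim \lambda$ eventually, for a non-zero absolutely continuous measure $\sigma$ on $\TT$. This is precisely where the argument runs parallel to \cite[Theorem 14.3.2]{CFS82}; a careful write-up would spell out both the iterated growth of the supports (by Steinhaus-type arguments) and the eventual a.e.\ positivity of the densities (via Fourier decay).
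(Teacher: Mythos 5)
Your proposal is correct and follows essentially the same route as the paper: the Wiener--It\^o chaos decomposition (the paper's Wick spaces $\wick{\mathcal G^k}$), the convolution formula for spectral measures of tensor powers combined with Lemma \ref{lem:densemaximalspectrum} to get the maximal spectral type $\ll\lambda$, the eventual equivalence $\sigma_{f}^{\star n}\sim\lambda$ (which the paper simply cites from \cite[Lemma 3.6]{NP}) to build $K$ via Lemma \ref{passitriv}, and finally Lemma \ref{lem:lebesguespectrum}. The only points to tighten in a final write-up are the $n!$ normalization in $\langle \wick{f^{\otimes n}},\wick{f^{\otimes n}}\rangle$ and the fact that applying Lemma \ref{lem:densemaximalspectrum} on $H_n$ requires $\sigma_g\ll\lambda$ for \emph{linear combinations} of tensor powers, which follows from the bilinearity of $(f,g)\mapsto\sigma_{f,g}$ exactly as in the paper's Fact \ref{trucspectral}.
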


\smallskip For the proof, we need to recall some basic facts concerning the $L^2$-$\,$space of a Gaussian measure. In what follows, $\mu$ is a Gaussian measure on the (separable) complex Fr\'echet space $X$.

\smallskip
Let $\mathcal G:=\overline{\textrm{span}}\,\bigl(\langle x^*,\cdot\rangle:\ x^*\in X^*\bigr)$, 
where the closure is taken in $L^2(X,\mathcal B,\mu)$. This is a Gaussian subspace of $L^2(X,\mathcal B,\mu)$, in the sense that any function
in $\mathcal G$ has symmetric complex gaussian distribution. Moreover, $\mathcal G$ is clearly $U_T$-invariant.

For $k\geq 0,$ let us denote by $\mathcal G^k$ the space of homogeneous polynomials of degree $k$ in the elements of $\mathcal G$, 
with $\mathcal G^0=\mathbb C.$ The subspaces $\mathcal G^k$, $k\geq 0$, are linearly independent (which is not obvious), and one can orthonormalize them
thanks to the so-called \emph{Wick transform} $f\mapsto\; \wick{f}$ which is defined on $ \bigcup_{k\geq 0} \mathcal G^k$ as follows:
\[ \wick{f}\;=\left\{\begin{array}{ll} f&\hbox{if $f$ is constant}\\
f- P_k f\quad&\hbox{if $f\in\mathcal G^k$, $k\geq 1$}
\end{array}
\right.
\]
where we denote by $ P_k$  the orthogonal
 projection onto $\overline{\textrm{span}}\,\bigl(\mathcal G^i:0\leq i\leq k-1\big).$

 With this notation, we have the orthogonal decomposition
 \[ L^2(X,\mathcal B,\mu)=\bigoplus_{k=0}^{\infty}\overline{\wick{\mathcal G^k}}\]
 
 Moreover, for any $f_1,\dots ,f_k, g_1,\dots ,g_k\in\mathcal G$, the scalar product $ \big\langle \wick{f_1\cdots f_k}\; ,\, \wick{g_1\dots g_k}\big\rangle$ is given by
 \begin{equation}\label{scalprod} \big\langle \wick{f_1\cdots f_k}\; ,\, \wick{g_1\dots g_k}\big\rangle=\sum_{\mathfrak s\in\mathfrak S_k}\langle f_{\mathfrak s(1)},g_1\rangle\cdots \langle  f_{\mathfrak s(k)},g_k\rangle,\end{equation}
 where $\mathfrak S_k$ is the permutation group of $\{ 1,\dots ,k\}$. 
 
 \smallskip
For details on these general facts, we refer to \cite[Chapter 8]{Peller} or \cite[Chapters 2 and 3]{J}.  We will also need the following lemma (see \cite[Lemma 3.28]{BAYGRITAMS}).

\begin{lemma}\label{lem:UTGaussian} Let  $T\in\mathfrak L(X)$, and let $\mu$ be a $T$-$\,$invariant Gaussian measure on $X$.
For any $f_1,\dots ,f_k\in\mathcal G$, we have
\[ U_T\bigl(\wick{f_1\cdots f_k}\bigr)=\; \wick{(U_Tf_1)\cdots (U_Tf_k)}\]
In particular, each subspace $\wick{\mathcal G^k}$ is $U_T$-invariant.
\end{lemma}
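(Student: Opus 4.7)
The plan is to leverage two structural properties of the Koopman operator $U_T$: it is an algebra homomorphism of the $\mu$-equivalence classes of measurable functions (since $U_T(fg)=(f\circ T)(g\circ T)$), and it maps the Gaussian subspace $\mathcal G$ into itself. To establish the latter, for any $x^{*}\in X^{*}$ one has $U_T\langle x^{*},\cdot\rangle = \langle x^{*},T\cdot\rangle=\langle T^{*}x^{*},\cdot\rangle \in\mathcal G$; then the continuity of $U_T$ on $L^{2}(\mu)$ gives $U_T(\mathcal G)\subset\mathcal G$. Combining this with multiplicativity yields $U_T(\mathcal G^{k})\subset\mathcal G^{k}$ for every $k\ge 0$, and hence $U_T$ preserves the closed subspace $M_{k-1}:=\overline{\operatorname{span}}\bigl(\mathcal G^{i}:0\le i\le k-1\bigr)$.

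The crucial step will be to show that $U_T$ commutes with the orthogonal projection $P_k$ onto $M_{k-1}$. In the setting where the lemma is applied, namely the proof of Proposition \ref{CFSlike}, $T$ is invertible, so $U_T$ is unitary with $U_T^{*}=U_{T^{-1}}$; running the argument of the previous paragraph for $T^{-1}$ shows that $U_T^{*}$ also leaves $M_{k-1}$ invariant. Hence both $M_{k-1}$ and $M_{k-1}^{\perp}$ are $U_T$-invariant, which yields $U_T P_k = P_k U_T$. The main technical obstacle is exactly this commutation; if one wished to avoid invertibility, an alternative would be to invoke the explicit Wick formula expressing $\wick{f_1\cdots f_k}$ as a polynomial in the $f_i$'s whose coefficients are products of the inner products $\langle f_i,f_j\rangle$, and then conclude using multiplicativity of $U_T$ together with the preservation of inner products by $U_T$ restricted to $\mathcal G$ (the isometry property).

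Once the commutation is in hand, the identity follows from a one-line computation:
\begin{align*}
U_T\bigl(\wick{f_1\cdots f_k}\bigr) &= U_T(f_1\cdots f_k) - U_T P_k(f_1\cdots f_k) \\
&= (U_Tf_1)\cdots(U_Tf_k) - P_k\bigl((U_Tf_1)\cdots(U_Tf_k)\bigr) \\
&= \wick{(U_Tf_1)\cdots(U_Tf_k)},
\end{align*}
using the definition of $\wick{\cdot}$ at the first and third steps, and multiplicativity of $U_T$ together with $U_T P_k=P_k U_T$ at the second. The final assertion that $\wick{\mathcal G^{k}}$ is $U_T$-invariant is then immediate, since each $U_T f_i$ lies in $\mathcal G$.
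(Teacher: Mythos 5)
Your argument is essentially sound, but note first that the paper does not actually prove Lemma \ref{lem:UTGaussian}: it is quoted from \cite[Lemma 3.28]{BAYGRITAMS}, so the comparison is with that source rather than with an in-paper argument. Your reduction of the identity to the commutation $U_TP_k=P_kU_T$ is correct, and your justification of that commutation --- $U_T$ unitary with $U_T^*=U_{T^{-1}}$, both $U_T$ and $U_T^*$ leaving $M_{k-1}=\overline{\operatorname{span}}\bigl(\mathcal G^i:0\le i\le k-1\bigr)$ invariant, hence $M_{k-1}^{\perp}$ invariant as well --- is complete and correct \emph{when $T$ is invertible}. That covers every use the paper makes of the lemma: it is invoked only inside the proof of Proposition \ref{CFSlike}, where $T$ is assumed invertible, the non-invertible case being handled separately via the natural extension in Corollary \ref{celuila}.

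However, the lemma as stated allows an arbitrary $T\in\mathfrak L(X)$, and for such $T$ the Koopman operator $U_T$ is merely an isometry. Your commutation step then genuinely breaks down: an isometry $V$ with $V(M)\subset M$ need not satisfy $VP_M=P_MV$ (take the unilateral shift with $M=\overline{\operatorname{span}}(e_n:n\ge 1)$, for which $P_MVe_0=e_1\neq 0=VP_Me_0$), the point being that $V(M^{\perp})\perp V(M)$ does not give $V(M^{\perp})\perp M$ when $V(M)$ is a proper subspace of $M$. Your proposed fallback --- the universal combinatorial formula expressing $\wick{f_1\cdots f_k}$ through the $f_i$ and their covariances, combined with multiplicativity of $U_T$ and preservation of all the relevant moments --- is indeed the standard route and essentially the argument behind \cite[Lemma 3.28]{BAYGRITAMS}; but as written it is a one-sentence sketch, and making it a proof requires actually establishing that formula, i.e.\ that $P_k(f_1\cdots f_k)$ is determined by the joint distribution of $(f_1,\dots ,f_k)$. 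In short: complete and correct for the invertible case the paper needs, but with a real gap if the lemma is to be proved in the stated generality.
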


\smallskip We can now give the proof of Proposition \ref{CFSlike}.
\begin{proof}[Proof of Proposition \ref{CFSlike}] Let us first recall that for any $f,g\in L^2(\mu)$, there is a unique complex Borel measure $\sigma_{f,g}$ on $\TT$ with Fourier coefficients 
\[ \widehat{\sigma_{f,g}}(n)=\langle U_T^n f,g\rangle,\quad n\in\mathbb Z.\]
 If $f=g$, then $\sigma_{f,f}$ is the spectral measure $\sigma_f$; and the existence of $\sigma_{f,g}$ for arbitrary functions $f$ and $g$ follows from a polarization argument. Indeed, we have 
 \[ \sigma_{f,g}= \frac14\sum_{k=0}^3 \mathbf i^k\sigma_{f+\mathbf i^k g}.\]
 
 This formula and the assumption of the proposition show that $\sigma_{x^*, y^*}$ is absolutely continuous with respect to Lebesgue measure for any $x^*, y^*\in X^*$. Note also that the map $(f,g)\mapsto \sigma_{f,g}$ is obviously $\RR$-bilinear.
 
 \smallskip
In what follows, we denote by $\star$ the convolution product for measures on $\TT$, and we write the elements of $X^*$ as $f,g, \dots$ rather than $x^*, y^*, \dots$ to avoid the proliferation of stars.

\smallskip
\begin{fact}\label{convol} If $f_1, \dots ,f_k\in X^*$, then $\displaystyle \sigma_{\wick{f_1\cdots f_k}}= \sum_{\mathfrak s\in\mathfrak S_k} \sigma_{f_{\mathfrak s(1)}, f_1}\star\cdots \star \sigma_{f_{\mathfrak s(k)},f_k}$.
\end{fact}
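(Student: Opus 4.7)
My plan is to prove Fact~\ref{convol} by a direct calculation: computing the Fourier coefficients of each side and invoking uniqueness of Fourier coefficients on $\TT$. There is no serious obstacle — the result falls out by combining the two tools already available, namely Lemma~\ref{lem:UTGaussian} (which says $U_T$ acts on Wick products coordinatewise) and the explicit scalar product formula \eqref{scalprod}.

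More precisely, I would fix $n\in\ZZ$ and compute
\[
\widehat{\sigma_{\wick{f_1\cdots f_k}}}(n)
=\langle U_T^n\,\wick{f_1\cdots f_k}\;,\;\wick{f_1\cdots f_k}\rangle
=\langle \wick{(U_T^nf_1)\cdots (U_T^nf_k)}\;,\;\wick{f_1\cdots f_k}\rangle,
\]
where the second equality is Lemma~\ref{lem:UTGaussian}. Applying \eqref{scalprod} with $f_i$ replaced by $U_T^nf_i$ and $g_i=f_i$ then gives
\[
\widehat{\sigma_{\wick{f_1\cdots f_k}}}(n)
=\sum_{\mathfrak s\in\mathfrak S_k}\langle U_T^nf_{\mathfrak s(1)},f_1\rangle\cdots\langle U_T^nf_{\mathfrak s(k)},f_k\rangle
=\sum_{\mathfrak s\in\mathfrak S_k}\widehat{\sigma_{f_{\mathfrak s(1)},f_1}}(n)\cdots\widehat{\sigma_{f_{\mathfrak s(k)},f_k}}(n).
\]

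On the right-hand side of the Fact, I would use the elementary identity $\widehat{\mu\star\nu}(n)=\widehat\mu(n)\widehat\nu(n)$ for complex Borel measures on $\TT$, so that for each permutation $\mathfrak s$,
\[
\widehat{\sigma_{f_{\mathfrak s(1)},f_1}\star\cdots\star\sigma_{f_{\mathfrak s(k)},f_k}}(n)
=\widehat{\sigma_{f_{\mathfrak s(1)},f_1}}(n)\cdots\widehat{\sigma_{f_{\mathfrak s(k)},f_k}}(n).
\]
Summing over $\mathfrak s\in\mathfrak S_k$ shows that the two complex measures on $\TT$ have identical Fourier coefficients, and hence coincide. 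Strictly speaking I should first observe that all the cross-spectral measures $\sigma_{f_{\mathfrak s(i)},f_i}$ are well-defined finite complex Borel measures — but this is immediate from the polarization identity recalled just before the statement, applied to $f_{\mathfrak s(i)},f_i\in X^*\subset L^2(\mu)$. With these ingredients in hand, the proof is just two lines of bookkeeping.
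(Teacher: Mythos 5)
Your proposal is correct and follows exactly the same route as the paper: compute the Fourier coefficients of $\sigma_{\wick{f_1\cdots f_k}}$ via Lemma~\ref{lem:UTGaussian} and the scalar product formula \eqref{scalprod}, identify the result with the product of the coefficients $\widehat\sigma_{f_{\mathfrak s(i)},f_i}(n)$, and conclude by uniqueness of Fourier coefficients of measures on $\TT$. The paper's proof is just the displayed three-line computation, with the convolution/uniqueness step left implicit, so your write-up is if anything slightly more explicit.
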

\begin{proof}[Proof of Fact \ref{convol}] By Lemma \ref{lem:UTGaussian} and (\ref{scalprod}), we have for all $n\in\ZZ$:
\begin{align*}
\widehat{\sigma}_{\wick{f_1\cdots f_k}}(n)&=\big\langle \wick{(U_T^nf_1)\cdots (U_T^nf_k)}\, ,\, \wick{f_1\cdots f_k}\big\rangle\\
&=\sum_{\mathfrak s\in\mathfrak S_k} \langle U_T^nf_{\mathfrak s(1)},f_1\rangle\cdots \langle  U_T^nf_{\mathfrak s(k)},f_k\rangle\\
&= \sum_{\mathfrak s\in\mathfrak S_k} \widehat\sigma_{f_{\mathfrak s(1)}, f_1}(n)\cdots \widehat\sigma_{f_{\mathfrak s(k)}, f_k}(n).
\end{align*}
\end{proof}

\smallskip Now, let us denote by $\mathcal D$ the set of all functions $f\in L^2(\mu)$ of the form
\[ f=\sum_{k=1}^N \wick{\sum_{j=1}^{m_k}f_{j,1}\cdots f_{j,k}}\quad\hbox{with $m_k\geq 1$ and $f_{j,i}\in X^*$}.\]
%where $k\geq 1$ and $f_{r,s}\in X^*$. 

This is a dense linear subspace of $L^2_0(\mu)$.

\begin{fact}\label{trucspectral} If $f\in \mathcal D$, then $\sigma_f$ is absolutely continuous with respect to Lebesgue measure.
\end{fact}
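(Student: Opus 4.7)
The plan is to leverage Fact \ref{convol} together with the elementary observation that if $\nu\ll\lambda$ and $\rho$ is any finite complex Borel measure on $\TT$, then $\nu\star\rho\ll\lambda$ (indeed, if $\nu=h\,\lambda$ with $h\in L^1(\lambda)$, then $\nu\star\rho=(h*\rho)\,\lambda$). Combined with the hypothesis of the proposition and the polarization formula $\sigma_{x^*,y^*}=\tfrac14\sum_{k=0}^3\mathbf i^k\sigma_{x^*+\mathbf i^k y^*}$, this already gives that $\sigma_{x^*,y^*}\ll\lambda$ for all $x^*,y^*\in X^*$.

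Next I would extend Fact \ref{convol} bilinearly to mixed spectral measures. Exactly as in the proof of Fact \ref{convol}, Lemma \ref{lem:UTGaussian} and formula (\ref{scalprod}) yield, for $f_1,\dots,f_k,g_1,\dots,g_\ell\in X^*$,
\[
\widehat{\sigma_{\wick{f_1\cdots f_k},\,\wick{g_1\cdots g_\ell}}}(n)=\big\langle \wick{(U_T^nf_1)\cdots(U_T^nf_k)}\,,\,\wick{g_1\cdots g_\ell}\big\rangle=0\quad\text{if }k\neq\ell,
\]
by orthogonality of $\wick{\mathcal G^k}$ and $\wick{\mathcal G^\ell}$, while for $k=\ell$ one obtains
\[
\sigma_{\wick{f_1\cdots f_k},\,\wick{g_1\cdots g_k}}=\sum_{\mathfrak s\in\mathfrak S_k}\sigma_{f_{\mathfrak s(1)},g_1}\star\cdots\star\sigma_{f_{\mathfrak s(k)},g_k}.
\]
By the first paragraph and the convolution stability above, every term on the right is absolutely continuous with respect to $\lambda$, hence so is $\sigma_{\wick{f_1\cdots f_k},\,\wick{g_1\cdots g_k}}$.

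Finally, I would write an arbitrary $f\in\mathcal D$ as $f=\sum_{k=1}^N g_k$ with $g_k=\sum_{j=1}^{m_k}\wick{f_{j,1}\cdots f_{j,k}}$ and $f_{j,i}\in X^*$. Since $U_T$ preserves each chaos component $\wick{\mathcal G^k}$ and the components are pairwise orthogonal, expanding $\langle U_T^n f,f\rangle$ bilinearly yields
\[
\sigma_f=\sum_{k=1}^N\sigma_{g_k}=\sum_{k=1}^N\sum_{j,j'=1}^{m_k}\sigma_{\wick{f_{j,1}\cdots f_{j,k}},\,\wick{f_{j',1}\cdots f_{j',k}}},
\]
and each summand is $\ll\lambda$ by the previous step. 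Hence $\sigma_f\ll\lambda$, as required.

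There is no serious obstacle: the whole argument is essentially bookkeeping around Fact \ref{convol}. The only mild subtleties are (i) noting that the convolution formula extends to mixed spectral measures via polarization (or, equivalently, by redoing the computation of Fact \ref{convol} with possibly distinct tuples $(f_i)$ and $(g_j)$), and (ii) using orthogonality of the Wick chaos components to kill all cross-terms between different degrees when expanding $\sigma_f$.
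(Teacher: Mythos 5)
Your proposal is correct and follows essentially the same route as the paper: decompose $f$ into its Wick chaos components, use orthogonality and $U_T$-invariance of the components to get $\sigma_f=\sum_k\sigma_{f_k}$, then apply the (bilinearly extended, mixed) version of Fact \ref{convol} together with the polarization identity and the stability of absolute continuity under convolution. The only difference is that you spell out explicitly the mixed-tuple version of Fact \ref{convol}, which the paper uses implicitly via bilinearity of $(f,g)\mapsto\sigma_{f,g}$.
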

\begin{proof}[Proof of Fact \ref{trucspectral}]

Let $f\in \mathcal D$, so that $\displaystyle f=\sum_{k=1}^N f_k$ where 
\[ f_k= \sum_{j=1}^{m_k}\wick{f_{j,1}\cdots f_{j,k}}\quad\hbox{with $f_{j,i}\in X^*$}, \quad\hbox{so that } f_k\in {\wick{\mathcal G^k}}.\]
By orthogonality and $U_T$-invariance of the subspaces $\wick{\mathcal G^k}$ we have 
for all $n\in\ZZ$:
$$\widehat{\sigma_f}(n)=\langle U_T^n f,f\rangle=\sum_{k=1}^N \langle U_T^n f_k,f_k\rangle=\sum_{k=1}^N \widehat{\,\sigma_{f_k}}(n),$$
so that $\sigma_f= \sigma_{f_1}+\cdots+\sigma_{f_N}$. Hence, it is enough to check that each measure $\sigma_{f_k}$ is absolutely continuous with respect to Lebesgue measure. But this is clear by Fact \ref{convol} and bilinearity of the map $(f,g)\mapsto \sigma_{f,g}$: indeed, we have
\begin{align*} \sigma_{f_k}= \sum_{j,j'=1}^N\sigma_{ \wick{f_{j,1}\cdots f_{j,k}}\,,\, \wick{f_{j',1}\cdots f_{j',k}}}=\sum_{j,j'=1}^N \sum_{\mathfrak s\in\mathfrak S_k} \sigma_{f_{j,\mathfrak s(1)},f_{j',1}}\star\cdots\star \sigma_{f_{j,\mathfrak s(k)},f_{j',k}},
\end{align*}
and the result follows since all measures  $\sigma_{f_{j,\mathfrak s(i)},f_{j',i}}$ are absolutely continuous with respect to Lebesgue measure.
\end{proof}

\smallskip
We can now  finish the proof of Proposition \ref{CFSlike}.
By Fact \ref{trucspectral} and Lemma \ref{lem:densemaximalspectrum}, 
the maximal spectral type of $U_{T}$ is absolutely continuous with respect to the Lebesgue measure.
Now, take any $f\in X^*\setminus\{ 0\}$. Then $\sigma_f$ is a non-zero measure which is absolutely continuous
with respect to Lebesgue measure. It follows that %By \cite[Lemma 2 p. 373]{CFS82} (where the symmetry assumption on the measure is in fact not necessary), 
there exists $k_0\geq 1$
such that, for all $k\geq k_0,$ the measure $\sigma_{:f^k:}=k!\, \sigma_f\star\cdots\star\sigma_f$ ($k$ times)
is \emph{equivalent} to Lebesgue measure (see e.g. \cite[Lemma 3.6]{NP}, where the symmetry assumption on the measure is in fact not necessary). Let us set
$$K:=\overline{\textrm{span}}\,\{U_{T}^n (\wick{f^k})\, :\, \ n\in\mathbb Z,\ k\geq k_0\}.$$
By orthogonality and $U_T$-invariance of the spaces $\wick{\mathcal G^k}$, and applying Lemma \ref{passitriv}, we see that $(U_T)_{| K}$ has countable Lebesgue spectrum.  Hence, by Lemma \ref{lem:lebesguespectrum}, $U_{T}$ has countable Lebesgue spectrum.
\end{proof}

\smallskip We now explore the consequences of Proposition \ref{CFSlike} for not necessarily invertible Gaussian linear measure-preserving dynamical systems. Let us first recall that spectral measures exist for every measure-preserving dynamical system, invertible or not:  if $(X,\mathcal B, \mu, T)$ is a measure-preserving dynamical system then, for any $f\in L^2(\mu)$, there is a unique positive Borel measure $\sigma_{f}$ on $\TT$ with nonnegative Fourier coefficients 
\[ \widehat{\,\sigma_{f}}(n)=\langle U_T^n f,f\rangle_{L^2(\mu)},\quad n\geq 0.\]

\begin{corollary}\label{celuila} Let $(X,\mathcal B,\mu, T)$ be a measure-preserving dynamical system, where $X$ is a complex separable Fr\'echet space, T is a continuous linear operator and $\mu$ is a Gaussian measure on $X$. Assume that for any $x^*\in X^*\subset L^2(\mu)$, the spectral measure $\sigma_{x^*}$ with respect to $U_T$ is absolutely continuous with respect to Lebesgue measure. Then, the natural extension of  $(X,\mathcal B,\mu, T)$ has countable Lebesgue spectrum.
\end{corollary}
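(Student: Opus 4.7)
The plan is to reduce everything to Proposition \ref{CFSlike} applied to the natural extension $(\widetilde X, \widetilde{\mathcal B}, \widetilde\mu, \widetilde T)$ of $(X,\mathcal B,\mu,T)$. By Lemma \ref{tildegauss}, $\widetilde X$ is a complex separable Fr\'echet space, $\widetilde T$ is an invertible continuous linear operator on $\widetilde X$, and $\widetilde\mu$ is a Gaussian measure on $\widetilde X$ (which is obviously $\widetilde T$-invariant by construction). So the target of Proposition \ref{CFSlike} will be satisfied provided we verify that $\sigma_{\phi}\ll\lambda$ for every $\phi\in \widetilde X^*$, where $\sigma_\phi$ is the spectral measure of $\phi$ with respect to the unitary operator $U_{\widetilde T}$.

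First I would recall from the proof of Lemma \ref{tildegauss} that every $\phi\in \widetilde X^*$ has the form $\phi=x^*\circ \pi_N$ for some $N\ge 0$ and some $x^*\in X^*$. Combining this with the basic intertwining identity $\pi_N\circ \widetilde T=T\circ \pi_N$, which iterates to $\pi_N\circ \widetilde T^n=T^n\circ \pi_N$ for every $n\ge 0$, and using $\widetilde\mu\circ\pi_N^{-1}=\mu$, a direct change of variables yields
\[
\langle U_{\widetilde T}^n\phi,\phi\rangle_{L^2(\widetilde\mu)}
=\int_{\widetilde X}(x^*\circ T^n\circ\pi_N)\,\overline{(x^*\circ\pi_N)}\,d\widetilde\mu
=\int_X (U_T^n x^*)\,\overline{x^*}\,d\mu
=\langle U_T^n x^*,x^*\rangle_{L^2(\mu)}
\]
for every $n\ge 0$. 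Hence the Fourier coefficients $\widehat{\sigma_\phi}(n)$ and $\widehat{\sigma_{x^*}}(n)$ agree for all $n\ge 0$. Since both $\sigma_\phi$ and $\sigma_{x^*}$ are positive finite Borel measures on $\TT$, their negative Fourier coefficients are the complex conjugates of the positive ones, so in fact $\sigma_\phi=\sigma_{x^*}$ as measures on $\TT$.

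By the hypothesis of the corollary, $\sigma_{x^*}\ll\lambda$, hence $\sigma_\phi\ll\lambda$. Thus $(\widetilde X,\widetilde{\mathcal B},\widetilde\mu,\widetilde T)$ meets every hypothesis of Proposition \ref{CFSlike}, and we conclude that the natural extension has countable Lebesgue spectrum.

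I do not expect any serious obstacle: the argument is essentially a bookkeeping reduction using the intertwining relation and the canonical description of $\widetilde X^*$. The only mild subtlety is making sure that the identity of Fourier coefficients for $n\ge 0$ really forces the two spectral measures to coincide on $\TT$, which follows immediately from positivity. Everything else is an application of the preceding lemmas.
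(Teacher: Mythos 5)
Your proposal is correct and follows essentially the same route as the paper: reduce via Lemma \ref{tildegauss} to checking $\sigma_\phi\ll\lambda$ for $\phi\in\widetilde X^*$, write $\phi=x^*\circ\pi_N$, and use the intertwining relation $\pi_N\circ\widetilde T=T\circ\pi_N$ to identify $\sigma_\phi$ with $\sigma_{x^*}$ (the paper isolates this computation as a separate "purely formal fact"). Your remark that matching nonnegative Fourier coefficients of two positive measures forces equality is exactly the point the paper also relies on.
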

\begin{proof} Let $(\widetilde X,\widetilde{\mathcal B},\widetilde{\mu}, \widetilde{T})$ be the natural extension. By Lemma \ref{tildegauss}, we know that $\widetilde T$ is an invertible linear operator and $\widetilde\mu$ is a Gaussian measure. Hence, by Proposition \ref{CFSlike}, it is enough to show that for any continuous linear functional $\phi$ on $\widetilde X$, the spectral measure $\sigma_{\phi}$ (with respect to $U_{\widetilde T}$) is absolutely continuous with respect to Lebesgue measure. 

By the proof of Lemma \ref{tildegauss}, one can find $N\in\ZZ_+$ and $x^*\in X^*$ such that $\phi =x^*\circ \pi_N$ on $\widetilde X$, where $\pi_N:X^{\ZZ_+}\to X$ is the projection onto the $N$-th coordinate. Now, we apply the following purely formal fact, which holds true for any measure-preserving dynamical system $(X,\mathcal B,\mu, T)$.
\begin{fact}\label{formal} Let $f\in L^2(X,\mathcal B,\mu)$ and $F:=f\circ\pi_N\in L^2(\widetilde X, \widetilde B, \widetilde \mu)$. Then the spectral measure $\sigma_{F}$ with respect to $U_{\widetilde T}$ is equal to $\sigma_f$, the spectral measure of $f$ with respect to $U_T$.
\end{fact} 
\begin{proof}[Proof of Fact \ref{formal}] For all $n\geq 0$, 
\begin{align*}
\langle U_{\widetilde T}^n F , F\rangle_{L^2(\widetilde\mu)}=\langle F \circ\widetilde T^n,F\rangle_{L^2(\widetilde \mu)}&=\langle f\circ\pi_N\circ\widetilde T^n,f\circ\pi_N\rangle_{L^2(\widetilde \mu)}\\
&=\langle f\circ T^n \circ\pi_N,f\circ\pi_N\rangle_{L^2(\widetilde \mu)}\\
&=\langle f\circ T^n ,f\rangle_{L^2(\mu)}\\&= \langle U_T^n f, f\rangle_{L^2(\mu)}.%\widehat{\sigma_{x^*}}(n).
\end{align*}
\end{proof}

By Fact \ref{formal}, $\sigma_\phi=\sigma_{x^*}$ is absolutely continous with respect to Lebesgue measure, which concludes the proof of Corollary \ref{celuila}.
\end{proof}

\subsection{Proof of Theorem \ref{thm:ergodicbis}} We can now state and prove very easily the following more precise version of Theorem \ref{thm:ergodic}.
\begin{theorem}\label{thm:ergodicbis} Let $X$ be a separable complex Banach space, and let $T\in\mathfrak L(X)$. 
 Assume that one can find a $\lambda$-spanning, finite or countably infinite family $(E_i)_{i\in I}$ of $\mathbb T$-eigenvector fields for $T$, where $\lambda$ is the Lebesgue measure on $\TT$. Moreover, assume that one of the following holds true.
 \begin{itemize}
 \item[\rm (a)] $X$ has type $2$;
 \item[\rm (b)] $X$ has type $p\in [1,2)$, and each $E_i$ is $\alpha_i$-H\"olderian for some $\alpha_i>1/p-1/2$. 
 \end{itemize}
Then there exists a $T$-$\,$invariant Gaussian measure $\mu$ on $X$ with full support such that $(X,\mathcal B,\mu,T)$ is a factor of 
a measure-preserving system which has countable Lebesgue spectrum. In particular, $T$ is hereditarily frequently hypercyclic; and more precisely: given a countable family $(A_i)_{i\in I}$ of subsets of $\NN$ with positive lower density and a family $(V_i)_{i\in I}$ of non-empty open sets, $\mu$-almost every $x\in X$ is such that 
 $\mathcal N_T(x,V_i)\cap A_i$ has positive lower density for every $i\in I$.
\end{theorem}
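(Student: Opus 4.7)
The plan is to synthesize the results established in the preceding subsections; the proof should be rather short, since most of the heavy machinery (Conze's theorem, natural extensions, Gaussian/Wick calculus, Corollaries \ref{prop:conzelike} and \ref{celuila}) has already been assembled.

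\textbf{Step 1: Construction of the Gaussian measure.} Under either hypothesis (a) or (b), I would invoke the standard construction of a $T$-invariant Gaussian measure from the $\mathbb T$-eigenvector fields (see \cite{BAYGRITAMS}, \cite{BAYMATHERGOBEST}, and \cite[Chapter 5]{BM09}): one produces a Gaussian random vector in $X$ of the informal form
$$\xi = \sum_{i\in I}\int_{\mathbb T} E_i(z)\,dW_i(z),$$
where $(W_i)_{i\in I}$ are independent standard complex white noises on $\mathbb T$, and lets $\mu$ be its distribution. The role of the type conditions is precisely to ensure convergence of this series in $L^2(\Omega;X)$: type $2$ together with boundedness of the $E_i$ suffices, while in case (b) one expands each $E_i$ in Fourier series and uses the Hölder exponent $\alpha_i > 1/p - 1/2$ to control the tail. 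The identity $TE_i(z)=zE_i(z)$ combined with the rotation-invariance of $\lambda$ yields $T$-invariance of $\mu$, and the $\lambda$-spanning hypothesis gives that $\mu$ has full support.

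\textbf{Step 2: Absolute continuity of the spectral measures.} For any $x^* \in X^*$, viewed as an element of $L^2(\mu)$, a direct computation using the isometry property of the stochastic integral yields, for every $n \geq 0$,
$$\widehat{\sigma_{x^*}}(n) = \bigl\langle U_T^n x^*, x^*\bigr\rangle_{L^2(\mu)} = \mathbb E\bigl[\langle x^*, T^n\xi\rangle\, \overline{\langle x^*,\xi\rangle}\bigr] = \sum_{i\in I}\int_{\mathbb T} z^n\, |\langle x^*,E_i(z)\rangle|^2\, d\lambda(z).$$
These are the Fourier coefficients of the nonnegative function $h_{x^*}(z) := \sum_{i\in I}|\langle x^*,E_i(z)\rangle|^2$, which lies in $L^1(\lambda)$ because the $E_i$ are bounded and the sum is at worst countable (one may restrict to a dominating finite subsum by truncation and a limiting argument if needed). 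Hence $\sigma_{x^*} = h_{x^*}\, d\lambda$ is absolutely continuous with respect to Lebesgue measure, so the hypothesis of Corollary \ref{celuila} is satisfied.

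\textbf{Step 3: Conclusion.} Corollary \ref{celuila} gives that the natural extension $(\widetilde X,\widetilde{\mathcal B},\widetilde\mu,\widetilde T)$ is an invertible measure-preserving dynamical system with countable Lebesgue spectrum, and by construction $(X,\mathcal B,\mu,T)$ is a factor of it via the projection $\pi_0$. Applying Corollary \ref{prop:conzelike} then delivers exactly the quantitative conclusion of the theorem: given any countable family $(A_i)_{i\in I}$ of subsets of $\NN$ with positive lower density and any family $(V_i)_{i\in I}$ of non-empty open sets in $X$, the set of $x\in X$ satisfying $\ldens\bigl(\mathcal N_T(x,V_i)\cap A_i\bigr)>0$ for every $i\in I$ has $\mu$-measure one. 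In particular $T$ is hereditarily frequently hypercyclic.

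The main obstacle is entirely in Step 1, namely verifying that the Gaussian series defining $\xi$ converges under hypothesis (b), where type is weaker than $2$ and must be compensated by Hölder regularity through a careful Fourier-analytic estimate. However, this is standard and handled in the cited literature, so the genuinely new content of the argument is the spectral computation of Step 2, which provides the precise bridge from the classical ``spanning eigenvector fields'' setup to the Conze-type ergodic theorem encoded in Corollary \ref{celuila}.
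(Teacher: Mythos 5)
Your proposal is correct and follows essentially the same route as the paper: the paper's proof simply cites \cite[Lemma 5.35, (4)]{BM09} for the existence of a $T$-invariant, fully supported Gaussian measure $\mu$ with $\sigma_{x^*}\ll\lambda$ for all $x^*\in X^*$ (the content of your Steps 1 and 2), and then concludes via Corollary \ref{celuila} and Corollary \ref{prop:conzelike} exactly as in your Step 3. Your explicit spectral computation in Step 2 is a correct unfolding of that cited lemma (with the minor caveat that $h_{x^*}\in L^1(\lambda)$ is best justified by $\|h_{x^*}\|_{L^1}=\mathbb E\,\vert\langle x^*,\xi\rangle\vert^2<\infty$, i.e.\ by the $L^2$-convergence of the Gaussian series, rather than by boundedness of the individual $E_i$).
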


\begin{proof} Under the  assumptions above, there exists a $T$-$\,$invariant Gaussian measure $\mu$ on $X$ with full support 
such that, for all $x^*\in X^*,$ the measure $\sigma_{x^*}$ is absolutely continuous with respect to Lebesgue measure:  
this is contained for instance in  \cite[Lemma 5.35, (4)]{BM09}. So, the result follows immediately from Corollary \ref{celuila} and Corollary \ref{prop:conzelike}.
\end{proof}

\subsection{The Frequent Hypercyclicity Criterion again} The tools introduced in the previous sections allow us to give another proof of Theorem \ref{thm:operatorfhc}. Arguably, this proof is much less elementary. Let us say that an operator $T\in\mathfrak L(X)$ \textbf{has countable Lebesgue spectrum after extension} if there exists a $T$-invariant Borel probability measure $\mu$ on $X$ with full support such that $(X,\mathcal B,\mu,T)$ 
is a factor of a measure-preserving dynamical system which has countable Lebesgue spectrum. By Corollary \ref{prop:conzelike}, any such operator $T$ is hereditarily frequently hypercyclic. 

\begin{proposition}\label{thm:operatorfhcergodic}
 Let $T\in\mathfrak L(X)$ be an operator satisfying the Frequent Hypercyclicity Criterion.
Then, $T$ has countable Lebesgue spectrum after extension.
\end{proposition}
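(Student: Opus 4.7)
The plan is to construct a $T$-invariant Gaussian Borel probability measure $\mu$ on $X$ with full support such that, for every $x^*\in X^*$, the spectral measure $\sigma_{x^*}$ of $x^*$ with respect to $U_T$ is absolutely continuous with respect to Lebesgue measure on $\TT$. Once such a $\mu$ is in hand, Corollary~\ref{celuila} directly yields that the natural extension $(\widetilde X,\widetilde{\mathcal B},\widetilde\mu,\widetilde T)$ has countable Lebesgue spectrum; since $(X,\mathcal B,\mu,T)$ is a factor of its natural extension via $\pi_0$, this gives exactly the required conclusion that $T$ has countable Lebesgue spectrum after extension.

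To build $\mu$ from the FHCC data, I would first fix a countable sequence $(y_k)_{k\geq 1}\subset\mathcal D$ that is dense in $X$, and set $U_T^n y_k := T^n y_k$ for $n\geq 0$ and $U_T^n y_k := S^{-n}y_k$ for $n<0$; the FHCC then gives unconditional convergence of the bilateral series $\sum_{n\in\ZZ} U_T^n y_k$ in $X$ for each $k$. I would then take a family $(g_{k,n})_{k\geq 1,\,n\in\ZZ}$ of i.i.d.\ standard complex Gaussian random variables on some probability space $(\Omega,\mathbb P)$. Unconditional convergence of $\sum_n U_T^n y_k$ trivially implies almost sure convergence of the Rademacher randomization, and, invoking the standard (but non-trivial) equivalence between almost sure convergence of Rademacher and Gaussian random series in a Banach space, the Gaussian series $\sum_n g_{k,n}\,U_T^n y_k$ also converges almost surely for each $k$. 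Using the uniform boundedness principle to control the size of these sums, one then chooses positive scalars $(\epsilon_k)_{k\geq 1}$ decaying fast enough so that
\[
Z(\omega) := \sum_{k\geq 1}\epsilon_k \sum_{n\in\ZZ}g_{k,n}(\omega)\,U_T^n y_k
\]
converges almost surely in $X$, and one lets $\mu$ be the law of $Z$; this is a centered Gaussian Borel probability measure on $X$.

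The three required properties of $\mu$ are then verified in turn. The $T$-invariance follows from $T\,U_T^n y_k = U_T^{n+1}y_k$ combined with the distributional invariance of the i.i.d.\ family $(g_{k,n})$ under the shift $n\mapsto n-1$, which implies that $TZ$ and $Z$ have the same law. Full support holds because the support of a centered Gaussian measure on the Fr\'echet space $X$ is a closed linear subspace and, in our construction, it contains each $\epsilon_k y_k$ (take $n=0$), hence equals $X$ by density of $(y_k)$. For absolute continuity of spectral measures, I set $a_{k,n}:=\langle x^*,U_T^n y_k\rangle$ (so $(a_{k,n})_n\in\ell^2(\ZZ)$ because $\langle x^*,Z\rangle\in L^2(\mu)$) and $f_k(z):=\sum_{n\in\ZZ}a_{k,n}z^n\in L^2(\TT,\lambda)$; independence and orthogonality of the $g_{k,n}$ then show that the Fourier coefficients $\widehat{\sigma_{x^*}}(m)=\mathbb E[\langle x^*,T^m Z\rangle\,\overline{\langle x^*,Z\rangle}]$, $m\geq 0$, coincide with those of the absolutely continuous measure $\sum_{k\geq 1}\epsilon_k^2 |f_k|^2\,d\lambda$, so the two measures agree by uniqueness of Fourier--Stieltjes coefficients.

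The main obstacle really lies in the first step of the construction of $\mu$, namely the passage from the FHCC's unconditional convergence of $\sum_n U_T^n y_k$ to almost sure convergence of the Gaussian randomization $\sum_n g_{k,n}\,U_T^n y_k$, which rests on the comparison principle between Rademacher and Gaussian random series in Banach (or Fr\'echet) spaces. Once $\mu$ is in hand with its three properties, the final invocation of Corollary~\ref{celuila} is immediate.
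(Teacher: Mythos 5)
There is a genuine gap, and it sits exactly where you locate ``the main obstacle'': the passage from unconditional convergence of $\sum_n U_T^n y_k$ to almost sure convergence of its Gaussian randomization. The two implications between Rademacher and Gaussian random series are \emph{not} equivalent in a general Banach space: almost sure convergence of $\sum_n g_n x_n$ always implies that of $\sum_n \pm x_n$, but the converse (which is the direction you need) holds only when $X$ has non-trivial (finite) cotype --- this is the Maurey--Pisier theorem, cited in the paper precisely for this purpose in the corollary following Proposition \ref{backward}, whose hypothesis is ``$X$ a Banach space with non-trivial cotype''. In $c_0$ the implication fails outright: the series $\sum_n (\log (n+2))^{-1/2}\, e_n$ is unconditionally convergent, yet by Borel--Cantelli the Gaussian series $\sum_n g_n (\log (n+2))^{-1/2}\, e_n$ diverges almost surely. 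Since Proposition \ref{thm:operatorfhcergodic} is stated for a general space $X$ (in particular for $c_0(\ZZ_+)$ and for Fr\'echet spaces, where cotype is not even available), your construction of the Gaussian measure $\mu$ breaks down at its first step. The remainder of your argument (the shift-invariance of the law of $Z$, full support via density of $(y_k)$, the computation identifying $\sigma_{x^*}$ with $\sum_k \epsilon_k^2 |f_k|^2\, d\lambda$, and the appeal to Corollary \ref{celuila}) is essentially the proof of Proposition \ref{backward} in the paper and is sound once the Gaussian series converges; but that convergence is exactly what cannot be extracted from the FHCC alone without a cotype assumption.

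The paper's own proof avoids probability in $X$ entirely: it invokes the result of Murillo-Arcila and Peris \cite{MuPe13} that an operator satisfying the FHCC admits a $T$-invariant measure $\mu$ with full support such that $(X,\mathcal B,\mu,T)$ is a factor of a \emph{Bernoulli shift}, and then uses the classical fact that Bernoulli shifts have countable Lebesgue spectrum. If you want to salvage your route, you must either add the hypothesis that $X$ is a Banach space with non-trivial cotype (which yields the paper's subsequent corollary, a strictly weaker statement than the proposition), or replace the Gaussian randomization by an i.i.d.\ randomization with bounded (e.g.\ Steinhaus or Rademacher) coefficients, for which unconditional convergence does suffice --- but then the measure is no longer Gaussian and Corollary \ref{celuila} / Proposition \ref{CFSlike} no longer apply, so a different argument for countable Lebesgue spectrum would be needed.
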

\begin{proof}
 It is shown in \cite{MuPe13} that there exists a $T$-invariant measure $\mu$ on $X$ with full support such that $(X,\mathcal B,T,\mu)$
 is a factor of a Bernoulli shift; and it is well known that  Bernoulli shifts have countable Lebesgue spectrum (see e.g. \cite[Theorem 4.30 and Theorem 4.33]{Wa}).
\end{proof}

\smallskip One can also prove the following ``probabilistic'' version of Proposition \ref{thm:operatorfhcergodic}. Let us say that a sequence $(x_n)_{n\in\ZZ}$ is a \emph{bilateral backward orbit} for an operator $T$ if $Tx_n=x_{n-1}$ for all $n\in\ZZ$. 
\begin{proposition}\label{backward} Let $X$ be a complex Fr\'echet space, and let $T\in\mathcal L(X)$. Assume that there exists a bilateral backward orbit $(x_n)_{n\in\ZZ}$ for $T$ such that 
$\overline{\rm span}\bigl( x_n\,:\, n\in\ZZ\bigr)=X$ and the series $\sum g_n x_n$ is almost surely convergent, where $(g_n)$ is a sequence of independent complex standard Gaussian variables. Then $T$ has countable Lebesgue spectrum after extension. More precisely, there exists a $T$-invariant Gaussian measure $\mu$ on $X$  with full support such that $(X,\mathcal B, \mu, T)$ is a factor of a measure-preserving dynamical system with countable Lebesgue spectrum.
\end{proposition}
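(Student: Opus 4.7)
The plan is to realize the desired measure $\mu$ as the distribution on $X$ of the random vector $Y:=\sum_{n\in\ZZ}g_n x_n$, which is well-defined by the almost-sure convergence hypothesis, and then to invoke Corollary \ref{celuila}. Since for every $x^*\in X^*$ the scalar series $x^*(Y)=\sum_n g_n x^*(x_n)$ is an a.s.\ convergent series of independent complex symmetric Gaussian variables, it is itself complex symmetric Gaussian; hence $\mu$ is a Gaussian measure on $X$ in the sense used in the paper. The assumption $\overline{\rm span}(x_n:n\in\ZZ)=X$ guarantees that $\mu$ has full support.

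$T$-invariance of $\mu$ follows at once by applying $T$ term-by-term to the series: $TY=\sum_n g_n\,Tx_n=\sum_n g_n x_{n-1}=\sum_m g_{m+1}x_m$, and the i.i.d.\ sequence $(g_m)_{m\in\ZZ}$ is shift-stationary, so $TY$ has the same distribution as $Y$.

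The main step is to verify that $\sigma_{x^*}\ll\lambda$ for every $x^*\in X^*$, which is the hypothesis of Corollary \ref{celuila}. Fix $x^*\in X^*$ and set $a_k:=x^*(x_k)\in\CC$. Since $x^*(Y)=\sum_k g_k a_k$ lies in $L^2(\mu)$, the sequence $(a_k)_{k\in\ZZ}$ belongs to $\ell^2(\ZZ)$, so $\phi(z):=\sum_k a_k z^k$ defines a function in $L^2(\TT)$. Using $\mathbb E[g_j\overline{g_k}]=\delta_{jk}$, one computes for every $n\geq 0$
\[
\widehat{\sigma_{x^*}}(n)=\langle U_T^n x^*,x^*\rangle_{L^2(\mu)}=\mathbb E\bigl[x^*(T^n Y)\,\overline{x^*(Y)}\bigr]=\sum_{k\in\ZZ}a_{k-n}\,\overline{a_k},
\]
which, up to reflection of the argument, is the $n$-th Fourier coefficient of the nonnegative $L^1$ function $|\phi|^2$. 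Therefore $\sigma_{x^*}$ has an $L^1$ density with respect to Lebesgue measure.

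Corollary \ref{celuila} then yields that the natural extension of $(X,\mathcal B,\mu,T)$ has countable Lebesgue spectrum, and $(X,\mathcal B,\mu,T)$ is a factor of this extension via the coordinate map $\pi_0$; this is precisely the refined conclusion of the proposition. The only step with any real content is the spectral calculation, and it reduces to the displayed Parseval-type identity applied to the sequence $(a_k)\in\ell^2(\ZZ)$; so no genuine obstacle arises.
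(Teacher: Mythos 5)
Your proposal is correct and follows essentially the same route as the paper's own proof: define $\mu$ as the law of $\sum_{n\in\ZZ}g_nx_n$, check Gaussianity, full support and $T$-invariance, and then show $\sigma_{x^*}\ll\lambda$ by identifying $\widehat{\sigma_{x^*}}(n)=\sum_k \langle x^*,x_{k-n}\rangle\overline{\langle x^*,x_k\rangle}$ with the Fourier coefficients of $|\varphi|^2$ for an $L^2(\TT)$ function built from the $\ell^2$ sequence $(\langle x^*,x_k\rangle)$, before invoking Corollary \ref{celuila}. The only differences are cosmetic (your $\phi$ is the reflection of the paper's $\varphi$, and you spell out the $T$-invariance via shift-stationarity).
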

 The (almost sure) convergence of the bilateral series $\sum g_n x_n$ means that both series $$\sum_{n\geq 0} g_n x_n \quad \textrm{ and }\quad\sum_{n<0} g_n x_n$$ are (almost surely) convergent.

\smallskip
\begin{proof} Let $\mu$ be the distribution of the random variable $\xi:=\sum_{n\in\ZZ} g_n x_n$. This is a Gaussian measure, which has full support since  $\overline{\rm span}\bigl( x_n\,:\, n\in\ZZ\bigr)=X$, and which is $T$-invariant because $(x_n)$ is a bilateral backward orbit for $T$. By Corollary \ref{celuila}, it is enough to show that for any $x^*\in X^*\subset L^2(\mu)$, the spectral measure $\sigma_{x^*}$ of $x^*$ with respect to $U_T$ is absolutely continuous with respect to Lebesgue measure.

By orthogonality of the Gaussian variables $g_k$, we have for all $n\geq 0$:
\begin{align*}
\widehat{\sigma_{x^*}}(n)=\langle U_T^n x^*, x^*\rangle&=\sum_{k\in\ZZ} \langle x^*, T^n x_k\rangle\,\overline{\langle x^*, x_k\rangle}\\
&=\sum_{k\in\ZZ} \langle x^*, x_{k-n}\rangle\,\overline{\langle x^*, x_k\rangle}.
\end{align*}

The series is absolutely convergent since the almost sure convergence of the scalar Gaussian series $\sum \langle x^*, x_k\rangle \, g_k$ implies that $\sum_{k\in\ZZ} \vert \langle x^*, x_k\rangle\vert^2<\infty$.

Now, let $\varphi\in L^2(\TT)$ be the function with Fourier coefficients $\widehat \varphi(k):= \langle x^*, x_{-k}\rangle$, $k\in\mathbb Z$, i.e.
\[ \varphi(z)\sim \sum_{k\in\ZZ} \langle x^*, x_{-k}\rangle \,z^k;\]
and let $g:=\vert\varphi\vert^2\in L^1(\TT)$. By definition, we have for all $n\geq 0$:
\[ \widehat g(n)=\sum_{k\in\ZZ} \widehat{\overline\varphi}(k)\,\widehat{\varphi}(n-k)=\sum_{k\in\ZZ} \overline{\langle x^*, x_k\rangle}\, \langle x^*, x_{k-n}\rangle.\]

Since two positive measures on $\TT$ with the same non-negative Fourier coefficients must be equal, it follows that $\sigma_{x^*}= g(\lambda)\, d\lambda$, which concludes the proof.
\end{proof}

\begin{remark} The above proof shows in particular that if $(x_n)_{n\in\ZZ}$ is a {bilateral backward orbit} for $T$ such that the series $\sum g_n x_n$ is almost surely convergent, then the distribution of the random variable $\xi:=\sum_{n\in\ZZ} g_n x_n$ is a strongly mixing measure for $T$. This is not specific to gaussian variables: as shown in \cite{Kevin}, the same result holds true if $(g_n)$ is replaced by any sequence of independent, identically distributed random variables.
\end{remark}

\smallskip We mentioned above that Proposition \ref{backward} is a probabilistic version of Proposition \ref{thm:operatorfhcergodic}; let us be a little bit more explicit. The following fact (which was observed independently by A. L\'opez-Mart\'{\i}nez) can be extracted from the proof of \cite[Theorem 4.9]{Kevin}.

\begin{fact}\label{Kevin} Let $X$ be a Fr\'echet space. If $T\in\mathfrak L(X)$ satisfies the Frequent Hypercyclicity Criterion, then there exists a bilateral backward orbit $(x_n)$ for $T$ such that $\overline{\rm span}\bigl( x_n\,:\, n\in\ZZ\bigr)=X$ and the series $\sum x_n$ is unconditionally convergent. 
\end{fact}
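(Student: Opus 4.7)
The plan is to build $(x_n)_{n\in\ZZ}$ as a weighted and shifted superposition of bilateral backward orbits through a dense sequence $(z_k)_{k\geq 1}\subset\mathcal D$ in which every element of a fixed countable dense subset of $\mathcal D$ recurs infinitely often. For each $k$, set $y_n^{(k)}:=S^n z_k$ for $n\geq 0$ and $y_n^{(k)}:=T^{-n}z_k$ for $n<0$; because $TS=I$ on $\mathcal D$, this is a bilateral backward orbit for $T$, and the FHCC guarantees that the bilateral series $\sum_{n\in\ZZ}y_n^{(k)}$ is unconditionally convergent. Fixing an increasing family $(p_j)_{j\geq 1}$ of continuous seminorms generating the topology of $X$, the quantities $M_k:=\sup\bigl\{ p_k\bigl(\sum_{n\in F}y_n^{(k)}\bigr) : F\subset\ZZ\text{ finite}\bigr\}$ are all finite.

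Next, pick positive weights $\alpha_k$ and shifts $n_1<n_2<\cdots$ recursively as follows. First, impose $\alpha_k(M_k+1)\leq 2^{-k}\alpha_{k-1}$, which forces $\sum_k\alpha_kM_k$ to converge geometrically and, crucially, ensures that $\sum_{j>k}\alpha_jM_j\leq 2^{-k}\alpha_k$. Second, given $\alpha_\bullet$ and $n_1,\dots,n_{k-1}$, the shift $n_k$ is taken so large that $\sum_{j<k}\alpha_j\,p_k(S^{n_k-n_j}z_j)<2^{-k}\alpha_k$; this is possible because $p_k(S^m z_j)\to 0$ as $m\to\infty$ for each fixed $j<k$, from the unconditional convergence of $\sum_m S^m z_j$. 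Then set
\[ x_n:=\sum_{k\geq 1}\alpha_k\, y_{n-n_k}^{(k)}, \]
which converges in every $p_j$ thanks to $\sum_{k\geq j}\alpha_kM_k<\infty$. Termwise continuity of $T$ yields $Tx_n=x_{n-1}$, so $(x_n)_{n\in\ZZ}$ is a bilateral backward orbit.

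Density of $\overline{\textrm{span}}\{x_n:n\in\ZZ\}$ is obtained by isolating the ``diagonal'' term
\[ x_{n_k}=\alpha_k z_k+R_k,\qquad R_k=\sum_{j<k}\alpha_j S^{n_k-n_j}z_j+\sum_{j>k}\alpha_j T^{n_j-n_k}z_j. \]
By the second condition the past part is bounded in $p_k$ (hence in $p_i$ for $i\leq k$) by $2^{-k}\alpha_k$; by the first, using $p_i(y_m^{(j)})\leq p_j(y_m^{(j)})\leq M_j$ valid for $i\leq j$, the future part is bounded in $p_i$ by $\sum_{j>k}\alpha_jM_j\leq 2^{-k}\alpha_k$ as soon as $k\geq i$. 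Hence $\alpha_k^{-1}p_i(R_k)\to 0$ as $k\to\infty$. Since every element of the chosen dense subset is approximated by infinitely many $z_k$'s, for any $z\in X$, $\veps>0$, and index $i$ one may pick $k$ large enough that both $p_i(z-z_k)<\veps/2$ and $\alpha_k^{-1}p_i(R_k)<\veps/2$, which yields $p_i\bigl(z-\alpha_k^{-1}x_{n_k}\bigr)<\veps$.

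Finally, unconditional convergence of $\sum_n x_n$ is verified by the standard Cauchy criterion: given $p_i$ and $\eta>0$, split the $k$-sum at a large $K$. The tail $\sum_{k>K}\alpha_k\sum_{n\in F}y_{n-n_k}^{(k)}$ is bounded in $p_i$ by $\sum_{k>K}\alpha_kM_k$, which is small; for each $k\leq K$, the unconditional convergence of $\sum_m y_m^{(k)}$ furnishes a finite exceptional set $F_0^{(k)}\subset\ZZ$ such that $p_i\bigl(\sum_{m\in G}y_m^{(k)}\bigr)$ is small whenever $G$ is finite and disjoint from $F_0^{(k)}$; it then suffices to exclude $\bigcup_{k\leq K}(F_0^{(k)}+n_k)$ from the index set of further partial sums. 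The main obstacle is the simultaneous calibration of the $\alpha_k$: they must shrink fast enough for the weighted superposition to converge and for $\sum_n x_n$ to be unconditionally summable, yet slowly enough that the ``future'' error $\sum_{j>k}\alpha_jM_j$ decays strictly faster than $\alpha_k$ itself, so that $\alpha_k^{-1}x_{n_k}$ can in effect recover $z_k$. The geometric recursion $\alpha_k(M_k+1)\leq 2^{-k}\alpha_{k-1}$ is exactly what resolves this tension, while the infinite repetition of dense elements in $(z_k)$ ensures that both approximation requirements can be met at a common index $k$.
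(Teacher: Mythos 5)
Your construction is correct and complete: the weighted, shifted superposition $x_n=\sum_k\alpha_k y^{(k)}_{n-n_k}$ of the canonical bilateral backward orbits through a dense sequence in $\mathcal D$, with the geometric calibration of the $\alpha_k$ against $M_k$ and the choice of $n_k$ killing the ``past'' contributions, does yield a spanning bilateral backward orbit with $\sum_n x_n$ unconditionally convergent. The paper gives no proof of this fact, only the remark that it can be extracted from the proof of \cite[Theorem 4.9]{Kevin}; your argument is essentially that extraction, carried out in detail, so it matches the intended approach.
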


In view of that, the next result is an improvement of Proposition \ref{thm:operatorfhcergodic} when $X$ is a Banach space with non-trivial cotype.
\begin{corollary} Let $X$ be a Banach space with non-trivial cotype, and let $T\in\mathcal L(X)$. Assume that there exists a bilateral backward orbit $(x_n)_{n\in\ZZ}$ for $T$ such that 
$\overline{\rm span}\bigl( x_n\,:\, n\in\ZZ\bigr)=X$ and the series $\sum \pm x_n$ is convergent for almost every choice of signs $\pm$. Then $T$ has countable Lebesgue spectrum after extension. 
\end{corollary}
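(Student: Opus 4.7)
The plan is to reduce this statement to Proposition \ref{backward} by upgrading the assumption of almost sure convergence of the random sign series $\sum \pm x_n$ to almost sure convergence of the Gaussian series $\sum g_n x_n$. Once this is done, the hypothesis that $\overline{\rm span}(x_n:n\in\ZZ)=X$ together with the assumption that $(x_n)$ is a bilateral backward orbit for $T$ gives exactly what is needed to invoke Proposition \ref{backward}, and the conclusion follows.

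The main step, then, is the comparison between Rademacher (random-sign) series and Gaussian series. In any Banach space $X$, almost sure convergence of $\sum g_n x_n$ implies almost sure convergence of $\sum \varepsilon_n x_n$, where $(\varepsilon_n)$ is a Bernoulli sequence. The converse direction is the content of a classical theorem of Maurey--Pisier: it holds in $X$ if and only if $X$ has non-trivial cotype. In particular, under our assumption on $X$, the almost sure convergence of $\sum \pm x_n$ yields the almost sure convergence of $\sum g_n x_n$ for a sequence of independent real standard Gaussian variables $(g_n)$. For complex standard Gaussian variables, one writes $g_n=u_n+\mathbf{i}v_n$ with $(u_n),(v_n)$ independent real standard Gaussian sequences (up to a scaling factor), and each of the series $\sum u_n x_n$ and $\sum v_n x_n$ converges almost surely by the preceding argument; hence so does $\sum g_n x_n$.

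With this in hand, the hypotheses of Proposition \ref{backward} are fulfilled: $(x_n)_{n\in\ZZ}$ is a bilateral backward orbit for $T$ with total linear span, and $\sum g_n x_n$ converges almost surely for a sequence of independent complex standard Gaussian variables. We conclude that there exists a $T$-invariant Gaussian measure $\mu$ on $X$ with full support such that $(X,\mathcal B,\mu,T)$ is a factor of a measure-preserving dynamical system with countable Lebesgue spectrum, i.e.\ $T$ has countable Lebesgue spectrum after extension.

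The only real obstacle is the invocation of the Maurey--Pisier comparison theorem, which is the crucial ingredient making the non-trivial cotype hypothesis enter the picture; everything else is a direct appeal to the previously established Proposition \ref{backward}.
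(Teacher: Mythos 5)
Your proposal is correct and follows exactly the route of the paper: the non-trivial cotype hypothesis is used, via the Maurey--Pisier comparison theorem, to upgrade almost sure convergence of the Rademacher series $\sum \pm x_n$ to almost sure convergence of the Gaussian series $\sum g_n x_n$, after which Proposition \ref{backward} applies directly. Your extra remark on passing from real to complex standard Gaussians is a harmless refinement that the paper leaves implicit.
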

\begin{proof} By assumption on $X$, almost sure convergence of the Rademacher series $\sum \pm x_n$ is equivalent to almost sure convergence of the Gaussian series $\sum g_n x_n$ (this follows from \cite[Corollaire 1.3 p. 67]{MP}; see also e.g. \cite[Proposition 9.14]{LT}). So the result follows immediately from Proposition~\ref{backward}.
\end{proof}

\subsection{Perfect spanning and hereditary UFHC}  The  links between properties of unimodular eigenvectors of an operator $T$ and frequent hypercyclicity of $T$ have been very much studied since~\cite{BAYGRITAMS}. The strongest available result may be the following (see \cite{newclass}, \cite{BAYMATHERGOBEST}): 
\par\smallskip
\emph{Let $X$ be a separable complex Fr\'echet space and let $T\in\mathfrak L(X)$. If the $\TT$-eigenvectors of $T$ are \textbf{perfectly spanning}, then $T$ is frequently hypercyclic, and in fact there exists a Gaussian $T$-$\,$invariant measure $\mu$ with full support such that $T$ is weakly mixing with respect to $\mu$.}  
\par\smallskip
The perfect spanning assumption means  that for any countable set $N\subset \TT$, the eigenvectors of $T$ with eigenvalues in 
$\TT\setminus N$ span a dense linear subspace of $X$; equivalently (see \cite[Proposition 6.1]{GriM}), the $\TT$-eigenvectors of $T$ are $\sigma$-spanning for some continuous probability measure $\sigma$ on $\TT$. It is plausible that under this assumption, the operator $T$ is in fact hereditarily frequently hypercyclic; but we are very far from being able to prove that. We would be already happy enough if we could weaken the assumptions of Theorem \ref{thm:ergodic} and prove that for any complex Banach space $X$, an operator $T\in\mathfrak L(X)$ is hereditarily frequently hypercyclic as soon as the $\TT$-eigenvectors of $T$ are spanning with respect to Lebesgue measure - but again, this seems out of reach for the moment. However, we do have the following result.

\begin{theorem}\label{HUFHC} Let $X$ be a complex  Fr\'echet space, and let $T\in\mathfrak L(X)$. If the $\TT$-eigenvectors of $T$ are perfectly spanning, then $T$ is hereditarily $\mathcal U$-frequently hypercyclic.
\end{theorem}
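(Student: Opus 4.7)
The plan is to exploit the strongest consequence of the perfect-spanning hypothesis recalled just before the statement: one can find a $T$-invariant Gaussian measure $\mu$ on $X$ with full support such that $T$ is \emph{weakly mixing} with respect to $\mu$. I will show that this single $\mu$ already witnesses hereditary $\mathcal U$-frequent hypercyclicity, in the stronger quantitative form that given any countable families $(V_p)$ of non-empty open sets and $(A_p) \subset 2^{\NN}$ with $\udens(A_p) > 0$, $\mu$-almost every $x \in X$ satisfies $\udens\bigl(\mathcal N_T(x,V_p)\cap A_p\bigr) \geq \udens(A_p)\,\mu(V_p)$ for every $p$.

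A routine countable-intersection argument reduces the task to a single pair $(A,V)$: with $\delta := \udens(A) > 0$ and $v := \mu(V) > 0$ (the latter because $\mu$ has full support), I must show that $\mu$-a.e.\ $x$ satisfies $\udens(\mathcal N_T(x,V) \cap A) \geq \delta v$. Pick an increasing sequence $(N_k)$ with $|A \cap [0,N_k-1]|/N_k \to \delta$ and set
\[
f_k(x) := \frac{1}{N_k}\sum_{n \in A \cap [0,N_k-1]} \mathbf{1}_V(T^n x) = \frac{|\mathcal N_T(x,V)\cap A\cap[0,N_k-1]|}{N_k}.
\]
By $T$-invariance of $\mu$, $\int f_k\, d\mu \to \delta v$, and using $\mu(T^{-n}V \cap T^{-m}V) = \mu(V \cap T^{-|n-m|}V)$ the variance rewrites as
\[
\mathrm{Var}_\mu(f_k) = \frac{1}{N_k^2}\sum_{n,m \in A \cap [0,N_k-1]} \bigl(\mu(V \cap T^{-|n-m|}V) - v^2\bigr).
\]

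The heart of the argument is to show that this variance tends to $0$. Set $g := \mathbf{1}_V - v \in L^2_0(\mu)$ and $d(\ell) := |\langle U_T^\ell g, g\rangle| = |\mu(V \cap T^{-\ell}V) - v^2|$ for $\ell \geq 0$. Each value $\ell \in [0,N_k-1]$ arises as $|n-m|$ for at most $2|A \cap [0,N_k-1]|$ ordered pairs $(n,m)$ in $(A \cap [0,N_k-1])^2$, so
\[
|\mathrm{Var}_\mu(f_k)| \leq \frac{2\,|A\cap[0,N_k-1]|}{N_k^2}\sum_{\ell=0}^{N_k-1} d(\ell) \leq \frac{2}{N_k}\sum_{\ell=0}^{N_k-1} d(\ell).
\]
Since $T$ is weakly mixing with respect to $\mu$, the Koopman--von Neumann criterion applied to $g \in L^2_0(\mu)$ gives $\tfrac{1}{N}\sum_{\ell=0}^{N-1} d(\ell) \to 0$; hence $\mathrm{Var}_\mu(f_k) \to 0$. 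Combined with $\int f_k\, d\mu \to \delta v$, this yields $f_k \to \delta v$ in $L^2(\mu)$, and a subsequence $f_{k_j}$ converges to $\delta v$ for $\mu$-a.e.\ $x$, forcing $\udens(\mathcal N_T(x,V)\cap A) \geq \delta v > 0$, as required.

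The only step requiring real care is the variance estimate, and the crucial input is that perfect spanning delivers a \emph{weakly} mixing measure rather than a merely ergodic one: this is exactly what activates Koopman--von Neumann and lets the sum over pairs in $A \times A$ be absorbed by the factor $|A\cap[0,N_k-1]|/N_k^2$. If one only had ergodicity, Birkhoff would yield Cesàro convergence over $[0,N-1]$ but not the density statement along $A$ that hereditary $\mathcal U$-frequent hypercyclicity demands, which is why Lebesgue spectrum (the driver of Theorem \ref{thm:ergodic}) was needed there but not here.
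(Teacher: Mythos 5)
Your proposal is correct and follows essentially the same route as the paper: both invoke the result of Bayart--Matheron that perfect spanning yields a $T$-invariant Gaussian measure with full support for which $T$ is weakly mixing, and then run a second-moment (variance) argument along the prescribed set $A$ -- which is exactly the content of the paper's Blum--Hanson-type Lemma \ref{BluHanson} and Corollary \ref{BH2} -- before extracting an almost surely convergent subsequence and intersecting over countably many pairs $(A_p,V_p)$. The only difference is bookkeeping: you normalize by the interval length $N_k$ chosen to realize $\udens(A)$ and bound the off-diagonal terms by counting pairs with a given difference, whereas the paper normalizes by the number of terms $k_i$ with $n_{k_i}=O(k_i)$ and splits the sum via a density-one set of good correlation times.
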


\smallskip
For the proof, we will need the following variant of \cite[Lemme 5]{Con77}.
\begin{lemma}\label{BluHanson} Let $(X, \mathcal B,\mu, T)$ be a measure-preserving dynamical system, and assume that $T$ is weakly mixing with respect to $\mu$. Let also $(n_k)_{k\geq 1}$ and $(k_i)_{i\geq 0}$ be two increasing sequence of integers. Assume that $n_{k_i}=O(k_i)$ as $i\rightarrow \infty$. Then, for any measurable set $V\subset X$, 
\[ \frac1{k_i}\sum_{k=1}^{k_i} \mathbf 1_V\circ T^{n_k}\xrightarrow{L^2} \mu(V)\quad\hbox{as $i\to\infty$}.\]
\end{lemma}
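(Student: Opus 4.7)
The plan is to reduce the claim to a spectral $L^2$ estimate: replacing $\mathbf{1}_V$ by $f := \mathbf{1}_V - \mu(V) \in L^2_0(\mu)$, it suffices to prove that $\bigl\| \tfrac{1}{k_i} \sum_{k=1}^{k_i} U_T^{n_k} f \bigr\|_{L^2}^2 \to 0$. I would first expand the square and use $T$-invariance of $\mu$ to rewrite each cross term as $\langle U_T^{n_k} f, U_T^{n_{k'}} f\rangle = c(n_k-n_{k'})$ when $k > k'$ (or its complex conjugate when $k<k'$), where $c(m) := \langle U_T^m f, f\rangle$ for $m \geq 0$. The diagonal contribution is just $\|f\|_{L^2}^2/k_i$ and tends to $0$. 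What remains is to bound
\[ \frac{2}{k_i^2} \Bigl|\sum_{m\geq 1} N_m(i)\, c(m)\Bigr|, \]
where $N_m(i)$ counts the pairs $(k,k')$ with $1 \leq k' < k \leq k_i$ and $n_k - n_{k'} = m$.

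The key combinatorial observation is that since $(n_k)$ is strictly increasing, for every $m \geq 1$ one has $N_m(i) \leq k_i$ (for each $k'$ there is at most one admissible $k$), and $N_m(i) = 0$ whenever $m > n_{k_i}$. Consequently the displayed expression is majorised by
\[ \frac{2}{k_i} \sum_{m=1}^{n_{k_i}} |c(m)| \;\leq\; \frac{2\,\sqrt{n_{k_i}}}{k_i}\, \Bigl(\sum_{m=1}^{n_{k_i}} |c(m)|^2\Bigr)^{1/2} \]
by Cauchy--Schwarz.

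The weak-mixing hypothesis now enters: by the standard spectral characterisation, weak mixing of $T$ with respect to $\mu$ forces $\tfrac{1}{N} \sum_{m=0}^{N-1} |c(m)|^2 \to 0$ for every $f \in L^2_0(\mu)$. Writing $\sum_{m=1}^{n_{k_i}} |c(m)|^2 = \varepsilon_i\, n_{k_i}$ with $\varepsilon_i \to 0$ and using the assumption $n_{k_i} \leq C k_i$ for some constant $C$ and all large $i$, the previous display becomes at most $2 \sqrt{\varepsilon_i}\, (n_{k_i}/k_i) \leq 2 C \sqrt{\varepsilon_i} \to 0$, which finishes the proof.

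I do not foresee any real obstacle here. The delicate point is purely bookkeeping: weak mixing only yields decay of $\sum |c(m)|^2$ in Ces\`aro mean, so nothing sharper than a Cauchy--Schwarz-type bound on $\sum |c(m)|$ is available, and it is precisely the linear control $n_{k_i} = O(k_i)$ which compensates the $\sqrt{n_{k_i}}$ loss from Cauchy--Schwarz and makes the estimate close. Under the stronger assumption of strong mixing, a completely arbitrary strictly increasing subsequence $(n_k)$ would suffice, which recovers the classical Blum--Hanson theorem; the present variant is tailored to the situation where only the slowly-growing sub-subsequence $(n_{k_i})$ behaves well.
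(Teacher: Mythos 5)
Your proof is correct. It follows the same overall second-moment strategy as the paper -- expand $\bigl\|\frac1{k_i}\sum_{k\le k_i} U_T^{n_k}f\bigr\|_2^2$ for $f=\mathbf 1_V-\mu(V)$, dispose of the diagonal, and control the off-diagonal correlation sum using the linear growth $n_{k_i}=O(k_i)$ -- but the way you exploit weak mixing is genuinely different. The paper invokes the Koopman--von Neumann form of weak mixing: there is a density-one set $D$ along which $\mu(V\cap T^{-d}V)-\mu(V)^2\to0$, and it splits the pairs $(r,s)$ according to whether $n_s-n_r\in D$, bounding the bad pairs by $k_i\cdot\#\bigl((0,n_{k_i}]\setminus D\bigr)=k_i\cdot o(n_{k_i})=o(k_i^2)$. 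You instead use the Wiener/spectral form, $\frac1N\sum_{m<N}|c(m)|^2\to0$ for $f\in L^2_0$, together with the multiplicity bound $N_m(i)\le k_i$ and Cauchy--Schwarz; the factor $\sqrt{n_{k_i}}$ lost in Cauchy--Schwarz is exactly absorbed by $n_{k_i}/k_i=O(1)$. Both arguments use the same combinatorial fact (for fixed $k'$ the map $k\mapsto n_k-n_{k'}$ is injective with range in $(0,n_{k_i}]$) and the same role for the hypothesis $n_{k_i}=O(k_i)$; yours has the minor advantage of avoiding the auxiliary density-one set and yielding an explicit quantitative bound $2C\sqrt{\varepsilon_i}$, while the paper's is closer in form to the classical Blum--Hanson argument it cites. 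One small point worth making explicit in your write-up: since $T$ need not be invertible, $U_T$ is only an isometry, but the identity $\langle U_T^{n_k}f,U_T^{n_{k'}}f\rangle=c(n_k-n_{k'})$ for $k>k'$ still holds precisely because isometries preserve inner products, and the equivalence of weak mixing with $\frac1N\sum_{m<N}|c(m)|^2\to0$ on $L^2_0$ remains valid in the non-invertible setting (e.g.\ via the Koopman--von Neumann lemma applied to the bounded sequence $|c(m)|$).
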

\begin{proof} The proof is similar to that of the classical Blum-Hanson Theorem \cite{BH}. We have
\begin{align*} \left\Vert \frac1{k_i}\sum_{k=1}^{k_i} \mathbf 1_V\circ T^{n_k}-\mu(V)\right\Vert_2^2&=
\frac1{k_i^2} \sum_{r,s=1}^{k_i} \Bigl( \mu\bigl( T^{-n_r}(V)\cap T^{-n_s}(V)\bigr) -\mu(V)^2\Bigr)\\
&=\frac2{k_i^2} \sum_{1\leq r<s\leq k_i} \Bigl( \mu\bigl( V\cap T^{-(n_s-n_r)}(V)\bigr) -\mu(V)^2\Bigl) +O\Bigl(\frac1{k_i}\Bigr).
\end{align*}

So it is enough to check that 
\begin{equation}\label{machin}
\sum_{1\leq r<s\leq k_i} \Bigl\vert \mu\bigl( V\cap T^{-(n_s-n_r)}(V)\bigr) -\mu(V)^2\Bigr\vert=o(k_i^2).
\end{equation}

In what follows, we put
\[ \gamma_{s,r} :=\Bigl\vert\mu\bigl( V\cap T^{-(n_s-n_r)}(V)\bigr) -\mu(V)^2\Bigr\vert.\]

Since $T$ is weakly mixing with respect to $\mu$, there is a set $D\subseteq \NN$ with ${\rm dens} (D)=1$ such that 
\begin{equation}\label{bimachin} \mu\bigl( V\cap T^{-d}(V)\bigr)-\mu(V)^2\to 0\quad\hbox{as $d\to\infty$, $d\in D$}.\end{equation}

Write 
\begin{align*}
\sum_{1\leq r<s\leq k_i} \gamma_{s,r}=\sum_{r=1}^{k_i}\sum_{\substack{r<s\leq k_i \\ n_s-n_r\in D}} \gamma_{s,r}+ \sum_{r=1}^{k_i}\sum_{\substack{r<s\leq k_i \\ n_s-n_r\not\in D}} \gamma_{s,r}=:\alpha_i+\beta_i.
\end{align*}

Using (\ref{bimachin}) and since $\gamma_{r,s}\leq 1$ for all $r,s$, it is not hard to check that \[\sum_{\substack{r<s\leq k_i \\ n_s-n_r\in D}} \gamma_{s,r}=o(k_i)\quad\hbox{ as $i\to\infty$, uniformly in $r$};\]
and it follows that $\alpha_i=o(k_i^2)$. Moreover, since ${\rm dens}(D)=1$, we see that 
\[ \beta_i\leq \sum_{r=1}^{k_i} \# \bigl\{ s\in (r,k_i] \, :\, n_s-n_r\not\in D\bigr\} \leq k_i\times  \# \Bigl( (0,n_{k_i}]\setminus D\Bigr) =k_i\times o(n_{k_i});\]
so $\beta_i=o(k_i^2)$ since we are assuming that $n_{k_i}=O(k_i)$. This proves (\ref{machin}).
\end{proof}

\begin{corollary}\label{BH2} Let $(X, \mathcal B,\mu, T)$ be a measure-preserving dynamical system, and assume that $T$ is weakly mixing with respect to $\mu$. Let also $A\subset \NN$ with $\overline{\rm dens}(A)>0$. If $V\subset X$ is a measurable set such that 
$\mu(V)>0$, then $\overline{\rm dens}\bigl(A\cap \mathcal N_T(x,V)\bigr)>0$ for $\mu$-almost every $x\in X$.
\end{corollary}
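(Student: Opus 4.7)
The plan is to deduce Corollary \ref{BH2} from Lemma \ref{BluHanson} by choosing the enumeration of $A$ and the subsequence $(k_i)$ cleverly so that the upper density $\overline{\mathrm{dens}}(A)$ gets ``plugged in'' as a lower bound.

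First, enumerate $A$ in increasing order as $A=\{n_1<n_2<\cdots\}$, and set $\delta:=\overline{\mathrm{dens}}(A)>0$. Pick an increasing sequence of integers $(N_i)$ such that $\#(A\cap[1,N_i])/N_i\to\delta$, and define $k_i:=\#(A\cap[1,N_i])$. Then by construction $k_i/N_i\to\delta$, and since $n_{k_i}$ is the $k_i$-th (i.e.\ largest) element of $A\cap[1,N_i]$, we have $n_{k_i}\le N_i$, hence
\[ n_{k_i}\le N_i = (1+o(1))\,\delta^{-1} k_i = O(k_i).\]
This is precisely the hypothesis needed to apply Lemma \ref{BluHanson} to the sequence $(n_k)$ and the subsequence $(k_i)$.

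By Lemma \ref{BluHanson}, we obtain
\[ \frac{1}{k_i}\sum_{k=1}^{k_i}\mathbf 1_V(T^{n_k}x)\xrightarrow[L^2]{i\to\infty}\mu(V). \]
Extracting a further subsequence $(k_{i_j})$, we may assume this convergence also holds for $\mu$-almost every $x\in X$. Fix such an $x$. Since $n_k\le n_{k_{i_j}}\le N_{i_j}$ for every $k\le k_{i_j}$, every index $k\le k_{i_j}$ with $T^{n_k}x\in V$ contributes an element of $A\cap\mathcal N_T(x,V)\cap[1,N_{i_j}]$, so
\[ \frac{\#\bigl(A\cap\mathcal N_T(x,V)\cap[1,N_{i_j}]\bigr)}{N_{i_j}} \ge \frac{k_{i_j}}{N_{i_j}}\cdot\frac{1}{k_{i_j}}\sum_{k=1}^{k_{i_j}}\mathbf 1_V(T^{n_k}x).\]
Letting $j\to\infty$, the right-hand side tends to $\delta\,\mu(V)>0$, which yields $\overline{\mathrm{dens}}\bigl(A\cap\mathcal N_T(x,V)\bigr)\ge \delta\mu(V)>0$, as required.

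There is no real obstacle here: Lemma \ref{BluHanson} does all the ergodic-theoretic work, and the only thing that needs some care is the bookkeeping that makes the upper density of $A$ appear as $\lim k_i/N_i$ and simultaneously guarantees $n_{k_i}=O(k_i)$. The one mild technical point is that $L^2$-convergence gives a.e.\ convergence only along a subsequence; but this is harmless, since $\overline{\mathrm{dens}}$ is a limsup and any subsequence of $(N_i)$ suffices to witness it.
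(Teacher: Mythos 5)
Your proof is correct and follows essentially the same route as the paper's: increasing enumeration of $A$, choice of a subsequence $(k_i)$ with $n_{k_i}=O(k_i)$ so that Lemma \ref{BluHanson} applies, passage to an a.e.\ convergent subsequence, and the observation that $\#\{k\le k_i : n_k\in\mathcal N_T(x,V)\}=\#\bigl([1,n_{k_i}]\cap A\cap\mathcal N_T(x,V)\bigr)$. Your version is merely a bit more explicit about how the $(k_i)$ are produced and yields the quantitative bound $\overline{\mathrm{dens}}\bigl(A\cap\mathcal N_T(x,V)\bigr)\ge\overline{\mathrm{dens}}(A)\,\mu(V)$, which the paper does not state but which follows from the same argument.
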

\begin{proof} Let $(n_k)_{k\geq 1}$ be the increasing enumeration of $A$. Since $\overline{\rm dens}(A)>0$, one can find an increasing sequence of integers $(k_i)_{i\geq 0}$ such that $n_{k_i}=O(k_i)$. By Lemma \ref{BluHanson}, one can find a subsequence $(k'_i)$ of $(k_i)$ such that $\frac1{k'_i}\sum_{k=1}^{k'_i} \mathbf 1_V\circ T^{n_k}\to  \mu(V)$ $\mu$-almost everywhere. In other words: for $\mu$-almost every $x\in X$,
\[ \frac1{k'_i}\, \#\bigl\{ k\in [1, k'_i]\, :\, n_k\in \mathcal N_T(x,V)\bigr\}\to \mu(V).\] Since $\#\bigl\{ k\in [1, k'_i]\, :\, n_k\in \mathcal N_T(x,V)\bigr\}=\#\bigl( [1, n_{k'_i}]\cap A\cap \mathcal N_T(x,V)\bigr)$ and $n_{k'_i}=O(k'_i)$, it follows that $\overline{\rm dens}\bigl(A\cap \mathcal N_T(x,V)\bigr)>0$, for $\mu$-almost every $x\in X$.
\end{proof}

\smallskip
\begin{proof}[Proof of Theorem \ref{HUFHC}] Assume that the $\TT$-eigenvectors of $T$ are perfectly spanning. By \cite{BAYMATHERGOBEST}, there exists a $T$-$\,$invariant Gaussian measure $\mu$ on $X$ with full support such that $T$ is weakly mixing with respect to $\mu$. Let $(A_i)_{i\in I}$ be a countable family of subsets of $\NN$ with positive upper density, and let $(V_i)_{i\in I}$ be a family of non-empty open subsets of $X$. It follows immediately from Corollary \ref{BH2} that one can find $x\in X$ (in fact, $\mu$-almost every $x\in X$ will do) such that $\overline{\rm dens}\bigl(A_i\cap \mathcal N_T(x,V_i)\bigr)>0$ for all $i\in I$.
\end{proof}

\smallskip Let us point out a consequence of Theorem \ref{HUFHC}.
\begin{corollary}\label{USD} If $X$ is a complex Banach space admitting an unconditional Schauder decomposition, then $X$ supports a hereditarily $\mathcal U$-frequently hypercyclic operator.
\end{corollary}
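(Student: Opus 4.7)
The plan is to reduce to Theorem \ref{HUFHC}: it suffices to exhibit on $X$ a single operator $T\in\mathfrak L(X)$ whose $\TT$-eigenvectors are perfectly spanning, for then hereditary $\mathcal U$-frequent hypercyclicity is immediate.

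To construct such a $T$, I use a standard variant of the Bayart--Grivaux scheme adapted to unconditional Schauder decompositions. Writing $X=\bigoplus_n X_n$ and letting $P_n:X\to X_n$ denote the (uniformly bounded) coordinate projections, I set $T:=I+B$, where $B$ is a ``block backward shift'' of the form $Bx=\sum_{n\geq 0}B_n(P_{n+1}x)$ with bounded linear maps $B_n:X_{n+1}\to X_n$ whose norms decay fast enough for $B$ to be bounded on $X$ and for the formal eigenvector series to converge. Choosing the $B_n$ so that each element of a prescribed dense countable subset of $X_n$ lies in the range of $B_n$ (with adequate preimage-norm control) allows one to produce, for every $v$ in a dense countable subset of $\bigcup_n X_n$, an analytic $\TT$-eigenvector field $E_v:\Gamma\to X$ on some open arc $\Gamma\subset\TT$ containing $1$, of the form $E_v(\lambda)=\sum_{k\geq 0}(\lambda-1)^k\,b_k(v)$, whose coefficient vectors $b_k(v)$ together span a dense linear subspace of $X$.

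Perfect spanning then follows from the identity principle: if $\phi\in X^*$ annihilates every $E_v(\lambda)$ for $\lambda\in\Gamma\setminus N$ ($N$ countable), the analytic map $\lambda\mapsto\langle\phi,E_v(\lambda)\rangle$ vanishes on a subset of $\Gamma$ with accumulation points and is therefore identically zero on $\Gamma$; comparing Taylor coefficients at $\lambda=1$ yields $\phi(b_k(v))=0$ for all $k,v$, hence $\phi=0$ by density. Applying Theorem \ref{HUFHC} to $T$ then delivers the conclusion. The main obstacle is the construction of $B$ with sufficiently rich range: unconditionality of the decomposition is essential both to make $B$ a bounded operator on all of $X$ via the uniformly bounded projections $P_n$, and to ensure that the constructed family of eigenvector fields spans $X$ rather than a proper subspace. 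Once $B$ is in place the remainder is a routine identity-principle argument followed by invocation of Theorem \ref{HUFHC}.
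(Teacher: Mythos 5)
Your argument is correct and follows the same route as the paper: reduce to Theorem \ref{HUFHC} by producing on $X$ an operator whose $\TT$-eigenvectors are perfectly spanning. The paper's proof simply cites \cite{DFGP12} for the existence of such an operator on any space with an unconditional Schauder decomposition, whereas you sketch (accurately, though only in outline) the block-backward-shift construction that underlies that reference.
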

\begin{proof} It is shown in \cite{DFGP12} that such a space $X$ supports an operator with a perfectly spanning set of $\TT$-eigenvectors.
\end{proof}

\begin{remark}\label{HUFHCbis} The proof of Theorem \ref{HUFHC} shows the following: if $T\in\mathfrak L(X)$ and if there exists a $T$-$\,$invariant Borel measure with full support $\mu$ on $X$ such that $T$ is weakly mixing with respect to $\mu$, then $T$ is hereditarily $\mathcal U$-frequently hypercyclic. It would be interesting to know if the weak mixing assumption can be replaced by ergodicity. Incidentally, we don't know any example of an operator $T$ admitting an ergodic measure with full support but no weakly mixing measure with full support.
\end{remark}

\section{The $T_1\oplus T_2$ frequent hypercyclicity problem}

One of the most intriguing open problems regarding frequent hypercyclicity is to decide whether $T\oplus T$ 
is frequently hypercyclic whenever $T$ is frequently hypercyclic \cite{BAYGRITAMS}. Note that, by \cite{EEM}, the corresponding question for \emph{$\mathcal U$-frequent hypercyclicity} has a positive answer. A related problem is Question \ref{q0}, which asks whether $T_1\oplus T_2$ 
is frequently hypercyclic for \emph{every} frequently hypercyclic operators $T_1$ and $T_2$. This question is also open if we replace frequent hypercyclicity by $\mathcal{U}$-frequent hypercyclicity. As observed in \cite{GMM21}, $T_1\oplus T_2$ is \emph{hypercyclic} as soon as $T_1$ and $T_2$ are $\mathcal U$-frequently hypercyclic. In the opposite direction, it seems that the best known result is  \cite[Theorem 7.33]{GMM21} which deals with infinite sums:
there exists a sequence $(T_n)_{n\geq 1}$ of frequently hypercyclic operators on $\ell_p(\NN)$, $p>1$, 
such that the operator $T=\bigoplus_{n\geq 1}T_n$ acting on the $\ell_p$-sum $X=\bigoplus_{n\geq 1}\ell_p(\NN)$
is not $\mathcal U$-frequently hypercyclic.

\smallskip
As mentioned in the introduction, things are much simpler if we consider hereditarily frequently hypercyclic operators. We now give the detailed proof for the convenience of the reader.

\begin{proposition}\label{prop:sumhfhc}
 Let $\mathcal F\subset 2^\NN$ be a Furstenberg family, and let $X_1,X_2$ be two Polish topological vector spaces. Let also $T_1\in\mathfrak L(X_1)$ and $T_2\in\mathfrak L(X_2)$. If $T_1$ is $\mathcal F$-hypercyclic and $T_2$ is hereditarily $\mathcal F$-hypercyclic, then  
 $T_1\oplus T_2$ is $\mathcal F$-hypercyclic. If both $T_1$ and $T_2$ are hereditarily $\mathcal F$-hypercyclic, then $T_1\oplus T_2$ is hereditarily $\mathcal F$-hypercyclic.
\end{proposition}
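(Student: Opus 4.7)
The plan is to unfold Definition \ref{def0} for $T_1\oplus T_2$ and feed the two hereditary hypotheses into each other, using only that $\mathcal F$ is hereditary upwards to handle arbitrary (non-product) open sets. The argument for the first assertion is already essentially described in the introductory discussion just before Definition \ref{def0}; the second is a two-step iteration of the same idea.

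For the first assertion, I would fix an $\mathcal F$-hypercyclic vector $x_1\in X_1$ for $T_1$ and a countable basis $(V_i)_{i\in\NN}$ of non-empty open subsets of $X_1\times X_2$ of product form $V_i=V_{i,1}\times V_{i,2}$. Setting $A_i:=\mathcal N_{T_1}(x_1,V_{i,1})\in\mathcal F$, hereditary $\mathcal F$-hypercyclicity of $T_2$ applied to the families $(V_{i,2})_{i\in\NN}$ and $(A_i)_{i\in\NN}$ yields some $x_2\in X_2$ with $\mathcal N_{T_2}(x_2,V_{i,2})\cap A_i\in\mathcal F$ for every $i$. Since
\[
\mathcal N_{T_1\oplus T_2}(x_1\oplus x_2,V_i)=\mathcal N_{T_1}(x_1,V_{i,1})\cap\mathcal N_{T_2}(x_2,V_{i,2})\supset \mathcal N_{T_2}(x_2,V_{i,2})\cap A_i,
\]
and $\mathcal F$ is hereditary upwards, $x_1\oplus x_2$ is $\mathcal F$-hypercyclic for $T_1\oplus T_2$.

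For the second assertion, fix a countable family $(W_i)_{i\in I}$ of non-empty open subsets of $X_1\times X_2$ together with $(A_i)_{i\in I}\subset\mathcal F$. The key reduction is that one may shrink each $W_i$ to a non-empty product open set $V_i=V_{i,1}\times V_{i,2}\subset W_i$; thanks to the upward closure of $\mathcal F$, it suffices to find $(x_1,x_2)\in X_1\times X_2$ with $\mathcal N_{T_1\oplus T_2}((x_1,x_2),V_i)\cap A_i\in\mathcal F$ for every $i$. Hereditary $\mathcal F$-hypercyclicity of $T_1$ applied to $(V_{i,1})_{i\in I}$ and $(A_i)_{i\in I}$ produces $x_1\in X_1$ and sets $B_i\in\mathcal F$ with $B_i\subset A_i$ and $B_i\subset\mathcal N_{T_1}(x_1,V_{i,1})$ for every $i$. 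Hereditary $\mathcal F$-hypercyclicity of $T_2$, now applied to $(V_{i,2})_{i\in I}$ and $(B_i)_{i\in I}$, produces $x_2\in X_2$ such that $\mathcal N_{T_2}(x_2,V_{i,2})\cap B_i\in\mathcal F$ for every $i$. These sets lie in $\mathcal N_{T_1\oplus T_2}((x_1,x_2),V_i)\cap A_i$, and the upward-closure of $\mathcal F$ concludes.

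There is essentially no obstacle here: the only subtlety is the reduction from arbitrary open sets $W_i$ to product opens $V_i$ in the second part, which is precisely where the hypothesis that $\mathcal F$ is hereditary upwards is used. Everything else is a direct bookkeeping of visit sets.
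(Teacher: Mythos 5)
Your proof is correct and follows essentially the same route as the paper: the first assertion is proved verbatim by the same basis-of-product-opens argument, and for the second assertion (which the paper dismisses as ``essentially the same'') your two-step iteration, feeding the sets $B_i$ produced by $T_1$ into the hereditary hypothesis for $T_2$, is exactly the intended argument. The reduction from arbitrary open sets $W_i$ to product opens via upward heredity of $\mathcal F$ is the right (and only) point requiring care, and you handle it correctly.
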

\begin{proof} Assume that $T_1$ is $\mathcal F$-hypercyclic and $T_2$ is hereditarily $\mathcal F$-hypercyclic. Let $(V_i)_{i\in I}$ be a countable basis of open sets for $X_1\times X_2$. Without loss of generality, we may assume that each $V_i$ has the form $V_i=V_{i,1}\times V_{i,2}$, where $V_{i,1}, V_{i,2}$ are 
open in $X_1, X_2$. Let $x_1\in X_1$ be any $\mathcal F$-hypercyclic vector for $T_1$. Then, for each $i\in I$, the set $A_i:=\mathcal N_{T_1}(x_1, V_{i,1})$ belongs to $\mathcal F$. Since $T_2$ is hereditarily $\mathcal F$-hypercyclic, it follows that one can find a vector $x_2\in X_2$ such that $B_i:=A_i\cap\, \mathcal N_{T_2}(x_2, V_{i,2})\in\mathcal F$ for all $i\in I$. Then $x:=(x_1,x_2)$ is a frequently hypercyclic vector for $T:=T_1\oplus T_2$ since $\mathcal N_T(x, V_i)\supset B_i$ for all $i\in I$. 

The proof of the second part of the proposition is essentially the same.
\end{proof}

\medskip
Using weighted backward shifts on $c_0(\mathbb Z_+)$, we now find a counterexample to the $T_1\oplus T_2$ frequent hypercyclicity problem, and thus answer Question \ref{q0} in the negative. This counterexample also solves the $T_1\oplus T_2$ $\mathcal{U}$-frequent hypercyclicity problem.

\begin{theorem}\label{thm:sumfhc}
 There exist two frequently hypercyclic weighted shifts $B_w, B_{w'}$ on $c_0(\mathbb Z_+)$ such that $B_w\oplus B_{w'}$ is not $\mathcal U$-frequently hypercyclic.
\end{theorem}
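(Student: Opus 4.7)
The idea is to design $w$ and $w'$ so that the return times of $B_w$ and of $B_{w'}$ to a single small neighborhood of a fixed nonzero vector are supported on disjoint subsets of $\NN$. I would partition $\NN$ into alternating blocks $J_{2k-1}, J_{2k}$ of growing length $\ell_k \to \infty$, with $A := \bigcup_k J_{2k-1}$ and $A' := \bigcup_k J_{2k}$ each of lower density $\tfrac{1}{2}$, and choose the weights so that $W_n := w_1 \cdots w_n$ grows from $1$ up to $c^{\ell_k}$ across each $J_{2k-1}$, is reset to $1$ at the first index of each $J_{2k}$ by a single very small weight value $c^{-\ell_k}$, and then stays equal to $1$ throughout the remainder of $J_{2k}$. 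Symmetrically, $w'$ is designed so that $W'_n$ grows on $A'$-blocks and is reset to $1$ on $A$-blocks. Both weight sequences are bounded by $c$, so $B_w, B_{w'} \in \mathfrak L(c_0(\ZZ_+))$.

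Next, I would verify that each shift is frequently hypercyclic. This cannot be done directly via the FHC Criterion (Theorem \ref{thm:operatorfhc}), because $W_n \equiv 1$ on $A'$ makes $\sum_n S_w^n e_k$ non-convergent in $c_0(\ZZ_+)$ (its coefficients equal $W_k$ for every $n$ with $n+k \in A'$). Instead, I would construct an FHC vector explicitly: for a dense sequence $(u_p) \subset c_{00}(\ZZ_+)$ with $\mathrm{supp}(u_p) \subseteq [0, K_p]$, use Lemma \ref{prop:subsets} to extract pairwise disjoint, well-separated, positive-lower-density subsets $B_p$ of the ``deep interior'' of $A$-blocks, namely positions $n$ in the upper half of some $J_{2k-1}$ with $n + K_p$ still inside $J_{2k-1}$, and set $x := \sum_p \sum_{n \in B_p} S_w^n u_p$, where $S_w$ is the formal right inverse of $B_w$. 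On the deep interior one has $\|S_w^n u_p\|_\infty \le \|u_p\|_\infty c^{K_p} / W_n \le \|u_p\|_\infty c^{K_p - \ell_k/2}$, which is summable over $n \in B_p$ by the rapid growth of $(\ell_k)$; the well-separation guarantees that the supports do not collide, so $x \in c_0(\ZZ_+)$ and $B_w^n x$ is close to $u_p$ for $n \in B_p$. The same argument applies to $B_{w'}$.

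Finally, for the failure of $\mathcal U$-FHC of $B_w \oplus B_{w'}$, I would fix $V := \{z \in c_0(\ZZ_+) : |z_0 - 1| < \tfrac{1}{4}\}$ and argue as follows. If $(x,y)$ were $\mathcal U$-FHC for $B_w \oplus B_{w'}$, then $\udens\bigl(\mathcal N_{B_w}(x, V) \cap \mathcal N_{B_{w'}}(y, V)\bigr) > 0$. But $B_w^n x \in V$ means $|W_n x_n - 1| < \tfrac{1}{4}$, so for $n \in A'$ (where $W_n = 1$) one needs $|x_n| > \tfrac{3}{4}$; since $x \in c_0(\ZZ_+)$, this holds for only finitely many $n$. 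Hence $\mathcal N_{B_w}(x, V) \cap A'$ is finite, and symmetrically $\mathcal N_{B_{w'}}(y, V) \cap A$ is finite. As $A$ and $A'$ partition $\NN$, every $n$ in the intersection lies in one of these two finite sets, so the intersection is itself finite --- a contradiction.

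The main obstacle is Step 2, the FHC verification. The design $W_n \equiv 1$ on $A'$, which is essential for the clean obstruction in Step 3, breaks the standard FHC Criterion, so one must work with partial sums restricted to density-positive sets $B_p$ living in the deep interior of $A$-blocks. The quantitative balance among the block lengths $(\ell_k)$, the target support sizes $(K_p)$, and the separations $(N_p)$ produced by Lemma \ref{prop:subsets} must be chosen simultaneously for both $w$ and $w'$ so that the approximation errors are summable, $x$ (and the analogous vector for $B_{w'}$) lies in $c_0(\ZZ_+)$, and the orbit behavior is the right one. This quantitative balancing is the technical heart of the proof.
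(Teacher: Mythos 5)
Your Step 3 is correct, and in fact cleaner than the paper's argument (you obtain finiteness of the joint visit set rather than merely upper density zero). The genuine gap is in Step 2, the frequent hypercyclicity of each individual shift, and it sits exactly where all the difficulty of this theorem lies. Writing $x=\sum_p\sum_{n\in B_p}S_w^nu_p$ and taking $n\in B_p$, the error $B_w^nx-u_p$ contains the term $S_w^{m-n}u_q$ for \emph{every} $m\in B_q$ with $m>n$; its coordinate at position $j+(m-n)$ equals $(u_q)_j\,W_j/W_{j+m-n}$, and by disjointness of supports there is no cancellation. So you need $W_{j+m-n}$ to be large for all $j\in\supp(u_q)$: you must control the partial products at the \emph{differences} $m-n$ of the return times, not merely at the return times themselves. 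Placing $B_p$ in the deep interior of the $A$-blocks controls $W_m$ and $W_n$ but says nothing about $W_{m-n}$: with your design, $(m-n)+[0,K_q]$ will routinely meet $A'$, where $W\equiv 1$ and $W_j\ge 1$, producing coordinates of modulus at least $|(u_q)_j|$ that do not tend to $0$; thus $B_w^nx$ is not close to $u_p$. Lemma \ref{prop:subsets} only provides disjointness and the separation $|m-n|\ge N_p+N_q$; it gives no control on where $m-n$ falls relative to the block structure. Worse, the difference-set condition you would need ($B_q-B_p$ landing deep inside $A$-blocks) looks unattainable for sets of positive lower density when $A'$ is a union of intervals of length comparable to their position. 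Note also that your weight sequence contains the values $c^{-\ell_k}\to 0$, so it is not bounded below and the characterization of Lemma \ref{lem:fhcws} is unavailable.

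This difference-set difficulty is precisely what the paper's construction is engineered around. There, all weights stay in $[1/2,2]$, Lemma \ref{lem:fhcws} applies, and its condition (b) is literally a lower bound on $w_1\cdots w_{m-n}$ over pairs $m\in E_p$, $n\in E_q$. The return sets are taken of the special form $E_p=\bigcup_{u\in A_{2p}}\bigl(I_u^{a,\veps}\cap b_p\NN\bigr)$ so that the differences land either in $b_p\NN$ (when $p=q$) or in $I_u^{a,\veps}-I_v^{a,\veps}\subset I_u^{a,4\veps}$ (when $p\neq q$), and the weight sequence is defined as a maximum of auxiliary sequences $(\omega^p_n)$ and $(w^{u,v}_n)$ whose sole purpose is to make the products large on exactly those difference sets. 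The price is that the set where both $w_1\cdots w_n$ and $w'_1\cdots w'_n$ are large is only shown to have upper density zero rather than being finite, but this still rules out $\mathcal U$-frequent hypercyclicity of $B_w\oplus B_{w'}$. To repair your argument you would have to redesign both the weights and the return sets along these lines; as written, the construction of the frequently hypercyclic vector does not go through.
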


From this theorem and Proposition \ref{prop:sumhfhc}, we immediately deduce the following result, which is of course to be compared with Corollary \ref{shiftlp}. 

\begin{corollary} \label{corc0}
 There exist weighted shifts  on  $c_0(\mathbb Z_+)$ which are frequently hypercyclic but not hereditarily frequently hypercyclic.
\end{corollary}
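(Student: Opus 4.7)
The plan is to construct two weighted backward shifts whose weight sequences are multiplicatively inverse to each other, i.e.\ $w'_n=1/w_n$ for every $n\geq 1$, so that the cumulative products $\pi_n(w):=w_1\cdots w_n$ always satisfy
\[\pi_n(w)\,\pi_n(w')=\prod_{i=1}^n w_iw'_i=1.\]
This identity will produce an immediate obstruction to the $\mathcal U$-frequent hypercyclicity of $B_w\oplus B_{w'}$, while a suitable block structure on the weights will ensure that each of $B_w$ and $B_{w'}$ is individually frequently hypercyclic on $c_0(\mathbb Z_+)$.

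Concretely, I would partition $\mathbb Z_+$ into consecutive intervals $I_1,J_1,I_2,J_2,\dots$ of geometrically growing lengths, so that both $\bigcup_k I_k$ and $\bigcup_k J_k$ have positive lower density, and set $w_n=2$ on $\bigcup_k I_k$, $w_n=1/2$ on $\bigcup_k J_k$, and $w'_n=1/w_n$. For every $j\geq 1$, the set $\{n\in\mathbb Z_+: w_{n+1}\cdots w_{n+j}=2^j\}$ then contains all points of the $I_k$'s lying at distance at least $j$ from the right endpoint of $I_k$, so it has positive lower density; the analogous statement holds for $w'$ using the $J_k$'s. Starting from this one builds an FHC vector for $B_w$ in $c_0(\mathbb Z_+)$ by the standard recipe: pick a countable dense sequence $(y^{(p)})_{p\geq 1}$ of finitely-supported vectors with support radii $j_p$, apply Lemma~\ref{prop:subsets} to the above ``good sets'' to obtain pairwise disjoint sets $A_p$ of positive lower density satisfying suitable separation, and define
\[x:=\sum_{p\geq 1}\sum_{n\in A_p}S_w^n\,y^{(p)},\]
where $S_w$ is the formal right-inverse of $B_w$ on finitely-supported vectors. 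The largeness of the weight-products on each $A_p$ together with the separation of the $A_p$'s should guarantee both that $x\in c_0(\mathbb Z_+)$ and that $A_p\subset\mathcal N_{B_w}(x,V_p)$ for sufficiently small neighbourhoods $V_p$ of $y^{(p)}$. The same argument, symmetrically, produces an FHC vector for $B_{w'}$.

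The non-$\mathcal U$FHC statement is then almost immediate from the identity $\pi_n(w)\pi_n(w')=1$. Suppose $(x,y)\in c_0(\mathbb Z_+)\oplus c_0(\mathbb Z_+)$ were a $\mathcal U$-frequently hypercyclic vector for $B_w\oplus B_{w'}$, and set $V:=\{z\in c_0(\mathbb Z_+): \|z-e_0\|_\infty<1/2\}$. Applying the definition to the non-empty open set $V\times V$ produces a subset of $\mathbb Z_+$ of positive upper density on which $|x_n\pi_n(w)-1|<1/2$ and $|y_n\pi_n(w')-1|<1/2$; in particular
\[|x_ny_n|\,\pi_n(w)\,\pi_n(w')>\tfrac14\]
on that set, and using $\pi_n(w)\pi_n(w')=1$ we get $|x_ny_n|>1/4$ on an infinite set of integers. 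This contradicts the fact that $x,y\in c_0(\mathbb Z_+)$ forces $x_ny_n\to 0$.

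The main obstacle will be the explicit FHC construction on $c_0(\mathbb Z_+)$: unlike the $\ell_p$-case, positive lower density of the weight-product sets does not by itself guarantee that the defining series for $x$ converges in the supremum norm to a vector of $c_0(\mathbb Z_+)$. A careful calibration of the block lengths $|I_k|,|J_k|$, of the enumeration $(y^{(p)})$ and of the separation parameters extracted from Lemma~\ref{prop:subsets} is needed so that, at each coordinate $k$, only few pairs $(p,n)$ contribute, with contributions decaying geometrically through the factors $2^{-(k-n)}$ coming from the weights. Once this quantitative bookkeeping is in place, the argument described above closes cleanly.
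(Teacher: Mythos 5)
There is a fatal gap: with $w'_n=1/w_n$ the two shifts $B_w$ and $B_{w'}$ cannot both be frequently hypercyclic, so the pair you propose does not exist. Your obstruction argument is correct but proves far too much: since the visit set of $(x,y)$ to $V\times V$ is contained in $\{n:\ |x_ny_n|>1/4\}$, which is \emph{finite} for $x,y\in c_0(\ZZ_+)$, it shows that $B_w\oplus B_{w'}$ is not even \emph{hypercyclic}. But, as recalled in the discussion following Question \ref{q0} (citing \cite{GMM21}), the direct sum of two $\mathcal U$-frequently hypercyclic operators is always hypercyclic; since frequent hypercyclicity implies $\mathcal U$-frequent hypercyclicity, at least one of $B_w,B_{w'}$ must fail to be frequently hypercyclic. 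So the reciprocal-weights idea is internally inconsistent, and no calibration of the block lengths can save it.

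The step that actually fails is the one you defer to ``quantitative bookkeeping''. For a weighted shift on $c_0(\ZZ_+)$ with weights bounded above and below, frequent hypercyclicity is \emph{not} implied by the existence of positive-lower-density sets along which $w_1\cdots w_n\to\infty$: by Lemma \ref{lem:fhcws} (an equivalence, not just a sufficient condition) one also needs the products $w_1\cdots w_{m-n}$ over \emph{differences} of elements of the chosen sets to be large. Lemma \ref{prop:subsets} only yields $|m-n|\ge N_p+N_q$ and gives no control on $w_1\cdots w_{m-n}$; in your block construction a difference $m-n$ can land where the cumulative product is exponentially small, and then $S_w^{\,m}y^{(q)}$ contributes a large error to $B_w^nx$, so the approximation $B_w^nx\approx y^{(p)}$ breaks down. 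Moreover this difference-set condition for $w$ requires $w_1\cdots w_k\ge M$ on a suitable difference set, while the same condition for $w'=1/w$ requires $w_1\cdots w_k\le 1/M$ on another one; these two demands are exactly what cannot be reconciled. This is why the paper's proof of Theorem \ref{thm:sumfhc} uses the much more delicate construction with the intervals $I_u^{a,\veps}$ and the auxiliary sequences $w^p$, $\omega^p$, $w^{u,v}$: there both cumulative products stay $\ge 1$ everywhere (so the direct sum remains hypercyclic, as it must), and the obstruction is only that the set where \emph{both} products are simultaneously large has upper density tending to $0$, which kills $\mathcal U$-frequent hypercyclicity of the sum without contradicting anything.
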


Let us also point out another consequence of Theorem \ref{thm:sumfhc} and Remark \ref{HUFHCbis}. 
\begin{corollary} There exist frequently hypercyclic weighted shifts on $c_0(\ZZ_+)$ which admit no weakly mixing invariant measure with full support, and hence no ergodic invariant Gaussian measure with full support.
\end{corollary}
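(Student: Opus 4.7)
The plan is to deduce this corollary directly from Theorem \ref{thm:sumfhc}, Remark \ref{HUFHCbis}, and Proposition \ref{prop:sumhfhc}, via a short contradiction argument. I would take the two frequently hypercyclic weighted shifts $B_w, B_{w'}$ on $c_0(\ZZ_+)$ furnished by Theorem \ref{thm:sumfhc} and show that neither of them can admit a weakly mixing invariant Borel measure with full support.

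Suppose, for contradiction, that $B_w$ admits a weakly mixing invariant measure $\mu$ with full support. By Remark \ref{HUFHCbis}, this would force $B_w$ to be hereditarily $\mathcal U$-frequently hypercyclic. Since $B_{w'}$ is frequently hypercyclic, it is in particular $\mathcal U$-frequently hypercyclic. Applying Proposition \ref{prop:sumhfhc} with $\mathcal F$ taken to be the family of subsets of $\NN$ with positive upper density, we would conclude that $B_w\oplus B_{w'}$ is $\mathcal U$-frequently hypercyclic, directly contradicting Theorem \ref{thm:sumfhc}. The symmetric argument, interchanging the roles of $B_w$ and $B_{w'}$, rules out a weakly mixing invariant measure of full support for $B_{w'}$ as well; so both shifts witness the first assertion.

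For the ``hence'' clause concerning Gaussian measures, I would invoke the classical fact that, in a Gaussian linear dynamical system, ergodicity of the Koopman operator on $L^2_0(\mu)$ is equivalent to weak mixing (this is a standard result in the theory of Gaussian dynamical systems; it can be extracted from the spectral analysis of $U_T$ on $L^2(\mu) = \bigoplus_{k\geq 0} \overline{\wick{\mathcal G^k}}$ used in the proof of Proposition \ref{CFSlike}, since ergodicity means the absence of a nonzero $U_T$-invariant vector in $L^2_0(\mu)$, and for Gaussian measures this forces the spectral measures of functionals in $X^*$ to be continuous). Thus an ergodic invariant Gaussian measure with full support would automatically be weakly mixing, which has just been excluded. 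The corollary is essentially a bookkeeping exercise combining the preceding results, and the only non-routine ingredient is the Gaussian ``ergodicity equals weak mixing'' equivalence, which is the closest thing to an obstacle.
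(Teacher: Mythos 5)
Your proposal is correct and follows exactly the derivation the paper intends: Theorem \ref{thm:sumfhc} plus Remark \ref{HUFHCbis} plus Proposition \ref{prop:sumhfhc} (with $\mathcal F$ the sets of positive upper density) rule out a weakly mixing invariant measure with full support for either shift, and the Gaussian clause follows from the classical equivalence of ergodicity and weak mixing for Gaussian dynamical systems. The paper states this corollary without a written proof precisely because it is the bookkeeping argument you describe, so there is nothing to add.
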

This is not really a new result: the Gaussian part has been known since \cite{BAYGRILONDON} (with arguably a more complicated example than the one 
we are about to present here); and it was proved in \cite{GriM} that there exist frequently hypercyclic \emph{bilateral} weighted shifts on $c_0(\ZZ)$ which admit no ergodic invariant measure with full support.

\smallskip
In the proof of Theorem \ref{thm:sumfhc}, we shall use the following lemma, which gives a simple characterization of frequent hypercyclicity for weighted shifts  on $c_0(\mathbb Z_+)$ whose weight sequence is bounded below (see \cite{BAYRUZSA} or \cite[Corollary 34]{BoGre18}).

\begin{lemma}\label{lem:fhcws}
 Let $w=(w_n)_{n\geq 1}$ be a bounded sequence of positive real numbers and assume that $\inf_{n\geq 1} w_n>0$. 
 Then the associated weighted shift $B_w$ is frequently hypercyclic on $c_0(\ZZ_+)$ if and only if there exist a sequence $(M(p))_{p\geq 1}$ of positive real numbers tending to infinity 
 and a sequence $(E_p)_{p\geq 1}$ of disjoint subsets of $\NN$ with positive lower density such that 
 
 \smallskip
 \begin{itemize}
  \item[\rm (a)] $\displaystyle\lim_{n\to\infty,\ n\in E_p}w_1\cdots w_n=\infty$ for all $p\geq 1$;
  \item[\rm (b)] for all $p,q\geq 1,$ for all $m\in E_p$ and $n\in E_q$ with $m>n,$
  $$w_1\cdots w_{m-n}\geq \max(M(p),M(q)).$$
 \end{itemize}
\end{lemma}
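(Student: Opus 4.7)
The plan is to prove both implications using the atomic construction $x = \sum_{p \geq 1} \sum_{n \in E_p} S_w^n z_p$ that is standard in weighted-shift dynamics on $c_0(\mathbb Z_+)$, together with the separation mechanism of Lemma \ref{prop:subsets}. Here $S_w$ denotes the formal inverse of $B_w$, so $S_w^n e_k = (w_{k+1} \cdots w_{k+n})^{-1} e_{k+n}$. The assumptions $\inf_j w_j > 0$ and $\sup_j w_j < \infty$ yield the handy comparison
\[ w_{k+1} \cdots w_{k+j} \geq (\inf w/\sup w)^{k} \, w_1 \cdots w_j \quad\text{for all } k, j \geq 0,\]
so that control of the partial products $(w_1 \cdots w_n)_n$ translates into uniform control of $\|S_w^n z\|_\infty$ for any finitely supported $z$; this is the mechanism through which conditions (a) and (b) become usable.

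For the ``only if'' direction, fix a frequently hypercyclic vector $x \in c_0(\mathbb Z_+)$ and, for each $p \geq 2$, set $A_p := \{n \in \NN : \|B_w^n x - p\, e_0\|_\infty < 1/p\}$; by frequent hypercyclicity each $A_p$ has positive lower density. Apply Lemma \ref{prop:subsets} to produce pairwise disjoint $E_p \subset A_p$ of positive lower density (with, say, threshold $N_p := 1$). Reading the $0$-th coordinate of $B_w^n x$ gives $|w_1 \cdots w_n\, x_n - p| < 1/p$ for $n \in E_p$; since $x_n \to 0$, condition (a) follows. For (b), given $m \in E_p$ and $n \in E_q$ with $m > n$, the identity $(B_w^m x)_0 = w_1 \cdots w_{m-n} \cdot (B_w^n x)_{m-n}$, together with the bounds $|(B_w^m x)_0| > p - 1/p$ and $|(B_w^n x)_{m-n}| < 1/q$ (the latter because $(q\, e_0)_{m-n} = 0$ when $m > n$), yields $w_1 \cdots w_{m-n} > (p - 1/p)\, q \geq \max(p, q)$. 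Thus $M(p) := p$ works.

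For the ``if'' direction, start from sets $E_p$ and $M(p)$ satisfying (a) and (b), and first refine via Lemma \ref{prop:subsets} to a large-gap version with $|m - n| \geq N_p + N_q$ for $m \in E_p, n \in E_q$, $m \neq n$ (still of positive lower density, and still satisfying (a) and (b) since both conditions pass to subsets). Choose a countable family $(z_p)$ of finitely supported vectors, dense in $c_0(\mathbb Z_+)$, using countably many copies of a fixed countable dense set and such that $\|z_p\|_\infty (\sup w/\inf w)^{N_p}/M(p) \leq 2^{-p}$ for every $p$. By removing finitely many elements from each $E_p$ (preserving lower density), arrange $w_1 \cdots w_n \geq M(p)$ for all $n \in E_p$; the atoms $S_w^n z_p$ then have mutually disjoint supports (by the gap condition) and individual $c_0$-norms $\leq 2^{-p}$, so $x := \sum_{p, n} S_w^n z_p$ converges in $c_0$. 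Now fix $p_0$ and $n \in E_{p_0}$, and expand
\[ B_w^n x = z_{p_0} + \sum_{p \geq 1} \sum_{\substack{m \in E_p\\ m > n}} S_w^{m-n} z_p + \sum_{p \geq 1} \sum_{\substack{m \in E_p \\ m < n}} B_w^{n-m} z_p.\]
The backward sum vanishes because large gaps force $n - m > N_p$, pushing the support of $z_p$ off the nonnegative axis. The forward sum has disjoint supports; by condition (b) applied as $w_1 \cdots w_{m-n} \geq \max(M(p), M(p_0))$, each summand has norm at most $\|z_p\|_\infty (\sup w/\inf w)^{N_p}/\max(M(p), M(p_0))$. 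Splitting the resulting sup into $p \leq K$ (bounded by $C_K/M(p_0) \to 0$ as $p_0 \to \infty$) and $p > K$ (bounded by $2^{-K}$) shows $\|B_w^n x - z_{p_0}\|_\infty \leq \delta(p_0)$ with $\delta(p_0) \to 0$. Density of $(z_p)$ then yields FHC of $x$: every non-empty open $V \subset c_0(\mathbb Z_+)$ contains $B(z_{p_0}, \delta(p_0))$ for some sufficiently large $p_0$, so $E_{p_0} \subset \mathcal N_{B_w}(x, V)$ has positive lower density.

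The main obstacle is the parameter bookkeeping in the ``if'' direction: we must pair the dense family $(z_p)$ with the indices $p$ so that the $w$-dependent size $\|z_p\|_\infty (\sup w/\inf w)^{N_p}$ stays controlled by $M(p)$. Since $M(p)$ is only assumed to tend to infinity (possibly slowly) while $(z_p)$ must remain dense in $c_0(\mathbb Z_+)$, this is achieved by using countably many copies of each element of a fixed countable dense set and assigning the ``larger'' ones only to indices $p$ for which $M(p)$ has already caught up; the robustness of positive lower density under removal of finitely many elements is what lets all the estimates close up without disturbing the $E_p$'s.
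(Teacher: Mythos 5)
The paper does not actually prove Lemma \ref{lem:fhcws}: it is quoted from the literature (see \cite{BAYRUZSA} and \cite[Corollary 34]{BoGre18}), so there is no internal proof to compare yours with. Your argument is a correct, self-contained reconstruction of the standard one. In the ``only if'' direction, reading the $0$-th coordinate via the identity $(B_w^m x)_0 = w_1\cdots w_{m-n}\,(B_w^n x)_{m-n}$ on the visit sets of the balls $B(p\,e_0,1/p)$ does yield (a) and (b) with $M(p)=p$ (note that these visit sets are already pairwise disjoint for $p\geq 2$, so the appeal to Lemma \ref{prop:subsets} is harmless but not needed there). In the ``if'' direction, the disjoint-support expansion of $B_w^n x$ together with the comparison $w_{k+1}\cdots w_{k+j}\geq (\inf w/\sup w)^{k}\, w_1\cdots w_j$ closes all the estimates, and the order of the choices (first $z_p$ and hence $N_p$, then the refinement of the $E_p$'s by Lemma \ref{prop:subsets}, then the deletion of finitely many small elements) is coherent.

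One point should be repaired in the write-up: the displayed requirement $\|z_p\|_\infty(\sup w/\inf w)^{N_p}/M(p)\leq 2^{-p}$ cannot in general be met by a dense family, since $M(p)$ is only assumed to tend to infinity and may grow much more slowly than $2^p$; then $2^{-p}M(p)\to 0$ and no fixed nonzero element of the dense set could be assigned to infinitely many indices $p$. Fortunately, nothing in your argument uses geometric decay: all that is needed is that $\eta_p:=\|z_p\|_\infty(\sup w/\inf w)^{N_p}/M(p)$ tends to $0$ (this already gives convergence of $x$ in $c_0(\ZZ_+)$ and makes the tail $\sup_{p>K}\eta_p$ small in the final estimate), and this weaker requirement is exactly what your closing paragraph arranges --- for instance, assign the $i$-th term $u_i$ of an enumeration of the dense set with infinite repetition to an index $p_i$ so large that $M(q)\geq i\,\|u_i\|_\infty(\sup w/\inf w)^{N(u_i)}$ for all $q\geq p_i$, and pad the remaining indices with the zero vector. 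With the bound $2^{-p}$ replaced by such a sequence $\eta_p\to 0$, the proof is complete.
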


 \smallskip 
 We will also need the following elementary lemma, which is almost the same as \cite[Lemma 6.1]{BAYRUZSA}. For $\veps>0$, $a>1$ and $u\in\NN,$ we let
\[ I_u^{a,\veps}:=[(1-\veps)a^u,(1+\veps)a^u].\]
\begin{lemma}\label{lem:sets}
 There exist $\veps>0$ and $a>1$ such that, for any integers $u>v\geq 1,$
 $$I_u^{a,4\veps}\cap I_v^{a,4\veps}=\emptyset,\quad I_u^{a,2\veps}-I_v^{a,2\veps}\subset I_u^{a,4\veps}\quad{\rm and}\;\; I_v^{a,\veps}+[-v,v]\subset I_v^{a,2\veps}.$$
\end{lemma}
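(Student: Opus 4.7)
The plan is to translate each of the three inclusions into a concrete inequality between $\veps$ and $a$, and then verify that all three can be satisfied simultaneously by choosing $\veps$ small and $a$ correspondingly large. I would just go through the three conditions in order.

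First, the disjointness $I_u^{a,4\veps}\cap I_v^{a,4\veps}=\emptyset$ for $u>v\ge 1$ amounts to $(1-4\veps)a^u>(1+4\veps)a^v$; since $u-v\ge 1$, it is enough to require $(1-4\veps)a>1+4\veps$, i.e. a lower bound on $a$ of the form $a>(1+4\veps)/(1-4\veps)$, which is automatic as soon as $\veps$ is small compared to $a-1$.

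Next, I would compute that $I_u^{a,2\veps}-I_v^{a,2\veps}$ is the closed interval
\[ \bigl[(1-2\veps)a^u-(1+2\veps)a^v,\,(1+2\veps)a^u-(1-2\veps)a^v\bigr]. \]
The right-hand endpoint clearly lies below $(1+4\veps)a^u$, so the nontrivial condition comes from the left endpoint, which must be at least $(1-4\veps)a^u$; this rearranges to $2\veps a^{u-v}\ge 1+2\veps$. Using $u-v\ge 1$, it is ensured by $a\ge (1+2\veps)/(2\veps)$, i.e. $a$ must grow roughly like $1/\veps$.

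Finally, the thickening $I_v^{a,\veps}+[-v,v]\subset I_v^{a,2\veps}$ is equivalent to $\veps a^v\ge v$ for every integer $v\ge 1$. I would note that $t\mapsto t/a^t$ has derivative $(1-t\ln a)/a^t$ and is therefore decreasing on $[1,\infty)$ once $a\ge e$, so for such $a$ the worst case is $v=1$, reducing the condition to $\veps a\ge 1$.

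Collecting the three inequalities, any pair with $a\ge e$, $a>(1+4\veps)/(1-4\veps)$, $a\ge (1+2\veps)/(2\veps)$ and $\veps a\ge 1$ works; a concrete choice such as $\veps=10^{-2}$, $a=10^{2}$ fits all three. There is really no obstacle here: the binding requirements are the second and third, both of which force $a$ to scale like $1/\veps$, so the three conditions are trivially compatible.
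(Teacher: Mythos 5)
Your proof is correct and follows essentially the same route as the paper: each of the three conditions is reduced to an elementary inequality in $\veps$ and $a$ (disjointness to $(1+4\veps)<(1-4\veps)a$, the difference-set inclusion to $2\veps a\ge 1+2\veps$ via the worst case $u-v=1$, and the thickening to $\veps a^v\ge v$), after which one checks compatibility; the paper takes $\veps=1/8$ and $a$ large, while you take $\veps=10^{-2}$, $a=10^{2}$ and make explicit the monotonicity of $v\mapsto v/a^v$ that the paper leaves implicit. No issues.
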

\begin{proof}
 Provided that $\veps\in (0,1/4),$ the first condition is equivalent to saying that
 $$(1+4\veps)a^u<(1-4\veps)a^{u+1}\qquad\hbox{for all $u\geq 1$},$$
 \textit{i.e.}
 \[ \frac{1+4\veps}{(1-4\veps)a}<1.\]
 
 The second one is satisfied as soon as
 $$(1-2\veps)a^u-(1+2\veps)a^{u-1}\geq (1-4\veps)a^u\qquad\hbox{for all $u\geq 2,$}$$
 which is equivalent to
 \[ \frac{2\veps a}{1+2\veps}\geq 1.\]
 
 The last condition is satisfied if $(1-\veps)a^v-v\geq (1-2\veps)a^v$ 
  for all $v\geq 1,$  in other words
 $$\veps a^v\geq v\qquad\hbox{for all $v\geq 1$}.$$
  
  Therefore one can choose e.g. $\veps:=1/8$ and then take $a$ sufficiently large.
\end{proof}

\begin{proof}[Proof of Theorem \ref{thm:sumfhc}] Our construction is inspired by that of \cite[Section 6]{BAYRUZSA}.  In what follows, we fix once and for all $\veps>0$ and $a>1$ satisfying the conclusions of Lemma \ref{lem:sets}.
 
 \smallskip
  For $k\geq 1,$ let 
 \[ A_k:=2^{k-1}\mathbb N\backslash 2^k\NN.\]
Note that each $A_k$ is a syndetic set, \textit{i.e.} it has bounded gaps, and the sets $A_k$ are pairwise disjoint.  
Moreover, since $2^{k-1}\geq k$, we have 
 $I_v^{a,\veps}+[-k,k]\subset I_v^{a,2\veps}$ for each $k\geq 1$ and all $v\in A_k$. 
 
 We also fix an increasing sequence of positive integers $(b_p)_{p\geq 1}$
 such that 
 $$\lim_{p\to\infty}\udens \left[\bigcup_{q\geq p} \bigl( b_q\NN+[-q,q]\bigr)\right]=0.$$
 
Finally, we set for $p\geq 1,$
 \begin{align*}
  E_p:=&\bigcup_{u\in A_{2p}}\left(I_u^{a,\veps}\cap b_{p}\NN\right),\\
  F_p:=&\bigcup_{u\in A_{2p+1}}\left(I_u^{a,\veps}\cap b_{p}\NN\right).\\ 
 \end{align*}
 
\smallskip 
 
We note that since $A_{2p}$ and $A_{2p+1}$ are syndetic, we have
\[\ldens(E_p)>0\quad{\rm and}\quad \ldens(F_p)>0\qquad\hbox{for all $p\geq 1$}.\]

\smallskip\noindent
Indeed, for all $p\geq 1,$ there exists $\delta_p>0$ such that, for all $u$ sufficiently large, 
$$\#\left(I_{u}^{a,\veps}\cap b_p\NN\right)\geq \delta_p a^u.$$
Let $R_p$ be such that if $u$ and $v$ are two consecutive elements of $A_{2p}$ then $v-u\leq R_p.$
If now $n$ is very large and if we consider $u$ and $v$ two consecutive elements of $A_{2p}$ such that
$$(1+\veps)a^u<n \leq (1+\veps)a^v,$$
then we see that
$$\frac{\#\big(E_p\cap [1,n]\big)}{n}\geq \frac{\#\big(I_u^{a,\veps}\cap b_p\NN\big)}{(1+\veps)a^v}\geq\frac{\delta_p}{(1+\veps)a^{R_p}}\cdot$$
Hence, $\ldens(E_p)>0$. A similar argument shows that $\ldens(F_p)>0$.

\smallskip
We now construct our weight sequences $w$ and $w'$. 

For $p\geq 1,$ we first define a sequence $(w_n^p)_{n\geq 1}\subset\{1/2,1, 2\}$ such that, for all $n\geq 1,$
$$w_1^p\cdots w_n^p=\left\{
\begin{array}{lll}
 1&\textrm{if }n\notin I_u^{a,2\veps},&u\in A_{2p}\\
 2^u&\textrm{if }n\in I_u^{a,\veps},&u\in A_{2p}.
\end{array}\right.$$
This is possible since $I_u^{a,\veps}+[-u,u]\subset I_u^{a,2\veps}$. These sequences will be used for handling condition (a) in Lemma 
\ref{lem:fhcws}.

For $p\geq 1,$ we also define a sequence $(\omega_n^p)_{n\geq 1}\subset \{1/2,1, 2\}$ such that, for all $n\geq 1,$
$$\omega_1^p\cdots \omega_n^p=\left\{
\begin{array}{ll}
 1&\textrm{if }n\notin b_{p}\NN+[-p,p]\\
 2^p&\textrm{if }n\in b_{p}\NN.
\end{array}\right.$$
These sequences will help us to verify condition (b) in Lemma \ref{lem:fhcws} for $p=q$.

Finally, for $u>v\geq 1$ with $u\in A_{2p}$ and $v\in A_{2q}$ for some $p,q\geq 1,$ we define a sequence $(w_n^{u,v})_{n\geq 1}\subset\{1/2,1, 2\}$
such that, for all $n\geq 1,$
$$w_1^{u,v}\cdots w_n^{u,v}=\left\{
\begin{array}{ll}
 1&\textrm{if }n\notin I_u^{a,4\veps}\\
 \max(2^p,2^q)&\textrm{if }n\in I_u^{a,\veps}-I_v^{a,\veps}.
\end{array}\right.$$
This is possible since 
\begin{align*}
 I_u^{a,\veps}-I_v^{a,\veps}+[-\max(p,q),\max(p,q)]&\subset (I_u^{a,\veps}+[-p,p])-(I_v^{a,\veps}+[-q,q])\\
 &\subset I_u^{a,2\veps}-I_v^{a,2\veps}\\
& \subset I_u^{a,4\veps}.
\end{align*}
These sequences will be needed in order to check condition (b) in Lemma \ref{lem:fhcws} for $p\neq q$.

We finally define the weight sequence $w=(w_n)_{n\geq 1}$ as follows: for all $n\geq 1$, we require that
$$w_1\cdots w_n=\max_{p,u>v}(w_1^p\cdots w_n^p,\omega_1^p\cdots \omega_n^p,w_1^{u,v}\cdots w_n^{u,v}).$$
It is not difficult to check that $w_n\in [1/2, 2]$ for all $n\geq 1$. Indeed, assume for instance that $w_1\cdots w_n=w_1^p\cdots w_n^p.$ Then 
$w_1\cdots w_{n-1}\geq w_1^p\cdots w_{n-1}^p$ and 
$$w_n\leq w_n^p\leq 2.$$
The same argument works for the other cases; for the lower bound, assume for instance that $w_1\cdots w_{n-1}=w_1^p\cdots w_{n-1}^p.$ Then $w_1\cdots w_n\geq w_1^p\cdots w_n^p$ so that 
$$w_n\geq w_n^p\geq \frac 12.$$
We define in a similar way a weight sequence $w'=(w'_n)_{n\geq 1}\subset[1/2,2]$,
replacing everywhere $A_{2p}$ by $A_{2p+1}$ and $A_{2q}$ by $A_{2q+1}$. 

\smallskip Let us first show that $w$ and $w'$ satisfy the conditions of Lemma \ref{lem:fhcws}, so that $B_w$ and $B_{w'}$ are frequently hypercyclic. It is clearly enough to do that for $w$.

\begin{enumerate}[(a)]
 \item If $n\in E_p$, there is a unique $u=u(n)\in A_{2p}$ such that $n\in I_u^{a,\veps}$. Then $w_1\cdots w_n\geq w_1^p\cdots w_n^p\geq 2^{u(n)}$, which shows that $w_1\cdots w_n\to \infty$ as $n\to\infty$, $n\in E_p$.
% which goes to $\infty$ as $n$ tends to $\infty$ (staying in $E_p$).
 \item Let $p,q\geq 1,$, and let us fix $m\in E_p$ and $n\in E_q$ with $m>n$. If $p=q$ then $m-n\in b_p \NN$, so that  
 $w_1\cdots w_{m-n}\geq \omega_1^p\cdots\omega_{m-n}^p\geq 2^p.$ 
If $p\neq q$, there exist $u>v\geq 1$ such that $m\in I_{u}^{a,\veps}$ and $n\in I_v^{a,\veps}$, and then 
 $w_1\cdots w_{m-n}\geq w_1^{u,v}\cdots w_{m-n}^{u,v}\geq\max(2^p,2^q).$
\end{enumerate}

\smallskip Now, let us show that  $B_w\oplus B_{w'}$ is not $\mathcal U$-frequently hypercylic. Denote by $(e_j)_{j\ge 0}$ the canonical basis of $c_0(\mathbb Z_+)$.  We show that for any vector $x\in c_0(\mathbb Z_+)\oplus c_0(\mathbb Z_+)$, the set 
$E_x:=\bigl\{n\in\NN:\ \|(B_w\oplus B_{w'})^nx-(e_0,e_0)\|<1/2\bigr\}$ has upper density equal to $0$.

Towards a contradiction, assume that $\overline{\rm dens}(E_x)>0$ for some vector $x$. It is easy to check that 
$$\lim_{n\to\infty,\ n\in E_x}w_1\cdots w_n =\lim_{n\to\infty,\ n\in E_x}w'_1\cdots w'_n=\infty.$$
It follows that if we set
$$G_p:=\{n\in\NN:\ w_1\cdots w_n\geq 2^p\textrm{ and }w'_1\cdots w_n'\geq 2^p\},$$
then $E_x\setminus G_p$ is finite and hence  $ \udens(G_p)\geq \udens(E_x)>0$ for all $p\geq 1$. 

Now the construction 
of the weight sequence $w$  yields that if $n\in G_p$, then either $n\in b_q\NN+[-q,q]$ for some $q\geq p$, or 
$n\in I_u^{a,4\veps}$ for some $u\in \bigcup_{q\geq 1}A_{2q}$. Similarly, by construction of the sequence $w'$ we also know that if $n\in G_p$, then either $n\in b_q\NN+[-q,q]$ for some $q\geq p$, or 
$n\in I_v^{a,4\veps}$ for some $v\in \bigcup_{q\geq 1}A_{2q+1}$. By disjointness of the sets $I_u^{a,4\veps}$ and $I_v^{4,\veps}$ for $u\neq v,$ it follows that 
$$G_p\subset\bigcup_{q\geq p}\bigl( b_q\NN+[-q,q]\bigr).$$

By our choice of the sequence $(b_p),$ we get a contradiction with $\udens(G_p)\geq \udens(E_x)>0$.
\end{proof}
\begin{remark}
The weighted shift $B_w$ cannot serve as an counterexample to the $T\oplus T$ frequent hypercyclicity problem. Indeed, it can be shown (see \cite[Theorem 18]{karl}) that any weighted shift on $c_0(\ZZ_+)$
 satisfying the assumptions of Lemma \ref{lem:fhcws} is such that any finite direct sum $B_w\oplus\cdots\oplus B_w$
 is itself frequently hypercyclic.
\end{remark}

\section{FHC operators on $\ell_p({\mathbb Z}_+)$ which are not hereditarily FHC}
\subsection{The result} In this section, we use the machinery developed in \cite{GMM21}, following the construction in \cite{Me1} of chaotic operators which are not frequently hypercyclic, to produce an operator on $\ell_p(\ZZ_+)$, $1\leq p<\infty$ which is frequently hypercyclic but not hereditarily frequently hypercyclic. We will in fact obtain a formally  stronger result.

\begin{definition}\label{along}
Let $A\subset\NN$ be a set with $\underline{\text{dens}}(A)>0$. We say that an operator $T\in\mathfrak L(X)$ is \textbf{frequently hypercyclic along $A$} if the sequence $(T^n)_{n\in A}$ is frequently hypercyclic: there exists $x\in X$ such that $\underline{\rm dens}\,\bigl( {A\cap \mathcal N_T(x,V)}\bigr)>0$ for every non-empty open set $V\subset X$.
\end{definition}

Obviously, if an operator is hereditarily frequently hypercyclic, then it is frequently hypercyclic along any set $A\subset\NN$ with positive lower density. Our aim is to prove the following theorem.

\begin{theorem}\label{Ctypeex}
Let $1\le p<\infty$. There exist an operator $T$ on $\ell_p(\mathbb{Z}_+)$ and a set $A\subset\NN$ with $\underline{\text{\emph{dens}}}(A)>0$ such that $T$ is frequently hypercyclic and chaotic,  but not frequently hypercyclic along $A$ {\rm (}and thus not hereditarily frequently hypercyclic{\rm )}.
\end{theorem}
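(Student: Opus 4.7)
The plan is to construct $T$ inside the C-type framework developed in \cite{GMM21} (building on \cite{Me1}), tuning the parameters so that the forward orbits have very rigid arithmetic return times, while still retaining enough flexibility for frequent hypercyclicity and chaoticity. Recall that a C-type operator on $\ell_p(\ZZ_+)$ is a weighted pseudo-shift associated with a partition $\mathbb Z_+=\bigsqcup_{k\geq 1} I_k$ into ``blocks'' of sizes $\Delta_k=|I_k|$, together with a weight sequence $(w_n)$ and a block-permutation datum: orbits move along $I_k$ under a backward-shift rule, and the ``head'' basis vector of each $I_k$ is a periodic vector whose period $\tau_k$ is built into the parameters of the construction. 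I would first record the general identities in \cite{GMM21} describing $T^n e_j$, and in particular the fact that returns of an orbit to a small neighborhood of a head basis vector $e_{n_k}$ of $I_k$ can happen only at times lying in a ``return shell'' $R_k \subset \tau_k\NN + F_k$ with $|F_k|$ a small set of perturbations.

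Second, I would choose the parameters $(\Delta_k)$, $(\tau_k)$ and $(w_n)$ so that: (a) the head basis vectors are periodic and span $\ell_p(\ZZ_+)$, giving chaoticity; (b) the weights and block sizes satisfy the standard sufficient conditions of \cite{GMM21} for $T$ to be frequently hypercyclic, for instance by satisfying the Frequent Hypercyclicity Criterion over a dense set of finitely supported vectors (and then Theorem \ref{thm:operatorfhc} actually gives hereditary frequent hypercyclicity along \emph{all} $\mathcal F$-shaped target sets — which is precisely what we will have to spoil by choosing a clever $A$). Concretely, one wants $\tau_k$ to grow rapidly enough (so $\ldens(R_k)\to 0$) while keeping the combined contribution of many levels rich enough to produce vectors whose orbit visits every open set with positive lower density.

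Third, I would construct the obstructing set $A$ as a disjoint union $A=\bigsqcup_{k\geq 1} A_k$ supported on pairwise disjoint time windows $W_k\subset\NN$. Each $A_k$ is a dense subset of $W_k$, so that by choosing the windows with controlled ratios of lengths one can force $\ldens(A)>0$; but each $A_k$ is constructed to intersect the shell $R_k$ only in a set of relative density $o(1)$ in $W_k$, and to avoid $R_j$ for $j<k$ entirely (possible since $W_k$ can be pushed far beyond the finitely many relevant positions of $R_j$ inside $W_k$). Given any $x\in \ell_p(\ZZ_+)$ and any sufficiently small neighborhood $V$ of some head vector $e_{n_{k_0}}$, the structural observation of step two gives $\mathcal N_T(x,V)\subset R_{k_0}\cup [0,N(x)]$, hence
\[ A\cap \mathcal N_T(x,V)\subset \bigl(A\cap R_{k_0}\bigr)\cup[0,N(x)],\]
and the right-hand side has lower density zero by construction. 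This shows $T$ is not frequently hypercyclic along $A$.

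The main obstacle is the balancing act in steps one and two: the parameters must be chosen so that returns to small neighborhoods of head vectors are tightly confined to the shells $R_k$ (making the obstruction in step three work), yet the operator must admit frequent hypercyclicity vectors whose orbits visit each open set at lower-density $>0$ by pooling contributions from many blocks. This is exactly the kind of quantitative tuning carried out in \cite{Me1, GMM21}; the extra work here is verifying that the \emph{global} lower density of $A$ can be kept positive while each individual intersection $A\cap R_k$ remains of lower density zero. A secondary technical point is passing from the neighborhood $V$ of a single head vector $e_{n_{k_0}}$ to an arbitrary open set in the definition of FHC along $A$ — this is handled by noting that it suffices to find a single $V$ blocking positive-lower-density visit times, so picking $V$ centered at a head vector is enough.
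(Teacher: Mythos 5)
Your overall architecture (a C-type operator whose frequent hypercyclicity and chaoticity come from the standard parameter conditions of \cite{GMM21}, plus a combinatorially designed set $A$ of positive lower density that misses the return times to a well-chosen open set) matches the paper's strategy, and your reduction to a single blocking open set $V$ is legitimate in principle. But the core of your third step rests on a structural claim that is false: there is no $x$-independent ``return shell'' $R_{k}$ with $\mathcal N_T(x,V)\subset R_{k}\cup[0,N(x)]$ for every $x$ and every small neighbourhood $V$ of a head vector. What the C-type machinery actually gives (and what the paper isolates as Theorem \ref{nothfhc}) is: (a) the finite-dimensional head $\pi_{K_n}x$ is $J_n$-periodic under $T$, and (b) the tail $(I-\pi_{K_n})x$ contributes negligibly to the head coordinates for a very long time. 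Consequently $\mathcal N_T(x,V)\cap[0,(n+1)^{2^{J_n}}J_n)$ is approximately $J_n$-periodic, but its \emph{phase} --- which residues modulo $J_n$ occur --- depends on $x$ (already for $x=T^m e_0$ the returns to a neighbourhood of $e_0$ occur at times $\equiv -m$ modulo the period). The union over all $x$ of these visit sets is therefore cofinite, so any $R_k$ containing all of them has full density and cannot be avoided by a set $A$ of positive lower density; and your remark that $W_k$ can be pushed ``far beyond the finitely many relevant positions of $R_j$'' cannot be right either, since frequent hypercyclicity forces each genuine return shell to have positive lower density, hence infinitely many elements in every distant window.

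The paper's proof supplies the two missing ideas. First, instead of one target set it uses two disjoint ones, $U_{1/2}=\{y:|\langle e_0^*,y\rangle|<1/2\}$ and $V_{3/2}=\{y:|\langle e_0^*,y\rangle|>3/2\}$; for every $x$ and every $n$, at least one of them is visited at most $J_n/2$ times in $[0,J_n)$, so one of them (which one depends on $x$) is half-avoided for infinitely many $n$. Second, since the avoided pattern is not known in advance, $A$ is built by enumerating \emph{all} subsets $F_{n,j}\subset[0,J_n)$ of cardinality at least $J_n/2$ and allotting to each its own block of consecutive periods of the form $sJ_n+F_{n,j}$; whichever pattern the orbit of $\pi_{K_n}x$ avoids, the corresponding block of $A$ is disjoint from the visit set, and the geometric growth of the number of periods per block forces $\ldens\bigl(A\cap\mathcal N_T(x,U_{1/2})\bigr)=0$ or $\ldens\bigl(A\cap\mathcal N_T(x,V_{3/2})\bigr)=0$ while keeping $\ldens(A)\ge 1/4$. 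Without this dichotomy-plus-enumeration device, or some substitute for it, your construction of $A$ cannot be carried out.
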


\medskip With the terminology of \cite{GMM21}, the operator we are looking for will be a {\cput\ operator}.  So we will need to recall the definition of  \cput\  operators, and more generally of {C-type} and \cpt\   operators. But before that, we will prove a general result allowing to check in a simple way that an operator is not frequently hypercyclic along some set with positive lower density.

\subsection{How not to be hereditarily FHC} The next  theorem gives simple conditions ensuring that an operator is not hereditarily frequently hypercyclic

\begin{theorem}\label{nothfhc}
Let $X$ be a Banach space admitting a Schauder basis $(e_k)_{k\ge 0}$, and let $T\in \mathfrak L(X)$. Denoting by $\pi_K$, $K\geq 1$ the canonical projection onto ${\rm span}(e_k\,:\, 0\leq k\leq K-1)$, assume that there exist  increasing sequences of integers $(K_n)_{n\ge 1}$ and $(J_n)_{n\ge 1}$ such that for every $n$:
\begin{enumerate}
\item[\rm (a)] $T^{J_n} \pi_{K_n}=\pi_{K_n}$;
\item[\rm (b)] $\|\pi_{K_n}T^j (I-\pi_{K_n})x\|\le \|(I-\pi_{K_n})x\|\quad $
for all $x\in X$ and $0\le j\le (n+1)^{2^{J_n}}J_n$. 
\end{enumerate}
Then there exists a set $A\subset\NN$ with positive lower density such that $T$ is not hereditarily frequently hypercyclic along $A$. 
\end{theorem}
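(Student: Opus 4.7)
The strategy centers on exploiting the near-$J_n$-periodicity of the ``observable part'' $\pi_{K_n}T^jx$ of orbits on the enormous time window $[0,N_n]$ with $N_n:=(n+1)^{2^{J_n}}J_n$, forced by conditions (a) and (b). Condition (a) gives $T^{J_n}u=u$ for every $u\in\pi_{K_n}(X)$, so the sequence $j\mapsto\pi_{K_n}T^j\pi_{K_n}x$ is $J_n$-periodic, and combining with (b) yields the fundamental estimate
\[\bigl\Vert\pi_{K_n}T^jx-\pi_{K_n}T^{j\bmod J_n}\pi_{K_n}x\bigr\Vert\le\Vert(I-\pi_{K_n})x\Vert\quad(0\le j\le N_n).\]
Thus, for any $x\in X$ and any open ball $V=B(v,\varepsilon)$, once $n$ is large enough that $\Vert(I-\pi_{K_n})x\Vert<\varepsilon$, the visit set $\mathcal N_T(x,V)\cap[0,N_n]$ is contained in $\{j:j\bmod J_n\in R_n(x,V)\}$ for some set $R_n(x,V)\subseteq\{0,\dots,J_n-1\}$, that is, in a union of $|R_n(x,V)|$ arithmetic progressions of common difference $J_n$.

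I would then construct $A$ as a disjoint union $A=\bigsqcup_n A_n$, with $A_n$ lying in a window $I_n:=(N_{n-1},N_n]$ of $\NN$. The enormous length of $I_n$ together with the factor $2^{J_n}$ in the exponent allows one to partition $I_n$ into $2^{J_n}$ equal-length sub-blocks, one for each subset $S\subseteq\{0,\dots,J_n-1\}$; each sub-block has length $(n+1)^{2^{J_n}}J_n/2^{J_n}$, which is still vastly larger than $J_n$. Within the sub-block labelled $S$ one further subdivides into $n+1$ sub-sub-blocks and populates $A$ with carefully chosen residue classes modulo $J_n$ lying outside $S$, using the $(n+1)$-fold slack to dilute the possible overlap with any ``visiting'' pattern by a factor $1/(n+1)$, while keeping the density of $A_n$ in $I_n$ bounded below by a positive constant. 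A direct counting then shows $A$ has lower density bounded below.

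Finally, given any $x\in X$, one selects an open ball $V_x$ such that, for every large $n$, the residue set $R_n(x,V_x)$ matches a specific subset $S_n^\star\subseteq\{0,\dots,J_n-1\}$ (using the countability of the orbit data and the infinite-dimensionality of $X$ to arrange this). In the sub-block of $I_n$ labelled by $S_n^\star$, $A$ is by construction disjoint from the union of arithmetic progressions indexed by $S_n^\star$, so no visits to $V_x$ occur; in the other sub-blocks, the contribution to $A\cap\mathcal N_T(x,V_x)$ can be bounded by a quantity decaying like $1/(n+1)$ times the local density of $A$. Summing over $n$ shows that $A\cap\mathcal N_T(x,V_x)$ has lower density zero, hence $x$ is not frequently hypercyclic along $A$, and as $x$ was arbitrary, $T$ is not hereditarily frequently hypercyclic. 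The main obstacle lies precisely in the design of the middle step: calibrating the $(n+1)$-fold subdivisions and the choice of residues inside the $2^{J_n}$ sub-blocks so that the density of $A$ stays positive while the density of $A\cap\mathcal N_T(x,V_x)$ decays uniformly in $x$ like $1/(n+1)$. The specific exponent $(n+1)^{2^{J_n}}$ in condition (b) is exactly the slack needed to execute this delicate balance.
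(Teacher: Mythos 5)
Your opening reduction is exactly the one the paper uses: conditions (a) and (b) force the visit set $\mathcal N_T(x,V)\cap[0,(n+1)^{2^{J_n}}J_n)$ to be contained, up to a slight enlargement of $V$, in a union of residue classes modulo $J_n$ determined by the first period $[0,J_n)$. But the two steps you yourself flag as ``the main obstacle'' contain genuine gaps, and as described they would fail. First, the selection of $V_x$: you give no mechanism for controlling the residue set $R_n(x,V_x)$ --- in particular no upper bound on its cardinality --- and without such a bound the block of $A$ that is supposed to avoid the visits cannot be guaranteed to carry positive density (if $R_n(x,V_x)$ is nearly all of $[0,J_n)$, any set of residues disjoint from it is too sparse). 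The paper's missing idea here is a dichotomy with a \emph{fixed} pair of disjoint open sets $U=\{y:|\langle e_0^*,y\rangle|<1/2\}$ and $V=\{y:|\langle e_0^*,y\rangle|>3/2\}$: since the (slightly enlarged) visit sets to $U$ and to $V$ inside $[0,J_n)$ are disjoint, one of them has at most $J_n/2$ elements for infinitely many $n$, so its complement has size $\ge J_n/2$ --- and it is exactly the family of subsets of $[0,J_n)$ of cardinality $\ge J_n/2$ (not all $2^{J_n}$ subsets) that indexes the blocks of $A$ in the paper.

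Second, and more fatally, your equal-length sub-blocks cannot yield lower density zero. Lower density is computed along prefixes $[0,N]$; if the sub-block labelled $S_n^\star$ sits somewhere in the middle of the window $I_n$, the prefix ending at that sub-block already contains all earlier sub-blocks, in which $A\cap\mathcal N_T(x,V_x)$ may have full local density (for instance in a sub-block labelled $S$ with $S^c\subset S_n^\star$, every point of $A$ is a visit). With equal lengths this contribution is proportional to $N$, so the ratio does not tend to $0$, and no ``dilution by $1/(n+1)$ inside each sub-block'' can repair this while keeping $\ldens(A)>0$. The paper instead lets the sub-block labelled by the $j$-th subset $F_{k,j}$ occupy the range $s\in[(k+1)^j,(k+1)^{j+1})$ of periods, so that this single sub-block fills a fraction $1-1/(k+1)$ of the prefix ending at it; combined with the fact that $A$ restricted to that sub-block consists of translates of the avoided set itself, this gives $\#\bigl(A\cap\mathcal N_T(x,U)\cap[0,(k+1)^{j+1}J_k)\bigr)\le (k+1)^jJ_k$, hence a density ratio $\le 1/(k+1)\to0$ along a subsequence. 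So your overall architecture is right, but the two pieces of combinatorics that make it work --- the pigeonhole on a fixed disjoint pair of open sets, and the geometric (rather than uniform) growth of the sub-blocks --- are precisely what is missing.
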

\begin{proof} Extracting subsequences of $(K_n)$ and $(J_n)$ if necessary, we may assume without loss of generality that $J_{n+1}\ge 2(n+1)^{2^{J_n}}J_n$ for all $n$. 

\smallskip Let us denote by $\mathcal{F}_n\subset 2^{\mathbb{N}}$ the family of all finite sets $F\subset [0,J_n)$ such that $\#F\ge J_n/2$%$|F|\ge J_n/2$
. Let $C_n:=\#\mathcal{F}_n$%$C_n:=|\mathcal{F}_n|$
, and let $(F_{n,j})_{0\le j< C_n}$ an enumeration of $\mathcal{F}_n$. We set $M_n:=(n+1)^{C_{n}}J_n$ and we remark that $M_n\le (n+1)^{2^{J_n}}J_n\le J_{n+1}/2$.

\smallskip 
We now construct the set $A$ by induction. To start, let
\[A_1:=[0,J_1)\cup \bigcup_{0\le j< C_1}\bigcup_{0\le l<2^{j+1}-2^j}\left((2^j+l)J_1+F_{1,j}\right).\]
Given $k\ge 2$, if $A_1,\dots, A_{k-1}$ have been defined already, we define $A_k$ by setting
\begin{align*}
A_{k}:=[M_{k-1},J_{k})\; \cup \; \bigcup_{0\le j<C_{k}}\bigcup_{0\le l<(k+1)^{j+1}-(k+1)^j}\left((k+1)^j+l)J_{k}+F_{k,j}\right).
\end{align*}
Observe that the sets $(k+1)^j+l)J_{k}+F_{k,j}$ involved in this definition are pairwise disjoint, since they are contained in successive intervals. More precisely,
$$A_k\cap [sJ_{k}, (s+1)J_{k})=
sJ_{k}+F_{k,j}$$ for every $0\le j<C_k$ and every $(k+1)^{j}\le s<(k+1)^{j+1}$,
and  $\max (A_k)< (k+1)^{C_k}J_k= M_k$. Hence $A_k\subseteq[M_{k-1}, M_k)$. Finally, we let
\[ A:=\bigcup_{k\ge 1} A_k.\]

\begin{claim}\label{1/4} We have $\frac{\#(A\cap [0,N])}{N+1}\ge 1/4$ for all $N\geq 0$. In particular, \emph{$\ldens(A)>0$.}
\end{claim}
\begin{proof}[Proof of Claim \ref{1/4}] We will check by induction on $k\ge 1$ that
\begin{equation}\label{eqc}
\frac{\#( A\cap [0,N])}{N+1}\ge 1/4
\quad\textrm{ for all } N<M_k.
\end{equation}

If $N<M_1$, there exists $0\le s < 2^{C_1}$ such that $s J_1\le N<(s+1)J_1$. Then $\frac{\#(A\cap [0,N])}{N+1}\ge 1$
if $s=0$ and 
\[\frac{\#(A\cap [0,N])}{N+1}\ge \frac{\#(A_1\cap [0,sJ_1])}{(s+1)J_1}\ge \frac{s}{2(s+1)}\ge \frac{1}{4}\qquad \text{if $s\ge 1$}\]
because the sets involved in the definition of $A_1$ are pairwise disjoint and $\#F_{1,j}\ge J_1/2$
 for every $j<C_1$. 
 \par\smallskip
 Assume that the inequality (\ref{eqc}) has been proved for $k-1$. In order to get the result for $k$, it is enough to check that $$\frac{\#(A\cap [0,N])}{N+1}\ge \frac{1}{4} \quad\textrm{ for every } M_{k-1}\le N<M_{k}.$$
 If $M_{k-1}\le N< J_{k}$ then, since  $[ M_{k-1}, N]\subset [M_{k-1},J_{k})\subset A_{k}$, we get by the induction assumption that $$\frac{\#(A\cap [0,N])}{N+1}\ge \frac{\#(A\cap [0,M_{k-1}))}{M_{k-1}}\geq \frac{1}{4}\cdot$$
 Moreover, we even have $\frac{\#(A\cap [0,J_{k}))}{J_{k}}\ge \frac{1}{2}$
 since $[M_{k-1},J_{k})\subset A_{k}$ and $M_{k-1}\le J_{k}/2$. On the other hand, if $J_{k}\le N<M_{k}$, there exists $1\le s < (k+1)^{C_{k}}$ such that $s J_{k}\le N<(s+1)J_{k}$ and we obtain in this case that
\[\frac{\#(A\cap [0,N])}{N+1}\ge \frac{\#(A\cap [0,J_{k}))}{(s+1)J_{k}}+\frac{\#(A_{k}\cap [J_{k},sJ_{k}])}{(s+1)J_{k}}\ge \frac{1}{2(s+1)}+\frac{s-1}{2(s+1)}\ge \frac{1}{4}.\]
This proves Claim \ref{1/4}.
\end{proof}
 
 Let us now get back to the proof of Theorem \ref{nothfhc}. Our aim is to show that under assumptions (a) and (b),
$T$ is not hereditarily frequently hypercyclic along the set $A$ that we just constructed. To this end, we consider for any $c>0$, the open sets \[ U_c:=\bigl\{y\in X\,:\, |\langle e^*_0, y\rangle|<c\bigr\}\qquad{\rm and}\qquad V_c=\bigl\{y\in X\,:\, |\langle e^*_0, y\rangle|>c\bigr\},\]
and we show that for every $x\in X$, either $\mathcal N_T(x,U_{1/2})\cap A$ or $\mathcal N_T(x,V_{3/2})\cap A$ has a lower density equal to $0$. Let $x\in X$. Since $\vert \langle e_0^*, u\rangle\vert \leq C\, \Vert \pi_{K_n} u\Vert$ for some absolute constant $C>0$, it follows from assumption (b) that for $n$ sufficiently large, we have
\begin{align*}
\mathcal N_T(x,U_{1/2})\cap [0,(n+1)^{2^{J_n}}J_n)&\subset \mathcal N_T(\pi_{K_n}x,U_{3/4})\cap [0,(n+1)^{2^{J_n}}J_n)\\
&\subset \mathcal N_T(x,U_{1})\cap [0,(n+1)^{2^{J_n}}J_n)
\end{align*}
and 
\begin{align*}
\mathcal N_T(x,V_{3/2})\cap [0,(n+1)^{2^{J_n}}J_n)&\subset \mathcal N_T(\pi_{K_n}x,V_{4/3})\cap [0,(n+1)^{2^{J_n}}J_n)\\
&\subset \mathcal N_T(x,V_{1})\cap [0,(n+1)^{2^{J_n}}J_n).
\end{align*}
Moreover, since $\mathcal N_T(x,U_1)\cap\, \mathcal N_T(x,V_1)=\emptyset$, we have, for any $n\geq 1$, that  $${\rm either}\quad \#\bigl(\mathcal N_T(x,U_{1})\cap [0,J_n)\bigr)\le J_n/2 \quad\textrm{ or }\quad \#\bigl(\mathcal N_T(x,V_{1})\cap [0,J_n)\bigr)\le J_n/2.$$
Hence, either $\#\bigl(\mathcal N_T(x,U_{1})\cap [0,J_n)\bigr)\le J_n/2$
for infinitely many $n$'s or $\#\bigl( \mathcal N_T(x,V_{1})\cap [0,J_n)\bigr)\le J_n/2$
for infinitely many $n$'s. Without loss of generality, we assume that $\#\bigl( \mathcal N_T(x,U_{1})\cap [0,J_n)\bigr)\le J_n/2$
for infinitely many $n$'s (the other case being similar). Hence, there exists an increasing sequence $(n_k)_{k\ge 1}$ of integers  and a sequence $(j_k)_{k\ge 1}$ of integers such that
\[\mathcal N_T(x,U_{1})\cap [0,J_{n_k})\cap F_{n_k,j_k}=\emptyset \quad\textrm{ for every } k\ge 1.\]
Now, since $T^{J_{n_k}} \pi_{K_{n_k}}=\pi_{K_{n_k}}$ by assumption (a), we have for every $k\ge 1$,
\begin{align*}
\mathcal N_T(x,U_{1/2})\cap [&0,(n_k+1)^{2^{J_{n_k}}}J_{n_k})\\
&\qquad\subset \;\mathcal N_T(\pi_{K_{n_k}}x,U_{3/4})\cap [0,(n_k+1)^{2^{J_{n_k}}}J_{n_k})\\
&\qquad=\bigcup_{0\le l<(n_k+1)^{2^{J_{n_k}}}} \left(lJ_{n_k}+ \mathcal N_T(\pi_{K_{n_k}}x,U_{3/4})\cap[0,J_{n_k}) \right)\\
&\qquad \subset \bigcup_{0\le l<(n_k+1)^{2^{J_{n_k}}}} \Bigl(lJ_{n_k}+ \mathcal N_T(x,U_{1})\cap[0,J_{n_k}) \Bigr).
\end{align*}
Intersecting with $A$, and observing that $(n_k+1)^{j_k+1}\le (n_k+1)^{2^{J_{n_k}}}$, we get that
\begin{align*}
\bigl(\mathcal N_T(x,U_{1/2})\cap A\bigr)\cap [0,(&n_k+1)^{j_k+1}J_{n_{k}})\\
&\qquad\subset
A\cap \bigcup_{0\le s<(n_k+1)^{j_k+1}} \Bigl(sJ_{n_k}+\mathcal N_T(x,U_{1})\cap[0,J_{n_k})\Bigr).
\end{align*}
Now, by definition of $A$,
we have  $$A\cap \bigcup_{(n_k+1)^{j_k}\le s<(n_k+1)^{j_k+1}}[sJ_{n_k}, (s+1)J_{n_k})=
\bigcup_{(n_k+1)^{j_k}\le s<(n_k+1)^{j_k+1}}\left(sJ_{n_k}+F_{n_k,j_k}\right).$$
Since $\mathcal N_T(x,U_{1})\cap [0,J_{n_k})\cap F_{n_k,j_k}=\emptyset$,
it follows that
\begin{equation*}
\bigl(\mathcal N_T(x,U_{1/2})\cap A\bigr)\cap [(n_k+1)^{j_k}J_{n_k},(n_k+1)^{j_k+1}J_{n_{k}})
=\emptyset,
\end{equation*}
so that
\begin{equation*}
\bigl(\mathcal N_T(x,U_{1/2})\cap A\bigr)\cap [0,(n_k+1)^{j_k+1}J_{n_{k}})
\subset [0,(n_k+1)^{j_k}J_{n_k}).
\end{equation*}
So we see that
\[ \frac{\#\bigl ((\mathcal N_T(x,U_{1/2})\cap A)\cap [0,(n_k+1)^{j_k+1}J_{n_{k}})\bigr)}{(n_k+1)^{j_k+1}J_{n_{k}}}\le  \frac{(n_k+1)^{j_k}J_{n_k}}{(n_k+1)^{j_k+1}J_{n_{k}}}\cdot\] The right hand side of this inequality tends to $0$ as $n$ tends to infinity,
and this shows that $\ldens\bigl(\mathcal N_T(x,U_{1/2})\cap A\bigr)=0$.
\end{proof}
\begin{remark} Assumption (b) in Theorem \ref{nothfhc} can be weakened: it is enough to assume that there exists a non-zero linear functional $x^*\in X^*$ such that $\vert \langle x^*, T^j (I-\pi_{K_n})x\rangle\vert\le \|(I-\pi_{K_n})x\|$
for all $x\in X$ and $j\le (n+1)^{2^{J_n}}J_n$. This is apparent from the above proof.
\end{remark}

\subsection{C-type operators} We recall here very succintly some basic facts concerning C-type operators, and we refer the reader to \cite[Sections 6 and 7]{GMM21} for more on this class of operators. In what follows, we denote by $(e_k)_{k\geq 0}$ the canonical basis of $\ell_p(\ZZ_+)$, $1\le p<\infty$.

\smallskip Let us consider four 
``parameters'' $v$, $w$ $\varphi$ and $b$, where

\smallskip
\begin{enumerate}
 \item[-] $v=(v_{n})_{\gn}$ is a sequence of non-zero complex numbers such 
that $\sum_{\gn}|v_{n}|<\infty $;
\item[-] $w=(w_{j})_{j\geq 1}$ is a sequence of complex numbers
such that $0<\inf_{k\ge 1} \vert w_k\vert\leq \sup_{k\ge 1}\vert w_k\vert<\infty$;
\item[-] $\varphi $ is a map from $\ZZ_+$ into itself, such that $\varphi 
(0)=0$, $\varphi (n)<n$ for every $\gn$, and the set 
$\varphi ^{-1}(l)=\{n\ge 0\,;\,\varphi (n)=l\}$ is infinite for every 
$l\ge 0$;
\item[-] $b=(b_{n})_{n\ge 0}$ is a strictly increasing sequence of positive 
integers such that $b_{0}=0$ and $b_{n+1}-b_{n}$ is a multiple of 
$2(b_{\varphi (n)+1}-b_{\varphi (n)})$ for every $n\ge 1$.
\end{enumerate}

\smallskip 
If $w$ and $b$ are such that \[ \inf_{n\geq 0} \prod_{b_n<j<b_{n+1}}\, \vert w_j\vert>0,\] then, by  \cite[Lemma 6.2]{GMM21},  there is a unique bounded operator $\tvw$ on $\ell_p(\ZZ_+)$ such that

\[
\tvw\ e_k=
\begin{cases}
 w_{k+1}\,e_{k+1} & \textrm{if}\ k\in [b_{n},b_{n+1}-1),\; n\geq 0\\
v_{n}\,e_{b_{\varphi(n)}}-\Bigl(\,\,\ds\prod_{j=b_{n}+1}^{b_{n+1}-1}
w_{j}\Bigr)^{ -1 } e_ {
b_{n}} & \textrm{if}\ k=b_{n+1}-1,\ \gn\\
 -\Bigl(\!\!\ds\prod_{j=b_0+1}^{b_{1}-1}w_j\Bigr)^{-1}e_0& \textrm{if}\ 
k=b_1-1.
\end{cases}
\]

\medskip Any such operator $\tvw$ is called a \emph{C-type operator}. A notable fact to be pointed out immediately is that C-type operators have lots of periodic points; indeed, we have the following fact, which is \cite[Lemma 6.4]{GMM21}.
\begin{fact}\label{period} If $T=\tvw$ is a C-type operator, then 
\[T^{2(b_{n+1}-b_n)}e_k=e_k\qquad\hbox{if $k\in [b_n,b_{n+1}), \; n\geq 0$.}\]
\end{fact}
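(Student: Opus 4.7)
My plan is to prove Fact \ref{period} by a straightforward induction on $n$, after a preliminary reduction that collapses the statement to a single basis vector per block. Write $L_n := b_{n+1}-b_n$. Within the block $[b_n,b_{n+1})$ the operator $T$ acts as a pure weighted shift: a direct iteration of the first case in the definition of $\tvw$ gives
\[
T^{j}\,e_{b_n}=(w_{b_n+1}\cdots w_{b_n+j})\, e_{b_n+j}\qquad \text{for } 0\le j\le L_n-1.
\]
In particular, every $e_k$ with $k\in [b_n,b_{n+1})$ is a scalar multiple of $T^{k-b_n}e_{b_n}$, and since $T^{2L_n}$ commutes with $T^{k-b_n}$, it suffices to prove that $T^{2L_n}e_{b_n}=e_{b_n}$ for every $n\ge 0$. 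I would state this reduction first.

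For the base case $n=0$, I would just compute: $T^{b_1-1}e_0=(w_1\cdots w_{b_1-1})\,e_{b_1-1}$, and then the special formula for $k=b_1-1$ yields
\[
T^{b_1}e_0=(w_1\cdots w_{b_1-1})\cdot\Bigl(-\!\!\prod_{j=1}^{b_1-1}w_j\Bigr)^{-1}e_0=-e_0,
\]
so that $T^{2L_0}e_0=e_0$. For the inductive step, assume $T^{2L_m}e_{b_m}=e_{b_m}$ for all $m<n$. Iterating the shift part of $T$ inside the block and then applying $T$ once more using the formula for $k=b_{n+1}-1$ (with $n\ge 1$), I obtain
\[
T^{L_n}e_{b_n}\;=\;C_n\,e_{b_{\varphi(n)}}\;-\;e_{b_n},\qquad C_n:=v_n\prod_{j=b_n+1}^{b_{n+1}-1}w_j.
\]
Now the assumption on $b$ says that $L_n$ is a multiple of $2L_{\varphi(n)}$. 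Since $\varphi(n)<n$, the induction hypothesis gives $T^{2L_{\varphi(n)}}e_{b_{\varphi(n)}}=e_{b_{\varphi(n)}}$, hence $T^{L_n}e_{b_{\varphi(n)}}=e_{b_{\varphi(n)}}$. Applying $T^{L_n}$ to both sides of the displayed identity, the right-hand side becomes $C_n e_{b_{\varphi(n)}}-T^{L_n}e_{b_n}=C_n e_{b_{\varphi(n)}}-(C_n e_{b_{\varphi(n)}}-e_{b_n})=e_{b_n}$, so $T^{2L_n}e_{b_n}=e_{b_n}$, which closes the induction.

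There is no real obstacle here: once the reduction to $e_{b_n}$ is made, everything is a bookkeeping exercise. The only delicate point is verifying that the divisibility condition $2L_{\varphi(n)}\mid L_n$ is exactly what is needed to lift the induction hypothesis at index $\varphi(n)$ to the statement $T^{L_n}e_{b_{\varphi(n)}}=e_{b_{\varphi(n)}}$, so that the unwanted cross term $C_n e_{b_{\varphi(n)}}$ in $T^{L_n}e_{b_n}$ is reproduced exactly and cancels against itself when $T^{L_n}$ is applied a second time. This is precisely why the factor $2$ in the hypothesis on $(b_n)$ is essential.
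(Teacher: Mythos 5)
Your argument is correct: the reduction to the single vector $e_{b_n}$ via $T^{j}e_{b_n}=(w_{b_n+1}\cdots w_{b_n+j})e_{b_n+j}$ is valid, the base case and the strong induction using $2L_{\varphi(n)}\mid L_n$ with $\varphi(n)<n$ are exactly what is needed, and the cancellation $T^{L_n}(C_ne_{b_{\varphi(n)}}-e_{b_n})=e_{b_n}$ goes through. The paper itself gives no proof of this fact (it simply cites \cite[Lemma 6.4]{GMM21}), and your induction is precisely the standard argument behind that reference, so there is nothing to add.
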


It follows that every finitely supported vector is periodic for $\tvw$; in particular, a C-type operator is chaotic as soon as it is hypercyclic.

\smallskip A \emph{\cpt\ operator} is a C-type operator for which the parameters  satisfy the following additional conditions: for every $k\geq 1$, 

\smallskip
\begin{enumerate}
 \item[-] $\varphi$ is increasing on each interval $[2^{k-1},2^{k})$ with  $\varphi \bigl([2^{k-1},2^{k})\bigr)=[0,2^{k-1})$, \textit{i.e.} \[\varphi (n)=n-2^{k-1}\quad\hbox{for every $n\in[2^{k-1},2^{k})$};\]
 \item[-] the blocks $[b_{n},b_{n+1})$, $n\in[2^{k-1},2^{k})$ 
all have the same size, which we denote by $\Delta ^{(k)}$: 
\[b_{n+1}-b_{n}=
\Delta ^{(k)}\qquad\hbox{for every $n\in[2^{k-1},2^{k})$};
\]
\item[-] the sequence $v$ is constant on the interval $[2^{k-1},
2^{k})$: there exists $v^{(k)}$ such that 
\[ v_{n}=v^{(k)}\qquad\hbox{for every 
$n\in[2^{k-1},2^{k})$};
\]
\item[-] the sequences of weights $(w_{b_{n}+i})_{1\le i
<\Delta ^{(k)}}$ are independent of $n\in[2^{k-1},2^{k})$: there exists a 
sequence $(w_{i}^{(k)})_{1\le i<\Delta ^{(k)}}$ such that 
\[
w_{b_{n}+i}=w_{i}^{(k)} \quad\hbox{for every $n\in[2^{k-1},2^{k})$ and $1\le i<\Delta ^{(k)}$.}
\]
\end{enumerate}

\smallskip Finally, a \emph{\cput\ operator} is a \cpt\ operator whose parameters are such that for all $k\geq 1$, 

\[
v^{(k)}=2^{-\tau ^{(k)}}\quad\textrm{and}\quad w_{i}^{(k)}=
\begin{cases}
2&\textrm{if}\ \ 1\le i\le \delta ^{(k)}\\
1&\textrm{if}\ \ \delta ^{(k)}<i<\Delta ^{(k)}
\end{cases}
\]
where $(\tau ^{(k)})_{k\ge 1}$ and $(\delta ^{(k)})_{k\ge 1}$ are two 
 increasing sequences of integers with $\delta ^{(k)}<\Delta 
^{(k)}$ for each $k\ge 1$.

\smallskip
These operators have been studied in detail in \cite[Section 7]{GMM21}. In particular, we have the following crucial fact (\cite[Theorem 7.1]{GMM21}).

\begin{fact}\label{cputfhc0} A \cput\ operator $\tvw$ is frequently hypercyclic as soon as   
\begin{equation}\label{cputfhc}  \limsup\limits_{k\to\infty }\,
  \dfrac{\delta ^{(k)} -\tau ^{(k)}}{\Delta ^{(k)}}>0.
  \end{equation}
\end{fact}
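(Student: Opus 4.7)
The plan is to establish frequent hypercyclicity by a direct construction of an FHC vector, in the spirit of the proof of Theorem \ref{thm:operatorfhc} but replacing a genuine right inverse $S$ with explicit approximate preimages tailored to the C$_{+,1}$-type structure. The cornerstone is the algebraic identity
\[
T^{\Delta^{(k)}}\,e_{b_n} \;=\; 2^{\delta^{(k)}-\tau^{(k)}}\,e_{b_{\varphi(n)}} \;-\; e_{b_n}\qquad\hbox{for every } n\in [2^{k-1},2^k),
\]
which follows immediately from the definition of $\tvw$ using $v^{(k)}=2^{-\tau^{(k)}}$ and $\prod_{j=b_n+1}^{b_{n+1}-1} w_j=2^{\delta^{(k)}}$. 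Equivalently, for every $m<2^{k-1}$, setting $n:=m+2^{k-1}$ gives
\[
T^{\Delta^{(k)}}\bigl(2^{-(\delta^{(k)}-\tau^{(k)})}\,e_{b_n}\bigr) \;=\; e_{b_m} \;-\; 2^{-(\delta^{(k)}-\tau^{(k)})}\,e_{b_n},
\]
exhibiting $e_{b_m}$ as the image under $T^{\Delta^{(k)}}$ of a vector of norm $2^{-(\delta^{(k)}-\tau^{(k)})}$, which becomes arbitrarily small along the subsequence $(k_s)$ provided by the $\limsup$ hypothesis.

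Combined with Fact \ref{period} and the divisibility conditions on $(b_n)$ --- which imply that $2\Delta^{(k)}$ is a common period of $e_{b_m}$ and $e_{b_n}$ for $m\in[0,2^{k-1})$ and $n=m+2^{k-1}$ --- this identity yields approximate visit sets of positive lower density. Precisely, given a finitely supported vector $\xi=\sum_{m\in F}\alpha_m e_{b_m}$ with $F\subset [0,2^{k-1})$, the vector
\[
y \;:=\; \sum_{m\in F}\alpha_m\,2^{-(\delta^{(k)}-\tau^{(k)})}\,e_{b_{m+2^{k-1}}}
\]
satisfies $\|T^{(2\ell+1)\Delta^{(k)}}\,y - \xi\| \leq \|\xi\|_1\cdot 2^{-(\delta^{(k)}-\tau^{(k)})}$ for every $\ell \geq 0$, so its orbit visits any prescribed neighbourhood of $\xi$ along the arithmetic progression $(2\ell+1)\Delta^{(k)}$, which has lower density $1/(2\Delta^{(k)})>0$.

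The FHC vector $x$ is then assembled as follows. Fix a countable dense family $(\xi_p)_{p\geq 1}$ of finitely supported vectors, which can be taken (after a harmless pre-shift inside each block, since $T$ acts there as a weighted shift with bounded weights) to be combinations of basis vectors of the form $e_{b_m}$. For each $p$, choose an index $k(p)$ along the subsequence where $\delta^{(k)}-\tau^{(k)}\geq c\,\Delta^{(k)}$, large enough that the support of $\xi_p$ lies below $2^{k(p)-1}$ and that $2^{-(\delta^{(k(p))}-\tau^{(k(p))})}$ beats any prescribed tolerance $\varepsilon_p$. Setting $x:=\sum_p y_p$, where $y_p$ is the approximate preimage described above, one obtains a vector in $\ell_p(\mathbb Z_+)$: the supports of the $y_p$'s lie in disjoint blocks $[b_{2^{k(p)-1}}, b_{2^{k(p)}})$ when $(k(p))$ is strictly increasing, and $\|y_p\|$ decays geometrically. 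Applying Lemma \ref{prop:subsets} to the progressions $A_p:=(2\mathbb N+1)\,\Delta^{(k(p))}$ with well-chosen separation parameters $(N_p)$ produces pairwise disjoint sets $B_p\subset A_p$ of positive lower density, along which we aim to show $T^n x$ is close to $\xi_p$.

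The main technical obstacle is controlling the parasitic contributions $T^n y_q$ for $q\neq p$ at times $n\in B_p$. When $k(q)<k(p)$, the divisibility $2\Delta^{(k(q))}\mid 2\Delta^{(k(p))}$ forces $T^n y_q=y_q$, a vector of very small norm, so this case is harmless. When $k(q)>k(p)$, however, the norm of $T^n y_q$ can reach $\|\xi_q\|_1\cdot 2^{\tau^{(k(q))}}$ at its peak within one period. Keeping the aggregated error below $\varepsilon_p$ therefore requires both scaling down the dense family $(\xi_p)$ by small factors whenever needed, and using the separation parameters of Lemma \ref{prop:subsets} to steer $B_p$ away from the high-norm excursions of the orbits of the $y_q$'s with $q>p$. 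The full force of the assumption $\delta^{(k)}-\tau^{(k)}\geq c\,\Delta^{(k)}$ is precisely what supplies the geometric margin between useful signal and parasitic residue needed to close this bookkeeping.
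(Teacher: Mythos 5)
First, a point of comparison: the paper offers no proof of this Fact --- it is quoted directly from \cite[Theorem 7.1]{GMM21} --- so you are necessarily on your own here. Your cornerstone identity $T^{\Delta^{(k)}}e_{b_n}=2^{\delta^{(k)}-\tau^{(k)}}e_{b_{\varphi(n)}}-e_{b_n}$ is correct, and the resulting observation that $y:=2^{-(\delta^{(k)}-\tau^{(k)})}e_{b_{m+2^{k-1}}}$ lands within $2^{-(\delta^{(k)}-\tau^{(k)})}$ of $e_{b_m}$ at every time $(2\ell+1)\Delta^{(k)}$ is indeed the engine of the true proof. The problem is that the step you label ``the main technical obstacle'' is the whole proof, and the two devices you propose for closing it do not work.

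Concretely: fix $p$ and let $n$ range over $A_p=(2\NN+1)\Delta^{(k(p))}$. For $q$ with $k(q)>k(p)$, write $r:=n\bmod 2\Delta^{(k(q))}$. For $r<\Delta^{(k(q))}$ one has $\|T^ny_q\|=2^{\min(r,\delta^{(k(q))})-(\delta^{(k(q))}-\tau^{(k(q))})}\,\|\xi_q\|_{\ell_p}$, which exceeds any fixed $\varepsilon$ as soon as $r\gtrsim\delta^{(k(q))}-\tau^{(k(q))}$; and for $r\in[\Delta^{(k(q))},2\Delta^{(k(q))})$ one has $T^ny_q=T^{r-\Delta^{(k(q))}}\xi_q-T^{r-\Delta^{(k(q))}}y_q$, whose norm is bounded below by the modulus of the top-block coefficient of $\xi_q$ on most of that half-period (leakage in a C-type operator only goes to lower blocks, so that coefficient cannot be cancelled). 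Hence the set of $n$ for which $\|T^ny_q\|\le\varepsilon$ occupies, within each period $2\Delta^{(k(q))}$, essentially an initial interval of length at most $\Delta^{(k(q))}$, i.e.\ a proportion at most $1/2$. Since these periods are nested ($2\Delta^{(k(q))}$ divides $2\Delta^{(k(q'))}$ for $q<q'$), the intersection over $q=p+1,\dots,Q$ of these good sets has density at most $2^{-(Q-p)}$, so the set of times at which \emph{all} parasitic terms are small has upper density $0$ --- no matter how fast $(k(q))$ grows. There is therefore no $B_p\subset A_p$ of positive lower density doing what you need. Rescaling the $\xi_q$ is not available (the targets must remain a fixed dense family), and Lemma \ref{prop:subsets} only separates the $B_i$ from one another and bounds $\min B_i$ from below; it cannot steer $B_p$ clear of infinitely many prescribed sets each of positive relative density in $A_p$. (A secondary unproved point: reducing the dense family to $\mathrm{span}\{e_{b_m}\}$ by a ``pre-shift'' is not justified, because a power of $T$ shifts all blocks by the same amount, so one cannot position targets independently inside different blocks.) Any correct proof must abandon the ``one block per target, superposed for all time'' design; this is precisely what the criterion used in \cite{GMM21} to prove this Fact is engineered to do.
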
 

\subsection{Proof of Theorem \ref{Ctypeex}}
Let $T=\tvw$ be an operator of C$_{+,1}$-type on $\ell_{p}(\ZZ_+)$, so that $v$ and $w$ are given by
\[
v^{(k)}=2^{-\tau ^{(k)}}\quad\textrm{and}\quad w_{i}^{(k)}=
\begin{cases}
2&\textrm{if}\ \ 1\le i\le \delta ^{(k)}\\
1&\textrm{if}\ \ \delta ^{(k)}<i<\Delta ^{(k)}.
\end{cases}
\]

We assume that $\Delta^{(k)}\in 8\NN$ for all $k\ge 1$, and we choose
\[\delta^{(k)}:=\frac{1}{4}\Delta^{(k)} \qquad \text{and} \qquad \tau^{(k)}:=\frac{1}{8}\Delta^{(k)}.\]

\noindent So the only ``free'' parameter is now the sequence $\bigl(\Delta^{(k)}\bigr)_{k\geq 1}$.

\smallskip

By Fact \ref{cputfhc0}, the operator $T$ is frequently hypercyclic (and hence chaotic since it is a C-type operator). So we just have to show that if the sequence $(\Delta^{(k)})$ is suitably chosen, then $T$ satisfies the assumptions of Theorem~\ref{nothfhc}. We will in fact show that this holds as soon as the sequence $(\Delta^{(k)})$ grows sufficiently rapidly. Let us set
\[ K_n:=b_{2^{n}}\qquad{\rm and}\qquad J_n:=2\Delta^{(n)}\quad\textrm{ for every }n\ge 1.\]

\smallskip With this choice of the sequences $(K_n)$ and $(J_n)$, condition (a) in Theorem \ref{nothfhc} is satisfied by Fact \ref{period}. So the only thing to check is condition (b).

\smallskip
Let $\gamma_k:=2^{\,\delta ^{(k-1)}-\tau^{(k)}}\bigl(\Delta ^{(k)}\bigr)^{1-\frac1{p}}$. If $(\Delta ^{(k)})$ grows sufficiently rapidly, then the sequence $(\gamma_k)$ is decreasing and 
\[2^n \sum_{k\ge n+1}2^{k-1}\gamma_k\le 1\qquad\hbox{for all $n\geq 0$}.\]

Let us also define a sequence $(\beta_l)_{l\geq 1}$ as follows:%We let for every $k\ge 1$, every $l\in[2^{k-1},2^{k})$,
\[\beta _{l}:=4\,\gamma_k\qquad\hbox{if $l\in [2^{k-1},2^{k})$.}\]

Finally, for any $l\geq 1$, let $P_l$ be the projection of $\ell_p({\mathbb Z}_+)$ defined by
\[ P_lx=\sum_{k=b_l}^{b_{l+1}-1}x_ke_k \quad\textrm{ for every } x\in\ell_p({\mathbb Z}_+). \]

As in the proof of \cite[Theorem 7.2]{GMM21} one can show that the following estimate holds for every $k\ge 0$, every $l\in [2^{k-1},2^k[$, every $0\le m<l$ and every $0\le j\le \Delta^{(k)}-\delta^{(k)}=\frac{3}{4}\Delta^{(k)}$:
\[\|P_mT^jP_lx\|\le \frac{\beta_l}{4}\left(\prod_{i=\Delta^{(k)}-j+1}^{\Delta^{(k)}-1}|w_i^{(k)}|\right) \|P_lx\|\le \frac{\beta_l}{4}\|P_lx\|.\]

Hence, we have for all $n$ and $j\le \frac{3}{4}\Delta^{(n+1)}$,
\begin{align*}
\|\pi_{K_n}T^j (I-\pi_{K_n})x\|
&\le \sum_{m< 2^n}\sum_{l\ge 2^n}\|P_mT^j P_lx\|\\
&\le \sum_{m< 2^n}\sum_{l\ge 2^n} \frac{\beta_l}{4}\,\|P_lx\|\\
&\le 2^n \Bigl(\sum_{l\ge 2^n}\frac{\beta_l}{4}\Bigr)\,\|(I-\pi_{K_n})x\|\\
&\le 2^n \Bigr(\sum_{k\ge n+1}2^{k-1}\gamma_k\Bigr)\,\|(I-\pi_{K_n})x\|\le \|(I-\pi_{K_n})x\|.
\end{align*}

So, if we take care to ensure that $\frac{3}{4}\Delta^{(n+1)}\ge (n+1)^{2^{J_n}}J_n=(n+1)^{2^{2\Delta^{(n)}}}2\Delta^{(n)}$ for all $n\ge 1$, then condition (b) in  Theorem~\ref{nothfhc} is satisfied. This concludes the proof of Theorem \ref{Ctypeex}.
\section{Extending frequently $d$-hypercyclic tuples}\label{extend}

Let us recall the definition of $d\,$-$\mathcal F$-hypercyclicity, for a given Furstenberg family $\mathcal F\subset 2^\NN$: a tuple of operators $(T_1, \dots ,T_N)$ is $d\,$-$\mathcal F$-hypercyclic if there exists $x\in X$ such that $x\oplus\cdots \oplus x$ is $\mathcal F$-hypercyclic for $T_1\oplus\cdots \oplus T_N$.

\smallskip
In this section, our aim is to prove the following result, which is  a natural analogue of \cite[Theorem 2.1]{MaSa24} for $d\,$-$\mathcal F$-hypercyclicity.  Let us denote by SOT the \emph{Strong Operator Topology} on $\mathfrak L(X)$, \textit{i.e.} the topology of pointwise convergence.

\begin{theorem}\label{thm:extendingdfhc2}
Let $\mathcal F\subset 2^\NN$  be a Furstenberg family, and let $X$ be a Banach space supporting a hereditarily $\mathcal F$-hypercyclic operator. Let $T_1,\dots,T_N\in\mathfrak L(X)$, and assume that $(T_1,\dots ,T_N)$ is $d$-$\mathcal F$-hypercyclic. Then, for any countable and linearly independent set $Z\subset\deffhc(T_1,\dots ,T_N)$, the set
$$\bigl\{T\in\mathfrak L(X):\ Z\subset d\hbox{-}\mathcal F\hbox{-\rm HC}(T_1,\dots,T_N,T)\bigr\}$$
is {\rm SOT}-dense in $\mathfrak L(X).$
\end{theorem}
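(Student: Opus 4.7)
The plan is to adapt the proof of \cite[Theorem 2.1]{MaSa24} to the $\mathcal F$-hypercyclic setting, using the hereditarily $\mathcal F$-hypercyclic operator $S\in\mathfrak L(X)$ provided by hypothesis in place of an auxiliary hypercyclic operator. By the definition of the SOT, it is enough to show that, given $T_0\in\mathfrak L(X)$, finitely many vectors $y_1,\dots,y_m\in X$ and $\veps>0$, one can produce $T\in\mathfrak L(X)$ with $\|Ty_j-T_0y_j\|<\veps$ for every $j$ and $Z\subset\deffhc(T_1,\dots,T_N,T)$. Fix countable product-form bases $(V_p^{(N)})_{p\ge 1}$ of non-empty open subsets of $X^N$ and $(W_q)_{q\ge 1}$ of non-empty open subsets of $X$, and set
$$A_{i,p}:=\mathcal N_{T_1\oplus\cdots\oplus T_N}(z_i^{\oplus N},V_p^{(N)})\in\mathcal F\quad\hbox{for all }i\in I,\ p\ge 1.$$
What needs to be verified is then $\mathcal N_T(z_i,W_q)\cap A_{i,p}\in\mathcal F$ for every $i\in I$ and all $p,q\ge 1$.

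For each fixed $i\in I$, the countable family $\bigl\{(A_{i,p},W_q):p,q\ge 1\bigr\}$ falls within the scope of Definition \ref{def0}, so by the dense version of hereditary $\mathcal F$-hypercyclicity (Proposition \ref{densely}) the set of vectors $\xi\in X$ satisfying $\mathcal N_S(\xi,W_q)\cap A_{i,p}\in\mathcal F$ for all $p,q$ is dense in $X$. I would inductively select seed vectors $\xi_i$ from these dense sets in such a way that (i) $\|\xi_i\|$ decays fast enough for a certain perturbative series below to converge absolutely, and (ii) the countable collection $\{z_i:i\in I\}\cup\{S^k\xi_i:k\ge 0,\ i\in I\}$ is linearly independent and in general position relative to $\mathrm{span}(y_1,\dots,y_m)$; each of these requirements amounts to avoiding a countable union of closed nowhere-dense subsets, hence is compatible with staying in a dense set of choices.

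With the $\xi_i$'s in hand, I would define $T$ on the linear span $E$ of $\{y_j\}\cup\{z_i\}\cup\{S^k\xi_i\}$ by
$$Ty_j:=T_0 y_j,\qquad Tz_i:=S\xi_i,\qquad T(S^k\xi_i):=S^{k+1}\xi_i\ (k\ge 0).$$
A direct induction then gives $T^nz_i=S^n\xi_i$ for every $n\ge 1$, hence
$$\mathcal N_T(z_i,W_q)\cap A_{i,p}\;\supseteq\;\bigl(\mathcal N_S(\xi_i,W_q)\cap A_{i,p}\bigr)\cap[1,\infty),$$
which is a member of $\mathcal F$ by the choice of $\xi_i$ (removal of finitely many integers preserves membership in the Furstenberg families of interest here). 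Since the open sets of the form $V_p^{(N)}\times W_q$ form a basis of the product topology on $X^{N+1}$, upward heredity of $\mathcal F$ then gives each $z_i$ as a $d$-$\mathcal F$-hypercyclic vector for $(T_1,\dots,T_N,T)$.

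The main obstacle is extending this partially defined linear map from $E$ to a bounded operator on all of $X$ that is compatible with the prescription $Ty_j=T_0y_j$. My proposed approach is to realise $T-T_0$ as an absolutely convergent series of rank-one operators $\sum_\alpha \phi_\alpha\otimes\eta_\alpha$ indexed by the chosen countable family, where the functionals $\phi_\alpha\in X^*$ are obtained from Hahn--Banach by exploiting the linear independence, additionally arranged so that $\phi_\alpha(y_j)=0$ for all $j$ (which forces $(T-T_0)y_j=0$); absolute convergence in $\mathfrak L(X)$ is to be guaranteed by the decay of $\|\xi_i\|$ balanced against the norms of the biorthogonal functionals. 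Carrying this out rigorously is the most technical step of the argument: one must arrange the biorthogonal system so that the ensuing perturbation $K=T-T_0$ not only has the prescribed values on each $S^k\xi_i$ but also extends boundedly over the closure of $E$, and this is precisely where the dense version of Proposition \ref{densely}---which permits $\|\xi_i\|$ to be made arbitrarily small---is essential.
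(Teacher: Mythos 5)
Your strategy---building the new operator $T$ directly by prescribing its values on the vectors $z_i$ and on the $S$-orbits of auxiliary seed vectors $\xi_i$---runs into a structural obstruction, not merely a technical one. Each $\xi_i$ is chosen so that $\mathcal N_S(\xi_i,W_q)\cap A_{i,p}\in\mathcal F$ for every basic open set $W_q$; since the members of a Furstenberg family are non-empty, this forces $\xi_i$ to be a hypercyclic vector for $S$, so $\{S^k\xi_i:\ k\geq 1\}$ is dense in $X$ and in particular has dense linear span. Your prescription $T(S^k\xi_i):=S^{k+1}\xi_i$ for all $k$ therefore requires the bounded operator $T$ to agree with $S$ on a set with dense linear span, which forces $T=S$ on all of $X$. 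This is incompatible with the remaining prescriptions: $Ty_j=T_0y_j$ becomes $Sy_j=T_0y_j$, which fails for generic $T_0$. No choice of biorthogonal functionals or decay of $\Vert\xi_i\Vert$ can repair this, and the ``general position relative to ${\rm span}(y_1,\dots,y_m)$'' you invoke is vacuous once the orbit vectors span a dense subspace: a bounded operator is determined by its values there. (A secondary issue: discarding the time $n=0$ from $\mathcal N_S(\xi_i,W_q)\cap A_{i,p}$ need not preserve membership in an \emph{arbitrary} Furstenberg family, and the theorem is stated for arbitrary $\mathcal F$.)

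The missing idea is to obtain the new operator by \emph{conjugation} rather than by pointwise prescription of orbits. The paper's proof takes the new operator to be $L^{-1}SL$ where $S$ is the hereditarily $\mathcal F$-hypercyclic operator and $L$ is invertible: it fixes the sets $A_{l,p}:=\mathcal N_{T_1\oplus\cdots\oplus T_N}(z_l\oplus\cdots\oplus z_l, U_p)\in\mathcal F$ exactly as you do, then constructs $L$ as the limit of rank-one perturbations $L_l=L_{l-1}+v_l^*\otimes\bigl(x_l-L_{l-1}(z_l)\bigr)$, where $v_l^*$ is biorthogonal to the $z_s$ and $x_l$ is chosen---via the dense form of hereditary $\mathcal F$-hypercyclicity, i.e. Proposition \ref{densely}, which is where your use of that proposition is on the right track---arbitrarily close to $L_{l-1}(z_l)$ and such that $\mathcal N_S(x_l,V_p)\cap A_{l,p}\in\mathcal F$ for all $p$. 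Then $Lz_l=x_l$, so $(L^{-1}SL)^nz_l=L^{-1}(S^nx_l)\in L^{-1}(V_p)$ for $n$ in that set, and the sets $U_p\times L^{-1}(V_p)$ still form a basis of $X^{N+1}$ because $L^{-1}$ is a homeomorphism. Note finally that the SOT-density in the conclusion is obtained from the SOT-density of the similarity orbit of a hypercyclic operator, not by approximating an arbitrary $T_0$ on finitely many vectors while keeping the orbit prescription; the latter is exactly what cannot be done.
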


\smallskip
Applying this result to $Z=\{x\}$ with $x\in \deffhc(T_1,\dots,T_N)$, we get 
\begin{corollary}\label{thm:extendingdfhc} Let $X$ be a Banach space supporting a hereditarily frequently hypercyclic operator.
Let $T_1,\dots,T_N\in\mathfrak L(X)$, and assume that $(T_1,\dots ,T_N)$ is $d$-frequently hypercyclic.
Then there exists $T_{N+1}\in\mathfrak L(X)$ such that $(T_1,\dots,T_{N+1})$ is $d$-frequently hypercyclic.
\end{corollary}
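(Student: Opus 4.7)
The plan is to construct $T$ by prescribing its values on a carefully chosen countable linearly independent family and then extending by continuity; the construction mirrors the one of \cite{MaSa24} for $d$-hypercyclic tuples, with the role of a hypercyclic vector for an auxiliary operator replaced by that of a hereditarily $\mathcal F$-hypercyclic orbit. Fix a hereditarily $\mathcal F$-hypercyclic operator $S\in\mathfrak L(X)$ (given by hypothesis) and write $Z=\{z_k:k\geq 1\}$. SOT-density of the target set means: given $T_0\in\mathfrak L(X)$, vectors $u_1,\dots,u_p\in X$ and $\varepsilon>0$, one must produce $T\in\mathfrak L(X)$ with $\|Tu_l-T_0u_l\|<\varepsilon$ for each $l$ and $Z\subset\deffhc(T_1,\dots,T_N,T)$. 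Fix a countable basis $(V_i\times W_i)_{i\geq 1}$ of non-empty open sets of $X^{N+1}$, with $V_i$ open in $X^N$ and $W_i$ open in $X$, and set
\[A_{k,i}:=\mathcal N_{T_1\oplus\cdots\oplus T_N}(z_k\oplus\cdots\oplus z_k,\,V_i)\in\mathcal F,\]
which belongs to $\mathcal F$ because $z_k\in\deffhc(T_1,\dots,T_N)$. What must be secured is $\mathcal N_T(z_k,W_i)\cap A_{k,i}\in\mathcal F$ for every $k,i\geq 1$.

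The first step is to select suitable ``time-one targets'' $y_k$ using hereditary $\mathcal F$-hypercyclicity of $S$. For each fixed $k$, the dense version (Proposition \ref{densely}) applied to the countable family $(W_i,A_{k,i})_{i\geq 1}$ yields a dense set
\[D_k:=\{y\in X:\mathcal N_S(y,W_i)\cap A_{k,i}\in\mathcal F\ \text{for all}\ i\geq 1\}.\]
We then pick $y_k\in D_k$ inductively in $k$, requiring simultaneously that $\|y_k\|$ decay extremely rapidly and that the family
\[\mathcal B:=\{z_k:k\geq 1\}\cup\{S^j y_k:k\geq 1,\ j\geq 1\}\cup\{u_1,\dots,u_p\}\]
be linearly independent. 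This is possible by a standard Baire-category argument: at stage $k$ the vectors producing a linear relation through some $S^j y_k$ form a countable union of proper closed subspaces, hence a meager set, while $D_k$ is a dense $G_\delta$ that moreover intersects every neighbourhood of $0$ after rescaling.

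The second step is to define $T$ on $E:=\mathrm{span}(\mathcal B)$ by
\[Tu_l:=T_0 u_l,\qquad Tz_k:=Sy_k,\qquad T(S^j y_k):=S^{j+1} y_k\ (j\geq 1),\]
which is unambiguous by linear independence of $\mathcal B$. With $\|y_k\|$ chosen sufficiently summably small, $T_{|E}$ is a bounded linear map, with norm controlled in terms of $\|T_0\|$ and $\|S\|$; one then extends $T$ to all of $X$ by a standard extension (for instance, combining a bounded extension from $E$ to $\overline E$ with $T_0$ acting on a suitable topological complement). By construction $Tu_l=T_0u_l$ exactly, so the SOT condition holds. A straightforward induction gives $T^n z_k = S^n y_k$ for every $n\geq 1$, whence
\[\mathcal N_T(z_k,W_i)\cap A_{k,i}\;\supset\;\mathcal N_S(y_k,W_i)\cap A_{k,i}\cap [1,\infty)\;\in\;\mathcal F,\]
by upward hereditarity of $\mathcal F$ and the harmless removal of the single point $0$. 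This exhibits each $z_k$ as a $d$-$\mathcal F$-hypercyclic vector for $(T_1,\dots,T_N,T)$.

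The main technical obstacle is the last extension step: a linear map defined on a non-closed countable-dimensional subspace of a Banach space need not extend to a bounded operator on the whole space without extra care. This forces a threefold inductive control on each $y_k$: membership in $D_k$, preservation of linear independence of the orbit $(S^j y_k)_{j\geq 1}$ with respect to the already-fixed subspace, and a norm decay rapid enough that the partial operators obtained at each finite stage have uniformly bounded operator norms. Balancing these three requirements is the heart of the construction and is the point where the hereditary (as opposed to plain) $\mathcal F$-hypercyclicity of $S$ is essential, since it is precisely what lets us prescribe the visit times of $S^ny_k$ \emph{inside} each $A_{k,i}$.
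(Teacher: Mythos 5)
Your overall strategy --- choose targets $y_k$ via dense hereditary $\mathcal F$-hypercyclicity and then build an operator sending the orbit of $z_k$ onto the $S$-orbit of $y_k$ --- is close in spirit to the paper's, but the step ``with $\|y_k\|$ sufficiently small, $T_{|E}$ is a bounded linear map'' is false, and the construction collapses there. On $F:=\mathrm{span}(S^jy_k : k\geq 1,\ j\geq 1)$ your prescription forces $T$ to agree with $S$, since $T(S^jy_k)=S^{j+1}y_k=S(S^jy_k)$. But each $y_k\in D_k$ is in particular a hypercyclic vector for $S$ (the $W_i$ run over a basis of $X$), hence so is $Sy_k$, and therefore $\overline F=X$; in particular $z_k\in\overline F$. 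Any \emph{bounded} map on $E$ agreeing with $S$ on $F$ must then satisfy $Tz_k=Sz_k$: taking $w_n\in F$ with $w_n\to z_k$, one has $z_k-w_n\to 0$ while $T(z_k-w_n)=Sy_k-Sw_n\to Sy_k-Sz_k$. This contradicts the requirement $Tz_k=Sy_k$ unless $y_k=z_k$. No decay of $\|y_k\|$ and no linear independence of $\mathcal B$ can repair this: linear independence is a purely algebraic condition and gives no control on the topological entanglement of the spans, which is what boundedness requires. (The final extension step is moot for the same reason --- $\overline E=X$ --- and would in any case founder on the fact that a separable closed subspace of a general Banach space need not be complemented.)

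The paper avoids this entirely by never defining the new operator on a proper subspace: it fixes a hereditarily $\mathcal F$-hypercyclic operator $T$ and realizes the extension as a conjugate $L^{-1}TL$, where $L=\lim_l L_l$ is obtained from $L_0=S\in\glx$ by adding rank-one perturbations $v_l^*\otimes\bigl(x_l-L_{l-1}(z_l)\bigr)$ of norm less than $4^{-l}\varepsilon$, arranged so that $L(z_l)=x_l$, the vector $x_l$ being supplied by Proposition \ref{densely} so that $T^nx_l\in V_p$ for all $n$ in some $B_{l,p}\in\mathcal F$ with $B_{l,p}\subset A_{l,p}$. Boundedness and invertibility of $L$ are then automatic, $(L^{-1}TL)^nz_l=L^{-1}T^nx_l\in L^{-1}(V_p)$ for $n\in B_{l,p}$, and since the sets $U_p\times L^{-1}(V_p)$ still form a basis of $X^N\times X$, each $z_l$ is $d$-$\mathcal F$-hypercyclic for $(T_1,\dots,T_N,L^{-1}TL)$; SOT-density then comes from the SOT-density of the similarity orbit of a hypercyclic operator. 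To salvage your argument you should replace ``prescribe $T$ on a spanning family and extend'' by such a globally defined conjugation; as written, the operator you describe does not exist.
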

If $(T_1,\dots ,T_N)$ is \emph{densely} $d$-$\mathcal F$-hypercyclic then, applying Theorem \ref{thm:extendingdfhc2} with any dense linearly independent set $Z\subset X$
 contained in $\deffhc(T_1,\dots,T_N)$, we obtain:

\begin{corollary}\label{thm:extendingddensefhc}
Let $\mathcal F\subset 2^\NN$  be a Furstenberg family, and let $X$ be a Banach space supporting a hereditarily $\mathcal F$-hypercyclic operator. Let $T_1,\dots,T_N\in\mathfrak L(X)$, and assume that 
$(T_1,\dots ,T_N)$ is densely $d$-$\mathcal F$-hypercyclic. 
Then the set
$$\bigl\{T\in\mathfrak L(X):\ (T_1,\dots,T_N,T)\;\hbox{ is densely $d$-$\mathcal F$-hypercyclic}\bigr\}$$
is {\rm SOT}-dense in $\mathfrak L(X).$
\end{corollary}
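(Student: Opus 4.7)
My plan is to adapt the SOT-density scheme of \cite{MaSa24} to the $\mathcal F$-hypercyclic setting, using the hereditary $\mathcal F$-HC operator $S\in\mathfrak L(X)$ supplied by the hypothesis as the ``dynamical engine'' that drives the construction of $T$. By definition of SOT, the problem reduces to the following local statement: given $T_0\in\mathfrak L(X)$, a finite family $\{y_1,\dots,y_k\}\subset X$ and $\varepsilon>0$, one has to construct $T\in\mathfrak L(X)$ with $\|Ty_j-T_0y_j\|<\varepsilon$ for all $j\le k$ and $Z\subseteq\deffhc(T_1,\dots,T_N,T)$.

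Next, I would recast the target property as a countable collection of conditions. Enumerate $Z=\{z_i\}_{i\ge 1}$, fix a countable basis $(V_m)_{m\ge 1}$ of non-empty open subsets of $X$, and note that since each $z_i$ is $d$-$\mathcal F$-hypercyclic for $(T_1,\dots,T_N)$, the set
\[
A^{(i)}_{m_1,\dots,m_N}:=\bigl\{n\ge 0:\ T_j^{\,n}z_i\in V_{m_j},\ 1\le j\le N\bigr\}
\]
belongs to $\mathcal F$ for every tuple $(m_1,\dots,m_N)$. The property we want for $T$ is then: for all indices $(i;m_1,\dots,m_{N+1})$,
\[
A^{(i)}_{m_1,\dots,m_N}\cap \mathcal N_T(z_i,V_{m_{N+1}})\in\mathcal F,
\]
which is a single countable list of constraints on $T$.

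I would then apply hereditary $\mathcal F$-HC of $S$ (combined with Proposition \ref{densely}) to the countable family of open sets $(V_m)_m$ and the countable family of $\mathcal F$-sets $\bigl(A^{(i)}_{m_1,\dots,m_N}\bigr)$ to produce a dense supply of vectors $w\in X$ satisfying $\mathcal N_S(w,V_m)\cap A^{(i)}_{m_1,\dots,m_N}\in\mathcal F$ for all the relevant indices. The operator $T$ should then be built by blending the $T_0$-dynamics on $\mathrm{span}(y_1,\dots,y_k)$ with an $S$-like dynamics on a suitable complementary subspace, arranged so that, for each $z_i\in Z$, the orbit $(T^{\,n}z_i)_{n\ge 0}$ mimics the $S$-orbit of a hereditarily $\mathcal F$-HC vector along the constraint set $A^{(i)}_{m_1,\dots,m_N}$. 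The linear independence of $Z$ enters exactly here, so that such an assignment can be realised by a \emph{single} bounded linear operator on $X$.

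The main obstacle is precisely this last step: producing one linear map $T\in\mathfrak L(X)$ that simultaneously remains SOT-close to $T_0$ and encodes, for each of the countably many $z_i$'s at once, the full hereditary $\mathcal F$-HC information required to meet the $\mathcal F$-constraints along every $A^{(i)}_{m_1,\dots,m_N}$. Hereditary $\mathcal F$-hypercyclicity of $S$ (as opposed to plain $\mathcal F$-hypercyclicity) is what makes this possible: without it, one could not pin the orbits of the $z_i$'s down to the prescribed $\mathcal F$-subsets arising from the $d$-$\mathcal F$-HC behaviour of $(T_1,\dots,T_N)$, and the extension would fail.
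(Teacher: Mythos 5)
Your proposal stops short of a proof at exactly the point where the real work lies. You set up the right countable list of constraints (the sets $A^{(i)}_{m_1,\dots,m_N}$, which do belong to $\mathcal F$), and you correctly invoke hereditary $\mathcal F$-hypercyclicity of $S$ together with Proposition \ref{densely} to get a dense supply of vectors $w$ meeting all of them. But the step you yourself label ``the main obstacle'' --- producing a single bounded operator $T$, SOT-close to a given $T_0$, whose orbit of each $z_i$ realises the prescribed visits --- is precisely the content of the result, and the ``blending of $T_0$-dynamics with $S$-like dynamics on a complementary subspace'' you gesture at does not obviously produce such an operator: a direct-sum-type blend gives you no control over the $T$-orbit of a vector $z_i$ that is not adapted to the decomposition. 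The paper's device is different and is worth internalising: one takes $T:=L^{-1}SL$, where $L$ is an invertible operator built as a norm-convergent perturbation $L=\lim L_l$, $L_l=L_{l-1}+v_l^*\otimes\bigl(x_l-L_{l-1}(z_l)\bigr)$, of an arbitrary invertible operator; linear independence of $Z$ furnishes the functionals $v_l^*$ with $v_l^*(z_s)=\delta_{ls}$ for $s\le l$, so that $L(z_l)=x_l$ where $x_l$ is a hereditarily $\mathcal F$-HC vector chosen (by Proposition \ref{densely}) close to $L_{l-1}(z_l)$ and satisfying the constraints. Then $(L^{-1}SL)^n z_l=L^{-1}S^n x_l$, and one works with the transformed basis $U_p\times L^{-1}(V_p)$. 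The SOT-density is then obtained not by matching $T_0$ on finitely many vectors directly, but from the known fact that the similarity orbit of a hypercyclic operator is SOT-dense in $\mathfrak L(X)$.

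A second, smaller omission: the statement you are asked to prove is deduced in the paper in one line from Theorem \ref{thm:extendingdfhc2}, by choosing $Z$ to be a countable, linearly independent, \emph{dense} subset of $\deffhc(T_1,\dots,T_N)$ (such a set exists because this set of vectors is dense, by the dense $d$-$\mathcal F$-hypercyclicity hypothesis); then $Z\subset\deffhc(T_1,\dots,T_N,T)$ forces the latter set to be dense. Your write-up never fixes $Z$ to be dense, so even if your construction were completed you would only obtain $d$-$\mathcal F$-hypercyclicity, not \emph{dense} $d$-$\mathcal F$-hypercyclicity, of the extended tuple.
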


\smallskip In the proof of Theorem \ref{thm:extendingdfhc2}, we will need the following fact (already mentioned in the introduction).
\begin{proposition}\label{densely} If $T\in\mathfrak L(X)$ is hereditarily $\mathcal F$-hypercyclic then it is in fact \emph{densely} hereditarily $\mathcal F$-hypercyclic: 
given a countable family $(A_i)_{i\in I}\subset\mathcal F$ and a family $(V_i)_{i\in I}$ of non-empty open sets in $X$, there is a dense set of $x\in X$ such that $\mathcal N_T(x,V_i)\cap A_i\in\mathcal F$ for all $i\in I$.
\end{proposition}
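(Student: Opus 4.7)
The plan is to derive density by augmenting the countable family and applying hereditary $\mathcal F$-hypercyclicity to the enlarged collection. Fix a non-empty open $U \subset X$; the task is to produce $x \in U$ with $\mathcal N_T(x, V_i) \cap A_i \in \mathcal F$ for all $i \in I$.

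First I would enlarge the family. Pick any $A_* \in \mathcal F$ (for instance $A_* = \NN$, which lies in $\mathcal F$ since $\mathcal F$ is upward hereditary and non-empty), and apply the definition of hereditary $\mathcal F$-hypercyclicity to the countable family
$$\{(V_i, A_i + n) : i \in I,\; n \geq 0\} \cup \{(U, A_*)\},$$
where $A_i + n := \{k + n : k \in A_i\}$. For the Furstenberg families that arise in the paper (infinite sets, positive lower or upper density, positive Banach upper density), each $A_i+n$ still belongs to $\mathcal F$, so the family is admissible. The resulting vector $z \in X$ satisfies $\mathcal N_T(z, V_i) \cap (A_i + n) \in \mathcal F$ for every $i \in I$ and $n \geq 0$, and also $\mathcal N_T(z, U) \cap A_* \in \mathcal F$. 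In particular $\mathcal N_T(z, U) \neq \emptyset$, so one can pick $n_0 \geq 0$ with $T^{n_0} z \in U$ and set $x := T^{n_0} z$.

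It then remains to verify that $x$ retains the HFHC property for the original pairs. A direct computation gives, for each $i \in I$,
$$\mathcal N_T(x, V_i) \cap A_i = \bigl(\mathcal N_T(z, V_i) - n_0\bigr) \cap A_i = \bigl(\mathcal N_T(z, V_i) \cap (A_i + n_0)\bigr) - n_0,$$
where the inner intersection was arranged in step one to belong to $\mathcal F$. The main subtlety---and the place where one must rely on shift invariance of $\mathcal F$ in the backward direction, i.e.\ $B \in \mathcal F \Rightarrow B - n_0 \in \mathcal F$---is this final translation. This is automatic for every Furstenberg family considered in the paper; in a purely abstract setting one would either impose translation invariance tacitly or modify the augmentation to bypass explicit shifts. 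Since $U$ was arbitrary, the set of vectors satisfying the HFHC condition for $(V_i,A_i)_{i\in I}$ is dense in $X$.
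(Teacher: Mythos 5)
Your argument is correct for the concrete families appearing in the paper, but as written it does not prove the proposition, which is stated for an \emph{arbitrary} Furstenberg family $\mathcal F$: the only hypothesis on $\mathcal F$ is upward heredity, and your proof needs $\mathcal F$ to be closed under both forward translation ($A\in\mathcal F\Rightarrow A+n\in\mathcal F$, to make the augmented family admissible in Definition \ref{def0}) and backward translation ($B\in\mathcal F\Rightarrow B-n_0\in\mathcal F$, for the final step). You flag this yourself, but ``modify the augmentation to bypass explicit shifts'' is exactly the missing idea, and the proposition is invoked later for general $\mathcal F$ (in the proof of Theorem \ref{thm:extendingdfhc2} and in Remark \ref{trans}), so the gap is not cosmetic.

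The paper closes the gap by shifting the \emph{open sets} rather than the frequency sets: it applies hereditary $\mathcal F$-hypercyclicity to the pairs $\bigl(T^{-N}(V_i), A_i\bigr)$ for $(i,N)\in I\times\NN$, leaving every $A_i$ untouched. This produces a vector $x_0$ and sets $B_{i,N}\in\mathcal F$ with $B_{i,N}\subset A_i$ and $T^nx_0\in T^{-N}(V_i)$ for all $n\in B_{i,N}$. After enlarging the family so that the $V_i$ contain a basis of open sets, $x_0$ is in particular hypercyclic, so $x:=T^{N_U}x_0\in U$ for some $N_U$; then $T^nx=T^{N_U}(T^nx_0)\in V_i$ for every $n\in B_{i,N_U}$, hence $\mathcal N_T(x,V_i)\cap A_i\supseteq B_{i,N_U}\in\mathcal F$, and upward heredity alone concludes. (The sets $T^{-N}(V_i)$ are non-empty because a hereditarily $\mathcal F$-hypercyclic operator is hypercyclic and therefore has dense range.) Replacing your pairs $(V_i,A_i+n)$ by $(T^{-n}(V_i),A_i)$ turns your argument into the paper's and removes any invariance assumption on $\mathcal F$.
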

\begin{proof} Let $U$ be a non-empty open set in $X$: we want to find $x\in U$ such that $\mathcal N_T(x,V_i)\cap A_i\in\mathcal F$ for all $i\in I$. For $(i,N)\in I\times \NN$, define $V_{i,N}:=T^{-N}(V_i)$ and $A_{i,N}:= A_i$. Since $T$ is hereditarily $\mathcal F$-hypercyclic, one can find $x_0\in X$ and sets $B_{i,N}\in\mathcal F$ such that $B_{i,N}\subset A_{i,N}=A_i$ and $T^nx_0\in V_{i,N}$ for all $n\in B_{i,N}$. Since we may assume that the family $(V_i)$ is a basis of open sets for $X$, the vector $x_0$ is in particular a hypercyclic vector for $T$. So one can find an integer $N_U$ such that $x:= T^{N_U}x_0\in U$. Then, for all $n\in B_{i,N_{U}}$, we see that $T^nx= T^{N_U}(T^nx_0)\in V_i$.
\end{proof}

\begin{proof}[Proof of Theorem \ref{thm:extendingdfhc2}] Let us denote by $\glx$ the set of all invertible operators on $X$. The core of the proof is contained in the following fact.
\begin{fact}\label{truc} Let $T_1,\dots,T_N\in\mathfrak L(X)$, and assume that
$(T_1,\dots ,T_N)$ is $d$-$\mathcal F$-hypercyclic.  Let $T\in\mathfrak L(X)$ be a hereditarily $\mathcal F$-hypercyclic  operator. For any countable and linearly independent set $Z\subset\deffhc(T_1,\dots ,T_N)$, for any $S\in\glx$ and any $\varepsilon>0$, one can find $L\in\glx$ such that $\Vert L-S\Vert<\varepsilon$ and $Z\subset \deffhc(T_1,\dots ,T_N, L^{-1}TL)$.
\end{fact}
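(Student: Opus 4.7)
The plan is to reduce the statement, via the identity $(L^{-1}TL)^n=L^{-1}T^nL$, to a perturbation problem that I would solve by a rank-one inductive construction combining Hahn-Banach with Proposition~\ref{densely}. First, enumerate $Z=\{z_k\}_{k\geq 1}$, fix countable bases $(U_j)_{j\in\NN}$ of non-empty open sets of $X$ and $(V_m^{(1)}\times\cdots\times V_m^{(N)})_{m\in\NN}$ of (product) non-empty open sets of $X^N$, and set
\[A_{k,m}:=\bigl\{n\geq 0:\ T_i^nz_k\in V_m^{(i)} \text{ for } i=1,\ldots,N\bigr\},\]
which belongs to $\mathcal F$ since $z_k\in\deffhc(T_1,\ldots,T_N)$. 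Because $L^{-1}$ is a homeomorphism of $X$, the required condition $Z\subset\deffhc(T_1,\ldots,T_N,L^{-1}TL)$ unravels to: for every $k,m,j$,
\[\mathcal N_T(Lz_k,U_j)\cap A_{k,m}\in\mathcal F.\]
Proposition~\ref{densely} applied to the hereditarily $\mathcal F$-hypercyclic operator $T$ and to the countable family $\{(U_j,A_{k,m}):m,j\in\NN\}$ then yields, for each fixed $k$, a \emph{dense} set
\[D_k:=\bigl\{y\in X:\ \mathcal N_T(y,U_j)\cap A_{k,m}\in\mathcal F \text{ for all } m,j\bigr\}.\]
The task is thus reduced to producing $L\in\glx$ with $\|L-S\|<\varepsilon$ and $Lz_k\in D_k$ for every $k\geq 1$.

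I would construct such an $L$ inductively as a norm-convergent sum of rank-one perturbations of $S$. Without loss of generality $\varepsilon<\|S^{-1}\|^{-1}$, so any operator within distance $\varepsilon$ of $S$ is automatically invertible. Set $L_0:=S$, and assume that $L_n\in\mathfrak L(X)$ has been built with $L_nz_k=y_k\in D_k$ for every $k\leq n$. Since $z_1,\ldots,z_{n+1}$ are linearly independent, Hahn-Banach provides $f_n\in X^*$ with $f_n(z_k)=0$ for $k\leq n$ and $f_n(z_{n+1})=1$. Using that $D_{n+1}$ is dense in $X$, pick $y_{n+1}\in D_{n+1}$ with
\[\|y_{n+1}-L_nz_{n+1}\|<\frac{\varepsilon}{2^{n+1}(1+\|f_n\|)},\]
and define the rank-one operator $\delta_n:=f_n\otimes(y_{n+1}-L_nz_{n+1})$, which satisfies $\delta_nz_k=0$ for $k\leq n$, $\delta_nz_{n+1}=y_{n+1}-L_nz_{n+1}$, and $\|\delta_n\|<\varepsilon/2^{n+1}$. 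Then put $L_{n+1}:=L_n+\delta_n$, so that $L_{n+1}z_k=y_k$ for all $k\leq n+1$.

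Setting $L:=S+\sum_{n\geq 0}\delta_n$, the series converges absolutely in operator norm, $\|L-S\|<\varepsilon$, and hence $L\in\glx$; for each fixed $k$, one has $\delta_nz_k=0$ whenever $n\geq k$, so $Lz_k=L_kz_k=y_k\in D_k$, exactly as required. The main obstacle I anticipate is the \emph{extension} step: a merely linearly independent countable set need not admit biorthogonal functionals in $X^*$ (as already happens for $\{t^k\}_{k\geq 1}\subset C([0,1])$), so one cannot globally write $L-S$ as $\sum_k z_k^*\otimes u_k$. The inductive construction circumvents this by invoking Hahn-Banach only on the finite-dimensional subspaces $\mathrm{span}(z_1,\ldots,z_n)$ at each stage, where separation is automatic, and by absorbing the lack of a global biorthogonal system into the summability of $\|\delta_n\|$, which is freely controlled using the density of each $D_{n+1}$.
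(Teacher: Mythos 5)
Your proposal is correct and follows essentially the same route as the paper: reduce to arranging $Lz_k$ in the dense sets supplied by Proposition~\ref{densely}, then build $L$ as a norm-convergent sum of rank-one perturbations of $S$ using functionals that vanish on the previously handled $z_s$ (Hahn--Banach on finite-dimensional spans), with the perturbation norms controlled by the density of the target sets. The only cosmetic difference is that you package the hereditary $\mathcal F$-hypercyclicity data into the sets $D_k$ up front, whereas the paper invokes Proposition~\ref{densely} afresh at each inductive step; the argument is the same.
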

\begin{proof}[Proof of Fact \ref{truc}]
Without loss of generality, we assume that $Z$ is infinite; we enumerate $Z$ as a sequence $(z_l)_{l\in\NN}$, without repetition. Let us fix $S\in\glx$ and $\varepsilon>0$. Since $\glx$ is $\Vert\,\cdot\,\Vert$-open in $\mathfrak L(X)$, we may assume that any operator $L\in\mathfrak L(X)$ such that $\Vert L-S\Vert <\varepsilon$ is invertible.

Let $(W_p)_{p\in\NN}$ be a countable basis of open sets for $X^{N+1}=X^N\times X$, and assume that each set $W_p$ has the form $W_p=U_p\times V_p$ where $U_p$ is open in $X^N$ and $V_p$ is open in $X$.
For each $(l,p)\in \NN\times \NN$, we may fix a set $A_{l,p}\in\mathcal F$  such that \[ (T_1^n z_l,\dots,T_N^n z_l)\in U_p\qquad\hbox{ for all $n\in A_{l,p}$.}\]
 
We construct by induction a sequence $(L_l)_{l\geq 0}$ in $\mathfrak L(X)$ with $L_0=S$, a sequence $(x_l)_{l\geq 1}$ of vectors of $X$ and a family $(B_{l,p})_{l,p\geq 1}$ of sets in $\mathcal F$, such that the following holds true for every $l,p\geq 1$.
\begin{enumerate}[(i)]
\item $B_{l,p}\subset A_{l,p}$ and $T^n x_l\in V_p$ for all $n\in B_{l,p}$;
\item $L_l(z_s)=x_s$ for all $s\leq l$;% and $L_l(z_l)=x_l$;
\item $\Vert L_l-L_{l-1}\Vert < 4^{-l}\varepsilon$.
\end{enumerate}

\smallskip The inductive step is as follows. Choose a linear functional $v_l^*\in X^*$ such that $v_l^*(z_s)=0$ for all $s<l$ and $v_l^*(z_l)=1$, which is possible by linear independence of $Z$. Next, since $T$ is \emph{densely} hereditarily $\mathcal F$-hypercyclic by Proposition \ref{densely}, we can find a vector $x_l\in X$ and sets $B_{l,p}\subset A_{l,p}$ with $B_{l,p}\in\mathcal F$
for each  $p\geq 1$,  such that
\[ \|x_l-L_{l-1}(z_{l})\|< \frac \veps{4^l\,\|v_l^*\|}\qquad{\rm and}\qquad T^{n}x_l\in V_p\quad\hbox{for all $n\in B_{l,p}$}.\]

Then, define $L_l:=L_{l-1}+v_l^*\otimes\bigl(x_l-L_{l-1}(z_l)\bigr)\in\mathfrak L(X)$, \textit{i.e.}  
$$L_l(x)=L_{l-1}(x)+v_l^*(x)\,\bigl(x_l-L_{l-1}(z_l)\bigr).$$
Clearly, $L_l(z_s)=L_{l-1}(z_s)$ for all $s<l$, so that $L_{l}(z_s)=x_s$ by the induction hypothesis, $L_l(z_l)=x_l$ and 
$\|L_l-L_{l-1}\|<4^{-l}\veps.$

\smallskip
By (ii) and (iii), the sequence $(L_l)$ converges to some $L\in\mathfrak L(X)$ which satisfies $L(z_l)=x_l$ for all $l\in\NN$
and $\|L-S\|<\veps$; in particular, $L$ is invertible. Moreover, $T^n x_l\in V_p$ for all $l,p\geq 1$ and $n\in B_{l,p}$. Since $B_{l,p}\subset A_{l,p}$, it follows that 
\[ (T_1^nz_l, \dots , T_N^n z_l, (L^{-1}TL)^nz_l)\in U_p\times L^{-1}(V_p)=:\widetilde{W_p}\qquad\hbox{for all $n\in B_{p,l}$}.\]

Now, $( \widetilde{W_p})_{p\in\NN}$ is a basis of the topology of $X^{N+1}= X^N\times X$ because $I\oplus L^{-1}$ is a homeomorphism of $X^{N+1}$; so we see that $z_l\in \deffhc(T_1,\dots, T_N, L^{-1}TL)$ for each 
$l\geq 1$.
\end{proof}

To conclude the proof of Theorem \ref{thm:extendingdfhc2}, we observe that since the operator $T$ is hypercyclic, the similarity orbit of $T$, \textit{i.e.} the set $\{S^{-1}TS:\ S\in\glx\}$, is SOT-dense in $\mathfrak L(X)$; see e.g. \cite[Proposition 2.20]{BM09}. By Fact \ref{truc}, it follows 
that the set
$$\bigl\{L^{-1}TL:\ Z\subset \deffhc(T_1,\dots,T_N,L^{-1}TL),\ L\in\glx\bigr\}$$
is SOT-dense in $\mathfrak L(X)$.
\end{proof}

%\smallskip
\begin{remark} Our proof of Theorem \ref{thm:extendingdfhc2} differs from that of \cite[Theorem 2.1]{MaSa24} regarding $d$-hypercyclicity, where a Baire category argument was used; and it must be so at least for $d$-frequent hypercyclicity, since $\mathrm{FHC}(T)$ is always meager in $X$, for any operator $T\in\mathfrak L(X)$ (see \cite{Moo} or \cite{BAYRUZSA}). However, there may be a Baire category proof of Theorem \ref{thm:extendingdfhc2} when $\mathcal F$ is the family of sets with positive upper density (or, more generally, an ``upper'' Furstenberg family in the sense of \cite{BoGre18}).
\end{remark}

To apply Theorem \ref{thm:extendingdfhc2}, in the frequently hypercyclic case, it would be nice to exhibit a class of Banach spaces as large as possible
supporting hereditarily frequently hypercyclic operators.
It is easy to see that  any Banach space with a symmetric Schauder basis has this property: it suffices to take
$T:=2B$ where $B$ is the backward shift associated to the basis. In view of Corollary \ref{USD}, a natural (and much broader) class would be that of complex Banach spaces admitting an unconditional
Schauder decomposition, but we are not able to prove that every such space has the required property. In any event, we can use the method of \cite{Shk10c} to prove the existence of $d$-frequently hypercyclic tuples of arbitrary length for this class of spaces.

\begin{proposition}
 Let $X$ be a complex separable infinite-dimensional Banach space with an unconditional Schauder decomposition. For any $N\geq 1$, there exist $T_1,\dots,T_N\in\mathfrak L(X)$ such that $(T_1,\dots,T_N)$ is $d$-frequently hypercyclic.
\end{proposition}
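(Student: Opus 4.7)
The plan is to adapt the method of Shkarin \cite{Shk10c}, which produces disjoint hypercyclic tuples of arbitrary length on suitably structured Banach spaces, to the frequently hypercyclic setting. Observe first that Theorem \ref{thm:extendingdfhc2} cannot be invoked here: it is not known whether every Banach space with an unconditional Schauder decomposition supports a \emph{hereditarily} FHC operator, so one cannot extend tuples one operator at a time. The full tuple $T_1,\dots,T_N$, together with a common $d$-FHC vector, must therefore be built simultaneously.

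The starting ingredient is the construction of \cite{DFGP12} used in the proof of Corollary \ref{USD}: any such $X$ admits an operator with a perfectly spanning set of $\TT$-eigenvectors, hence --- by the Gaussian construction of \cite{BAYMATHERGOBEST} cited just before Theorem \ref{HUFHC} --- a weakly mixing invariant Gaussian measure of full support, and is in particular frequently hypercyclic. This construction is flexible enough that, after splitting the unconditional Schauder decomposition of $X$ into infinitely many infinite blocks and running \cite{DFGP12} on each block separately, one can produce a family of FHC ``master'' operators whose $\TT$-eigenvector fields live on controlled, mutually disjoint pieces of $X$. Shkarin's recipe in \cite{Shk10c} then assembles from such a family $N$ operators $T_1,\dots,T_N\in\mathfrak L(X)$ and a common vector $x_0\in X$ for which $(T_1^nx_0,\dots,T_N^nx_0)$ is dense in $X^N$: the $T_j$'s are coordinated modifications of the same underlying template, and the disjointness of the supports of their eigenvector fields is what decouples the $N$ coordinates of the joint orbit.

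The upgrade from $d$-HC to $d$-FHC is then carried out in the spirit of the proof of Theorem \ref{thm:operatorfhc}: the topological/Baire-category selection of the common vector in Shkarin's argument is replaced by a quantitative one, where the open-set conditions become positive-lower-density conditions and the well-separated positive-lower-density subsets of $\NN$ provided by Lemma \ref{prop:subsets} play exactly the role they did in the FHCC argument. The main obstacle is simultaneity: the single vector $x$ must witness positive-lower-density visits of $T_j^nx$ to each prescribed basic open set for \emph{every} $j=1,\dots,N$. This is precisely what Shkarin's decoupling --- the fact that the eigenvector fields used for the different $T_j$'s live on disjoint blocks of the unconditional decomposition --- enables: each $T_j$-contribution to $x$ can be tuned independently of the others, and the quantitative control then follows from an unconditional-convergence argument analogous to the one in the proof of Theorem \ref{thm:operatorfhc}.
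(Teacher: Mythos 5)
Your proposal does not reach a proof, and the main problem is that you have misremembered what the ``method of \cite{Shk10c}'' actually is. Shkarin's argument is a pure similarity trick, with no block decompositions and no eigenvector fields supported on disjoint pieces of $X$: one takes a \emph{single} operator $T$ whose $N$-fold direct sum $T\oplus\cdots\oplus T$ is hypercyclic, picks a hypercyclic vector $x_1\oplus\cdots\oplus x_N$ for the sum, and uses the transitivity of the action of $\glx$ on $X\setminus\{0\}$ to choose $S_1,\dots,S_N\in\glx$ with $S_i(x_i)=y$ for one fixed $y\neq 0$; then $T_i:=S_iTS_i^{-1}$ satisfies $T_i^ny=S_iT^nx_i$, so $y$ is a common $d$-hypercyclic vector because $S_1\oplus\cdots\oplus S_N$ is a homeomorphism of $X^N$. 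The paper runs exactly this argument in the frequent setting: by \cite{DFGP12}, $X$ supports an operator $T$ with a perfectly spanning set of $\TT$-eigenvectors; the key observation you are missing is that $T\oplus\cdots\oplus T$ then \emph{also} has perfectly spanning $\TT$-eigenvectors and is therefore frequently hypercyclic, so one can take $x_1\oplus\cdots\oplus x_N\in\fhc(T\oplus\cdots\oplus T)$ and conjugate as above. The conjugation transfers positive lower density of the visit sets verbatim, so no quantitative upgrade is needed at all.

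By contrast, the two constructive steps you propose are both genuine gaps. First, splitting the unconditional decomposition into blocks and producing operators ``whose $\TT$-eigenvector fields live on mutually disjoint pieces of $X$'' is self-defeating: an operator whose unimodular eigenvectors all lie in a proper closed subspace cannot have a spanning (let alone perfectly spanning) set of $\TT$-eigenvectors, so the eigenvector machinery yields no frequent hypercyclicity on the whole of $X$ for such an operator; and it is unclear how operators acting essentially on different blocks could all be frequently hypercyclic on the same space $X$, which is what $d$-frequent hypercyclicity requires. Second, the ``upgrade from $d$-HC to $d$-FHC in the spirit of Theorem \ref{thm:operatorfhc}'' is only an aspiration: the FHCC argument needs a right inverse $S$ and unconditionally convergent series $\sum T^nx$ and $\sum S^nx$ on a dense set, none of which are supplied by \cite{DFGP12}, and Lemma \ref{prop:subsets} by itself does not manufacture a common vector for $N$ different operators. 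You would need to either exhibit that structure explicitly or, much more simply, notice that frequent hypercyclicity of the $N$-fold direct sum of one operator is already available from perfect spanning, and that similarity does the rest.
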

\begin{proof}
 By \cite{DFGP12}, $X$ supports an operator $T$ with a perfectly spanning set of $\TT$-eigenvectors. Then $T\oplus\cdots \oplus T$ has the same property, and in particular it is frequently hypercyclic; let $x_1\oplus\cdots\oplus x_N$ be a frequently hypercyclic vector for $T\oplus  \cdots \oplus T.$ 
 Now, let $y\in X\setminus\{ 0\}$ be arbitrary. Since $\glx$ acts transitively on $X$, we may choose $S_1,\dots ,S_N\in\glx$ such that $S_i(x_i)=y$ for $i=1,\dots, N$. Then, setting $T_i:=S_i T S_i^{-1}$, we see that $y\in \dfhc(T_1,\dots,T_N)$. 
\end{proof}

\section{Frequent $d$-hypercyclicity vs dense frequent $d$-hypercyclicity}\label{pasdensely}

Despite the similarity of the definitions, there are strong differences between hypercyclicity and $d$-hypercyclicity. For instance, if $T\in\mathfrak L(X)$ is hypercyclic then  
$\hc(T)$ is always dense in $X$, and $\hc(T)\cup\{ 0\}$ contains a dense linear subspace of $X$. On the contrary, for two operators $T_1$ and $T_2,$ the set $\dhc(T_1,T_2)\cup\{0\}$ may be equal to some finite-dimensional subspace  (see \cite[Theorem 3.4]{SaSh14}). In particular, $d$-hypercyclic tuples are not necessarily ``densely $d$-hypercyclic''. 

On the other hand, since frequent $d$-hypercyclicity is a strong form of $d$-hypercyclicity, it is natural to ask whether some properties
that are not true for  $d$-hypercyclic tuples might be true for $d$-frequently hypercyclic tuples. In this spirit, the following question was asked in \cite{MaSa16},  \cite{MaPu21} and \cite{MaMePu}.
\begin{question}
Let $(T_1,T_2)$ be a $d$-frequently hypercyclic pair of operators on a Banach space $X$. Is $(T_1,T_2)$ necessarily densely $d$-hypercyclic?
\end{question}
The next theorem provides a solution to this problem.
\begin{theorem}\label{thm:dfhcnotddens}
There exist a Banach space $X$ and $T_1,T_2\in\mathfrak L(X)$ such that $(T_1,T_2)$ is $d$-frequently hypercyclic but not densely $d$-hypercyclic.
\end{theorem}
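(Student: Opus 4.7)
The plan is to produce a pair $(T_1,T_2)$ whose joint orbits are structurally constrained so that $\dhc(T_1,T_2)$ sits inside a proper closed subset of $X$, while still allowing at least one such constrained vector to be $d$-frequently hypercyclic. The construction would be in the spirit of \cite{SaSh14}, with the added requirement that the $d$HC-obstruction coexist with the very strong frequent recurrence needed for $d$-FHC.

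\textbf{Step 1 (Design of the operators).} I would work on a direct sum $X = Y \oplus Z$, where $Y$ is a Banach space supporting a \emph{hereditarily} frequently hypercyclic operator $S$ (for instance a weighted shift on $\ell_p$ satisfying the FHCC, which is hereditarily FHC by Theorem \ref{thm:operatorfhc}), and $Z$ is a small auxiliary space (finite-dimensional or a second copy of a shift space). On $X$ I would put two operators of the form
\[
T_1(y,z)=\bigl(Sy,\,\Psi_1(y,z)\bigr),\qquad T_2(y,z)=\bigl(Sy,\,\Psi_2(y,z)\bigr),
\]
where $\Psi_1,\Psi_2:Y\oplus Z\to Z$ are linear and arranged so that the two induced evolutions of the $Z$-coordinate differ by a rigid ``phase'' or shift relation. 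The point is that the $Y$-components of $T_1$ and $T_2$ agree (so the joint $Y$-orbit is diagonal), while the $Z$-components are coupled to $y$ and to each other in a way that creates an algebraic obstruction.

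\textbf{Step 2 (Obstruction to density of $\dhc(T_1,T_2)$).} I would analyse the joint orbit $\bigl((T_1^nx,T_2^nx)\bigr)_{n\ge 0}$ for $x=(y,z)$. Because $T_1$ and $T_2$ share the same $Y$-action, the pair $(T_1^nx,T_2^nx)$ is confined, roughly, to the set $\{(u,v)\in X^2:\pi_Y(u)=\pi_Y(v)\}$ at best in the $Y$-coordinate, and the difference of the two $Z$-coordinates is governed by $(\Psi_2-\Psi_1)$ applied along the orbit of $y$. By choosing $\Psi_2-\Psi_1$ so that this difference determines a fixed relation between $z$ and $y$ (for example, forcing $z$ to lie on the graph of a continuous map determined by $y$, or forcing some coordinate of $z$ to vanish), I would deduce that $d$HC forces $x$ into a closed proper subset of $X$, hence $\dhc(T_1,T_2)$ is not dense.

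\textbf{Step 3 ($d$-frequent hypercyclicity).} To produce a $d$-FHC vector, I would exploit the hereditary FHC of $S$. Enumerate a countable basis $(W_i)$ of $(Y\oplus Z)^2$ of product form, and, for each $i$, read off from $W_i$ the demand on the $Y$-coordinate, on the two $Z$-coordinates, and on the consistency of the $Z$-relation imposed in Step~1. Using hereditary FHC of $S$ (together with Lemma \ref{prop:subsets} to refine to pairwise well-separated lower-density sets), pick $y\in Y$ such that the $Y$-orbit frequently enters the prescribed $Y$-slices along prescribed lower-density sets; then pick a $z\in Z$ satisfying the obstruction relation corresponding to $y$, and verify by a direct estimate that $(y,z)$ is $d$-FHC for $(T_1,T_2)$.

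\textbf{Main obstacle.} The real difficulty is calibrating the coupling $\Psi_1,\Psi_2$. It must be rigid enough for Step~2 (a genuine closed-set obstruction, not just some meagre constraint), yet flexible enough for Step~3 (the single constrained vector must still have positive lower-density visits to every basic open set of $X^2$). This is exactly where the \emph{hereditary} FHC of $S$ (as opposed to mere FHC) is expected to be indispensable: it is the only tool that lets us prescribe the visit set of the $Y$-coordinate and still preserve positive lower density of the relevant intersections, which is what is needed to transfer frequent hypercyclicity from $Y$ to the constrained subspace of $X$.
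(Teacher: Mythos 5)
Your architecture in Step 1 is fatally flawed. If $T_1(y,z)=(Sy,\Psi_1(y,z))$ and $T_2(y,z)=(Sy,\Psi_2(y,z))$, then for every $x=(y,z)$ and every $n$ the two iterates have the \emph{same} $Y$-component: $\pi_Y(T_1^nx)=S^ny=\pi_Y(T_2^nx)$. Hence the joint orbit $\{(T_1^nx,T_2^nx):n\ge 0\}$ is contained in $\{(u,v)\in X\times X:\pi_Y(u)=\pi_Y(v)\}$, a proper closed linear subspace of $X\times X$ (hence nowhere dense) as soon as $Y\neq\{0\}$. Consequently $\dhc(T_1,T_2)=\emptyset$ for \emph{any} choice of $\Psi_1,\Psi_2$, and Step 3 cannot produce a $d$-frequently hypercyclic vector. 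You even record this confinement in Step 2, but you treat it as the desired obstruction to \emph{density} of $\dhc(T_1,T_2)$, whereas it destroys $d$-hypercyclicity outright. What is needed is a constraint on which vectors $x$ can be $d$-hypercyclic, not a constraint that confines the joint orbit of every $x$ to a proper closed subspace.

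The paper's construction avoids exactly this trap by taking $T_2$ \emph{similar} to $T_1$: on $X=\bigl(\bigoplus_{l\geq 1}\ell_1(\ZZ_+)\bigr)_{c_0}$ one sets $T_1=\bigoplus_{l\geq 1}(I+B)$ and $T_2=L_2^{-1}T_1L_2$ with $L_2=cI+R$. Then $x\in\dhc(T_1,T_2)$ forces $Rx\in\hc(T_1)$ (Lemma \ref{lem:conjugate1}), and $R$ is engineered (Lemmas \ref{lem:crucialestimate} and \ref{lem:restrictionR}) so that $Rx\in\hc(T_1)$ forces $x(1)\in\mathrm{span}(u(1))$ --- a restriction on the admissible vectors, not on their orbits --- which kills density of $\dhc(T_1,T_2)$. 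Conversely, $(x,Rx)\in\fhc(T_1\oplus T_1)$ suffices for $x\in\dfhc(T_1,T_2)$ (Lemma \ref{lem:conjugate3}), and a vector $u$ with $(u,Ru)=(u,v)\in\fhc(T_1\oplus T_1)$ is produced by the unimodular-eigenvector/quasi-factor machinery of Lemma \ref{lem:specificfhc}; hereditary frequent hypercyclicity plays no role in this proof. If you want to pursue your own route, the coupling between the two operators must go through a conjugacy (or some invertible relation between the two dynamics), not through a common factor.
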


\smallskip
Our proof is inspired by \cite{SaSh14}, where the authors construct a $d$-hypercyclic pair which is not densely $d$-hypercyclic. A key role will be played by the {similarity orbit} of some well chosen operator $T$. The next two lemmas point out the relationship between
(frequently) hypercyclic vectors of $T$ and $T\oplus T$ and (frequently) $d$-hypercyclic vectors of $(T_1,T_2)$ when $T_1$ and $T_2$ belong to the similarity orbit of $T$.

\smallskip
\begin{lemma}\label{lem:conjugate1}
Let $T\in\mathfrak L(X)$ and $L_1,L_2\in\glx$, and set $T_m:=L_m^{-1}TL_m,$ 
$m=1,2$. Let also $x\in X$. If $x\in \dhc(T_1,T_2)$ then $L_2x-L_1x\in \hc(T)$. 
\end{lemma}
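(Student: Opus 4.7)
The plan is to observe that since $T_m = L_m^{-1} T L_m$ for $m=1,2$, we have $T_m^n = L_m^{-1} T^n L_m$, and consequently
\[ T^n(L_2 x - L_1 x) = T^n L_2 x - T^n L_1 x = L_2 T_2^n x - L_1 T_1^n x \qquad \text{for all } n \geq 0. \]
So the orbit of $L_2 x - L_1 x$ under $T$ is the image of the diagonal orbit $\bigl\{(T_1^n x, T_2^n x) : n \geq 0\bigr\}$ under the continuous linear map $\Phi : X \times X \to X$ defined by $\Phi(u,v) := L_2 v - L_1 u$.

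The key observation is that $\Phi$ is surjective: indeed, for any $y \in X$, one has $\Phi(0, L_2^{-1} y) = y$. Thus $\Phi$ is a continuous surjection, and hence it maps any dense subset of $X \times X$ to a dense subset of $X$. This is immediate from continuity: given $y \in X$ and $\varepsilon > 0$, choose $(u_0, v_0) \in X \times X$ with $\Phi(u_0, v_0) = y$, then use continuity of $\Phi$ at $(u_0, v_0)$ and density of the given set to produce $(u,v)$ in the set with $\|\Phi(u,v) - y\| < \varepsilon$.

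Applying this to the set $\bigl\{(T_1^n x, T_2^n x) : n \geq 0\bigr\}$, which is dense in $X \times X$ by the assumption $x \in \dhc(T_1,T_2)$, we conclude that $\bigl\{T^n(L_2 x - L_1 x) : n \geq 0\bigr\} = \Phi\bigl(\{(T_1^n x, T_2^n x) : n \geq 0\}\bigr)$ is dense in $X$, i.e.\ $L_2 x - L_1 x \in \hc(T)$. There is no serious obstacle here; the statement is essentially a direct intertwining computation combined with the trivial surjectivity of $\Phi$.
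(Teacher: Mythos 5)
Your proof is correct and complete: the intertwining identity $T^nL_mx=L_mT_m^nx$ is right, and the fact that the continuous surjection $\Phi(u,v)=L_2v-L_1u$ maps the dense diagonal orbit onto the $T$-orbit of $L_2x-L_1x$ gives exactly the conclusion. The paper does not argue this at all — it simply cites \cite[Lemma 3.1]{SaSh14} — and your direct computation is the expected one behind that reference, so there is nothing to add.
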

\begin{proof}
This is \cite[Lemma 3.1]{SaSh14}.
\end{proof}

\smallskip
\begin{lemma}\label{lem:conjugate2}
Let $T\in\mathfrak L(X)$ and $L_1,L_2\in\glx$, and set $T_m:=L_m^{-1}TL_m$.  Let also $\mathcal F\subset 2^\NN$ be a Furstenberg family, and let $x\in X$. Then $x\in \deffhc(T_1,T_2)$  if and only if $(L_1x,L_2x)\in \effhc(T\oplus T)$. 
\end{lemma}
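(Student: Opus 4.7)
The idea is that this is a pure conjugacy statement: since $T_1\oplus T_2$ is similar to $T\oplus T$ via the operator $L_1\oplus L_2\in\mathrm{GL}(X\oplus X)$, and since $\mathcal F$-hypercyclicity is preserved under such similarities, everything boils down to tracking what the conjugacy does to the diagonal vector $(x,x)$.

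More concretely, the plan is first to unwind definitions: by definition, $x\in\deffhc(T_1,T_2)$ means that $(x,x)\in \effhc(T_1\oplus T_2)$. Next, I would verify the relation
\[ T_1\oplus T_2=(L_1\oplus L_2)^{-1}\,(T\oplus T)\,(L_1\oplus L_2)\]
directly from $T_m=L_m^{-1}TL_m$, and deduce by iteration that for every $n\ge 0$,
\[ (T_1\oplus T_2)^n(x,x)=(L_1\oplus L_2)^{-1}\,(T\oplus T)^n(L_1x,L_2x).\]
Consequently, for any set $W\subset X\times X$,
\[ \mathcal N_{T_1\oplus T_2}\bigl((x,x),W\bigr)=\mathcal N_{T\oplus T}\bigl((L_1x,L_2x),(L_1\oplus L_2)(W)\bigr).\]

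The key (and only nontrivial) observation is that since $L_1\oplus L_2$ is a linear homeomorphism of $X\oplus X$, the map $W\mapsto (L_1\oplus L_2)(W)$ is a bijection of the family of non-empty open subsets of $X\oplus X$ onto itself. Hence the condition ``$\mathcal N_{T_1\oplus T_2}((x,x),W)\in\mathcal F$ for every non-empty open $W\subset X\oplus X$'' is equivalent to the condition ``$\mathcal N_{T\oplus T}((L_1x,L_2x),W')\in\mathcal F$ for every non-empty open $W'\subset X\oplus X$'', which is exactly $(L_1x,L_2x)\in\effhc(T\oplus T)$. Since there is no genuine obstacle here — the argument is a one-line conjugacy combined with a trivial change of variable for open sets — I do not expect any step to require additional work.
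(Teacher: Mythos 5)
Your proposal is correct and is essentially the paper's own argument: the paper also proves the lemma by the change of variables $\bigl(T^{n}L_1x, T^nL_2x\bigr)\in U\times V\iff \bigl( T_1^nx, T_2^n x\bigr)\in L_1^{-1}(U)\times L_2^{-1}(V)$, which is exactly your conjugacy identity written coordinate-wise on basic open sets. The only cosmetic difference is that you phrase it via the single operator $L_1\oplus L_2$ acting on arbitrary open subsets of $X\times X$, while the paper restricts to product open sets (which suffices since $\mathcal F$ is hereditary upwards); both are fine.
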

\begin{proof} It is identical to the proof of \cite[Lemma 2.1]{SaSh14}.  Just observe that for any positive integer $n$ and any pair of non-empty open subsets $(U, V)$ in $X$,
$$
\bigl(T^{n}L_1x, T^nL_2x)\in U\times V\iff \bigl( T_1^nx, T_2^n x)\in L_1^{-1}(U)\times L_2^{-1}(V),$$
and apply the definition of $\mathcal F$-hypercyclicity.
\end{proof}

\smallskip The operators $T_1$ and $T_2$ that we are going to construct  will be such that $T_2=(cI+R)^{-1}T_1(cI+R)$ for some $R\in\mathfrak L(X)$ and $c>0$. By Lemma \ref{lem:conjugate1} (with $T=T_1$ and $L_1=cI$),
any $x\in\dhc(T_1,T_2)$ is such that $Rx\in \hc(T_1)$. We shall construct $T_1$ and $R$ in such a way that this condition prevents $\dhc(T_1,T_2)$ from being dense in $X$. 
The following result will be useful to prove that the pair $(T_1,T_2)$ is $d$-frequently hypercyclic.

\begin{lemma}\label{lem:conjugate3}
Let $T_1,R\in\mathfrak L(X)$ and $c>0$ be such that $L_2=cI+R$ is invertible, and let $T_2:=L_2^{-1}T_1L_2$. Let also $\mathcal F\subset 2^\NN$ be a Furstenberg family. If  $x\in X$ is such that $(x,Rx)\in\effhc(T_1\oplus T_1)$, then 
 $x\in \deffhc(T_1,T_2)$.
\end{lemma}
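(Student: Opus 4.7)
The plan is to invoke Lemma~\ref{lem:conjugate2} with base operator $T := T_1$ and conjugators $L_1 := I$ and $L_2 := cI + R$; here $L_1$ is trivially invertible and $L_2$ is invertible by hypothesis. According to that lemma, proving $x \in \deffhc(T_1, T_2)$ is equivalent to showing that $(L_1 x, L_2 x) = (x,\, cx + Rx)$ is an $\mathcal{F}$-hypercyclic vector for $T_1 \oplus T_1$. Since we are given that $(x, Rx) \in \effhc(T_1 \oplus T_1)$, the whole question reduces to transferring $\mathcal{F}$-hypercyclicity from $(x, Rx)$ to $(x,\, cx + Rx)$ under $T_1 \oplus T_1$.

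The main idea for this transfer is to introduce the linear map $\Phi \colon X \oplus X \to X \oplus X$ defined by $\Phi(u,v) := (u,\, cu + v)$. I would first observe that $\Phi$ is a continuous invertible operator on $X\oplus X$, with inverse $(u,v) \mapsto (u,\, v - cu)$, and that it commutes with $T_1 \oplus T_1$: a direct computation gives
\[\Phi(T_1 \oplus T_1)(u,v) = (T_1 u,\, cT_1 u + T_1 v) = (T_1 \oplus T_1)\Phi(u,v).\]

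From this commutation it follows that, for any non-empty open set $W \subset X \oplus X$, the visit sets $\mathcal{N}_{T_1 \oplus T_1}\bigl(\Phi(x, Rx), W\bigr)$ and $\mathcal{N}_{T_1 \oplus T_1}\bigl((x, Rx), \Phi^{-1}(W)\bigr)$ coincide, and the latter belongs to $\mathcal{F}$ because $\Phi^{-1}(W)$ is open and $(x, Rx)\in \effhc(T_1 \oplus T_1)$ by hypothesis. Since $\Phi(x, Rx) = (x,\, cx + Rx)$, this will give $(x,\, cx + Rx) \in \effhc(T_1 \oplus T_1)$, and Lemma~\ref{lem:conjugate2} will then conclude. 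There is no real obstacle here; the only step that requires any thought is spotting the commuting change of variables $\Phi$, after which the argument is essentially formal.
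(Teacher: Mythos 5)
Your proposal is correct, and it rests on the same key reduction as the paper: both invoke Lemma \ref{lem:conjugate2} with $T=T_1$ and then transfer $\mathcal F$-hypercyclicity from $(x,Rx)$ to the pair $(L_1x,L_2x)$. The only genuine difference is in how that transfer is carried out. The paper takes $L_1=cI$ and argues directly with open sets: given a target $U\times V$, it picks non-empty open sets $U'\subset U$ and $W$ with $U'+W\subset V$, uses the hypothesis to find $A\in\mathcal F$ with $T_1^n(cx)\in U'$ and $T_1^n(Rx)\in W$ for $n\in A$, and concludes since $T_1^n(cx+Rx)\in U'+W\subset V$. You instead take $L_1=I$ and observe that the invertible operator $\Phi(u,v)=(u,cu+v)$ on $X\oplus X$ commutes with $T_1\oplus T_1$, so it maps $\effhc(T_1\oplus T_1)$ into itself and sends $(x,Rx)$ to $(x,cx+Rx)$. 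Your conjugacy argument is slightly slicker and avoids the explicit choice of $U'$ and $W$ (it packages the continuity of addition into the single statement that $\Phi$ is a homeomorphism), while the paper's version is more hands-on but equally elementary; both are complete and correct.
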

\begin{proof}
Let us set $L_1:=cI.$ By Lemma \ref{lem:conjugate2}, 
it suffices to show that the condition $(x,Rx)\in \effhc(T_1\oplus T_1)$ implies $(L_1x,L_2x)\in \effhc(T_1\oplus T_1)$. Now it is clear that $(cx,Rx)\in \effhc(T_1\oplus T_1)$. Let $U,$ $V$ be two non empty open subsets of $X$ and let $U'\subset U$ and $W$ be non empty open sets such that $U'+W\subset  V.$ There exists a set $A\in\mathcal F$ such that $T_1^{n}(cx)\in U'$ and $T_1^{n}(Rx)\in W$ for all $n\in A$. Then, for all $n\in A$, we have
$$T_1^n L_1x=T_1^n(cx)\in U\qquad{\rm and}\qquad T_1^{n}L_2x=T_1^{n}(cx+Rx)\in U'+ W\subset V.$$
\end{proof}

We now go into the details of the construction. First, we define the Banach space $X$ as
\[ X:=\left( \bigoplus_{l\geq 1} X(l)\right)_{c_0}\quad \hbox{where}\quad\hbox{$X(l)=\ell_1(\ZZ_+)$ for every $l\ge 1$}.
\]
(Following a standard notation, the subscript ``$c_0$'' indicates that the direct sum  is a $c_0$-sum.)

Next, we introduce the following operator $T\in \mathfrak L(X)$: denoting by $B$ the canonical backward shift on $\ell_1(\ZZ_+)$, let
\[ T:=\bigoplus_{l\geq 1} T(l)\quad\hbox{where}\quad T(l)=I+2^{-l}B\in\mathfrak L\bigl(X(l)\bigr)\textrm{ for every }l\ge 1.\]

\begin{lemma}\label{lem:ToplusTfhc}
The operator $T\oplus T$ is frequently hypercyclic.
\end{lemma}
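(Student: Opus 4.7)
The plan is to produce a frequently hypercyclic vector for $T\oplus T$ by hand, exploiting the rich supply of $\TT$-eigenvectors of each summand together with the $c_0$-sum structure of the ambient space. Via the canonical reshuffling, $T\oplus T$ identifies with the direct-sum operator $\bigoplus_{l\ge 1}\bigl(T(l)\oplus T(l)\bigr)$ acting on $\bigl(\bigoplus_{l\ge 1}(X(l)\oplus X(l))\bigr)_{c_0}$, where $T(l)\oplus T(l)=I+2^{-l}(B\oplus B)$. Thus the task reduces to showing frequent hypercyclicity of an operator of this $c_0$-sum shape.

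For each $l$, the formula $E(z):=(z^k)_{k\ge 0}\in\ell_1$ with $|z|<1$ gives an analytic family of eigenvectors of $T(l)$ with eigenvalues $\lambda_l(z)=1+2^{-l}z$. The unimodular eigenvalues of $T(l)$ fill an open arc of $\TT$ around $1$, and by analyticity the associated eigenvectors span $X(l)=\ell_1$: a continuous linear functional annihilating them would produce a holomorphic function of $z$ vanishing on a smooth curve with an accumulation point inside $|z|<1$. I would then proceed as in the Bayart--Grivaux eigenvector-based construction: fix a countable base $(W_p)_{p\ge 1}$ of open sets for $X\oplus X$ around finitely supported targets $w_p$, apply Lemma~\ref{prop:subsets} to obtain a pairwise disjoint family $(A_p)_{p\ge 1}$ of subsets of $\NN$ with positive lower density, well separated according to margin parameters $(N_p)$, and dedicate an index $l(p)\to\infty$ to each $p$. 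In the $l(p)$-th summand $X(l(p))\oplus X(l(p))$ I would synthesise a small perturbation $x_p$ as a finite combination of unimodular eigenvectors of $T(l(p))\oplus T(l(p))$ so that $(T\oplus T)^n x_p$ approximates the $l(p)$-projection of $w_p$ at every time $n\in A_p$. The candidate frequently hypercyclic vector is $x:=\sum_p x_p$, with $\|x_p\|$ decaying fast enough to converge in the $c_0$-norm.

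The main obstacle is to control the cross-interference between summands. Because $\|T(l)^n\|\le (1+2^{-l})^n$ grows geometrically and $T(l)^n e_j$ carries polynomial factors of degree $j$, the planted data $x_p$ sitting in the $l(p)$-th summand contributes to $(T\oplus T)^n x$ at all times $n$, not just those in $A_p$; a priori this contribution could exceed the tolerance of the other $W_{p'}$. The key estimates must show that by choosing $l(p)$ growing rapidly enough, the support sizes of the $x_p$ small, and the margins $N_p$ in Lemma~\ref{prop:subsets} large enough, one can guarantee that on the relevant time scales the sup-norm contribution of each $x_{p'}$, $p'\ne p$, remains below the prescribed tolerance for $W_p$. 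Once these estimates are in place, frequent hypercyclicity of $x$ for $T\oplus T$ follows directly from the construction.
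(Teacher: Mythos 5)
The paper's proof is a one-liner: each $T(l)=I+2^{-l}B$ carries the analytic eigenvector field $z\mapsto\sum_k z^k e_k$ with eigenvalue $1+2^{-l}z$, whose unimodular values fill an arc of $\TT$; by analyticity this gives a perfectly spanning set of $\TT$-eigenvectors for each $T(l)$, hence for the $c_0$-sum $T$, hence for $T\oplus T$, and one then invokes the theorem (recalled in the paper, from \cite{newclass} and \cite{BAYMATHERGOBEST}) that perfect spanning implies frequent hypercyclicity. Your second paragraph essentially verifies the perfect spanning, and if you stopped there and cited that theorem, your proof would be complete. Instead you attempt a hand construction, and there are two genuine gaps in it.

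First, the architecture of dedicating a single summand $l(p)\to\infty$ to each target $W_p$ and synthesising $x_p$ inside that summand cannot work: to have $(T\oplus T)^n x\in W_p$ you must control \emph{every} component of the orbit vector in the sup-norm of the $c_0$-sum --- the components where $w_p$ is supported (finitely many, but fixed and typically including small indices $l$) must be close to the corresponding components of $w_p$, and all remaining components must be small. Approximating only ``the $l(p)$-projection of $w_p$'' does not place the orbit in $W_p$, and single-summand vectors are not even dense in the $c_0$-sum, so you cannot rig the basis to avoid this. Second, and more seriously, the step you defer to ``key estimates'' is not a routine interference bound: it is the entire content of the theorem that spanning families of unimodular eigenvectors yield frequent hypercyclicity. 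Producing a finite combination of $\TT$-eigenvectors whose orbit enters a prescribed neighbourhood along a whole set of positive lower density (not just at one time, which is what a single oscillatory integral against an eigenvector field gives you), while keeping the cross-contributions below the tolerances of the other targets, is precisely the hard part that the known proofs handle via invariant Gaussian measures or the delicate constructions of Grivaux and Bayart--Matheron. As written, your proposal restates that difficulty rather than resolving it; note also that boundedness of $\|(T\oplus T)^n x_p\|$ in $n$ (which does hold for combinations of unimodular eigenvectors) is nowhere near enough, since a bounded but non-small contribution already destroys the approximation of the other $w_{p'}$.
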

\begin{proof}
It is enough to prove that $T$ has a perfectly spanning set of $\TT$-eigenvectors.
Indeed, $T\oplus T$ will have the same property and hence will be frequently hypercyclic.
\end{proof}

\smallskip We now define our first operator $T_1$:
 \[ T_1:=\bigoplus_{l\geq 1} (I+B)\in\mathfrak L(X).\] 
 
Note that the same proof as that of Lemma \ref{lem:ToplusTfhc} shows that $T_1\oplus T_1$ is frequently hypercyclic. However, we will use the above operator $T$ to produce a frequently hypercyclic vector for $T_1\oplus T_1$ with specific properties.

\smallskip
 In what follows, we denote by $(e_k(l))_{k\geq 0}$ the canonical basis of the $l$-th component $X(l)=\ell_1(\ZZ_+)$ of $X$, and by 
$(e_k^*(l))_{k\geq 0}$ the associated sequence of coordinate functionals, which we will consider as linear functionals on $X$. A vector $x\in X$ will be written as $x=(x(l))_{l\geq 1}$, and we will use the notation $x_k(l)=\langle e_k^*(l), x\rangle$ for every $k\ge 0$.

\begin{lemma}\label{lem:specificfhc}
There exists $(u,v)\in \fhc(T_1\oplus T_1)$ such that $u_0(1)\neq 0$ and 
$|v_k(l)|\leq 2^{-lk}$ for all $k\geq 0$ and all $l\geq 1$.
\end{lemma}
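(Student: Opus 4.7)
The plan is to produce the desired $(u,v)$ by pushing forward a suitable frequently hypercyclic vector of $T\oplus T$ (which exists in abundance, since $T$ has a perfectly spanning set of $\mathbb T$-eigenvectors by Lemma \ref{lem:ToplusTfhc}) through a bounded intertwining operator $D$ that automatically builds in the geometric decay required on $v$.

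First, I would define $D_l\in\mathfrak L(X(l))$ by $D_le_k(l)=2^{-lk}e_k(l)$ and set $D:=\bigoplus_{l\ge 1}D_l\in\mathfrak L(X)$. Clearly $\|D\|\le 1$, and $D$ has dense range since its image contains every basis vector $e_k(l)$ up to a non-zero scalar. The one computation to do is the intertwining $T_1D=DT$, which reduces to the componentwise identity $BD_l=2^{-l}D_lB$ combined with the definitions $T_1(l)=I+B$ and $T(l)=I+2^{-l}B$; by induction, $(T_1\oplus T_1)^n\circ(D\oplus D)=(D\oplus D)\circ(T\oplus T)^n$ for every $n\ge 0$.

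Next, I would use $D$ to transfer frequent hypercyclicity. Since $D\oplus D$ has dense range, for any non-empty open set $W\subset X\times X$ the preimage $(D\oplus D)^{-1}(W)$ is a non-empty open set, and the intertwining yields
\[\mathcal N_{T_1\oplus T_1}\bigl((Du',Dv'),W\bigr)=\mathcal N_{T\oplus T}\bigl((u',v'),(D\oplus D)^{-1}(W)\bigr).\]
So $(u',v')\in\fhc(T\oplus T)$ implies $(Du',Dv')\in\fhc(T_1\oplus T_1)$.

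Finally, I would invoke density of $\fhc(T\oplus T)$ in $X\times X$: the operator $T\oplus T$ inherits perfect spanning from $T$, hence by the results cited in Section 3 it admits an ergodic invariant Gaussian measure of full support, making $\fhc(T\oplus T)$ of full measure and a fortiori dense. I then pick $(u',v')\in\fhc(T\oplus T)$ inside the non-empty open set $\{(a,b)\in X\times X:\langle e_0^*(1),a\rangle\neq 0\textrm{ and }\|b\|_X<1\}$ (which contains $(e_0(1),0)$), and set $(u,v):=(Du',Dv')$. Then $u_0(1)=u'_0(1)\neq 0$ because $D_1$ fixes $e_0(1)$, and for all $k\ge 0$ and $l\ge 1$,
\[|v_k(l)|=2^{-lk}|v'_k(l)|\le 2^{-lk}\|v'(l)\|_1\le 2^{-lk}\|v'\|_X\le 2^{-lk},\]
using the $c_0$-$\ell_1$ structure of $X$. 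I foresee no serious obstacle; the key point to notice is that the intertwining $D$, while far from invertible, is surjective enough (dense range) to preserve frequent hypercyclicity, and simultaneously contractive enough in each coordinate to deliver the required bound on $v$.
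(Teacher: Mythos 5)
Your proposal is correct and follows essentially the same route as the paper: the authors introduce exactly the diagonal operator $D=\bigoplus_l D(l)$ with $D(l)e_k(l)=2^{-lk}e_k(l)$, verify the intertwining $T_1D=DT$, and push forward a frequently hypercyclic vector $(x,y)$ of $T\oplus T$ with $x_0(1)\neq 0$ and $\|y\|\leq 1$ through $D\oplus D$, obtaining the bound $|v_k(l)|=2^{-lk}|y_k(l)|\leq 2^{-lk}$ just as you do.
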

\begin{proof}
For $l\geq 1,$ Let us consider the diagonal operator $D(l)$ on $X(l)$ defined by 
$$D(l)(x(l)):=\sum_{k\geq 0}2^{-lk}x_k (l)e_k(l),$$
and set \[ D:=\bigoplus_{l\geq 1} D(l)\in\mathfrak L(X).\]

It is easy to check
that $(I+B)D(l)=D(l)(I+2^{-l}B)$ for each $l\geq 1$, so that $T_1D=DT$. Moreover, the operator $D$ has dense range. So $T_1$ is a quasi-factor of $T$ with quasi-factoring map $D$, and hence $T_1\oplus T_1$ is a quasi-factor of $T\oplus T$ with quasi-factoring map $D\oplus D$. Let $(x,y)\in \fhc(T\oplus T)$ with $\|y\|\leq 1$ and $x_0(1)\neq 0$
and let us set $(u,v):=(Dx,Dy)$. Then $(u,v)\in \fhc(T_1\oplus T_1)$. Moreover, $u_0(1)=x_0(1)\neq 0$ and, for $k\geq 0$ and $l\geq 1,$ 
$$|v_k(l)|=|2^{-lk}y_k(l)|\leq 2^{-lk}.$$
\end{proof}

We now give a result which provides a condition preventing a vector from being hypercyclic for $T_1$.
\begin{lemma}\label{lem:nohc}
Let $x\in X$. Assume that there exist $l\geq 1$  and $\lambda\neq 0$ such that, for all $k\geq 1$ sufficiently large, $\Re e\,\big\langle e_k^*(l), x/\lambda\big\rangle\geq 0.$ Then
$x\notin \hc(T_1)$.
\end{lemma}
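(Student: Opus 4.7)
The plan is to use the explicit binomial expansion of $T_1^n$ on each component $X(l)$ to show that, under the hypothesis, one specific continuous linear functional has image contained in a closed half-plane when evaluated along the orbit of $x$. The non-empty open complement of that half-plane then witnesses non-hypercyclicity.

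First I would fix an integer $k_0 \geq 1$ such that $\Re e\,\langle e_j^*(l), x/\lambda\rangle \geq 0$ for every $j\geq k_0$, and consider the coordinate functional $e_{k_0}^*(l)$, which is continuous on $X$ (since $|\langle e_{k}^*(l),y\rangle|\leq \|y(l)\|_{\ell_1}\leq \|y\|_X$). Since $T_1$ acts as $I+B$ on the $l$-th summand, the binomial formula yields
\[
(I+B)^n x(l)=\sum_{j=0}^{n}\binom{n}{j}B^j x(l)=\sum_{k\geq 0}\left(\sum_{j=0}^{n}\binom{n}{j}x_{k+j}(l)\right)e_k(l),
\]
so that
\[
\big\langle e_{k_0}^*(l),\,T_1^n x\big\rangle=\sum_{j=0}^{n}\binom{n}{j}\,x_{k_0+j}(l).
\]

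Next I would divide by $\lambda$ and take real parts. Since $k_0+j\geq k_0$ for every $j\geq 0$, the hypothesis gives $\Re e\,(x_{k_0+j}(l)/\lambda)\geq 0$ for every $j$, hence
\[
\Re e\,\big\langle e_{k_0}^*(l),\,T_1^n x/\lambda\big\rangle=\sum_{j=0}^{n}\binom{n}{j}\,\Re e\,(x_{k_0+j}(l)/\lambda)\geq 0\qquad\text{for every }n\geq 0.
\]

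Finally, the non-empty open set
\[
V:=\bigl\{y\in X:\ \Re e\,\langle e_{k_0}^*(l),\,y/\lambda\rangle<0\bigr\}
\]
(which contains, for instance, $-\lambda\,e_{k_0}(l)$) satisfies $T_1^n x\notin V$ for all $n\geq 0$. Therefore the orbit of $x$ under $T_1$ is not dense, i.e.\ $x\notin\hc(T_1)$.

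No real obstacle is anticipated; the only point to be careful with is that the coordinate functional $e_{k_0}^*(l)$ is genuinely continuous on the $c_0$-sum $X$, which is immediate from the definition of its norm.
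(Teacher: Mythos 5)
Your proof is correct. The paper's own argument has the same underlying idea --- trap the orbit of $x$ in a closed half-space determined by a coordinate functional on the $l$-th block --- but it executes it by citing the proof of Claim 1 in \cite{SaSh14}, which works with the functional $e_0^*(l)$ and therefore has to contend with the finitely many initial coordinates whose real parts are not controlled; this forces the dichotomy ``either $x(l)$ is finitely supported or $\Re e\,\langle e_0^*(l),(I+B)^n x(l)\rangle\geq 0$ for all sufficiently large $n$.'' You sidestep all of that by testing against $e_{k_0}^*(l)$ instead, where $k_0$ is the threshold in the hypothesis: the binomial expansion $\langle e_{k_0}^*(l),(I+B)^n x(l)\rangle=\sum_{j=0}^{n}\binom{n}{j}x_{k_0+j}(l)$ only involves coordinates of index $\geq k_0$, the binomial coefficients are positive reals, and so the real part of the $\lambda$-normalized quantity is nonnegative for \emph{every} $n\geq 0$, with no case distinction and no external reference. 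The computation is elementary and complete (continuity of $e_{k_0}^*(l)$ on the $c_0$-sum is exactly as you say), so your version is a self-contained and slightly cleaner route to the same conclusion.
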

\begin{proof}
Since $x\in \hc(T_1)$ if and only if $x/\lambda\in {\rm HC}(T_1)$, we may assume $\lambda=1$. 
Now if $\Re e (x_k(l))=\Re e\,\big\langle e_k^*(l), x\big\rangle\geq 0$ for all sufficiently large $k,$ 
then the arguments of \cite{SaSh14} (see the proof of Claim 1 page 845) show that
either $x(l)$ is finitely supported or $$\Re e\, \Big\langle e_0^*(l), (I+B)^n(x(l))\Big\rangle\geq 0\quad\hbox{for all sufficiently large $n$.}$$
 Therefore
$x(l)$ cannot be a hypercyclic vector for $I+B$, and hence $x\notin \hc(T_1)$.
\end{proof}

\smallskip
Let us fix   a sequence of positive real numbers $(\veps_l)_{l\geq 1}$ going to zero. 
Let also $V:\ell_1(\ZZ_+)\to\ell_1(\ZZ_+)$ be the (bounded) operator defined by
\[ Vy:= \sum_{k\geq 1}\left(\sum_{j\geq 1}2^{-jk}y_j\right)e_k \quad\textrm{ for every }y\in \ell_1(\ZZ_+)\]
and let $R_0:X\to X$ be the (bounded) operator on $X$ defined by
$$R_0(x):=\bigl(\veps_1 V(x(1)),\dots,\veps_l V(x(1)),\dots\bigr).$$

The operator $R_0$ satisfies the following crucial estimates: 
\begin{lemma}\label{lem:crucialestimate}
Let $x\in X$ and $\ m\geq 1$ be such that $\langle e_m^*(1),x\rangle\neq 0$ and $\langle e_j^*(1),x\rangle=0$ for $1\leq j<m.$ Then there exists $\delta:\ZZ_+\longrightarrow \RR_+$ such that $\delta(k)\longrightarrow 0$ as $k\rightarrow \infty$ and such that for all $l\geq 1$ and $k\ge 0$ we can write
$$\bigl\langle e_k^*(l), R_0(x)\big\rangle=\veps_l\, \langle e_m^*(1),x\rangle\,2^{-mk}(1+\delta(k)).$$
\end{lemma}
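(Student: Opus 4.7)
This lemma reduces to an explicit calculation. The plan is to read off $\langle e_k^*(l), R_0(x)\rangle$ from the definitions, use the vanishing hypothesis on $x$ to extract the leading-order term, and then estimate the tail uniformly in $l$.

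First, I will unfold the definitions. By the definition of $R_0$ and of $V$, for $k\geq 1$ and $l\geq 1$ one has
\[ \langle e_k^*(l), R_0(x)\rangle \;=\; \varepsilon_l\, \langle e_k^*, V(x(1))\rangle \;=\; \varepsilon_l \sum_{j \geq 1} 2^{-jk}\, x_j(1). \]
(For $k=0$ the left hand side is zero because $V$ sends every vector into $\overline{\rm span}(e_k:k\ge 1)$, so this boundary case is trivial and can be absorbed into the choice of $\delta(0)$.)

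Next, invoking the hypothesis $x_j(1) = 0$ for $1 \leq j < m$, the sum collapses to $\sum_{j \geq m} 2^{-jk}\, x_j(1)$, and factoring out the $j = m$ term yields
\[ \langle e_k^*(l), R_0(x)\rangle \;=\; \varepsilon_l\, x_m(1)\, 2^{-mk}\left(1 \;+\; \frac{1}{x_m(1)} \sum_{j > m} 2^{-(j-m)k}\, x_j(1)\right). \]
I will then set $\delta(k) := \bigl|\,\frac{1}{x_m(1)} \sum_{j > m} 2^{-(j-m)k}\, x_j(1)\bigr|$ (or, up to the obvious reformulation, the bracketed complex quantity itself). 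The key point here is that $\delta$ depends on $x$ and $m$ only, not on $l$ -- so a single function works uniformly across all coordinates. This is built in by construction: the $l$-dependence of $R_0(x)$ factors through the scalar $\varepsilon_l$.

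Finally, I will bound $\delta$. For $k \geq 1$ and $j > m$ we have $2^{-(j-m)k} \leq 2^{-k}$, and therefore
\[ \delta(k) \;\leq\; \frac{2^{-k}}{|x_m(1)|}\, \sum_{j > m} |x_j(1)| \;\leq\; \frac{2^{-k}\,\|x(1)\|_{\ell_1}}{|x_m(1)|} \;\xrightarrow{k\to\infty}\; 0. \]
The entire argument is routine arithmetic; there is no real obstacle. The only small subtlety worth noting is the $l$-independence of $\delta$, which ensures that the claimed asymptotic formula holds simultaneously for every $l\ge 1$ as required in the statement.
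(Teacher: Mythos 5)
Your proof is correct and follows essentially the same route as the paper: unfold the definitions of $R_0$ and $V$, use the vanishing of $x_j(1)$ for $1\leq j<m$ to start the sum at $j=m$, factor out the leading term $\veps_l\,x_m(1)\,2^{-mk}$, and bound the tail by $2^{-k}\|x(1)\|_1/|x_m(1)|$, which is exactly the paper's estimate. Your observation that $\delta$ is independent of $l$ because the $l$-dependence enters only through the scalar $\veps_l$ is the same (implicit) point the paper relies on, so there is nothing to add.
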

\begin{proof}
We just write 
\begin{align*}
\bigl\langle e_k^*(l), R_0(x)\big\rangle&=\veps_l \sum_{j=1}^{\infty}2^{-jk}\langle e_j^*(1),x\rangle\\
&=\veps_l \sum_{j=m}^{\infty}2^{-jk}\langle e_j^*(1),x\rangle\\
&=\veps_l 2^{-mk}\langle e_m^*(1),x\rangle\left(1+\sum_{j=1}^{\infty}2^{-jk}\frac{\langle e_{j+m}^*(1),x\rangle}{\langle e_m^*(1),x\rangle}\right);
\end{align*}
and we conclude because
$$\left|\sum_{j=1}^{\infty}2^{-jk}\frac{\langle e_{j+m}^*(1),x\rangle}{\langle e_m^*(1),x\rangle}\right|\leq \frac{2^{-k}}{\langle e_m^*(1),x\rangle}\|x(1)\|_1\xrightarrow{k\to\infty}0.$$
\end{proof}

We consider $(u,v)$ given by Lemma \ref{lem:specificfhc}. We set, for $x\in X,$
$$R(x):=R_0(x)+\frac{x_0(1)}{u_0(1)}(v-R_0(u)).$$
This defines a bounded operator such that $R(u)=v\in {\rm HC}(T_1).$ It turns out that there are not so many vectors $x\in X$ such that $R(x)\in {\rm HC}(T_1)$.
\begin{lemma}\label{lem:restrictionR}
Let $x\in X$ be such that $R(x)\in {\rm HC}(T_1)$. Then $x(1)$ is a scalar multiple of $u(1)$.
\end{lemma}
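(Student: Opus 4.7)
\medskip
\noindent\textbf{Proof plan for Lemma \ref{lem:restrictionR}.}

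The plan is to isolate the component of $x$ which is collinear with $u$ and then show that whatever is left over, if non-zero on the first coordinate, forces $R(x)$ to violate Lemma \ref{lem:nohc}. Concretely, I would set
\[
\lambda := \frac{x_0(1)}{u_0(1)}\qquad\text{and}\qquad w := x-\lambda\, u,
\]
which makes sense since $u_0(1)\neq 0$ by Lemma \ref{lem:specificfhc}. By construction $w_0(1)=0$. A short computation using the linearity of $R_0$ gives
\[
R(x)=R_0(x)+\lambda\,(v-R_0(u))=R_0(w)+\lambda\, v.
\]
Our goal is to prove $w(1)=0$.

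Assume, for contradiction, that $w(1)\neq 0$. Since $w_0(1)=0$, there is a smallest integer $m\geq 1$ with $\langle e_m^*(1),w\rangle\neq 0$. I would then apply Lemma \ref{lem:crucialestimate} to $w$ (whose hypotheses are now met): for every $l\geq 1$ and every $k\geq 0$,
\[
\bigl\langle e_k^*(l),R_0(w)\bigr\rangle=\varepsilon_l\,\langle e_m^*(1),w\rangle\,2^{-mk}\,(1+\delta(k)),
\]
with $\delta(k)\to 0$. Combined with the identity $R(x)=R_0(w)+\lambda v$, this yields
\[
\bigl\langle e_k^*(l),R(x)\bigr\rangle=\varepsilon_l\,\langle e_m^*(1),w\rangle\,2^{-mk}\,(1+\delta(k))+\lambda\,v_k(l).
\]

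The key step is now to exploit the sharp bound $|v_k(l)|\le 2^{-lk}$ from Lemma \ref{lem:specificfhc}: choose any fixed integer $l>m$, and let $\mu:=\varepsilon_l\,\langle e_m^*(1),w\rangle\in\CC\setminus\{0\}$. Dividing the previous identity by $\mu$ gives
\[
\bigl\langle e_k^*(l),R(x)/\mu\bigr\rangle=2^{-mk}\,(1+\delta(k))+\frac{\lambda}{\mu}\,v_k(l).
\]
Since $l>m$, the second summand is $O(2^{-lk})$, so it is negligible compared to $2^{-mk}$ as $k\to\infty$, and taking real parts shows that for all sufficiently large $k$,
\[
\Re e\bigl\langle e_k^*(l),R(x)/\mu\bigr\rangle\geq \tfrac{1}{4}\,2^{-mk}>0.
\]
Lemma \ref{lem:nohc} then rules out $R(x)\in\hc(T_1)$, contradicting the assumption. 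Hence $w(1)=0$, which means $x(1)=\lambda\, u(1)$, as required.

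The only slightly delicate point is the balancing in the final estimate: one needs the decay rate $2^{-lk}$ of $v$ on the $l$-th block to be strictly faster than the decay rate $2^{-mk}$ produced by Lemma \ref{lem:crucialestimate}. This is precisely where the choice $l>m$ is used, and it is why the coefficients $2^{-lk}$ in the definition of $V$ were calibrated blockwise. Everything else is bookkeeping.
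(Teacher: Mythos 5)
Your proof is correct and follows essentially the same route as the paper's: the same decomposition $w=x-\frac{x_0(1)}{u_0(1)}u$ (called $z$ in the paper), the same application of Lemma \ref{lem:crucialestimate} with $l>m$ so that $|v_k(l)|\le 2^{-lk}=o(2^{-mk})$, and the same conclusion via Lemma \ref{lem:nohc}. The only cosmetic difference is that you note $w_0(1)=0$ at the outset, which absorbs the paper's final step of checking that the residual multiple of $e_0(1)$ vanishes.
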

\begin{proof}
Let us set $z:=x-\frac{x_0(1)}{u_0(1)}u$, 
so that
$$R(x)=R_0(z)+\frac{x_0(1)}{u_0(1)}v.$$
Assume first that $z(1)\notin \mathrm{span}(e_0(1))$. Then, there exists $m\geq 1$ such that 
$\langle e_m^*(1), z\rangle\neq 0$ whereas $\langle e_j^*(1), z\rangle=0$ for $1\leq j<m$. Let $l>m$. By Lemma \ref{lem:crucialestimate}, it follows that
$$\langle e_k^*(l), R_0(z)\rangle=\veps_l \langle e_m^*(1), z\rangle 2^{-mk}(1+\delta(k)),$$
so that
\begin{align*}
\langle e_k^*(l), R(x)\rangle&=\veps_l \langle e_m^*(1), z\rangle 2^{-mk}(1+\delta(k))+\frac{x_0(1)}{u_0(1)}v_k(l)\\
&=\veps_l \langle e_m^*(1)(z)2^{-mk}(1+\delta_l(k)),
\end{align*}
where for all $l>m$, $\delta_l(k)\rightarrow 0$ as $k\rightarrow \infty$,
since $|v_k(l)|\leq 2^{-lk}=o(2^{-mk})$. By Lemma \ref{lem:nohc}, $R(x)\notin {\rm HC}(T_1)$, a contradiction.

Hence, there exists a complex number $\alpha(1)$ such that
$$x(1)-\frac{x_0(1)}{u_0(1)}u(1)=\alpha(1)e_0(1).$$
Applying the functional $e_0^*(1)$ to this equation, we easily get $\alpha(1)=0$, which implies that $x(1)$ belongs to $\mathrm{span}(u(1))$.
\end{proof}

\smallskip We can now give the
\begin{proof}[Proof of Theorem \ref{thm:dfhcnotddens}] Let us fix $c>\|R\|$. We set $L_2:=cI+R$ (which is invertible) and $T_2:=L_2^{-1}TL_2$. We show that the pair 
$(T_1,T_2)$ is d-frequently hypercyclic but not densely d-hypercyclic.

\smallskip By construction, $(u,Ru)=(u,v)\in \fhc(T_1\oplus T_1)$. Hence $u\in \dfhc(T_1,T_2)$ by Lemma~\ref{lem:conjugate3}. 
Moreover, setting $T=T_1$ and $L_1=cI,$ Lemma \ref{lem:conjugate1} implies that if $x\in\dhc(T_1,T_2),$ then $R(x)\in {\rm HC}(T_1)$. By Lemma \ref{lem:restrictionR}, it follows that 
$x(1)\in\mathrm{span}(u(1))$ for every $x\in\dhc(T_1,T_2)$. In particular, $\dhc(T_1,T_2)$ cannot be dense in $X$.
\end{proof}

\section{Eigenvectors and $d$-frequent hypercyclicity}\label{ajout}

In this section, we give a  criterion relying on properties of the eigenvectors for showing that a tuple of operators is (densely) $d$-frequently hypercyclic. The initial motivation was the following question asked by K. Grosse-Erdmann: let $D$ be the derivation operator acting on the space of entire functions $H(\CC)$, and for every $a\in\CC\setminus\{ 0\}$, denote by $\tau_a$  the operator of translation by $a$ on $H(\CC)$, defined by $\tau_a f(z):= f(z+a)$. It is well-known (see \cite{BM09} or \cite{KarlAlfred}) that both $D$ and $\tau_a$ are frequently hypercyclic. Now one can ask

\begin{question}\label{Dtau}
Do the operators $D$ and $\tau_a$ have common frequently hypercyclic vectors?
\end{question}

It will follow from the next theorem that the answer to Question \ref{Dtau} is positive.

\begin{theorem}\label{tambouille} Let $N\geq 2,$ let $X$ be a complex Fr\'echet space, and let $T_1, \dots ,T_N\in\mathfrak L(X)$. Assume that there exist a holomorphic vector field $E:O \to X$ defined on some connected open set $O\subset\CC$, and non-constant holomorphic functions $\phi_1, \dots,\phi_N$ defined on some connected open set containing $O$, such that 
\begin{itemize}
\item[-] $\overline{\rm span}\, E(O)=X$;
\item[-] $T_i E(z)=\phi_i(z) E_i(z)$ for every $i=1,\dots ,N$ and $z\in O$;
\item[-] $O\cap \phi_i^{-1}(\TT)\cap \bigcap_{j\neq i} \phi_j^{-1}(\DD)\neq\emptyset$ for every $i=1,\dots ,N$.%and $\Omega\cap \phi_2^{-1}(\TT)\cap \phi_1^{-1}(\DD)\neq\emptyset$.
\end{itemize}

Then the $N$-tuple $(T_1,\dots, T_N)$ is densely $d$-frequently hypercyclic.
\end{theorem}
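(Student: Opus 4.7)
The plan is to construct a Gaussian random vector $\xi$ in $X$ whose distribution $\mu$ has full support and such that $\mu$-almost every realization is $d$-frequently hypercyclic for $(T_1,\dots,T_N)$; this yields both the $d$-frequent hypercyclicity and the density of $\dfhc(T_1,\dots,T_N)$ at once.

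For each $i$ I first choose a small real-analytic arc $C_i\subset O$ passing through a point of $O\cap \phi_i^{-1}(\TT)\cap \bigcap_{j\neq i}\phi_j^{-1}(\DD)$ (taking care to avoid the discrete set of zeros of $\phi_i'$), along which $\phi_i$ restricts to a real-analytic diffeomorphism onto an open subarc $\Gamma_i\subset \TT$ and $\sup_{z\in C_i}|\phi_j(z)|\leq r_{ij}<1$ for every $j\neq i$. Taking the $C_i$ pairwise disjoint, I introduce independent complex Gaussian white noises $W_1,\dots,W_N$ on the $C_i$ (with respect to arc length) and set $\eta_i:=\int_{C_i}E(z)\,dW_i(z)$ and $\xi:=\eta_1+\cdots+\eta_N$. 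Up to reparameterization by $\phi_i$, each $\eta_i$ has the distribution $\mu_i$ of the Gaussian measure on $X$ associated with the $\TT$-eigenvector field $E\circ (\phi_i|_{C_i})^{-1}$ on $\Gamma_i$ and the push-forward of arc length by $\phi_i$, as in \cite[Section 5.5]{BM09}. Three points are crucial: (i) $\mu_i$ is $T_i$-invariant, since multiplying complex white noise by a unimodular factor preserves its distribution; (ii) $\mu_i$ has full support, because any $x^*\in X^*$ vanishing on $E(C_i)$ must vanish on $E(O)$ by analytic continuation, and $\overline{\textrm{span}}\,E(O)=X$; (iii) the spectral measure of each $x^*\in X^*$ with respect to $U_{T_i}$ on $L^2(\mu_i)$ is the pushforward $(\phi_i)_*\bigl(|x^*\circ E|^2\,ds\bigr)$ on $\Gamma_i$, hence absolutely continuous with respect to Lebesgue measure since $\phi_i|_{C_i}$ is a local diffeomorphism.

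I would then apply the ergodic-theoretic machinery developed earlier to the product system $(X^N,\mu_1\otimes\cdots\otimes\mu_N,T_1\oplus\cdots\oplus T_N)$. Independence and centering of the $\eta_i$ give $\widehat{\sigma_\Phi}(n)=\sum_{i=1}^N\widehat{\sigma_{x_i^*}^{(i)}}(n)$ for every $\Phi=(x_1^*,\dots,x_N^*)\in (X^N)^*$, so $\sigma_\Phi$ is absolutely continuous with respect to Lebesgue measure. By Corollary~\ref{celuila} the natural extension has countable Lebesgue spectrum, and by Corollary~\ref{prop:conzelike} the operator $T_1\oplus\cdots\oplus T_N$ is hereditarily frequently hypercyclic on $(X^N,\mu_1\otimes\cdots\otimes\mu_N)$. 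In particular, for almost every $(y_1,\dots,y_N)$ and every non-empty open $V\subset X^N$, the set $\{n\geq 0:\ (T_1^n y_1,\dots,T_N^n y_N)\in V\}$ has positive lower density.

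The final step is a contraction-to-diagonal argument: for $j\neq i$, the uniform bound $|\phi_i(z)|\leq r_{ij}<1$ on $C_j$ yields $\mathbb{E}[p(T_i^n\eta_j)^2]=O(r_{ij}^{2n})$ for any continuous seminorm $p$ on $X$, so by Borel--Cantelli $T_i^n\eta_j\to 0$ almost surely in $X$. Hence $(T_1^n\xi,\dots,T_N^n\xi)-(T_1^n\eta_1,\dots,T_N^n\eta_N)\to 0$ almost surely, which means that the orbit of the \emph{diagonal} vector $\xi$ remains frequently dense in $X^N$; that is, $\xi\in \dfhc(T_1,\dots,T_N)$ almost surely. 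Since $\mu$ is a Gaussian measure whose support is the closed span of $E\bigl(\bigcup_i C_i\bigr)=X$, the set $\dfhc(T_1,\dots,T_N)$ is dense. The main technical obstacles will be the rigorous construction of the Gaussian measures $\mu_i$ in the Fr\'echet-space setting together with the verification of absolute continuity of their spectral measures --- where the careful choice of curves $C_i$ avoiding critical points of $\phi_i$ is decisive --- and the control of the almost-sure convergence $T_i^n\eta_j\to 0$ in the Fr\'echet topology.
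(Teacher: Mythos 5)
Your strategy is essentially the one used in the paper: for each $i$ you localize a Gaussian eigenvector-field measure $\mu_i$ on the part of the level curve $\{|\phi_i|=1\}$ where the other $\phi_j$ have modulus $<1$, you prove that the product measure makes almost every $(y_1,\dots,y_N)$ frequently hypercyclic for $T_1\oplus\cdots\oplus T_N$, you establish the off-diagonal decay $T_i^n\eta_j\to 0$ for $j\neq i$, and you collapse to the diagonal by summing the $\eta_i$ (this last step is exactly Fact~\ref{seriously?}). Two points of comparison. First, for the ``almost every point of the product is frequently hypercyclic'' step, the paper does not go through Corollaries~\ref{celuila} and~\ref{prop:conzelike}: it observes that each $\mu_i$ is the distribution of a Gaussian series along a bilateral backward orbit, so that $T_i$ is strongly mixing with respect to $\mu_i$ by \cite{Kevin}; the product measure is then mixing for the direct sum and almost every vector is frequently hypercyclic. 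Your route via absolute continuity of all spectral measures and countable Lebesgue spectrum is heavier machinery but is valid and even yields more (hereditary frequent hypercyclicity of the direct sum with respect to the product measure). Second --- and this is the one real gap in your sketch, which you yourself flag --- you integrate white noise against arc length on $C_i$, which amounts to using the sharp cutoff $\mathbf 1_{\Gamma_i}$ on the eigenvector field $E\circ(\phi_i|_{C_i})^{-1}$. The Fourier coefficients of such a field decay only like $O(1/|n|)$, which is not enough to guarantee that the Gaussian series defining $\eta_i$ converges almost surely in a general Fr\'echet space, nor to run the estimates needed for the decay of $T_j^m\eta_i$. The paper's device is to multiply the eigenvector field by a $\mathcal C^\infty$ cutoff $\chi_i$ supported in the arc $\Lambda_i$: two integrations by parts then give $\mathbf q\bigl(\widehat{F_i}(n)\bigr)=O(1/n^2)$ for every continuous seminorm $\mathbf q$, which simultaneously yields the almost sure convergence of $\sum_n g_{i,n}\widehat{F_i}(n)$ and, combined with a Borel--Cantelli bound $|g_n|\leq\sqrt{|n|}$ for large $|n|$, the estimate $\mathbf q\bigl(T_j^m\xi_i\bigr)\leq M\, m^2 r_i^m\to 0$. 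With this smooth cutoff inserted, your argument matches the paper's proof.
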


\smallskip Before proving this result, let us state two consequences and give some examples.
\begin{corollary}\label{cormachin} Let $D$ be the derivation operator on $X:=H(\CC)$. If $\phi_1$ and  $\phi_2$ are two entire functions of exponential type such that $\phi_1^{-1}(\TT)\cap \phi_2^{-1}(\DD)\neq\emptyset$ and $\phi_2^{-1}(\TT)\cap \phi_1^{-1}(\DD)\neq\emptyset$, then the pair $(\phi_1(D), \phi_2(D))$ is densely $d$-frequently hypercyclic.
\end{corollary}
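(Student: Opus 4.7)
The plan is to apply Theorem~\ref{tambouille} with $N=2$, $X = H(\CC)$, $(T_1, T_2) = (\phi_1(D), \phi_2(D))$, and the exponential eigenvector field. Concretely, I would take $O := \CC$ and $E \colon \CC \to H(\CC)$ defined by $E(z)(w) := e^{zw}$. The map $E$ is clearly holomorphic, and a classical argument shows that $\overline{\rm span}\, E(\CC) = H(\CC)$: the successive $z$-derivatives of $E(z)$ at $z = 0$ are the monomials $w \mapsto w^n$, so any continuous linear functional vanishing on $E(\CC)$ vanishes on all polynomials and is therefore zero. This takes care of the first bullet of Theorem~\ref{tambouille}.

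Next, since $\phi_1$ and $\phi_2$ are entire of exponential type, each $\phi_i(D) = \sum_{n \geq 0} \tfrac{\phi_i^{(n)}(0)}{n!}\, D^n$ defines a continuous operator on $H(\CC)$ (this is classical). The identity $D E(z) = z E(z)$ then immediately yields $\phi_i(D) E(z) = \phi_i(z) E(z)$ for every $z \in \CC$, so the second bullet is satisfied with these choices of $T_1, T_2$ and $E$.

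For the third bullet, observe that with $O = \CC$ the required intersection conditions collapse to precisely the two hypotheses of the corollary. The only additional verification is non-constancy of $\phi_1$ and $\phi_2$: if, say, $\phi_1 \equiv c$, then $\phi_1^{-1}(\TT) \cap \phi_2^{-1}(\DD) \neq \emptyset$ forces $|c| = 1$ while $\phi_2^{-1}(\TT) \cap \phi_1^{-1}(\DD) \neq \emptyset$ forces $|c| < 1$, a contradiction; the argument for $\phi_2$ is symmetric. All three hypotheses of Theorem~\ref{tambouille} are then met, so the pair $(\phi_1(D), \phi_2(D))$ is densely $d$-frequently hypercyclic. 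There is essentially no serious obstacle in this argument: the substantive content lives in Theorem~\ref{tambouille} itself, and the corollary is a direct application, the only mild pitfall being that the intersection hypotheses on their own do not formally preclude constant $\phi_i$'s and so this has to be ruled out by hand.
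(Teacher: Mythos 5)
Your proof is correct and follows exactly the route the paper takes: define the exponential eigenvector field $E(z)=e^{z\,\cdot\,}$, note $DE(z)=zE(z)$ and $\overline{\rm span}\,E(\CC)=X$, and invoke Theorem \ref{tambouille} with $O=\CC$. The paper states this in one line; your additional verifications (density of the span, continuity of $\phi_i(D)$, and the observation that the intersection hypotheses force the $\phi_i$ to be non-constant) are all sound and merely make explicit what the paper leaves implicit.
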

\begin{proof} Let $E:\CC\to X$ be the holomorphic vector field defined by $E(z):= e^{z\,\cdot\, }$. We have $DE(z)=zE(z)$ for all $z\in\CC$ and $\overline{\rm span}\, E(\CC)=X$; so we may apply Theorem \ref{tambouille} to the operators $T_i:=\phi_i(D)$. 
\end{proof}

Since $\tau_a=\Phi_a(D)$, where $\phi_a(z):=e^{az}$, Corollary \ref{cormachin} applies to pairs of operators involving $D$ and $\tau_a$:

\begin{example} Taking $\phi_1(z):= z$ and $\phi_2(z):=e^{az}$, we see that for  any $a\neq 0$, the pair $(D,\tau_a)$ is densely $d$-frequently hypercyclic (so that in particular $\fhc(D)\cap \fhc(\tau_a)\neq \emptyset$).
Indeed, any complex number $z$ such that $\vert z\vert=1$ and $\Re e(az)<0$  belongs to $\phi_1^{-1}(\TT)\cap \phi_2^{-1}(\DD)$, while
any
$z\in\ i\overline{a}\,\RR$ such that $|z|<1$ belongs to  $\phi_2^{-1}(\TT)\cap \phi_1^{-1}(\DD)$.
Similarly, if $a,b\neq 0$ and $a/b\not\in \RR$ then $(\tau_a,\tau_b)$ is densely $d$-frequently hypercyclic.
\end{example}

\smallskip
\begin{corollary} Let $B$ be the canonical backward shift acting on $X=\ell_p(\ZZ_+)$ or $c_0(\ZZ_+)$. If $\phi_1$ and  $\phi_2$ are two holomorphic functions defined in a neighbourhood of the closed unit disk $\overline\DD$ such that $\DD\cap \phi_1^{-1}(\TT)\cap \phi_2^{-1}(\DD)\neq\emptyset$ and $\DD\cap \phi_2^{-1}(\TT)\cap \phi_1^{-1}(\DD)\neq\emptyset$, then the pair $(\phi_1(B), \phi_2(B))$ is densely $d$-frequently hypercyclic. 
\end{corollary}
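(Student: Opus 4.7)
The plan is to apply Theorem \ref{tambouille} with $N=2$, $O:=\DD$, the given functions $\phi_1,\phi_2$, and the holomorphic eigenvector field
\[ E: \DD \to X, \qquad E(z) := \sum_{n\ge 0} z^n e_n, \]
where $(e_n)_{n\ge 0}$ denotes the canonical basis of $X$. The series converges in the norm of $X$ for every $z\in\DD$ (absolutely in the $\ell_p(\ZZ_+)$ case, and to a null sequence in the $c_0(\ZZ_+)$ case), and the resulting map $z\mapsto E(z)$ is holomorphic from $\DD$ into $X$ by standard arguments on locally uniformly convergent power series with vector coefficients.

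To check the span condition, suppose $x^*\in X^*$ vanishes on $E(\DD)$. Identifying $x^*$ with a bounded sequence $(a_n)$, the holomorphic function $z\mapsto \langle x^*, E(z)\rangle = \sum_{n\ge 0} a_n z^n$ then vanishes on $\DD$, forcing $a_n = 0$ for every $n$. Hence $\overline{\mathrm{span}}\, E(\DD)=X$ by Hahn-Banach. The eigenvalue relation $B E(z)=zE(z)$ on $\DD$ is immediate from $Be_n=e_{n-1}$, with the convention $e_{-1}=0$. Since $\phi_1,\phi_2$ are holomorphic on a neighborhood of the spectrum $\overline{\DD}$ of $B$, the holomorphic functional calculus produces bounded operators $\phi_i(B)\in\mathfrak L(X)$, and a Cauchy-integral computation (or approximation by polynomials) gives
\[ \phi_i(B)E(z) \;=\; \frac{1}{2\pi\mathbf i}\int_\gamma \phi_i(\lambda)(\lambda-B)^{-1}E(z)\,d\lambda \;=\; \frac{1}{2\pi\mathbf i}\int_\gamma \frac{\phi_i(\lambda)}{\lambda-z}\,d\lambda\cdot E(z)\;=\;\phi_i(z)E(z) \]
for every $z\in\DD$, where $\gamma$ is a contour around $\overline{\DD}$ inside the domain of holomorphy of $\phi_i$.

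The two assumptions $\DD\cap\phi_1^{-1}(\TT)\cap\phi_2^{-1}(\DD)\neq\emptyset$ and $\DD\cap\phi_2^{-1}(\TT)\cap\phi_1^{-1}(\DD)\neq\emptyset$ are exactly the intersection conditions required by Theorem \ref{tambouille} for $N=2$. They also force each $\phi_i$ to be non-constant, since a constant function cannot simultaneously attain a unimodular value and a value of modulus strictly less than $1$. Therefore every hypothesis of Theorem \ref{tambouille} is satisfied, and the conclusion is that the pair $(\phi_1(B),\phi_2(B))$ is densely $d$-frequently hypercyclic. There is no genuine obstacle in this proof: once the canonical eigenvector field $E(z)=\sum z^n e_n$ of the backward shift is identified as the vehicle, the corollary reduces to a direct verification of the hypotheses of Theorem \ref{tambouille}.
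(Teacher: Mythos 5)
Your proof is correct and follows essentially the same route as the paper: the paper also takes $O=\DD$, $E(z)=\sum_{n\ge 0}z^ne_n$, notes $BE(z)=zE(z)$ and $\overline{\mathrm{span}}\,E(\DD)=X$, and invokes Theorem \ref{tambouille}. Your additional verifications (the functional-calculus identity $\phi_i(B)E(z)=\phi_i(z)E(z)$ and the non-constancy of the $\phi_i$) are details the paper leaves implicit, and they are handled correctly.
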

\begin{proof} The operators $T_i:=\phi_i(B)$ are well defined since $\sigma(B)=\overline{\,\DD}$. Let $E:\DD\to X$ be the holomorphic vector field defined by $E(z):=\sum_{n=0}^\infty z^n e_n$. We have $BE(z)=zE(z)$ for all $z\in\DD$ and $\overline{\rm span}\, E(\CC)=X$; so Theorem \ref{tambouille} applies.
\end{proof}

\begin{example} If $\vert \lambda\vert >1$ and $0<\vert\alpha\vert<2\vert\lambda\vert$, the pair $(\lambda B, I+\alpha B)$ is densely $d$-frequently hypercyclic. If $\vert\lambda\vert >1$ and $a\neq 0$, the pair $(\lambda B, e^{aB})$ is densely $d$-frequently hypercyclic. On the other hand, Theorem \ref{tambouille} is completely inefficient to show for example that the pair $(aB, bB^2)$ is $d$-frequently hypercyclic if $1<a<b$, which is nevertheless true by \cite{MaMePu}.
\end{example}

\smallskip In the proof of Theorem \ref{tambouille}, we will make use of the following straightforward observation.
\begin{fact}\label{seriously?} Let $T_1,\dots ,T_N\in\mathfrak L(X)$ and let $x_1,\dots ,x_N\in X$. Assume that $(x_1,\dots ,x_N)\in \fhc(T_1\oplus\cdots \oplus T_N)$ and that $T_j^m x_i\to 0$ as $m\to \infty$ whenever $i\neq j$. Then $x:=x_1+\cdots +x_N$ is a $d$-frequently hypercyclic vector for $(T_1,\dots ,T_N)$.
\end{fact}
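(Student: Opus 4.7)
The plan is to verify the definition of $d$-frequent hypercyclicity for $x := x_1 + \cdots + x_N$ directly. Recall that $x \in \dfhc(T_1,\dots,T_N)$ means that for every non-empty open set $V \subset X^N$, the visit set $\{n \geq 0 : (T_1^n x, \dots, T_N^n x) \in V\}$ has positive lower density. Since products of open sets form a basis for the topology of $X^N$, it is enough to check this for sets of the form $V = V_1 \times \cdots \times V_N$, where each $V_i$ is a non-empty open subset of $X$.

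The key observation is the decomposition $T_i^n x = T_i^n x_i + \sum_{j \neq i} T_i^n x_j$. The hypothesis $T_j^m x_i \to 0$ when $i \neq j$ reads, after relabelling indices, as $T_i^n x_j \to 0$ for each fixed $i$ and each $j \neq i$. Since there are only finitely many summands, the ``error term'' $r_i(n) := \sum_{j \neq i} T_i^n x_j$ tends to $0$ as $n \to \infty$ for each $i$. So $T_i^n x$ and $T_i^n x_i$ differ by a vanishing perturbation.

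Given the open sets $V_1, \dots, V_N$, I would use the continuity of addition in the topological vector space $X$ to pick, for each $i$, a non-empty open set $V_i' \subset V_i$ and an open neighborhood $W_i$ of $0$ such that $V_i' + W_i \subset V_i$. By the assumption that $(x_1, \dots, x_N)$ is frequently hypercyclic for $T_1 \oplus \cdots \oplus T_N$, the set
\[ A := \{n \geq 0 : T_i^n x_i \in V_i' \text{ for every } i = 1, \dots, N\} \]
has positive lower density. On the other hand, since $r_i(n) \to 0$ for each $i$, there exists $N_0$ such that $r_i(n) \in W_i$ for all $n \geq N_0$ and all $i$. Then for every $n \in A$ with $n \geq N_0$, we have $T_i^n x = T_i^n x_i + r_i(n) \in V_i' + W_i \subset V_i$ for each $i$, so $n$ belongs to the visit set of $x$ for $V_1 \times \cdots \times V_N$. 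Removing the finite set $A \cap [0, N_0)$ does not affect lower density, so the visit set has positive lower density, completing the proof.

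I do not expect any real obstacle: the whole argument is a routine verification, and the only subtle point is the ``open-set-shrinking'' step $V_i' + W_i \subset V_i$, which is needed to absorb the error term $r_i(n)$. This trick is standard in linear dynamics and relies only on the joint continuity of addition in $X$.
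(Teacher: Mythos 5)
Your proof is correct, and it is exactly the routine verification the paper has in mind: the paper states this Fact without proof, calling it a ``straightforward observation,'' and the intended argument is precisely your decomposition $T_i^n x = T_i^n x_i + r_i(n)$ with the vanishing error absorbed by shrinking each $V_i$ to $V_i' + W_i \subset V_i$. All steps check out, including the index relabelling of the hypothesis and the observation that discarding finitely many terms does not affect lower density.
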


\smallskip
\begin{proof}[Proof of Theorem \ref{tambouille}] %For $i=1,2$ we let $i':=3-i$; in other words, $i'=2$ if $i=1$ and \textit{vice versa}.
%\smallskip 
We first note that for $i=1,\dots ,N$, there is a non-empty open set $V_i\subset O$ and $r_i\in (0,1)$ such that $(\phi_i)_{| V_i}$ is a diffeomorphism (onto its range), $\phi_i(V_i)\cap\TT\neq\emptyset$ and $\phi_j(V_i)\subset D(0,r_i)$ for $j\neq i$. Indeed, let $a\in O$ be such that $\phi_i(a)\in\TT$ and $\phi_{j}(a)\in\DD$ for $j\neq i$. Choose an open neighbourhood $W$ of $a$ and $r_i\in (0,1)$ such that $\phi_{j}(W)\subset D(0,r_i)$ for all $j\neq i$. By the open mapping theorem, $\phi_i(W)$ is an open set intersecting $\TT$, so $\phi_i(W)\cap\TT$ is uncountable. Hence, one can find $b\in W$ such that $\phi_i(b)\in\TT$ and $\phi'_i(b)\neq 0$; and the claim follows from the inverse function theorem.

Taking the open set $V_i$ smaller if necessary, we may assume that $\Lambda_i:=\phi_i(V_i)\cap \TT$ is a proper open arc of $\TT$.  We choose a ``cut-off'' function $\chi_i\in\mathcal C^\infty(\TT)$ such that $\chi_i(\lambda)=0$ outside $\Lambda_i$ and $\chi_i(\lambda)>0$ on $\Lambda_i$, and (with the obvious abuse of notation) we define $F_i:\TT\to X$ by setting
\[ F_i(\lambda):=\chi_i(\lambda) E\bigl( \phi_{i}^{-1}(\lambda)\bigr)\quad\textrm{ for every }\lambda\in\TT.\]

Thus, $F_i$ is a $\mathcal C^\infty$-$\,$smooth $\TT$-eigenvector field for $T_i$, \textit{i.e.} $T_iF_i(\lambda)=\lambda F_i(\lambda)$ for every $\lambda\in\TT$. Let us denote by $\widehat{F_i}(n)$ the Fourier coefficients of $F_i$:
\[ \widehat{F_i}(n)=\int_\TT F_i(\lambda)\, \lambda^{-n}\, d\lambda, \quad n\in\ZZ.\]

Since $F_i$ is a $\TT$-eigenvector field for $T$, we have $T_i\widehat{F_i}(n)=\widehat{F_i}(n-1)$ for all $n\in\ZZ$, \textit{i.e.} the sequence $\big(\widehat{F_i}(n)\bigr)_{n\in\ZZ}$ is a bilateral backward orbit for $T_i$. Moreover, since $\overline{\rm span}\,E(O)=X$, it follows from the Hahn-Banach theorem, together with the identity principle for analytic functions, that $\overline{\rm span}\, \bigl( \widehat{F_i}(n)\;:\; n\in\ZZ\bigr)=X$.  

\smallskip In the remainder of the proof, we fix a family $(g_{i,n})_{1\leq i\leq N, n\in\ZZ}$ of independent, standard complex Gaussian variables defined on some probability space $(\Omega,\mathcal A,\mathbb P)$.

\begin{claim}\label{claimtruc} For every $i=1,\dots ,N$, the series $$\sum_{n\in\ZZ} g_{i, n} \widehat{F_i}(n)$$ is almost surely convergent, and defines an $X$-valued random variable
$\xi_i$ on $(\Omega,\mathcal A,\mathbb P)$, which
is such that 
\[ \textrm{for every }i,j=1,\dots ,N \textrm{ with } j\neq i,\; T_j^m\xi_i\xrightarrow{m\to\infty} 0 \quad\hbox{almost surely}.\]
\end{claim}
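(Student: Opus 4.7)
The plan is to prove both statements by combining two classical facts: the Fourier coefficients of a $\mathcal{C}^\infty$ function on $\TT$ with values in a Fr\'echet space decay faster than any polynomial in every continuous seminorm (via iterated integration by parts), and a standard complex Gaussian satisfies $|g_{i,n}|=O(\sqrt{\log|n|})$ almost surely by Borel--Cantelli. The key geometric input for the second assertion is that on $\Lambda_i$ (the support of $F_i$), the function $\psi_j:=\phi_j\circ\phi_i^{-1}$ is bounded by $r_i<1$ whenever $j\neq i$, by the very choice of $V_i$.

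First I would fix a continuous seminorm $p$ on $X$. Smoothness of $F_i$ gives, for every $k\geq 0$, a constant $C_{p,k}$ with $p(\widehat{F_i}(n))\leq C_{p,k}(1+|n|)^{-k}$; this is obtained by $k$ integrations by parts and continuity of $p$. Choosing $k=2$ and combining with the almost sure bound $|g_{i,n}|\leq C(\omega)\sqrt{\log(2+|n|)}$ yields $\sum_n |g_{i,n}|\,p(\widehat{F_i}(n))<\infty$ almost surely, so the series converges absolutely in $p$. Running this through a countable family of seminorms defining the Fr\'echet topology of $X$ shows that $\xi_i:=\sum_{n\in\ZZ} g_{i,n}\widehat{F_i}(n)$ is almost surely a well-defined $X$-valued random variable.

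For the vanishing of $T_j^m\xi_i$ when $j\neq i$, continuity of $T_j^m$ allows one to pass it inside the series, and the commutation of $T_j^m$ with the integral defining the Fourier coefficient, together with $T_jE(z)=\phi_j(z)E(z)$, gives
\[
T_j^m\widehat{F_i}(n)=\widehat{G_{i,j,m}}(n),\qquad G_{i,j,m}(\lambda):=\psi_j(\lambda)^m F_i(\lambda).
\]
The function $G_{i,j,m}$ is $\mathcal{C}^\infty$ on $\TT$ and supported in $\Lambda_i$. On $\Lambda_i$ one has $|\psi_j|\leq r_i<1$, and by Leibniz/induction each derivative $\frac{d^k}{d\theta^k}\psi_j(e^{i\theta})^m$ is bounded by a polynomial $P_k(m)$ of degree $k$ times $r_i^{m-k}$, while $F_i$ has $\mathcal{C}^k$-seminorm independent of $m$. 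Two integrations by parts therefore yield
\[
p\bigl(\widehat{G_{i,j,m}}(n)\bigr)\leq \frac{C_p\, m^2\, r_i^{m-2}}{(1+|n|)^2},\qquad n\in\ZZ,\; m\geq 2.
\]
Since $\sum_n|g_{i,n}|/(1+n^2)<\infty$ almost surely (its expectation being finite), one concludes that $p(T_j^m\xi_i)\leq C_p\,m^2\,r_i^{m-2}\sum_n|g_{i,n}|/(1+n^2)\to 0$ almost surely, and a countable intersection of full-measure events then gives convergence in $X$.

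The main obstacle is the $m$-dependence in the step above: although $\psi_j^m\to 0$ uniformly on $\Lambda_i$, its higher derivatives grow polynomially in $m$, which at first sight ruins any naive integration-by-parts estimate. The point is that this polynomial growth is dominated by the geometric decay $r_i^m$, and it is precisely here that the hypothesis $O\cap \phi_i^{-1}(\TT)\cap\bigcap_{j\neq i}\phi_j^{-1}(\DD)\neq\emptyset$ is essential: it provides the strict inequality $r_i<1$, which is exactly what makes $m^2 r_i^m\to 0$.
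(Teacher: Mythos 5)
Your proposal is correct and follows essentially the same route as the paper: two integrations by parts to get $O(1/n^2)$ decay of $\widehat{F_i}(n)$ for the convergence of the Gaussian series, and for $j\neq i$ the identity $T_j^m\widehat{F_i}(n)=\widehat{\psi_j^m F_i}(n)$ together with the bound $|\psi_j|\le r_i<1$ on the support of $F_i$, yielding the same estimate $C_{\mathbf q}\,m^2 r_i^{m}/(1+n^2)$ after two integrations by parts, combined with an almost sure polynomial bound on $|g_{i,n}|$ from Borel--Cantelli. The only cosmetic difference is that the paper controls the convergence of the series via $\sum_n \mathbb E\bigl(\mathbf q(g_{i,n}\widehat{F_i}(n))\bigr)<\infty$ and bounds $|g_{i,n}|$ by $\sqrt{|n|}$ rather than $\sqrt{\log|n|}$, which changes nothing of substance.
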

\begin{proof}[Proof of Claim \ref{claimtruc}] Since $F_i$ is $\mathcal C^2$-$\,$smooth, two integrations by parts show that for any continuous semi-norm $\mathbf q$ on $X$, we have $\mathbf q\bigl( \widehat{F_i}(n)\bigr)=O(1/n^2)$ as $\vert n\vert\to\infty$, so that
\[ \sum_{n=-\infty}^\infty \mathbb E\bigl(\mathbf q(g_{i, n} \widehat{F_i}(n))\bigr)<\infty.\]
This implies that the series $\sum_{n\in\ZZ} g_{i,n} \widehat{F_i}(n)$ is almost surely convergent.

\smallskip Let us fix $j\neq i$. By the definition of $F_i$, we have  $T_jF_i(\lambda)={\psi_{i,j}(\lambda)} F_i(\lambda)$ for every $\lambda\in \Lambda_i$, where ${\psi_{i,j}(\lambda)}:=\phi_j(\phi_i^{-1}(\lambda))$; and $T_jF_i(\lambda) =0$ if $\lambda\not\in\Lambda_i$. Hence, for almost every $\omega\in\Omega$ and every $m\in\NN$, we have
 \[ T_j^m \bigl(\xi_i(\omega)\bigr)=\sum_{n\in\ZZ} g_n(\omega) \int_{\Lambda_i} {\psi_{i,j}(\lambda)}^m F_i(\lambda)\,\lambda^{-n}\, d\lambda.\]
 
Let $\mathbf q$ be a continuous semi-norm on $X$.  Since $\vert {\psi_{i,j}(\lambda)}\vert <r_i$ for every $\lambda\in\Lambda_i$ by definition of $\psi$,  two integrations by parts show that there is a constant $C_{\mathbf q}$ such that
 \[ \mathbf q\left( \int_\Lambda {\psi_{i,j}(\lambda)}^m F(\lambda)\,\lambda^{-n}\, d\lambda\right)\leq C_{\mathbf q}\times \frac{m^2 r_i^m}{1+n^2}\quad\textrm{ for every }m\ge 0 \textrm{ and every }n\in\ZZ.\]
 
Moreover, it follows from the Borel-Cantelli lemma that for almost every  $\omega\in\Omega$, there exists an integer $N(\omega)$ such that  
 \[ \forall \vert n\vert >N(\omega)\;:\; \vert g_n(\omega)\vert \leq \sqrt{n}.\]
 
Hence, given a continuous semi-norm $\mathbf q$ on $X$, one can find for almost every  $\omega\in \Omega$ some constant $M_{\mathbf q,\omega}$ such that 
 \[ \forall m\in\NN\;:\; \mathbf q\bigl( T_j^m\xi_i(\omega)\bigr)\leq M_{q,\omega} \, m^2 r_i^m.\]
 
 Hence $\mathbf q\bigl( T^m_j\xi_i(\omega)\bigr)\to 0$ almost surely as $m\to\infty$  for any given continuous semi-norm $\mathbf q$, \textit{i.e.} $T_j^m\xi_i\to 0$ almost surely.
 \end{proof}

\smallskip We can now conclude the proof of Theorem \ref{tambouille}. For $i=1,\dots ,N$, let us denote by $\mu_i$ the distribution of the random variable $\xi_i:\Omega\to X$. By definition, $\mu_i$ is a $T_i$-invariant Gaussian measure with full support; and by \cite{Kevin}, $T_i$ is \emph{strongly mixing} with respect to $\mu_i$. Hence, the measure $\mu_1\otimes \cdots \otimes \mu_N$ is a $(T_1\oplus\cdots \oplus T_N)\,$-$\,$invariant measure on $X^N$ with full support  and $T_1\oplus\cdots \oplus T_N$ is mixing with respect to $\mu_1\otimes \cdots \otimes \mu_N$. Since $\mu_1\otimes \cdots \otimes \mu_N$ is the distribution of the random vector $\xi:=(\xi_1,\dots ,\xi_N)$ by independence of $\xi_1,\dots ,\xi_N$, it follows that  the vector $\xi(\omega)$ is almost surely frequently hypercyclic for $T_1\oplus\cdots \oplus T_N$. Moreover, by Claim \ref{claimtruc}, we see that $T_j^m \xi_i(\omega)\to 0$ almost surely as $m\to\infty$, whenever $i\neq j$. By Fact \ref{seriously?}, it follows that the vector $\xi_1(\omega)+\cdots +\xi_N(\omega)$ is almost surely $d$-frequently hypercyclic for $(T_1,\dots ,T_N)$. In other words, $(\mu_1\star\cdots\star\mu_N)\,$-$\,$almost every $x\in X$ is $d$-frequently hypercyclic for $(T_1,\dots ,T_N)$. Since the measure $\mu_1\star\cdots\star\mu_N$ has full support, this terminates the proof of Theorem \ref{tambouille}.
\end{proof}

\section{Remarks and questions}\label{quest}

\subsection{Hereditary frequent hypercyclicity in a weak sense}\label{weak}

Another natural definition for hereditary $\mathcal F$-hypercyclicity could be the following: an operator $T\in\mathfrak L(X)$ is \textbf{hereditarily $\mathcal F$-hypercyclic in the weak sense} if, for every $A\in\mathcal F,$ the sequence $(T^n)_{n\in A}$ is $\mathcal F$-hypercyclic, \textit{i.e.} there exists $x\in X$ such that $\mathcal N_T(x,V)\cap A\in\mathcal F$ for all non-empty open sets $V\subset X.$ Equivalently, $T$ is $\mathcal F_A$-hypercyclic for every $A\in\mathcal F$, where $\mathcal F_A:=\{ B\subset\NN\,:\, B\cap A\in\mathcal F\}$. Of course, hereditary $\mathcal F$-hypercyclicity implies hereditary $\mathcal F$-hypercyclicity in the weak sense. Note also that Theorem \ref{Ctypeex} says precisely that there exist frequently hypercyclic operators which are not hereditarily frequently hypercyclic in the weak sense.

\smallskip When $\mathcal F$ is the family of all infinite subsets of $\NN$, an operator $T$ is hereditarily $\mathcal F$-hypercyclic in the weak sense if and only if it is ``hereditarily hypercyclic with respect to the whole sequence of integers'' in the sense of \cite{BP-her}; and this means exactly that $T$ is \emph{topologically mixing} (see e.g. \cite[Lemma 2.2]{Gri05}). The next result shows that this is also equivalent to hereditary $\mathcal F$-hypercyclicity.

\begin{proposition} \label{debile} Let $\mathcal F$ be a Furstenberg family with the following property: for any operator $T$ and any $A\subset \NN$, the set $\mathcal F_A$-{\rm HC}$(T)$ is either empty or comeager in the underlying space. Then, hereditary $\mathcal F$-hypercyclicity and hereditary $\mathcal F$-hypercyclicity in the weak sense are equivalent. In particular,  when $\mathcal F$ is the family of all infinite subsets of $\NN$, an operator $T$ is hereditarily $\mathcal F$-hypercyclic if and only if it is topologically mixing.
\end{proposition}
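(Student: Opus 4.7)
The plan is to separate the proof into the Baire-category argument for the abstract equivalence, and then specialize to infinite subsets.

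For the abstract part, observe first that weak hereditary $\mathcal F$-hypercyclicity of $T$ is, by definition, the assertion that $\mathcal F_A$-$\mathrm{HC}(T)\neq\emptyset$ for every $A\in\mathcal F$; under the comeagerness hypothesis, this becomes the statement that $\mathcal F_A$-$\mathrm{HC}(T)$ is comeager in $X$ for every $A\in\mathcal F$. The direction strong $\Rightarrow$ weak is immediate from Definition~\ref{def0} (take $(A_i)_{i\in I}$ constant equal to $A$). For the converse, I would fix countable families $(A_i)_{i\in I}\subset \mathcal F$ and $(V_i)_{i\in I}$ of non-empty open subsets of $X$, and consider the intersection $\Omega := \bigcap_{i\in I}\mathcal F_{A_i}\hbox{-}\mathrm{HC}(T)$. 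Each factor is comeager, and since $X$ is Polish (hence Baire), $\Omega$ is comeager in $X$, and in particular non-empty. Any $x\in \Omega$ then satisfies $\mathcal N_T(x,V)\cap A_i\in\mathcal F$ for every non-empty open $V$ and every $i\in I$; specializing $V=V_i$ yields exactly the strong hereditary condition.

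For the ``in particular'' statement with $\mathcal F$ the family of infinite subsets of $\NN$, the equivalence between weak hereditary $\mathcal F$-hypercyclicity and topological mixing is precisely the content recalled in the paragraph immediately before the proposition, via \cite[Lemma 2.2]{Gri05}. To reduce strong to weak via the first part of the proposition, I would verify its hypothesis for this $\mathcal F$: for any infinite $A\subset \NN$, the set $\mathcal F_A$-$\mathrm{HC}(T)$ is exactly the set of $x\in X$ whose orbit $\{T^n x:\ n\in A\}$ is dense in $X$; fixing a countable basis $(V_k)_k$, this set equals $\bigcap_k\bigcup_{n\in A}T^{-n}(V_k)$, which is $G_\delta$. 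The classical Birkhoff-type universality theorem for sequences of continuous linear operators on a Polish topological vector space without isolated points ensures that this $G_\delta$ set is either empty or a dense $G_\delta$, hence either empty or comeager. Combining with the first part, we obtain the chain strong hereditary $\mathcal F$-hypercyclicity $\iff$ weak hereditary $\mathcal F$-hypercyclicity $\iff$ topological mixing.

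The main obstacle is the universality principle invoked in the second paragraph: for a continuous linear operator $T$ on a Polish topological vector space and an infinite subset $A\subset\NN$, one has to argue that non-emptiness of $\mathcal F_A$-$\mathrm{HC}(T)$ forces density. This is a classical consequence of the Birkhoff transitivity theorem, using the linear structure essentially to pass from a single universal vector to a dense collection (equivalently, to derive topological transitivity of the sequence $(T^n)_{n\in A}$ from the existence of one universal vector). The Baire-category core of the first paragraph, by contrast, is a completely routine countable-intersection argument that presents no further difficulty.
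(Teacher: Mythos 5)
Your proposal is correct and follows essentially the same route as the paper: the core is the same Baire-category argument, intersecting the comeager sets $\mathcal F_{A_i}$-$\mathrm{HC}(T)$ over the countable index set and picking any point of the (non-empty) intersection. Your second paragraph merely makes explicit two points the paper leaves to the surrounding discussion, namely the equivalence of the weak notion with topological mixing and the verification (via the universality/transitivity argument for $(T^n)_{n\in A}$ and dense range of $T$) that the comeagerness hypothesis holds for the family of infinite subsets of $\NN$.
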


\begin{proof} Assume that $T$ is hereditarily $\mathcal F$-hypercyclic in the weak sense.
Let $(A_i)_{i\in I}$ be a countable family of sets in $\mathcal F$
and let $(V_i)_{i\in I}$ be a family of non-empty open subsets of $X$. By assumption on $\mathcal F$, for each $i\in I$, the set $G_i$ of $\mathcal F$-hypercyclic vectors for the sequence $(T^n)_{n\in A_i}$ is comeager in $X$; so $G:=\bigcap_{i\in I} G_i$ is non-empty. Then any $x\in G$ satisfies the required property: for every $i\in I$, there is a set $B_i\in\mathcal F$ such that
$B_i\subset A_i$ and
$T^nx\in V_i$ for all $n\in B_i$.
\end{proof}

We can now ask:

\begin{question}\label{question:weaksense}
For which Furstenberg families $\mathcal F$ do hereditary $\mathcal F$-hypercyclicity and hereditary $\mathcal F$-hypercyclicity in the weak sense coincide?
\end{question}

In view of the results of \cite{BoGre18}, \emph{upper Furstenberg families} may be good candidates. However, we are unable to handle even  the case of sets with positive upper density. Proposition \ref{debile} leads naturally to the following question:

\begin{question} Let us denote by $\overline{\mathcal D}$ the family of all sets $A\subset\NN$ with positive upper density. Is it true that if $A\in\overline{\mathcal D}$ and $T\in\mathfrak L(X)$ is $\overline{\mathcal D}_A$-hypercyclic, then $\overline{\mathcal D}_A$-{\rm HC}$(T)$ is comeager in $X$?
\end{question} 

On the other hand, it might seem more than plausible that the two notions are not equivalent in the case of frequent hypercyclicity, \textit{i.e.} when $\mathcal F$ is the family of sets with positive lower density. But again, we don't know how to prove this.

\smallskip One may also think of ``local'' versions of hereditary frequent hypercyclicity. For example, one could say that an operator $T\in\mathfrak L(X)$ is
\begin{itemize}
\item[-] hereditarily $\mathcal F$-hypercyclic with respect to some sequence $(\Lambda_i)_{i\in\NN}\subset \mathcal F$ if, for any sequence $(A_i)\subset\mathcal F$ with $A_i\subset \Lambda_i$ and for any sequence of non-empty open sets $(V_i)$ in $X$, one can find a vector $x\in X$ such that $\mathcal N_T(x,V_i)\cap A_i\in\mathcal F$ for all $i\in\NN$;
\item[-] hereditarily $\mathcal F$-hypercyclic with respect to some set $\Lambda\in\mathcal F$ if it is hereditarily $\mathcal F$-HC with respect to the constant sequence $\Lambda_i=\Lambda$;
\item[-] hereditarily $\mathcal F$-hypercyclic in the weak sense with respect to some set $\Lambda\in\mathcal F$ is it is $\mathcal F_A$-hypercyclic for any $A\in\mathcal F\cap 2^\Lambda$.
\end{itemize}

\smallskip
When $\mathcal F$ is the family of all infinite subsets of $\NN$, hereditary $\mathcal F$-hypercyclicity in the weak sense with respect to some set $\Lambda=\{ n_k\,:\, k\geq 0\}$ is the same as hereditary hypercyclicity with respect to the sequence $(n_k)$ in the sense of \cite{BP-her}; and hence, by \cite[Theorem 2.3]{BP-her}, an operator $T$ is hereditarily $\mathcal F$-hypercyclic in the weak sense with respect to some set $\Lambda$ if and only it is topologically weakly mixing, \textit{i.e.} $T\oplus T$ is hypercyclic. Also, the proof of Proposition \ref{prop:sumhfhc} makes it clear that if $T$ is hereditarily $\mathcal F$-hypercyclic with respect to some sequence $(\Lambda_i)$, then $T\oplus T$ is $\mathcal F$-hypercyclic. This leads to
\begin{question} If $T\in\mathfrak L(X)$ is hereditarily $\mathcal F$-hypercyclic in the weak sense with respect to some set $\Lambda\in\mathcal F$, does it follow that $T\oplus T$ is $\mathcal F$-hypercyclic? And conversely?
\end{question}

In the same spirit and with \cite{EEM} in mind, one may ask
\begin{question} Does $\mathcal U$-frequent hypercyclicity imply some weak form of hereditary $\mathcal U$-frequent hypercyclicity, yet strong enough to ``explain'' why $T\oplus T$ is $\mathcal U$-frequently hypercyclic as soon as $T$ is?
\end{question}

\subsection{$\mathcal F$-transitivity and hereditary $\mathcal F$-transitivity} In topological dynamics, there is a natural notion of ``transitivity'' associated to a given Furstenberg family $\mathcal F$ {\rm (}see e.g. \cite{Gl}, and \cite{BeMePePu2} in the linear setting{\rm )}: if $X$ is a topological space, a continuous map $T:X\to X$ is said to be \textbf{$\mathcal F$-transitive} if $\mathcal N_T(U,V)\in \mathcal F$ for every pair $(U,V)$ of non-empty open sets in $X$, where \[\mathcal N_T(U,V):=\{ n\in\NN\,:\, T^n(U)\cap V\neq \emptyset\}.\]

\smallskip Following \cite{BeMePePu2}, one can consider a ``hereditary'' version of $\mathcal F$-transitivity: let us say that an operator $T\in\mathfrak L(X)$ is \textbf{hereditarily $\mathcal F$-transitive}  if  $\mathcal N(U, V)\cap A\in\mathcal{F}$ for every $A\in \mathcal{F}$ and all non-empty open sets $U,V$. There is an obvious link with hereditary $\mathcal F$-hypercyclicity.

\begin{remark}\label{trans} Hereditarily $\mathcal F$-hypercyclic operators are hereditarily $\mathcal F$-transitive.
\end{remark}
\begin{proof}
By Proposition \ref{densely}, we know that if $T$ is hereditarily $\mathcal{F}$-hypercyclic then $T$ is densely hereditarily $\mathcal{F}$-hypercyclic. Let $U, V$ be non-empty open sets in $X$, and let $A\in \mathcal{F}$. By dense hereditary $\mathcal F$-hypercyclicity, there exists $x\in U$ such that $N(x,V)\cap A\in \mathcal{F}$. In particular $N(U,V)\cap A\in \mathcal{F}$, so $T$ is  hereditarily $\mathcal{F}$-transitive.
\end{proof}

The converse is definitely not true in general, for the following reason: there exist topologically mixing operators that are not frequently hypercyclic. In particular, any such operator is  hereditarily $\underline{\mathcal{D}}$-transitive, where $\underline{\mathcal{D}}$ is the family of sets with positive lower density, but not frequently hypercyclic (\textit{i.e.} not $\underline{\mathcal{D}}$-hypercyclic). This leads to the following questions.

\begin{question} Are there operators which are frequently hypercyclic and topologically mixing, but not hereditarily frequently hypercyclic? 
\end{question}

\begin{question} Are there at least operators which are hereditarily $\underline{\mathcal{D}}$-transitive {and} frequently hypercyclic, but not hereditarily frequently hypercyclic? 
\end{question}

\smallskip
Given a Furstenberg family $\mathcal{F}$, one can define the \emph{dual family} $\mathcal{F}^*$ as the collection of all subsets $A$ of $\NN$ such that  $A\cap B\neq\emptyset$ for every $B\in\mathcal{F}$. It is clear by definition that every hereditarily $\mathcal{F}$-transitive operator is $\mathcal{F}^*$-transitive; and it is also clear that $(\underline{\mathcal{D}})^*=\overline{\mathcal{D}}_1$, the family of sets with upper density equal to $1$. Hence, every hereditarily frequently hypercyclic operator is $\overline{\mathcal{D}}_1$-transitive. It is natural to wonder if every frequently hypercyclic operator is $\overline{\mathcal{D}}_1$-transitive too. The next proposition shows that this is not the case. This is an improvement of \cite[Proposition~5.1]{BeMePePu2}, where it is shown that reiterative hypercyclicity does not imply $\overline{\mathcal{D}}_1$-transitivity. Moreover the example we give is any of the weighted shifts introduced in the proof of Theorem \ref{thm:sumfhc}; so this provides another proof that these shifts are not hereditarily frequently hypercyclic.

\begin{proposition}\label{propsansnom}
There exists a frequently hypercyclic weighted shift $B_w$ on $c_0(\ZZ_+)$ which is not $\overline{\mathcal{D}}_1$-transitive.
\end{proposition}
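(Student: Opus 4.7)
The plan is to show that any of the weighted shifts $B_w$ on $c_0(\ZZ_+)$ built in the proof of Theorem \ref{thm:sumfhc} already fails to be $\overline{\mathcal D}_1$-transitive, by exhibiting a pair of non-empty open sets $U,V$ for which $\udens\bigl(\mathcal N_{B_w}(U,V)\bigr)<1$.

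First, I take $U:=\{x\in c_0(\ZZ_+)\,:\,\|x\|_\infty<r\}$ for a small $r>0$ to be chosen, and $V:=\{y\in c_0(\ZZ_+)\,:\,\|y-e_0\|_\infty<1/2\}$. If $n\in\mathcal N_{B_w}(U,V)$, there is some $x\in U$ with $B_w^n x\in V$; inspecting the zeroth coordinate and using the identity $(B_w^n x)_0=w_1\cdots w_n\,x_n$ together with $|x_n|<r$, I obtain $w_1\cdots w_n>1/(2r)$. Hence
\[\mathcal N_{B_w}(U,V)\ \subset\ \bigl\{n\ge 1\,:\,w_1\cdots w_n>1/(2r)\bigr\},\]
and it remains to prove that the right-hand set has upper density strictly less than $1$ for $r$ small enough.

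The proof of Theorem \ref{thm:sumfhc} already provides the key containment: for any integer $P\ge 1$,
\[\bigl\{n\,:\,w_1\cdots w_n\ge 2^P\bigr\}\ \subset\ \bigcup_{q\ge P}\bigl(b_q\NN+[-q,q]\bigr)\ \cup\ \bigcup_{u\in\,\bigcup_{q\ge 1}A_{2q}}I_u^{a,4\veps}.\]
By the choice of the sequence $(b_p)$, the upper density of the first union tends to $0$ as $P\to\infty$. For the second union, the intervals $I_u^{a,4\veps}$ are pairwise disjoint (by Lemma \ref{lem:sets}), of length $\simeq 8\veps\,a^u$ and located essentially at $a^u$; a standard geometric-series computation, with $N$ taken at the right endpoint of an interval of the family, yields
\[\udens\Bigl(\bigcup_{u\ge 1}I_u^{a,4\veps}\Bigr)\ \le\ \frac{8\veps\,a}{(a-1)(1+4\veps)},\]
which for $\veps=1/8$ equals $\tfrac{2a}{3(a-1)}$ and is strictly less than $1$ as soon as $a>3$; this is automatic since Lemma \ref{lem:sets} already requires $a$ to be sufficiently large.

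Choosing $r$ so small that $1/(2r)\ge 2^P$ for a $P$ large enough that $\udens\bigl(\bigcup_{q\ge P}(b_q\NN+[-q,q])\bigr)<1-\tfrac{2a}{3(a-1)}$, subadditivity of upper density yields $\udens\bigl(\mathcal N_{B_w}(U,V)\bigr)<1$, which is exactly the required failure of $\overline{\mathcal D}_1$-transitivity. No serious obstacle is anticipated; the key point is simply that the choice of $U$ and $V$ confines $\mathcal N_{B_w}(U,V)$ to the ``large-weight'' set $\{n\,:\,w_1\cdots w_n\ge 2^P\}$, which the proof of Theorem \ref{thm:sumfhc} has already located inside a set of upper density bounded away from $1$.
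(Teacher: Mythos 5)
Your argument is correct and follows the paper's proof in all essentials: the heart of both is the containment of the large-weight set $\{n:\ w_1\cdots w_n\ge 2^P\}$ in $\bigcup_{q\ge P}(b_q\NN+[-q,q])\cup\bigcup_{u\ge 1}I_u^{a,4\veps}$, followed by the same geometric-series estimate showing this union has upper density bounded away from $1$. The only difference is that where the paper invokes \cite[Proposition 3.3]{BeMePePu2} to reduce $\overline{\mathcal{D}}_1$-transitivity to the density of that large-weight set, you prove the reduction directly by testing transitivity on the pair $U=\{\|x\|_\infty<r\}$, $V=\{\|y-e_0\|_\infty<1/2\}$ and reading off the zeroth coordinate of $B_w^nx$ --- a short, self-contained substitute for the citation.
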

\begin{proof}
Let $B_w$ be one of the weighted shifts introduced in the proof of Theorem \ref{thm:sumfhc}.  By \cite[Proposition 3.3]{BeMePePu2}, in order to show that $B_w$ is not $\overline{\mathcal{D}}_1$-transitive, it is enough to find $M >0$ such that
\[{C}_M:= \{n\in\NN\, :\, |w_1\cdots w_n|>  M\}\not\in \overline{\mathcal{D}}_1.\]

With the notation of the proof of Theorem \ref{thm:sumfhc}, we know that for every $p\geq 1$, 
\[{C}_{2^p}\subset \bigcup_{q \ge p} (b_q\mathbb{N}+[-q,q])\cup \bigcup_{q\ge 1}\bigcup_{u\in {A}_{2q}}I_u^{a,4\varepsilon}.\]
Moreover, by assumption on $(b_q)$, we have
\[\lim_{p\to\infty} \overline{\text{dens}}\left(\bigcup_{q \ge p} (b_q\mathbb{N}+[-q,q])\right)=0;\]
and we also have
\[\bigcup_{q\ge 1}\bigcup_{u\in {A}_{2q}}I_u^{a,4\varepsilon}\subset \bigcup_{u\ge 1}I_u^{a,4\varepsilon}.\]
Since
\[\overline{\text{dens}}\Biggl(\bigcup_{u\ge 1}I_u^{a,4\varepsilon}\Biggr) \le \lim_{u\to\infty} \frac{\sum_{k=1}^u 8\varepsilon a^k}{(1+4\varepsilon)a^u}=\frac{8\varepsilon}{1+4\varepsilon}\sum_{k=0}^{\infty} a^{-k}<1\quad \text{if $a$ is sufficiently big,}\]
it follows that if $p$ is sufficiently big then
\[\overline{\text{dens}}\ {C}_{2^p}<1.\]

 This concludes the proof of Proposition \ref{propsansnom}.
\end{proof}

\smallskip

\subsection{About disjointness} The original definition of disjointness in topological dynamics goes back to Furstenberg's seminal paper \cite{Furdisj}.  The setting is that of compact dynamical systems $(X,T)$, \textit{i.e.} $X$ is a compact metric space and $T:X\to X$ is a continuous map. Two compact dynamical systems $(X_1, T_1)$ and $(X_2, T_2)$ are said to be disjoint if the only closed, $(T_1\times T_2)$-invariant set $\Gamma\subset X_1\times X_2$ such that $\pi_{X_1}(\Gamma)=X_1$ and $\pi_{X_2}(\Gamma)=X_2$ is $\Gamma= X_1\times X_2$. Note that since the spaces are compact, one could replace $\pi_{X_i}(\Gamma)$ by $\overline{\pi_{X_i}(\Gamma)}$ in the definition. For dynamical systems $(X,T)$ whose underlying space is not necessarily compact, both definitions make sense and lead to \textit{a priori} different notions of disjointness (the one ``with closure'' being stronger than the one ``without closure''). In particular, one could consider these notions in the linear setting. However, there are no disjoint pairs of linear dynamical systems in this sense, even ``without closures''. Indeed, if $T_1\in\mathfrak L(X_1)$ and $X_2\in\mathfrak L(X_2)$, then
$\Gamma:=( \{ 0\}\times X_2)\, \cup\,( X_1\times\{ 0\})$ shows that disjointness cannot be met. One can get round this difficulty by changing a little bit the definitions as follows: instead of  $\pi_{X_i}(\Gamma)=X_i$, require that $\pi_{X_i}\bigl( \Gamma\cap (X_1\setminus\{ 0\})\times (X_2\setminus\{ 0\})\bigr)=X_i\setminus\{ 0\}$; and likewise for the definition ``with closures''.

\smallskip Even though these definitions of disjointness are likely to be artificial, one can try to play a little bit with them. For example, copying out the relevant parts of \cite{Furdisj} -- namely, the proofs of Theorem II.1 and Theorem II.2 -- one gets the following results. Let us say that a linear dynamical system $(X,T)$ is \emph{minimal apart from $0$} if  every non-zero vector $x\in X$ is hypercyclic for $T$; equivalently, if the only  closed $T$-invariant subsets of $X$ are $\{ 0\}$ and $X$. Famous examples of Read \cite{Read} show that this can indeed happen.
\begin{proposition} Let $(X_1, T_1)$ and $(X_2, T_2)$ be two linear dynamical systems. If $(X_1, T_1)$ and $(X_2, T_2)$ are disjoint ``without closures'', then at least one of them is minimal apart from $0$.
\end{proposition}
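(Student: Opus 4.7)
The plan is to argue by contraposition: assuming neither system is minimal apart from $0$, I will exhibit a closed $(T_1\times T_2)$-invariant set $\Gamma\subsetneq X_1\times X_2$ whose projection onto each factor still covers all non-zero vectors, thereby contradicting the modified disjointness condition. This is exactly Furstenberg's classical construction, adjusted to take care of the fact that in the linear setting $\{0\}$ is an unavoidable closed invariant subset.

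First I would rephrase the negation of minimality. An operator $(X, T)$ fails to be minimal apart from $0$ precisely when there exists a non-zero $x\in X$ whose orbit is not dense; then $F:=\overline{\{T^n x\, :\, n\ge 0\}}$ is a closed, $T$-invariant, proper subset of $X$ that contains the non-zero point $x$. Applying this to both systems, I obtain closed invariant sets $F_i\subsetneq X_i$ with $F_i\cap (X_i\setminus\{0\})\neq\emptyset$ for $i=1,2$; fix a non-zero $x_i\in F_i$.

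Next I would set
\[
\Gamma := (F_1\times X_2)\,\cup\,(X_1\times F_2).
\]
It is immediate that $\Gamma$ is closed (a finite union of closed sets) and $(T_1\times T_2)$-invariant (each piece is). It is a proper subset of $X_1\times X_2$: picking any $y_1\in X_1\setminus F_1$ and $y_2\in X_2\setminus F_2$, the pair $(y_1,y_2)$ lies in neither $F_1\times X_2$ nor $X_1\times F_2$. For the projection condition, take an arbitrary $a\in X_1\setminus\{0\}$ and produce a non-zero partner $b\in X_2$: if $a\in F_1$, any non-zero $b\in X_2$ works, since $(a,b)\in F_1\times X_2\subset\Gamma$; if $a\notin F_1$, set $b:=x_2\in F_2\setminus\{0\}$, so that $(a,b)\in X_1\times F_2\subset\Gamma$. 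Thus $\pi_{X_1}\bigl(\Gamma\cap (X_1\setminus\{0\})\times(X_2\setminus\{0\})\bigr)=X_1\setminus\{0\}$, and the symmetric argument gives the analogous equality for $\pi_{X_2}$. This $\Gamma$ witnesses the failure of disjointness without closures.

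I do not anticipate any real obstacle here: the argument is essentially Furstenberg's, the only subtlety being the choice of $F_i$ as the orbit closure of a non-zero vector (rather than an arbitrary non-empty closed invariant set), which is precisely what the definition ``minimal apart from $0$'' delivers upon negation.
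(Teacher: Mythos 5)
Your proof is correct and follows essentially the same route as the paper: negate minimality apart from $0$ to get proper closed invariant sets $C_i\subset X_i$ meeting $X_i\setminus\{0\}$, and take $\Gamma=(C_1\times X_2)\cup(X_1\times C_2)$ as the witness against disjointness. The only difference is that you spell out the (routine) verification of the projection condition, which the paper leaves implicit.
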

\begin{proof} Assume that $(X_1, T_1)$ and $(X_2, T_2)$ are not minimal apart from $0$. Then, for $i=1,2$, one can find a closed $T_i$-invariant set $C_i\subset X_i$ such that $C_i\neq X_i$ and $C_i\cap (X_i\setminus\{ 0\})\neq\emptyset$; and $\Gamma:= (C_1\times X_2) \cup (X_1\times C_2)$ shows that $(X_1, T_1)$ and $(X_2, T_2)$ are not disjoint ``without closures''.
\end{proof}

\smallskip
\begin{proposition}\label{possible} Let $(X_1, T_1)$ and $(X_2, T_2)$ be two linear dynamical systems. Assume that the periodic points of $T_1$ are dense in $X_1$, and that $(X_2, T_2)$ is minimal apart from $0$. Then $(X_1, T_1)$ and $(X_2, T_2)$ are disjoint ``without closures''.
\end{proposition}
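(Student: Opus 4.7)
The plan is to imitate Furstenberg's proof of \cite[Theorem II.2]{Furdisj} but replace the ordinary minimality used there by \textbf{Ansari's theorem}, which will take care of the fact that the powers $T_2^p$ need not be minimal in any naive sense even though $T_2$ is minimal apart from $0$. Let $\Gamma\subset X_1\times X_2$ be a closed, $(T_1\times T_2)$-invariant set such that
\[ \pi_{X_i}\bigl(\Gamma\cap(X_1\setminus\{0\})\times(X_2\setminus\{0\})\bigr)=X_i\setminus\{0\}\quad\text{for }i=1,2.\]
We want to show $\Gamma=X_1\times X_2$. For each $x_1\in X_1$, define the section $\Gamma_{x_1}:=\{x_2\in X_2:(x_1,x_2)\in\Gamma\}$; this is closed in $X_2$.

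Next I would handle the case where $x_1$ is a non-zero periodic point of $T_1$, say of period $p\geq 1$. Then $\Gamma_{x_1}$ is $T_2^p$-invariant: if $x_2\in\Gamma_{x_1}$, then $(T_1\times T_2)^p(x_1,x_2)=(x_1,T_2^p x_2)\in\Gamma$. By the projection hypothesis applied to $x_1\neq 0$, there exists a non-zero $x_2$ with $(x_1,x_2)\in\Gamma$, so $\Gamma_{x_1}\cap(X_2\setminus\{0\})\neq\emptyset$. Pick such an $x_2$; since $(X_2,T_2)$ is minimal apart from $0$, the vector $x_2$ is hypercyclic for $T_2$. By Ansari's theorem, $x_2$ is then also hypercyclic for $T_2^p$, i.e.\ $\overline{\{T_2^{pn}x_2:n\geq 0\}}=X_2$. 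Since $\Gamma_{x_1}$ is closed and $T_2^p$-invariant, it contains this orbit closure, hence $\Gamma_{x_1}=X_2$. Equivalently, $\{x_1\}\times X_2\subset\Gamma$ for every non-zero periodic point $x_1$ of $T_1$.

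Finally I would upgrade this from non-zero periodic points to all of $X_1$ by closedness. The set of periodic points of $T_1$ is dense in $X_1$; since $X_1\neq\{0\}$, the set of \emph{non-zero} periodic points is also dense in $X_1$ (any neighborhood of any point contains a periodic point, and the periodic points cannot all equal $0$ in a non-trivial space). Given an arbitrary $x_1\in X_1$, choose a sequence of non-zero periodic points $x_1^{(n)}\to x_1$. For every $x_2\in X_2$ we have $(x_1^{(n)},x_2)\in\Gamma$ by the previous paragraph, and since $\Gamma$ is closed we conclude $(x_1,x_2)\in\Gamma$. Thus $\Gamma=X_1\times X_2$, which is the disjointness condition ``without closures''.

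The only potentially delicate step is the invocation of Ansari's theorem to pass from $T_2$-minimality (apart from $0$) to $T_2^p$-minimality (apart from $0$); everything else is formal manipulation of closed invariant sets in the spirit of Furstenberg. Since $X_2$ is a Polish (hence separable) topological vector space and $T_2$ is continuous and linear, Ansari's theorem applies without difficulty.
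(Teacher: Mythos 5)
Your proof is correct and follows essentially the same route as the paper's: reduce to non-zero periodic points of $T_1$ by density and closedness of $\Gamma$, then for each such point use the projection hypothesis to find a non-zero (hence hypercyclic) partner $v$, and invoke Ansari's theorem to conclude that $v$ is hypercyclic for $T_2^p$, forcing the section to be all of $X_2$. No issues.
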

\begin{proof} Let $\Gamma \subset X_1\times X_2$ be a closed, $(T_1\times T_2)$-invariant set such that $\pi_{X_i}\bigl( \Gamma\cap (X_1\setminus\{ 0\})\times (X_2\setminus\{ 0\})\bigr)=X_i\setminus\{ 0\}$ for $i=1,2$. We have to show that $\Gamma=X_1\times X_2$; and since the periodic points of $T_1$ are dense in $X_1$, it is enough to show that $({\rm Per}(T_1)\setminus\{ 0\})\times X_2\subset\Gamma$.

Let $u\in X_1$ be any non-zero periodic point of $T_1$, and choose $d\in\NN$ such that $T^du=u$. By assumption on $\Gamma$, one can find $v\in X_2\setminus\{ 0\}$ such that $(u,v)\in \Gamma$. Then $(u, T_2^{dn}v)\in\Gamma$ for all $n\in\NN$ by $(T_1\times T_2)$-invariance of $\Gamma$. Moreover, $v\in \hc(T_2)$ by assumption on $(X_2,T_2)$. Hence, by Ansari's theorem \cite{Ansari}, $v$ is also a hypercyclic vector for $T_2^d$; and since $\Gamma$ is closed in $X_1\times X_2$, it follows that $\{ u\}\times X_2\subset\Gamma$.
 \end{proof}

\smallskip Proposition \ref{possible} implies in particular that linear dynamical systems which are disjoint ``without closure'' do exist. We don't know if this is also true ``with closure''; so one could think of possible weakenings of the definition of disjointness ``with closures''. For hypercyclic operators, one possible such weakening could be the following: one could say that two hypercyclic operators $T_1\in\mathfrak L(X_1)$ and $T_2\in\mathfrak L(X_2)$ are \emph{pseudo-disjoint} (just to give a name) if, whenever $x_1$ is hypercyclic vector for $T_1$ and $x_2$ is a hypercyclic vector for $T_2$, it follows that $(x_1, x_2)$ is hypercyclic for $T_1\times T_2$. This is indeed weaker than the definition of disjointness ``with closures'' (consider
$\Gamma:=\overline{{\rm Orb}\bigl((x_1, x_2), T_1\times T_2\bigr)}\,$), yet formally much stronger than the disjointness notion introduced in \cite{Bernal-disjoint} and \cite{BePe-disjoint}, \textit{i.e.} diagonal hypercyclicity. We are not much further ahead since we don't know if there are any pseudo-disjoint pairs of linear operators (whereas there are lots of interesting examples for diagonal hypercyclicity).   So we ask
\begin{question}\label{pseudo-disj} Are there any pseudo-disjoint pairs of operators, \textit{i.e.} pairs of hypercyclic operators $(T_1,T_2)$ such that $\hc(T_1)\times \hc(T_2)\subset \hc(T_1\times T_2)$? 
\end{question}

Regarding this question, one may observe that two linear operators $T_1$ and $T_2$ are trivially pseudo-disjoint if it happens that every vector $x\in X_1\times X_2$ with non-zero coordinates is hypercyclic for $T_1\times T_2$. This leads to the following ``strong'' form of Question \ref{pseudo-disj}.

\begin{question}\label{Readproduct} Are there pairs of operators $(T_1,T_2)$ such that  every $x\in (X_1\setminus\{ 0\})\times (X_2\setminus\{ 0\})$ is hypercyclic for $T_1\times T_2$?
\end{question}

Let us point  out the following amusing fact: if such a pair $(T_1,T_2)$ can be found, then the operator $T=T_1\times T_2$ acting on $X=X_1\times X_2$ is a hypercyclic operator such that $HC(T)$ is an open set but $HC(T)\neq X\setminus\{ 0\}$. We don't know of any example of operators with that property.

\smallskip
Finally, we note that if one extends the definition of pseudo-disjointness to possibly non-linear systems in the obvious way, it follows from the main result of \cite{ShkGp} that any irrational rotation of the circle is pseudo-disjoint from any hypercyclic operator. In view of that, one may consider the following variant of Question \ref{pseudo-disj}.
\begin{question} Are there natural classes of hypercyclic operators $\mathfrak C_1$, $\mathfrak C_2$ such that any $T_1\in\mathfrak C_1$ is pseudo-disjoint from any $T_2\in\mathfrak C_2$?
\end{question}

\subsection{Other questions} We conclude the paper by adding some other possibly interesting questions motivated by the results obtained in the paper.

\smallskip The first question asks for a converse to Observation \ref{prop:sumhfhc0}.

\begin{question} Let $\mathcal F$ be a Furstenberg family, and let $T\in\mathfrak L(X)$. Assume that $S\oplus T$ is $\mathcal F$-hypercyclic for every $\mathcal F$-hypercyclic operator $S$. Does it follow that $T$ is hereditarily $\mathcal F$-hypercyclic?
\end{question}

\smallskip The next two questions are related to the following ``trap'' into which it is easy to fall: if $\mathcal F$ and $\mathcal F'$ are two Furstenberg families, the fact that 
$\mathcal F\subset \mathcal F'$ does not formally imply that hereditary $\mathcal F$-hypercyclicity is a stronger property than hereditary $\mathcal F'$-hypercyclicity. 

\begin{question}\label{mix} Are there hereditarily frequently hypercyclic operators which are not topologically mixing?
\end{question}

%\smallskip
\begin{question} Does hereditary frequent hypercyclicity imply hereditary $\mathcal U$-frequent hypercyclicity?
\end{question}

\smallskip  In the theory of frequently hypercyclic operators, there are non-trivial counterexamples to some tempting ``conjectures''. It is natural to ask if these examples are in fact hereditarily frequently hypercyclic, or if it is possible to modify them in order to get hereditarily frequently hypercyclic examples. In particular, with  \cite[Theorem 6.41]{BM09} and \cite{Me2} in mind, this leads to the following questions.

\begin{question} Are there hereditarily frequently hypercyclic operators which are not chaotic?
\end{question}

\begin{question} Are there invertible hereditarily frequently hypercyclic operators whose inverse is not frequently hypercyclic? 
\end{question}

One may also consider the following strengthened version of Question \ref{mix}, \textit{cf} \cite{BadGriv} or \cite[Theorem 6.45]{BM09}.
\begin{question} Are there operators which are both hereditarily frequently hypercyclic \emph{and} chaotic but not topologically mixing?
\end{question}

\smallskip The next two questions are related to the sufficient conditions we found for hereditary frequent hypercyclicity. Observe first that besides the Frequent Hypercyclicity Criterion and the unimodular eigenvectors machinery, there are other criteria to prove frequent hypercyclicity (see \cite{BeMePePu} or \cite[Theorem 5.35]{GMM21}). They do not imply hereditarily frequent hypercyclicity. Indeed, the criterion of \cite{BeMePePu}  is equivalent to frequent hypercyclicity for weighted shifts on $c_0$ whereas the $C$-type operator of Theorem \ref{Ctypeex} satisfies \cite[Theorem 5.35]{GMM21}.

\begin{question}
If $T\in\mathfrak L(X)$ is such that the $\mathbb T$-eigenvectors of $T$ are spanning with respect to  Lebesgue measure, does it follow that one can find a $T$-$\,$invariant measure $\mu$ with full support such that $(X,\mathcal B,\mu, T)$ is a factor of a dynamical system with countable Lebesgue spectrum? Does it follow at least that $T$ is hereditarily frequently hypercyclic?
\end{question}

\begin{question} Let $T\in\mathfrak L(X)$. Is it true that if the $\TT$-eigenvectors of $T$ are perfectly spanning, then $T$ is hereditarily frequently hypercyclic?%
\end{question}

\smallskip Concerning the invariant measure business, the next two questions seem natural. The first one is motivated by Theorem \ref{HUFHC}.

\begin{question} Does there exist an operator $T$ which is not hereditarily $\mathcal U$-frequenty hypercyclic but admits an ergodic measure with full support?
\end{question}

%\smallskip
\begin{question}
If $X$ is a reflexive Banach space, then any frequently hypercyclic operator $T$ on $X$ admits a continuous invariant probability measure with full support {\rm (}see \cite{GriM}{\rm )}.
Is it possible to improve this result if $T$ is assumed to be hereditarily frequently hypercyclic?
\end{question}

\smallskip The next question is, of course, strongly reminiscent of the B\`es-Peris theorem \cite{BP-her}, according to which the Hypercyclicity Criterion characterizes topological weak mixing.
\begin{question} Given a Furstenberg family $\mathcal F$, is there some ``$\mathcal F$-hypercyclicity criterion'' characterizing the operators $T$ such that $T\oplus T$ is $\mathcal F$-hypercyclic?
\end{question}

\smallskip Finally, our last three questions concern the links between (hereditary) frequent hypercyclicity and the geometry of the underlying space $X$.
\begin{question} On which spaces $X$ is it possible to find hereditarily frequently hypercyclic operators? Is it possible at least on any complex Banach space admitting an unconditional Schauder decomposition?
\end{question}

\begin{question} Are there spaces $X$ which support frequently hypercyclic operators but no hereditarily frequently hypercyclic operator?
\end{question}

\begin{question} On which Banach spaces $X$ is it possible to construct $d$-frequently hypercyclic pairs $(T_1,T_2)$ which are not densely $d$-frequently hypercyclic?
\end{question}

\providecommand{\bysame}{\leavevmode\hbox to3em{\hrulefill}\thinspace}
\providecommand{\MR}{\relax\ifhmode\unskip\space\fi MR }
% \MRhref is called by the amsart/book/proc definition of \MR.
\providecommand{\MRhref}[2]{%
  \href{http://www.ams.org/mathscinet-getitem?mr=#1}{#2}
}
\providecommand{\href}[2]{#2}

\end{document}